\newtheorem{theorem}{Theorem}[section]
\newtheorem{lemma}[theorem]{Lemma}
\newtheorem{proposition}[theorem]{Proposition}
\newtheorem{corollary}[theorem]{Corollary}
\theoremstyle{definition}
\newtheorem{definition}[theorem]{Definition}
\newtheorem{example}[theorem]{Example}
\theoremstyle{remark}
\newtheorem{remark}[theorem]{Remark}
\numberwithin{equation}{section}
\newenvironment{proof}
  {\begin{trivlist}
  \item[\textit{\noindent\textsc{ Proof.}}]}
  {{$\square$}\end{trivlist}}
\newcommand{\Adj}{\ensuremath{\mbox{\rm Adj}}}
\newcommand{\Const}{\ensuremath{\mbox{\rm Const}}}
\newcommand{\Hom}{\ensuremath{\mbox{\rm Hom}}}
\newcommand{\Spec}{\ensuremath{\mbox{\rm Spec}}}
\newcommand{\DiffSpec}{\ensuremath{\mbox{\rm DiffSpec}}}
\newcommand{\Diff}{\ensuremath{\mbox{\rm Diff}}}
\newcommand{\Aut}{\ensuremath{\mbox{\rm Aut}}}
\newcommand{\Der}{\ensuremath{\mbox{\rm Der}}}
\newcommand{\Gal}{\ensuremath{\mbox{\rm Gal}}}
\newcommand{\Gr}{\ensuremath{\mbox{\rm Gr}}}
\begin{document}

\title{Differential Galois Theory of Algebraic Lie-Vessiot Systems
\footnote{{\bf MSC:} Primary 34M05 ; Secondary, 12H05, 14L99, 34A26. \newline {\bf Key Words:}
Differential Galois Theory, Algebraic Groups, Lie Reduction}} 
\author{David Bl\'azquez-Sanz \& Juan Jos\'e Morales-Ruiz\footnote{This research has been partially
 supported by grant MCyT-FEDER MTM2006-00478of Spanish goverment, and the Sergio Arboleda University 
Research Agency CIVILIZAR.}}

\maketitle

\begin{abstract}
  In this paper we develop a differential Galois theory for
algebraic Lie-Vessiot systems in algebraic homogeneous spaces.
Lie-Vessiot systems are non autonomous vector fields
that are linear combinations with time-dependent coefficients of 
fundamental vector fields of an algebraic Lie group action. Those systems
are the building blocks for differential equations that admit
superposition of solutions. Lie-Vessiot systems in algebraic 
homogeneous spaces include the case of linear differential equations. 
Therefore, the differential Galois theory for Lie-Vessiot systems is an
extension of the classical Picard-Vessiot theory. In particular,
algebraic Lie-Vessiot systems are solvable in terms of Kolchin's strongly
normal extensions. Therefore, strongly normal extensions are geometrically
interpreted as the fields of functions on principal homogeneous spaces
over the Galois group. Finally we consider the problem
of integrability and solvability of automorphic differential equations. Our
main tool is a classical method of reduction, somewhere cited as Lie reduction. We
develop and algebraic version of this method, that we call Lie-Kolchin reduction.
Obstructions to the application are related to Galois cohomology.
\end{abstract}

\section{Introduction}

  A Lie-Vessiot system, as defined in \cite{BM2008}, is a system
of non-autonomous differential equations,
\begin{equation}\label{GeneralNA} \dot x_i = F_i(t,x_1,\ldots,x_n),\end{equation}
such that there exist $r$ functions $f(t)$ of the parameter
$t$ verifying:
$$F_i(t,x_1,\ldots,x_n) = \sum_{j=1}^rf_j(t)(A_jx_i),$$
where $A_1,\ldots,A_s$ are autonomous vector fields which infinitesimally span
a \emph{pretransitive} Lie group action. Such systems were introduced
by S. Lie at the end of 19th century (see, for instance \cite{Lie1893c}). 
The differential equation \eqref{GeneralNA}, interpreted
as a non-autonomous vector field, in a manifold $M$, is a linear combination
of the infinitesimal generators of the action of $G$ in $M$:
$$\vec X = \frac{\partial}{\partial t} + \sum f_j(t)A_j.$$

  In \cite{BM2008}, it is proven that a differential equation
admits a \emph{superposition law} if and only if it is 
a Lie-Vessiot system related to a pretransitive Lie group action 
(this is the global version of a classical result 
exposed in \cite{Lie1893c}). The
orbits by a pretransitive group action are homogeneous $G$-spaces, 
so that we can decompose a Lie-Vessiot system in a family of 
systems on homogeneous spaces. Therefore, Lie-Vessiot systems
on homogeneous spaces are the building blocks of differential equations
admitting superpostion laws.

Here, we study Lie-Vessiot systems on algebraic
homogeneous spaces $M$ with coefficients $f_i$ in a differential
field $\mathcal K$ whose field of constants $\mathcal C$ is the field of definition
of the phase space $M$. In this frame, a Lie-Vessiot system is seen as a derivation
of the scheme $M_{\mathcal K}$, compatible with the canonical 
derivation of $\mathcal K$.

\subsection*{Notation and Conventions}

  We denote differential and ordinary fields and rings
by calligraphic letters $\mathcal C, \mathcal K, \ldots $
The canonical derivation of a differential ring $\mathcal K$ is denoted by $\partial_{\mathcal K}$
or just $\partial$ whenever it does not lead to confussion. 
Algebraic varieties are denoted by capital letters $M, G, \ldots$
The structure sheaf of $M$ is denoted by $\mathcal O_M$. If $M$ is
a $\mathcal C$-algebraic variety and $\mathcal C\subset \mathcal K$, 
the space of $\mathcal K$-points of an algebraic variety $M$ is
denoted by $M(\mathcal K)$. We write $M_{\mathcal K}$ for the $\mathcal K$-algebraic
variety obtained after base change $M\times_{\mathcal C}\Spec(\mathcal K)$.
If $p$ is a point of $M$ we denote by $\kappa(p)$ its quotient field and 
$p^\natural$ the valuation morphism
$p^\natural\colon \mathcal O_{M,p}\to \kappa(p)$.

\setcounter{tocdepth}{1}
\tableofcontents


\section{Algebraic Groups and Homogeneous Spaces}

\subsection{Algebraic Groups}

Let us consider a field $\mathcal C$ and its algebraic closure $\bar{\mathcal C}$.
By an \emph{algebraic variety} over $\mathcal C$ we mean
a reduced and separated scheme of finite type over $\mathcal C$. Along this
text an \emph{algebraic group} means an algebraic variety endowed with an
algebraic group law and inversion morphism. In particular, algebraic groups 
over fields of characteristic zero are
smooth varieties (\cite{Mum} pp. 101--102).

  The functor of points of an algebraic group takes values on the category
of groups. If $G$ is a $\mathcal C$-algebraic group, and $\mathcal K$ is a 
$\mathcal C$-algebra, then the set $G(\mathcal K)$ of $\mathcal K$-points of 
$G$ is naturally endowed with an structure of group.

  An algebraic group is an \emph{affine group} if it is an affine algebraic
variety. The main example of an affine algebraic group is the General Linear Group,
$$GL(n,\mathcal C) = \Spec\left( \mathcal C[x_{ij},\Delta]\right), \quad \Delta = \frac{1}{|x_{ij}|} .$$
We call algebraic linear groups to the Zariski closed subgroups of $GL(n,\mathcal C)$.
It is well known that any affine algebraic group is isomorphic to an algebraic linear
group.

\subsection{Lie Algebra of an Algebraic Group}

  Let us consider $\mathfrak X(G)$ the space of regular vector
fields in $G$, \emph{id est}, derivations of the sheaf $\mathcal O_G$
vanishing on $\mathcal C$. The Lie bracket of regular vector fields is a regular vector
field, so $\mathfrak X(G)$ is a Lie algebra.

\begin{definition}
  Let $A$ be a regular vector field in $G$, and $\psi\colon G\to G$ an
  automorphism of algebraic variety. Then, we define $\psi(A)$ the
  transformed vector field $\psi(A) = (\psi^\sharp)^{-1}\circ A\circ
  \psi^\sharp$.
  $$\xymatrix{\mathcal O_G\ar[r]^-{\psi(A)}\ar[d]_-{\psi^{\sharp}} & \mathcal O_G \\
  \mathcal O_G \ar[r]^-{A} & \mathcal O_G
  \ar[u]_-{(\psi^\sharp)^{-1}}}$$
\end{definition}

  Any $\mathcal C$-point $\sigma$ of $G$ induces right and left translations, $R_{\sigma}$ and $L_{\sigma}$,
which are automorphisms of the algebraic variety $G$. A $\bar{\mathcal C}$-point $\bar\sigma$ of $G$,
induces translations in $G_{\bar{\mathcal C}}$. 

\begin{definition}
  The Lie algebra $\mathcal R(G)$ of $G$ is the space of all regular
  vector fields $A\in\mathfrak X(G)$ such that for all $\bar{\mathcal C}$-point $\sigma\in G(\bar{\mathcal C})$,
  $R_\sigma(A \otimes 1) =A \otimes 1$. In the same way, we define the
  Lie algebra $\mathcal L(G)$  of left invariant vector fields.
\end{definition}

  The Lie bracket of two right invariant vector field is a right
invariant vector field. The same is true for left invariant vector
fields, so $\mathcal R(G)$ and $\mathcal L(G)$ are Lie
sub-algebras of $\mathfrak X(G)$. For a point $x\in G$ its tangent space $T_\sigma G$ 
is defined as the space of $\mathcal C$-derivations from the ring of germs of regular functions, 
$\mathcal O_{G,\sigma}$ with values in its quotient field $\kappa(\sigma)$. 
It is a $\kappa(\sigma)$-vector space
of the same dimentsion than $G$. Any regualr vector field $\vec X$ in $\mathfrak X(G)$,
can be seen as a map $\sigma\mapsto \vec X_{\sigma}\in T_\sigma(G)$. Let us consider
$e$ the identity element of $G$. If $\mathcal C$ is algebraically closed, for any vector
$\vec v\in T_{e}G$ there are unique invariant vector fields $\vec R\in \mathcal R(G)$ and 
$L\in\mathcal L(G)$ such that $\vec R_e = \vec L_e = \vec v$ (see \cite{Mum} pp. 98--99).  %

\subsection{Algebraic Homogeneous spaces}

\begin{definition}
    Let $G$ be a $\mathcal C$-\emph{algebraic} group. 
A $G$-space $M$ is an algebraic variety over $\mathcal C$ endowed with an
  algebraic action of $G$,
  $$G\times_{\mathcal C} M \xrightarrow{a} M,\quad (\sigma,x)\mapsto \sigma\cdot x.$$
\end{definition}

  Let $M$ be a $G$-space. Then for each extension $\mathcal C\subset \mathcal K$, the
group $G(\mathcal K)$ acts on the set $M(\mathcal K)$. Therefore it is a $G(\mathcal K)$-set in 
the set theoretic sense. Given a point $x\in M$ its \index{isotropy subgroup} 
\emph{isotropy subgroup} is an \emph{algebraic} subgroup of $G$ 
that we denote by $H_x$. It is defined by equation $H_x\cdot x = x$. 
Note that it is not necessary for $x$ to be a rational point.

  The intersection of the isotropy subgroups of all \emph{closed} points of $M$ is
a normal \emph{algebraic} subgroup $H_M \triangleleft G$. The action
of $G$ is called \emph{faithful} if $H_M$ is the identity element
$\{e\}$, and it is called \emph{free} if for any rational point $x$, 
$H_x=\{e\}$. It is called \emph{transitive} if for each pair of
rational points $x,y\in M$ there is a $\sigma\in G$ such that
$\sigma\cdot x = y$; \emph{id est} there is only one orbit.

\begin{definition}
Let us consider the induced morphism,
$$(a\times Id)\colon G\times_{\mathcal C} M\to M\times_{\mathcal C} M,\quad
(\sigma,x)\mapsto (\sigma x, x)$$ then,
\begin{enumerate}
\item[(1)] $M$ is an homogeneous $G$-space if $(a\times Id)$ is
surjective.
\item[(2)] $M$ is a principal homogeneous $G$-space if $(a\times Id)$ is
an isomorphism.
\end{enumerate}
\end{definition}

If $\mathcal C$ is algebraically closed, an homogeneous $G$-space is simply a \emph{transitive}
$G$-space and a principal homogeneous $G$-space is a \emph{free and transitive}
$G$-space. In such case, any principal homogeneous $G$-space over is isomorphic to $G$.

\subsection{Existence of quotients: Chevalley's theorem}

Let $V$ ve a $\mathcal C$-vector space, and $GL(V)$ the group of linear 
transformations of $V$. It is an $\mathcal C$-algebraic group, and it
acts algebraically on any tensor space over $V$. Given a tensor $T$ we
call \emph{stabilizer subgroup} of $T$ to the group of linear
transformations $\sigma\in GL(V)$ for whom there exist a scalar
$\lambda\in C$ such that $\sigma(t) = \lambda T$.
In other words, the stabilizer subgroup of $T$ is the isotropy
subgroup of the line $\langle T \rangle$ spanned by $T$ in the
projectivization of the tensor space.

\begin{theorem}[Chevalley, see \cite{Humphreys} p. 80]\label{ThChevalley}
  Let $V$ be a $\mathcal C$-vector space of finite dimension, and let
$H\subset GL(V)$ be an algebraic subgroup. There exist a tensor,
$$T\in \bigoplus_i \left(V^{\otimes n_i}\otimes_{\mathcal C} \left(V^{\otimes m_i}\right)^*\right)$$
such that $H$ is the stabilizer of $T$,
  $$H = \{\sigma\in GL(V) |\langle\sigma(T)\rangle = \langle T \rangle \}$$
\end{theorem}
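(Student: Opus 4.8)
The plan is to move everything into the coordinate ring $A = \mathcal{C}[GL(V)] = \mathcal{C}[x_{ij},\det^{-1}]$ and to exploit the right regular representation of $GL(V)$ on $A$, where $\rho_g$ denotes the algebra automorphism of $A$ induced by right translation $R_g$. First I would identify $H$ with the ideal $I\subset A$ of functions vanishing on it, and observe that $H$ is precisely the stabilizer of this ideal. Indeed, for a $\bar{\mathcal{C}}$-point $g$, the vanishing locus of $\rho_g(I)$ is the translate $Hg$, so $\rho_g(I)=I$ if and only if $Hg=H$, which (using $e\in H$) is equivalent to $g\in H$. This recasts the problem as the description of the stabilizer of $I$ inside the regular representation, at the level of the Hopf algebra, and so in a field-independent way.

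The technical heart is a linearization lemma: every finite-dimensional subspace of $A$ is contained in a finite-dimensional subspace stable under all right translations. This follows from the comodule structure of $A$, since the comultiplication $\Delta(f)=\sum_i f_i\otimes g_i$ shows that every translate $\rho_g f$ lies in the finite-dimensional span of the $f_i$. Applying this to a finite generating set of the ideal $I$ (which exists by Noetherianity), I obtain a finite-dimensional right-stable subspace $E\subset A$, and I set $W=I\cap E$. Then $W$ is finite-dimensional, preserved by $\rho_h$ for $h\in H$, and still generates $I$; conversely, since $\rho_g$ is an algebra automorphism, the condition $\rho_g(W)=W$ forces $\rho_g$ to preserve the ideal generated by $W$, namely $I$. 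Hence $H=\{g\in GL(V)\mid \rho_g(W)=W\}$ is exactly the stabilizer of the $d$-dimensional subspace $W$ of the finite-dimensional representation $E$, with $d=\dim W$.

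To pass from a subspace to a line I would use the top exterior power: a linear automorphism of $E$ stabilizes the $d$-dimensional subspace $W$ if and only if it stabilizes the line $\bigwedge^d W$ inside $\bigwedge^d E$. This reduces matters to realizing $\bigwedge^d E$ equivariantly inside a tensor space of the prescribed shape. Since $E$ is a subrepresentation of $A=\mathcal{C}[x_{ij},\det^{-1}]$, whose matrix entries together with $\det^{-1}$ transform in representations built from $V$ and $V^*$, a function that is homogeneous of degree $k$ embeds equivariantly, after clearing the inverse determinant by a suitable power of $\det$, into a tensor space $V^{\otimes a}\otimes(V^{\otimes b})^*$; multiplying by a power of $\det$ amounts to tensoring by a one-dimensional determinant representation and does not change the stabilizer of any line. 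Taking exterior powers of such embeddings and choosing $T$ to be a generator of the image of $\bigwedge^d W$ then produces a tensor in $\bigoplus_i\bigl(V^{\otimes n_i}\otimes(V^{\otimes m_i})^*\bigr)$ whose line $\langle T\rangle$ is stabilized exactly by $H$.

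I expect the main obstacle to be the linearization lemma together with the careful bookkeeping of the determinant factors: one must ensure that the inverse powers of $\det$ are cleared in an equivariant manner, so that the resulting $T$ genuinely lives in the covariant–contravariant tensor space of the stated form rather than in a rational twist of it. By contrast, the reformulation of $H$ as the stabilizer of an ideal, and the exterior power step converting a subspace stabilizer into a line stabilizer, are routine once the finite-dimensional stable representation $E$ is in hand.
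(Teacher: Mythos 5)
Your proposal is correct, and it is essentially the proof behind the paper's citation: the paper states this theorem without proof, deferring to Humphreys, and your first two paragraphs reproduce exactly the standard argument given there (pass to the ideal $I$ of $H$ in $\mathcal C[x_{ij},\det^{-1}]$, use local finiteness of the right regular representation to find a finite-dimensional right-stable $E$ containing generators, set $W=I\cap E$ so that $H$ is the stabilizer of the subspace $W$, then use $\bigwedge^{d}$ to convert the subspace stabilizer into a line stabilizer), while your third paragraph supplies the determinant-clearing step needed to realize the line inside $\bigoplus_i \left(V^{\otimes n_i}\otimes_{\mathcal C}(V^{\otimes m_i})^*\right)$ as stated. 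The one detail worth making explicit is that the equivariant embedding of the homogeneous pieces of $\mathcal C[x_{ij}]$ (and hence of $\bigwedge^{d}E$) into genuine tensor spaces, rather than quotients of them, uses characteristic zero, e.g.\ via the symmetrization splitting $\mathrm{Sym}^{k}U\hookrightarrow U^{\otimes k}$, which is harmless under the paper's standing assumptions.
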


  From this result we obtain that for a linear algebraic group $G$ and
an algebraic subgroup $H$, the quotient space $G/H$ is isomorphic to
the orbit $O_{\langle T\rangle}$ in the projective space $\mathbb
P\left(\bigoplus_i \left(V^{\otimes n_i}\otimes_C \left(V^{\otimes
m_i}\right)^*\right)\right)$. It is a quasiprojective algebraic
variety. 

  There is a lack in the literature of an existence
theorem for arbitrary quotients of an non-linear algebraic group
over an arbitrary field. However, there is 
a result, due to M. Rosenlicht \cite{Rosenlicht1963}, saying
that for any action of an algebraic group $G$ on an
algebraic variety $V$, there exist a $G$-invariant open
subset $U\subset V$ such that the geometrical quotient
$U/G$ in the sense of Mumford exists. In the case of 
a subgroup $G'$ acting on $G$, this open subset 
must be right-invariant, and then it coincides with $G$.

\subsection{Galois Cohomology}

  In this section, we assume that $\mathcal C$ is a \emph{perfect field}; note that this holds
if $\mathcal C$ is of characteristic zero, which is the case we are interested in. 
In such case, any algebraic extension can be embedded into a Galois
extension. Therefore, the algebraic closure $\bar{\mathcal C}$
is the inductive limit of all Galois extensions of $\mathcal C$. The group of $\mathcal C$-automorphisms 
of $\bar{\mathcal C}$ is then identified with the projective limit of all Galois groups, of
algebraic extensions of $\mathcal C$. With the initial topology of the family
of projections onto finite Galois groups, this is a compact totally disconnected
group, that we denote $\Gal(\bar{\mathcal C}/\mathcal C)$. 

 Let $G$ be a $\mathcal C$-algebraic group. The group of automorphisms acts on $G(\bar{\mathcal C})$ by
composition. Let us consider $\mathbf G^k$ the set of continuous maps from
$\Gal(\bar{\mathcal C}/ \mathcal C)^k$ onto $G(\bar{\mathcal C})$. 
In such case $\mathbf G^0 = G(\bar{\mathcal C})$.
We consider the sequence:
\begin{equation}\label{GCsequence}
0 \to \mathbf G^0 \xrightarrow{\delta_0} \mathbf G^1 \xrightarrow{\delta_1} \mathbf G^2,
\end{equation} 
where the codifferential of $x\in \mathbf G^0$ is $(\delta_0 x)(\sigma) = x^{-1}\cdot
\sigma(x)$, and the codifferential of $\varphi\in \mathbf G^1$ is 
$(\delta_1\varphi)(\sigma,\tau) = \varphi(\sigma\cdot\tau)^{-1}\cdot\varphi(\sigma)\cdot\sigma(\varphi(\tau))$.
An element in the image of $\delta_0$ is called a \emph{coboundary}, the set of coboundaries is
denoted by $B^1(G,C)$. An element $\varphi\in\mathbf G^1$ is called a \emph{$1$-cocycle} if $\delta_1\varphi$ vanish.
The set of $1$-cocycles is denoted $Z^1(G,\mathcal C)$. Two $1$-cocycles are called 
\emph{cohomologous} if there is $x\in \mathbf G^0$ such that $\varphi(\sigma) = x^{-1}\cdot \psi(\sigma) \cdot
\sigma(x)$. This is an \emph{equivalence relation} in $Z^1(G,\mathcal C)$. The quotient
set $\mathbf Z^1(G,\mathcal C)/\sim$ is a pointed set, with distingished point the class of \emph{coboundaries}. 
Note that when $G$ is an abelian group the sequence \eqref{GCsequence}
is a \emph{differential complex} and this quotient is the first cohomology group. 

\begin{definition}
  The zero Galois cohomology set of $G$ with coefficients in $\mathcal C$, $H^0(G,\mathcal C)$ is the kernel of
$\delta_0$. It is a pointed set with distinguised point the identity. The first Galois 
cohomology set of $G$ with coefficients in $\mathcal C$, $H^1(G,\mathcal C)$, is the pointed set 
$Z^1(G,\mathcal C)/\sim$.
\end{definition}
 
  From the definition of $\delta_0$ it is clear that $x\in H^0(G,\mathcal C)$ if an only if
it is invariant under the action of $\Gal(\bar{\mathcal C}/\mathcal C)$. The fixed field of $\bar{\mathcal C}$ in
precisely $\mathcal C$, therefore the zero Galois cohomology set coincides with the set
of $\mathcal C$-points $G(\mathcal C)$. Therefore, we define the zero cohomology set $H^0(V,\mathcal C)$ of any
$\mathcal C$-algebraic variety $V$ to be the set of $\mathcal C$-points $V(\mathcal C)$.

  Let $G'$ be an algebraic subgroup of $G$. In such case $H^0(G/G',\mathcal C)$ is a pointed
set, with distinguised point the class of the identity. An element $x\in H^0(G/G',\mathcal C)$
is a $\mathcal C$-point of the homogeneous space $G/G'$. This $x$ is the class of a unique 
$\bar{\mathcal C}$-point $\bar x$ of $G$. The \emph{coboundary} $\partial_0 \bar x$ is a cocycle
in $G'$, and its cohomology class $[\bar x]\in H^1(G',\mathcal C)$ does not depends on the election
of $x$. We have a morphism of pointed sets $H^0(G/G',\mathcal C)\to H^1(G,\mathcal C)$ called the \emph{connecting
morphism}.  We obtain an exact sequence of pointed sets:
$$0 \to H^0(G',\mathcal C) \to H^0(G,\mathcal C) \to H^0(G/G', \mathcal C) \to H^1(G',\mathcal C) \to H^1(G,\mathcal C)$$
and when $G'$ is a normal subgroup of $G$, the sequence
$$H^1(G',\mathcal C) \to H^1(G,\mathcal C) \to H^1(G/G', \mathcal C)$$
is also exact (see \cite{Ko1}, p. 277--288).

Using the previous exact sequence it is relatively easy to compute the first
Galois cohomology set of several algebraic groups. We say that the first cohomology
set of $G$ with coefficients in $\mathcal C$ \emph{vanish} if it consists of an only point. 
In particular the following results
are well known:
\begin{itemize}
\item The first cohomology set of the additive group $H^1((\mathcal C,+),\mathcal C)$ vanish.
\item The first cohomology set of the multiplicative group $H^1(\mathcal C^*,\cdot), \mathcal C)$ vanish.
\item $H^1(GL(n,\mathcal C),\mathcal C)$ vanish.
\item $H^1(SL(n,\mathcal C),\mathcal C)$ vanish.
\item If $G$ is linear connected solvable group then $H^1(G,\mathcal C)$ vanish.
\item If $\mathcal C$ is algebraically closed then for any algebraic group $H^1(G, {\mathcal C})$ vanish.
\item If $S$ is a Riemann surface and $\mathcal M(S)$ is its field of meromorphic
      function then for any linear connected $\mathcal M(S)$-algebraic group $G$, $H^1(G, \mathcal M(S))$ vanish
      (this is a particular case of fields of dimension lower or equal than one, treated in
      \cite{Serre}).
\item If $S$ is an open Riemann surface then for any connected $\mathcal M(S)$-algebraic 
      group $H^1(G, \mathcal M(S))$ vanish (Grauert theorem, see \cite{Sibuya}).
\end{itemize}

The first Galois cohomology set classifies the \emph{principal homogeneous spaces
over $G$.} This classification was first obtained by Ch\^atelet for some
particular cases, here we follow Kolchin \cite{Ko1} (see p. 281--283). The main fact is that 
if the first Galois cohomology set vanish then all principal homogeneous spaces have
rational points. 

\begin{theorem}
  Let $G$ be a $\mathcal C$-algebraic group and $M$ a principal homogeneous $G$-space. Then
$M$ defines a class $[M]$ in $H^1(G,\mathcal C)$. This cohomology class classifies $M$ up to $\mathcal C$-isomorphisms.
$M$ is isomorphic to $G$ if and only if $[M]$ is the distinguised point of $H^1(G,\mathcal C)$. Reciprocally
any cohomology class of $H^1(G,\mathcal C)$ is the class of certain homogeneous $G$-space. 
\end{theorem}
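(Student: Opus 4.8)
The plan is to realize $[M]$ as the cohomology class of a \emph{descent cocycle} attached to a choice of geometric point, and then to invert this construction by Galois twisting. First I would use the observation recorded above: over the algebraic closure a principal homogeneous space is free and transitive, hence isomorphic to $G_{\bar{\mathcal C}}$; in particular $M(\bar{\mathcal C})\neq\emptyset$. Throughout I regard $M$ as a right $G$-space (replacing the given action by its opposite if necessary), so that the resulting cocycle matches the convention of $\delta_1$. Fixing a point $\bar x\in M(\bar{\mathcal C})$, the orbit map $g\mapsto \bar x\cdot g$ identifies $G_{\bar{\mathcal C}}$ with $M_{\bar{\mathcal C}}$ equivariantly. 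For each $\sigma\in\Gal(\bar{\mathcal C}/\mathcal C)$ the conjugate point $\sigma(\bar x)$ is again geometric, so by freeness and transitivity there is a unique $\varphi(\sigma)\in G(\bar{\mathcal C})$ with $\sigma(\bar x)=\bar x\cdot\varphi(\sigma)$.

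Then I verify the two formal properties. Applying $\sigma$ to $\tau(\bar x)=\bar x\cdot\varphi(\tau)$ and using compatibility of the Galois action with the $G$-action gives $\varphi(\sigma\tau)=\varphi(\sigma)\cdot\sigma(\varphi(\tau))$, so $\varphi\in Z^1(G,\mathcal C)$; and since $\bar x$ is defined over a finite Galois subextension $L/\mathcal C$, the value $\varphi(\sigma)$ lies in $G(L)$ and depends only on $\sigma|_L$, so $\varphi$ factors through $\Gal(L/\mathcal C)$ and is continuous. Replacing $\bar x$ by another point $\bar x\cdot g$ (with $g\in G(\bar{\mathcal C})$) changes $\varphi$ into the cohomologous cocycle $\sigma\mapsto g^{-1}\cdot\varphi(\sigma)\cdot\sigma(g)$, so the class $[M]\in H^1(G,\mathcal C)$ is independent of the choice. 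This defines the map $M\mapsto[M]$.

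Next come the triviality criterion and injectivity. The cocycle $\varphi$ is a coboundary, $\varphi(\sigma)=g^{-1}\sigma(g)$, exactly when $\bar x\cdot g^{-1}$ is fixed by every $\sigma$, that is, when $M$ has a $\mathcal C$-rational point $x_0$; and for such a point the orbit map $g\mapsto x_0\cdot g$ is a $\mathcal C$-defined isomorphism $G\to M$, because $(a\times Id)$ is an isomorphism. This proves $[M]$ is the distinguished point iff $M$ is isomorphic to $G$. For injectivity, if $M,M'$ carry cocycles that I may assume (after adjusting the chosen base points) to be literally equal, then $\bar x\cdot g\mapsto \bar x'\cdot g$ is a $G$-equivariant $\bar{\mathcal C}$-isomorphism commuting with the two Galois actions, and hence descends to a $\mathcal C$-isomorphism $M\to M'$.

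Finally, surjectivity by twisting, which is where the real work lies. Given $\varphi\in Z^1(G,\mathcal C)$ I would equip $G_{\bar{\mathcal C}}$ with the twisted semilinear Galois action $\sigma*g=\varphi(\sigma)\cdot\sigma(g)$ by left translations; the cocycle identity is precisely what makes $*$ an action, and since left translations commute with right translations the right $G$-action is preserved. The main obstacle is then the \emph{effectiveness of Galois descent}: one must show this twisted action descends to an honest $\mathcal C$-variety $M={}_\varphi G$ rather than a mere formal object. For $G$ affine this is descent of the coordinate ring, the subring fixed by $*$ being a $\mathcal C$-algebra that recovers $\mathcal O_G\otimes_{\mathcal C}\bar{\mathcal C}$ after base change; in general one invokes Weil descent applied to the quasi-projective embedding furnished by Chevalley's theorem (Theorem \ref{ThChevalley}), which is the argument carried out in \cite{Ko1}. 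The resulting $M$ is then a principal homogeneous space whose associated cocycle is, by construction, $\varphi$, completing the classification.
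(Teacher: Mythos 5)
Your proposal is correct, and in substance it coincides with the proof the paper relies on: the paper gives no argument of its own for this theorem, deferring to Kolchin \cite{Ko1} (pp.~281--283), and your construction --- extract a cocycle from a geometric point $\bar x\in M(\bar{\mathcal C})$, check continuity and independence of the base point, read off triviality from the existence of a rational point, get injectivity by descending an equivariant isomorphism, and get surjectivity by twisting $G_{\bar{\mathcal C}}$ by the cocycle --- is exactly the classical Ch\^atelet--Kolchin--Serre argument carried out there. Your handling of the convention (passing to the right action so that freeness gives $\varphi(\sigma\tau)=\varphi(\sigma)\cdot\sigma(\varphi(\tau))$, matching the paper's $\delta_1$), and your isolation of effectiveness of Galois descent as the one substantive issue in the surjectivity step, are both exactly where the weight of the proof lies.

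One repair is needed in that last step. Theorem \ref{ThChevalley} cannot furnish the quasi-projective embedding you invoke for a general $\mathcal C$-algebraic group $G$: it concerns algebraic subgroups $H\subset GL(V)$ and yields quasi-projectivity of $G/H$ only when $G$ is \emph{linear}, whereas the theorem being proved allows non-affine $G$ (abelian varieties, and extensions of abelian varieties by affine groups). What the general case actually requires is the theorem of Chow--Barsotti that every algebraic group variety over a field is quasi-projective; combined with Weil's effectiveness of Galois descent for quasi-projective varieties (the descent datum factors through a finite Galois quotient by the continuity you already established), this closes the argument. So the structure of your proof is sound and the needed fact is true, but the reference supporting it must be replaced; as written, the non-affine case rests on a theorem that does not apply to it.
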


\subsection{Fundamental Fields}

  Consider a right invariant vector field $A\in \mathcal R(G)$. Then,
$\vec A\otimes 1$ is a regular vector field in $G\times_{\mathcal C} M$. This vector field
is projectable by the action of $G$ in $M$,
$$a\colon G\times_{\mathcal C} M \to M, \quad \vec A \otimes 1 \mapsto \vec A^M.$$

\begin{definition}\label{DFfundamentalfield}
  We call algebra of fundamental field $\mathcal R(G,M)$ to the Lie algebra
of regular vector fields in $M$ spanned by the projections  $\vec A^M$ of vector 
fields $\vec A\otimes 1$, being $\vec A$  right invariant vector field in $G$. 
\end{definition}

  There is a canonical surjective Lie algebra morphism, 
$$\mathcal R(G)\to \mathcal R(G,M), \quad \vec A \to \vec A^M,$$
the kernel of this morphism is the Lie algebra of the kernel of the
action $H_M$, $\mathcal R(H_M)\subset \mathcal R(G)$. In particular, the Lie 
algebra of fundamental fields $\mathcal R(G,G)$ in $G$ coincides with $\mathcal R(G)$.

\section{Differential Algebraic Geometry}

 We can state that the differential algebraic geometry is with
respect to the differential algebra the same than the classical
algebraic geometry is with respect to the commutative algebra. In this sense, the
differential algebraic geometry is the study of geometric objects
associated with differential rings. Here we present the theory 
of schemes with derivations, which has been developed by Buium \cite{Bu}, 
and the theory of differential schemes, which is due to Keigher \cite{Ke1, Ke2},
Carra' Ferro (see \cite{Ca0}), and Kovacic \cite{Kov1}.

\subsection{Differential Algebra}
  We present here some preliminaries in differential algebra. 
The main references for this subject are \cite{Ritt1950}, 
\cite{Ka}, \cite{Ko1}. 

  A differential ring is a commutative ring $\mathcal A$
and a derivation $\partial_{\mathcal A}$. By a derivation we mean an application
verifying the Leibnitz rule, 
$\partial_{\mathcal A}(ab) = a\cdot\partial_{\mathcal A}(b) + b\cdot\partial_{\mathcal A}(a)$.
An element $a\in \mathcal A$ is called a constant if it has vanishing
derivative $\partial a = 0$. Whenever it does not lead to confusion, we will write $\partial$
instead of $\partial_{\mathcal A}$. The subset $C_{\mathcal A}$ of 
constants elements is a subring of $\mathcal A$. When $\mathcal A$ is a field we call
it a \emph{differential field}. In such a case, the constant ring
$C_{\mathcal A}$ is a subfield of $\mathcal A$.
An ideal $\mathfrak I\subset \mathcal A$ is a differential ideal if
$\partial (\mathfrak I)\subset \mathfrak I$.

  Note that if $\mathfrak I$ is a differential ideal, then the
quotient $\mathcal A/\mathfrak I$ is also a differential ring. For a
subset $S\subset\mathcal A$ we denote $[S]$ for the smallest
differential ideal containing $S$, and $\{S\}$ for the smallest
radical differential ideal containing $S$. For an ideal $\mathfrak
I\subset \mathcal A$ we denote $\mathfrak I'$ for the smallest
differential ideal containing $\mathfrak I$, namely:
$\mathfrak I' = \sum_i \partial^i(\mathfrak I).$ Localization by
arbitrary multiplicative sytems is also suitable in differential rings.
A ring morphism is called \emph{differential} if it is compatible with
the derivation. In the category of differential rings, tensor product is also
well defined. 

%
%
%

  Consider $\mathcal K$ a differential field. A differential ring
$\mathcal A$ endowed with a morphism $\mathcal
K\hookrightarrow\mathcal A$ is called a \emph{differential $\mathcal
K$-algebra}. If $\mathcal A$ is a differential field then we say
that it is a \emph{differential extension} of $\mathcal K$.
\index{differential!$\mathcal
K$-algebra}\index{differential!extension}

%
%
%
%

\subsection{Keigher Rings}

  If $\mathfrak I\subset \mathcal A$ is an ideal, we denote 
its radical ideal by $\sqrt{\mathfrak I}$, 
the intersection of all prime ideals containing $\mathfrak I$. In algebraic geometry,
there is a one-to-one correspondence between the set of radical
ideals of $\mathcal A$ and the set of Zariski closed subsets of
$\Spec(\mathcal A)$, the prime spectrum of $\mathcal A$. In order to
perform an analogous systematical study of the set of differential
ideals - \emph{id est} differential algebraic geometry - we should
require radicals of differential ideals to be also differential
ideals. This property does not hold in the general case. We have to
introduce a suitable class of differential rings. This class was
introduced by Keigher (see \cite{Ke1}); we call them Keigher rings.

\begin{definition}
 A Keigher ring is a differential ring verifying that for each
differential ideal $\mathfrak I$, its radical $\sqrt{\mathfrak I}$
is also a differential ideal.
\end{definition}

\begin{definition}
  For any ideal $\mathfrak I\subset \mathcal A$ we define its differential
  core as
  $\mathfrak I_\sharp = \{a\in\mathfrak I\colon \forall n(\partial^na\in\mathfrak
  I)\}$.
\end{definition}

  Keigher rings can be defined in several equivalent ways. The
following theorem of characterization includes different possible
definitions (see \cite{Kov1}, proposition 2.2.).

\begin{theorem}\label{C2THE2.1.6}
Let $\mathcal A$ be a differential ring.
  The following are equivalent:
\begin{enumerate}
\item[(a)] If $\mathfrak p\subset\mathcal A$ is a prime ideal, then $\mathfrak
p_\sharp$ is a prime differential ideal.
\item[(b)] If $\mathfrak I\subset\mathcal A$ is a differential ideal, and $S$ is a
multiplicative system disjoint from $\mathfrak I$, then there is a
prime maximal differential ideal containing $\mathfrak I$ disjoint
with $S$.
\item[(c)] If $\mathfrak I\subset\mathcal A$ is a differential ideal, then so is $\sqrt{\mathfrak
I}$.
\item[(d)] If $S$ is any subset, then $\{S\} = \sqrt{[S]}$.
\item[(e)] $\mathcal A$ is a Keigher ring.
\end{enumerate}
\end{theorem}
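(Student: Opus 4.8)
The plan is to prove the cycle of equivalences (a) $\Rightarrow$ (b) $\Rightarrow$ (c) $\Rightarrow$ (d) $\Rightarrow$ (e) $\Rightarrow$ (a), where (c) and (e) are literally the same statement by definition, so the real content is organizing the other implications into a loop. The conceptual heart of the whole theorem is the interaction between the differential core $\mathfrak{I}_\sharp$ and prime/radical ideals, so I would first record two elementary facts to be used repeatedly: that $\mathfrak{I}_\sharp$ is the largest differential ideal contained in $\mathfrak{I}$, and that for a multiplicative system $S$, the core of an ideal maximal among those disjoint from $S$ is related to primality. Let me think about each arrow.

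\textbf{From (a) to (b).} Given a differential ideal $\mathfrak{I}$ and a multiplicative system $S$ disjoint from $\mathfrak{I}$, I would apply Zorn's lemma to the family of differential ideals containing $\mathfrak{I}$ and disjoint from $S$; this family is nonempty and closed under unions of chains, so it has a maximal element $\mathfrak{m}$. The task is to show $\mathfrak{m}$ is prime. The standard trick is to take a prime ideal $\mathfrak{p}$ maximal among (not necessarily differential) ideals containing $\mathfrak{m}$ and disjoint from $S$ — such a $\mathfrak{p}$ is prime by the classical localization argument. Then $\mathfrak{p}_\sharp$ is a differential ideal containing $\mathfrak{m}$, still disjoint from $S$ (since $\mathfrak{p}_\sharp \subset \mathfrak{p}$), and by hypothesis (a) it is prime; maximality of $\mathfrak{m}$ forces $\mathfrak{m} = \mathfrak{p}_\sharp$, so $\mathfrak{m}$ is prime and differential, which is exactly (b).

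\textbf{From (b) to (c).} Let $\mathfrak{I}$ be a differential ideal; I want $\sqrt{\mathfrak{I}}$ differential. It suffices to show $\sqrt{\mathfrak{I}}$ is an intersection of prime differential ideals, since an intersection of differential ideals is differential. Concretely, for any $a \notin \sqrt{\mathfrak{I}}$ the multiplicative system $S = \{1, a, a^2, \ldots\}$ is disjoint from $\mathfrak{I}$, so by (b) there is a prime differential ideal $\mathfrak{p} \supset \mathfrak{I}$ with $a \notin \mathfrak{p}$; hence $a \notin \bigcap \mathfrak{p}$ over prime differential $\mathfrak{p} \supset \mathfrak{I}$. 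This shows $\sqrt{\mathfrak{I}}$ equals that intersection and is therefore differential.

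\textbf{From (c) to (d) to (a).} For (c) $\Rightarrow$ (d), given any subset $S$, the ideal $[S]$ is differential, so by (c) its radical $\sqrt{[S]}$ is a radical differential ideal containing $S$; since $\{S\}$ is by definition the smallest such, one checks both inclusions to get $\{S\} = \sqrt{[S]}$. Finally for (d) $\Rightarrow$ (a) — equivalently closing the loop, where I will freely use that (e) and (c) coincide verbatim — given a prime $\mathfrak{p}$ I must show $\mathfrak{p}_\sharp$ is a prime differential ideal; it is differential by construction, so the work is primality. Suppose $ab \in \mathfrak{p}_\sharp$ with $a, b \notin \mathfrak{p}_\sharp$; the main obstacle, and the step I expect to absorb the most care, is manipulating the differential powers $\partial^n(ab)$ against $\mathfrak{p}$ to derive a contradiction, using (d) to control the radical differential ideal generated by suitable products. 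I would exploit that $\{ab\} = \sqrt{[ab]}$ together with the Leibniz expansion to land all the required elements inside $\mathfrak{p}$, contradicting primality of $\mathfrak{p}$. This primality argument for the core is the genuinely delicate combinatorial point; the remaining implications are essentially formal applications of Zorn's lemma and the definition of radical differential ideals.
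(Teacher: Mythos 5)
The paper itself gives no proof of this theorem: it is quoted from Kovacic (\emph{Differential Schemes}, Proposition 2.2), so your proposal has to be judged on its own merits rather than against an argument in the text. Your organization into the cycle (a) $\Rightarrow$ (b) $\Rightarrow$ (c) $\Rightarrow$ (d) $\Rightarrow$ (a), with (c) and (e) identical by definition, is sound, and three of the four arrows are complete as you describe them: the Zorn's lemma argument combined with taking the core $\mathfrak p_\sharp$ of a classically maximal prime for (a) $\Rightarrow$ (b); the representation of $\sqrt{\mathfrak I}$ as the intersection of all prime differential ideals containing $\mathfrak I$ for (b) $\Rightarrow$ (c); and the two-inclusion check for (c) $\Rightarrow$ (d). These are correct and standard.

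The genuine gap is (d) $\Rightarrow$ (a), and it sits exactly where you admit the difficulty. Saying you will ``exploit $\{ab\}=\sqrt{[ab]}$ together with the Leibniz expansion'' is not yet an argument: expanding $\partial^n(ab)=\sum_k\binom{n}{k}\,\partial^k a\,\partial^{n-k}b\in\mathfrak p$ produces only \emph{sums}, and primality of $\mathfrak p$ says nothing about individual cross terms of a sum lying in $\mathfrak p$; no amount of Leibniz manipulation alone will separate them. The missing idea is the Kaplansky--Keigher lemma on radical differential ideals: if $J$ is a radical differential ideal and $xy\in J$, then $x\,\partial y\in J$ and $(\partial x)y\in J$. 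The proof is a squaring trick, not Leibniz: from $\partial(xy)=x\,\partial y+y\,\partial x\in J$ one gets $(x\,\partial y)^2=(x\,\partial y)\,\partial(xy)-(xy)(\partial x)(\partial y)\in J$, and radicality of $J$ yields $x\,\partial y\in J$. Iterating gives $(\partial^i a)(\partial^j b)\in\{ab\}$ for all $i,j$. With this lemma the implication is short: if $ab\in\mathfrak p_\sharp$ then $[ab]\subseteq\mathfrak p$, hence $\sqrt{[ab]}\subseteq\sqrt{\mathfrak p}=\mathfrak p$, and (d) is used precisely to know that $\{ab\}=\sqrt{[ab]}$ is a radical differential ideal contained in $\mathfrak p$; so $(\partial^i a)(\partial^j b)\in\mathfrak p$ for all $i,j$, and if $a\notin\mathfrak p_\sharp$ one picks $i$ with $\partial^i a\notin\mathfrak p$, whereupon primality of $\mathfrak p$ forces $\partial^j b\in\mathfrak p$ for every $j$, i.e.\ $b\in\mathfrak p_\sharp$. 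So your architecture is right, but the engine of the hard implication is the radicality/squaring lemma, and without stating and proving it your proof is incomplete at its one essential point.
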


  By a \emph{Ritt algebra} we mean a differential ring including
the field $\mathbb Q$ of rational numbers. When studing differential
equations in characteristic zero, differential rings considered are
mainly Ritt algebras. A main property of Ritt algebras is that
the radical of a differential ideals is a differential ideal (see for
instance \cite{Ka}), therefore \emph{Ritt algebras are Keigher rings}.

\begin{proposition}
  If $\mathcal A$ is a Keigher ring then for any differential ideal
$\mathfrak I$, $\mathcal A/\mathfrak I$ is Keigher and for any
multiplicative system $S$, $S^{-1}\mathcal A$ is Keigher.
\end{proposition}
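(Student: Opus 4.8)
The plan is to reduce both assertions to the defining property recorded as conditions (c)/(e) of Theorem~\ref{C2THE2.1.6}: a differential ring is Keigher precisely when the radical of every differential ideal is again a differential ideal. In each case I would transport a differential ideal of the new ring back to a differential ideal of $\mathcal A$, apply the hypothesis there, and push the result forward, exploiting that the relevant ideal correspondence is compatible both with radicals and with the derivation.

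For the quotient, write $\pi\colon \mathcal A \to \mathcal A/\mathfrak I$ for the canonical projection, which is a differential morphism because $\mathfrak I$ is a differential ideal. I would invoke the standard lattice isomorphism between ideals of $\mathcal A/\mathfrak I$ and ideals of $\mathcal A$ containing $\mathfrak I$, noting that it restricts to a bijection on differential ideals (since both $\pi$ and $\pi^{-1}$ respect $\partial$) and that it commutes with radicals, $\pi^{-1}(\sqrt{\mathfrak J}) = \sqrt{\pi^{-1}(\mathfrak J)}$. Given a differential ideal $\mathfrak J \subset \mathcal A/\mathfrak I$, the contraction $\mathfrak K = \pi^{-1}(\mathfrak J)$ is a differential ideal of $\mathcal A$ containing $\mathfrak I$; since $\mathcal A$ is Keigher, $\sqrt{\mathfrak K}$ is a differential ideal, and identifying $\sqrt{\mathfrak J}$ with the image of $\sqrt{\mathfrak K}$ under $\pi$ shows $\sqrt{\mathfrak J}$ to be differential.

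For the localization, write $\lambda\colon \mathcal A \to S^{-1}\mathcal A$, where the derivation extends by the quotient rule $\partial(a/s) = (s\,\partial a - a\,\partial s)/s^2$, making $\lambda$ a differential morphism. I would record four facts: contraction $\lambda^{-1}$ sends differential ideals to differential ideals; extension $S^{-1}(-)$ does the same (a one-line check, since for $k\in\mathfrak K$ one has $\partial(k/s)=\partial k/s - k\,\partial s/s^2 \in S^{-1}\mathfrak K$ using that $\partial k\in\mathfrak K$); the identity $S^{-1}(\lambda^{-1}\mathfrak J) = \mathfrak J$ for every ideal $\mathfrak J$ of $S^{-1}\mathcal A$; and the commutation of radicals with localization, $S^{-1}\sqrt{\mathfrak K} = \sqrt{S^{-1}\mathfrak K}$. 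Then, given a differential ideal $\mathfrak J \subset S^{-1}\mathcal A$, set $\mathfrak K = \lambda^{-1}(\mathfrak J)$, a differential ideal of $\mathcal A$; Keigherness yields $\sqrt{\mathfrak K}$ differential, and $\sqrt{\mathfrak J} = \sqrt{S^{-1}\mathfrak K} = S^{-1}\sqrt{\mathfrak K}$ is then differential as the extension of a differential ideal.

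The routine commutative-algebra ingredients (the ideal correspondences and the commutation of radicals with quotient and with localization) I would cite without proof. The only genuinely differential points needing verification are that $\partial$ descends to $\mathcal A/\mathfrak I$ and extends to $S^{-1}\mathcal A$, and that contraction and extension preserve the differential structure. There is no real obstacle here beyond this bookkeeping; the substance of the argument is simply that Keigherness is an intrinsic property of the radical–differential interaction that is faithfully reflected across both constructions.
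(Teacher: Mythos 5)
Your proposal is correct and takes essentially the same approach as the paper: in both parts you contract the given differential ideal to $\mathcal A$, apply Keigherness there, and push the radical forward. The only difference is presentational --- for the localization the paper carries out the same quotient-rule computation element-by-element on elements $\frac{a}{s}\in\sqrt{\mathfrak a}$, whereas you package it into the standard identities $\sqrt{S^{-1}\mathfrak K}=S^{-1}\sqrt{\mathfrak K}$ and the fact that the extension of a differential ideal is again differential.
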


\begin{proof}
  Assume $\mathcal A$ is Keigher.
  First, let us prove that $\mathcal A/\mathfrak I$ is Keigher. Consider the projection
  $\pi\colon\mathcal A\to\mathcal A/\mathfrak I$. Let $\mathfrak a$ be
a differential ideal of $\mathcal A / \mathfrak I$. Then
$\sqrt{\mathfrak a} = \pi(\sqrt{\pi^{-1}(\mathfrak a)})$ is a
differential ideal.

  Second, consider a localization morphism $l\colon \mathcal A \to S^{-1}\mathcal A$. Let
$\mathfrak a\subset S^{-1}\mathcal A$ be a differential ideal. Let
us denote by $\mathfrak b$ the preimage $l^{-1}(\mathfrak a)$; it is
a differential ideal and $l(\mathfrak b)\cdot S^{-1}\mathcal A =
\mathfrak a$.

Let us consider $\frac{a}{s}\in\sqrt{\mathfrak a}$.
$\frac{a}{s}\frac{s}{1} = \frac{a}{1}\in\sqrt{\mathfrak a}$. For
certain $n$, hence $\frac{a^n}{1}\in\mathfrak a$, $a^n\in\mathfrak
b$ and $a\in\sqrt{\mathfrak b}$. $\mathcal A$ is Keigher, and then
$\partial a\in\sqrt{\mathfrak b}$. Therefore $(\partial a)^m\in
\mathfrak b$, so that $\left(\frac{\partial
a}{1}\right)^m\in\mathfrak a$ and, for instance, $\frac{\partial
a}{1}\in\sqrt{\mathfrak a}$. Finally,
$$\partial\left(\frac{a}{s}\right) = \frac{\partial a}{1}\frac{1}{s} -
\frac{a}{1}\frac{\partial s}{s^2}\in\sqrt{\mathfrak a},$$ and by
\emph{(c)} of Theorem \ref{C2THE2.1.6} $S^{-1}\mathcal A$ is
Keigher.
\end{proof}

\subsection{New Constants}

  From now on let $\mathcal K$ be a differential field, and let
$\mathcal C$ be its field of constants. We assume that $\mathcal C$
is algebraically closed. A classical lemma of differential algebra
(see \cite{Ko1} p. 87 Corollary 1) says that if $\mathcal A$ is a
differential $\mathcal K$-algebra, then the ring of constant
$C_{\mathcal A}$ is linearly disjoint over $\mathcal C$ with
$\mathcal K$. Let us set this classical lemma in a more geometric
frame.

\begin{lemma}\label{LmDisjoint}
  Let $\mathcal A$ be an integral finitely generated
differential $\mathcal K$-algebra. Then there is an affine subset
$U\subset \Spec(\mathcal A)$ such that the ring of constants
$C_{\mathcal A_U}$ is a finitely generated algebra over $\mathcal
C$.
\end{lemma}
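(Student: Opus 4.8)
The plan is to control all the constant rings through the single field $C_{\mathcal L}$, where $\mathcal L = \mathrm{Frac}(\mathcal A)$, and then to descend from this field to a finitely generated algebra living on an affine open. The starting remark is that every affine open $U\subseteq\Spec(\mathcal A)$ has coordinate ring sitting between $\mathcal A$ and $\mathcal L$, so that $C_{\mathcal A_U}=\mathcal A_U\cap C_{\mathcal L}$; thus it suffices to (i) show that $C_{\mathcal L}$ is a finitely generated field extension of $\mathcal C$, and (ii) exhibit one localization whose constant ring is a finitely generated $\mathcal C$-algebra with fraction field $C_{\mathcal L}$.

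For (i) I would apply the classical disjointness lemma (\cite{Ko1}, p.~87) to the differential $\mathcal K$-algebra $\mathcal L$: its constants $C_{\mathcal L}$ are linearly disjoint from $\mathcal K$ over $\mathcal C$, so $\mathcal K\otimes_{\mathcal C}C_{\mathcal L}\hookrightarrow\mathcal L$ and the compositum $\mathcal M:=\mathcal K(C_{\mathcal L})$ is an intermediate field of the finitely generated extension $\mathcal L/\mathcal K$. Since subextensions of finitely generated field extensions are again finitely generated, $\mathcal M=\mathcal K(c_1,\dots,c_r)$ for finitely many $c_i\in C_{\mathcal L}$. Disjointness then identifies the constant field of $\mathcal K(c_1,\dots,c_r)=\mathrm{Frac}(\mathcal K\otimes_{\mathcal C}\mathcal C(c_1,\dots,c_r))$ with $\mathcal C(c_1,\dots,c_r)$, the derivation there being $\partial_{\mathcal K}\otimes 1$ and the $c_i$ genuine constants; hence every $c\in C_{\mathcal L}\subseteq\mathcal M$ already lies in $\mathcal C(c_1,\dots,c_r)$, and $C_{\mathcal L}=\mathcal C(c_1,\dots,c_r)$ is finitely generated over $\mathcal C$.

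For (ii), which is the real obstacle --- a finitely generated field can sit as the fraction field of many non--finitely generated subrings, and localizing $\mathcal A$ only enlarges its constants --- I would fix a \emph{normal} finitely generated $\mathcal C$-algebra $B$ with $\mathrm{Frac}(B)=C_{\mathcal L}$ (e.g.\ the normalization of $\mathcal C[c_1,\dots,c_r]$, finite over it), clear denominators to arrange $\mathcal D:=\mathcal K\otimes_{\mathcal C}B\subseteq\mathcal A$, and view $\mathcal A$ as a finitely generated $\mathcal D$-algebra with $\mathrm{Frac}(\mathcal D)=\mathcal M$. By generic freeness there is $0\neq t\in\mathcal D$ with $\mathcal A_t$ free over $\mathcal D_t$, and after a further localization I may take $1$ to belong to a basis. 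A comparison of basis coordinates over $\mathcal D_t$ and over $\mathcal M$ then yields $\mathcal A_t\cap\mathcal M=\mathcal D_t$, whence $C_{\mathcal A_t}=\mathcal D_t\cap C_{\mathcal L}=C_{\mathcal D_t}$.

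It remains to see that the constants of the localization $\mathcal D_t$ of $\mathcal K\otimes_{\mathcal C}B$ form a finitely generated $\mathcal C$-algebra. Writing $t$ in a $\mathcal C$-basis of $\mathcal K$ and tracking coefficients, one checks that a constant $f\in\mathrm{Frac}(B)$ lies in $\mathcal D_t$ exactly when its polar locus on $\Spec(B)$ is contained in the vanishing set of the ideal $\mathfrak g\subseteq B$ generated by the $B$-coefficients of $t$; thus $C_{\mathcal D_t}=\{f\in\mathrm{Frac}(B):\ f\ \text{regular off}\ V(\mathfrak g)\}$. Because $B$ is normal, the codimension $\geq 2$ conditions collapse by Hartogs' phenomenon and this ring is simply the localization $B_{g_0}$ at the divisorial part $g_0$ of $\mathfrak g$, which is finitely generated over $\mathcal C$. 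Taking $U$ to be the principal open determined by $t$ then gives $C_{\mathcal A_U}=B_{g_0}$, finishing the proof. The delicate points are the two disjointness identifications of constant fields in step (i) and the passage, via generic freeness and normality, from the finite generation of the field $C_{\mathcal L}$ to that of the algebra $C_{\mathcal A_U}$ in step (ii).
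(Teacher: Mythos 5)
Your strategy is genuinely different from the paper's, and most of it is correct: step (i) is sound, the generic-freeness plus basis-coordinate argument does give $\mathcal A_t\cap\mathcal M=\mathcal D_t$ and hence $C_{\mathcal A_t}=C_{\mathcal D_t}$, and the coefficient-ideal computation correctly identifies $C_{\mathcal D_t}$ with the rational functions on $\Spec(B)$ regular off $V(\mathfrak g)$ (using that $\mathcal C$ is algebraically closed, so the coefficient ideals of $t$ and of $t^n$ have the same radical). The genuine gap is your final sentence. On a normal affine variety the codimension-one part $Z_1$ of $V(\mathfrak g)$ is only a Weil divisor; writing it as $V(g_0)$ for a single $g_0\in B$ requires some positive multiple of $Z_1$ to be a principal divisor, i.e.\ its class to be torsion in $\mathrm{Cl}(B)$, and this fails already when $B$ is the cone over an elliptic curve and $Z_1$ the cone over a point whose class is non-torsion. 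Worse, the ring you actually need to control, $\mathcal O(\Spec(B)\setminus Z_1)=\bigcup_n H^0\bigl(\Spec(B),\mathcal O(nZ_1)\bigr)$, can fail to be finitely generated at all once $\dim B\geq 3$: this is Rees's counterexample connected with Zariski's generalized form of Hilbert's fourteenth problem. Hartogs/normality only removes the codimension-$\geq 2$ locus; it says nothing about the divisorial one. Nor can you evade this by taking $B$ factorial instead of merely normal: the function field of an elliptic curve $E$ is not the fraction field of any finitely generated UFD over $\mathcal C$, since such a ring would be the coordinate ring of some $E\setminus S$, whose class group is a quotient of the non-finitely-generated divisible group $E(\mathcal C)$ by a finitely generated subgroup, hence nonzero.

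The gap is repairable inside your own framework, because generic freeness leaves freedom in the choice of $t$: for any $0\neq b\in B$, $\mathcal A_{tb}$ is still free over $\mathcal D_{tb}$ with $1$ in a basis, and the coefficient ideal of $tb$ is $b\mathfrak g$. Choose $b$ vanishing on every codimension-one component of $V(\mathfrak g)$ (take $b=1$ if there are none). Then constants must be regular exactly on $D(b\mathfrak g)=D(b)\setminus Z_2$, where $Z_2\cap D(b)$ has codimension $\geq 2$, and Hartogs applied to the normal ring $B_b$ gives $C_{\mathcal A_{tb}}=C_{\mathcal D_{tb}}=B_b$, finitely generated over $\mathcal C$; the corresponding principal affine open subset of $\Spec(\mathcal A)$ is the required $U$. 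With that one correction your proof closes, though it is much heavier than the paper's, which simply localizes at the denominators of generators $\lambda_i$ of $\mathcal K\cdot C_{Q(\mathcal A)}$ and invokes Kolchin's linear-disjointness lemma to conclude $C_{\mathcal A_U}=\mathcal C[\lambda_1,\dots,\lambda_s]$. To your credit, the place where you stumble is precisely the step the paper treats in silence (why a constant of $\mathcal A_U$ is a polynomial, not merely a rational function, in the $\lambda_i$), so your extra care is well aimed; but as written the proposal does not yet prove the lemma.
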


\begin{proof}
  Consider $Q(\mathcal A)$ the field of fractions of $\mathcal A$.
The extension $\mathcal K \subset Q(\mathcal A)$ is of finite
transcendency degree. Then, $\mathcal K \subset \mathcal K \cdot
C_{Q(\mathcal A)} \subset Q(\mathcal A$) are extensions of finite
transcendency degree, and there are $\lambda_1,\ldots\lambda_s$ in
$C_{Q(\mathcal A)}$ such that $\mathcal
K(\lambda_1,\ldots,\lambda_s) = \mathcal K\cdot C_{Q(\mathcal A)}$.
Constants $\lambda_1$,$\ldots$,$\lambda_s$ are fractions
$\frac{f_i}{g_i}$. Consider the affine open subset obtained by
removing from $\Spec(\mathcal A)$ the zeroes of the denominators,
$$U = \Spec A \setminus \bigcup_{i=1}^s (g_i)_0.$$
Then, $\lambda_i\in \mathcal A_U$ and $\mathcal K[C_{\mathcal A_U}]
= \mathcal K[\lambda_1,\ldots,\lambda_s]$. We will prove that
$C_{\mathcal A_U} = \mathcal
C[\lambda_1,\ldots,\lambda_s]$. Let $\lambda\in C_{\mathcal A_U}$.
It is certain polynomial in the variables $\lambda_i$ with
coefficients in $\mathcal K$:
$$\lambda = \sum_{I\in\Lambda}a_I\lambda^I, \quad a_I\in \mathcal
K;$$ where $\Lambda$ is a suitable finite set of
multi-indices. We can take this set in such way that the
$\{\lambda^{I}\}_{I\in\Lambda}$ are linearly independent over
$\mathcal K$, and then so they are over $\mathcal C$.
$\{\lambda,\lambda^I\}_{I\in\Lambda}$ is a subset of $\mathcal
K$-linearly dependents elements of $C_{\mathcal A_U}$. By \cite{Ko1}
(p. 87 corollary 1) then they are $\mathcal C$-linearly dependent.
Hence, $\lambda$ is $\mathcal C$-linear combination of
$\{\lambda_I\}_{I\in\Lambda}$, $\lambda\in \mathcal
C[\lambda_1,\ldots,\lambda_s]$ and finally $C_{\mathcal A_U} =
\mathcal C[\lambda_1,\ldots,\lambda_s]$.
\end{proof}

\subsection{Differential Spectra}

\index{differential!spectrum}
\begin{definition}
Let $\mathcal A$ be a differential ring. 
$\DiffSpec(\mathcal A)$ is the set of all prime differential ideals
 $\mathfrak p\subset \mathcal A$.
\end{definition}

Let $S\subset\mathcal A$ any subset. We define the differential
locus of zeroes of $S$, $\{S\}_{0}\subset\DiffSpec(\mathcal A)$ as
the subset of prime differential ideals containing $S$. This family
of subsets define a topology (having these subsets as closed
subsets), that we call the \emph{Kolchin topology} or
\emph{differential Zariski topology.} Note that $\{S\}_0 = (S)_0\cap
\DiffSpec(\mathcal A)$. From that if follows:

\begin{proposition}
  $\DiffSpec(\mathcal A)$ with Kolchin topology is a topological subspace of
$\Spec(\mathcal A)$ with Zariski topology.
\end{proposition}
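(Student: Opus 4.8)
The plan is to show that the lattice of Kolchin-closed subsets of $\DiffSpec(\mathcal A)$ coincides with the lattice of closed subsets of the subspace topology that $\DiffSpec(\mathcal A)$ inherits from $\Spec(\mathcal A)$. Since two topologies on the same underlying set are equal precisely when they have the same closed sets, establishing this coincidence is exactly the assertion that $\DiffSpec(\mathcal A)$, with the Kolchin topology, is a topological subspace of $\Spec(\mathcal A)$ with the Zariski topology.

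First I would recall the three relevant families of closed sets. The Zariski-closed subsets of $\Spec(\mathcal A)$ are, by definition, exactly the sets $(S)_0$ as $S$ ranges over subsets of $\mathcal A$. Consequently, by the definition of the subspace topology, the closed subsets of $\DiffSpec(\mathcal A)$ for the induced topology are exactly the intersections $(S)_0 \cap \DiffSpec(\mathcal A)$. Finally, the Kolchin-closed subsets of $\DiffSpec(\mathcal A)$ are by definition the sets $\{S\}_0$.

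The crux is the set-theoretic identity $\{S\}_0 = (S)_0 \cap \DiffSpec(\mathcal A)$ already observed above, which holds tautologically: a prime differential ideal belongs to $\{S\}_0$ if and only if it contains $S$, while it belongs to $(S)_0 \cap \DiffSpec(\mathcal A)$ if and only if it is a prime ideal containing $S$ that happens to be differential, and these are the same condition. Reading this identity in both directions yields the two required inclusions. On the one hand, every Kolchin-closed set $\{S\}_0$ equals $(S)_0 \cap \DiffSpec(\mathcal A)$, hence is closed in the subspace topology. On the other hand, every subspace-closed set $(S)_0 \cap \DiffSpec(\mathcal A)$ equals $\{S\}_0$, hence is Kolchin-closed. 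Thus the two families of closed sets coincide.

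Since the closed sets agree, the Kolchin topology is precisely the subspace topology induced by the Zariski topology, and $\DiffSpec(\mathcal A)$ is a topological subspace of $\Spec(\mathcal A)$, as claimed. There is essentially no analytic obstacle in this argument; the only point deserving (entirely routine) attention is the \emph{surjectivity} direction, namely confirming that every member of the subspace closed-set family is realized as some $\{S\}_0$, and the displayed identity supplies this directly.
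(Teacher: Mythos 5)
Your proof is correct and follows exactly the paper's own argument: the paper simply notes the tautological identity $\{S\}_0 = (S)_0 \cap \DiffSpec(\mathcal A)$ immediately before the proposition and declares that the result follows. You have merely spelled out the routine bookkeeping (both families of closed sets coincide, hence the topologies agree), which is all there is to verify.
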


From now on, let us consider the following notation: $X =
\Spec(\mathcal A)$, and $X' = \DiffSpec(\mathcal A)$.

  Let us recall that a topological space is said \emph{reducible}
if it is the non-trivial union of two closed subsets. It is said
\emph{irreducible} if it is not reducible. A point of an irreducible
topological space is said \emph{generic} if it is included in each
open subset. The following properties of the differential spectrum
are proven in \cite{Ke2} (see Proposition 2.1).

\begin{proposition}
$X'$ verifies:
\begin{enumerate}
\item[(1)] $X'$ is quasicompact.
\item[(2)] $X'$ is $T_0$ separated.
\item[(3)] Every closed irreducible subspace of $X'$ admits a unique generic
  point. The map $X'\to 2^{X'}$, that maps each point $x$ to its Kolchin closure $\overline{\{x\}}$
  is a bijection between points of $X'$ and irreducible closed subspaces of $X'$.
\end{enumerate}
\end{proposition}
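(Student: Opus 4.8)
The plan is to work inside the ambient space $X = \Spec(\mathcal A)$, using that $X'$ is a topological subspace of $X$ and that its closed sets are precisely the differential loci $\{S\}_0$ of prime differential ideals containing $S$. I would treat the three assertions in increasing order of difficulty, disposing of the two soft ones first and isolating the real work in quasicompactness.

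For (2) I observe that $T_0$ is inherited: $X=\Spec(\mathcal A)$ is $T_0$ and every subspace of a $T_0$ space is $T_0$. Concretely, given distinct prime differential ideals $\mathfrak p\neq\mathfrak q$, I pick $f$ lying in one but not the other; then the basic open $D'(f):=X'\setminus\{f\}_0$ contains exactly one of them. This simultaneously yields the \emph{uniqueness} part of (3) for free, since in a $T_0$ space two points with the same closure must coincide, and a generic point is characterized by having closure equal to the whole irreducible set.

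For (3) the substantive point is the \emph{existence} of a generic point. Given a nonempty closed irreducible $Z=\{S\}_0$, I would form $\mathfrak p=\bigcap_{\mathfrak q\in Z}\mathfrak q$. As an intersection of prime differential ideals this is a radical differential ideal, and since each $\mathfrak q\in Z$ contains $S$ we get $\mathfrak p\supseteq S$, so $\mathfrak p\in Z$. Irreducibility forces $\mathfrak p$ to be prime: if $fg\in\mathfrak p$ with $f,g\notin\mathfrak p$, then every point of $Z$ contains $f$ or $g$, so $Z=(Z\cap\{f\}_0)\cup(Z\cap\{g\}_0)$ would be a decomposition into two proper closed subsets, a contradiction. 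Hence $\mathfrak p\in X'$, and because $\mathfrak p$ is contained in every member of $Z$ one checks directly that $\overline{\{\mathfrak p\}}=Z$. Combined with (2), this makes $\mathfrak p\mapsto\overline{\{\mathfrak p\}}$ the asserted bijection between points and irreducible closed subsets.

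For (1) I would reduce quasicompactness to a statement about differential ideals. Covering $X'$ by basic opens $D'(f_i)$, $i\in I$, means that no prime differential ideal contains $S=\{f_i\}$, i.e. $\{S\}_0=\emptyset$; a finite subcover exists iff already $\{S_F\}_0=\emptyset$ for some finite $F\subseteq I$. Writing $[S]=\bigcup_F[S_F]$ as a directed union of the finitely generated differential ideals $[S_F]$, one has $1\in[S]$ iff $1\in[S_F]$ for some finite $F$. Everything therefore hinges on the implication that a \emph{proper} differential ideal is contained in some prime differential ideal, equivalently $\{S\}_0=\emptyset\Rightarrow 1\in[S]$. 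This is the main obstacle, and it is exactly the point where the ring-theoretic structure must be used: it fails for arbitrary differential rings, but holds here by Theorem \ref{C2THE2.1.6}(b) applied with the multiplicative system $S=\{1\}$, since in characteristic zero $\mathcal A$ is a Ritt algebra and hence a Keigher ring. Granting this, $\{S\}_0=\emptyset$ forces $[S]=\mathcal A$, so $1\in[S_F]$ for some finite $F$, whence $\{S_F\}_0=\emptyset$ and $\{D'(f_i)\}_{i\in F}$ is the required finite subcover.
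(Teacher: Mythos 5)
Your proofs of (2) and (3) are correct, and they are valid in the generality in which the proposition is stated, namely for an arbitrary differential ring $\mathcal A$. (The paper gives no proof of its own here; it cites \cite{Ke2}, Proposition 2.1, which is likewise a statement about arbitrary differential rings.) In particular your generic point $\mathfrak p=\bigcap_{\mathfrak q\in Z}\mathfrak q$, with primality extracted from irreducibility and with $\overline{\{\mathfrak p\}}=Z$, uses nothing beyond the fact that an intersection of prime differential ideals is a radical differential ideal, and uniqueness does follow from $T_0$.

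Your proof of (1), however, has a genuine gap: it imports a hypothesis the proposition does not have. This subsection assumes only that $\mathcal A$ is a differential ring; the paper introduces the Keigher condition explicitly and only in the results \emph{following} this proposition (Lemma \ref{LM3.8} and the two propositions after it each begin ``Assume that $\mathcal A$ is Keigher''), so Theorem \ref{C2THE2.1.6}(b) is not available to you. Your justification ``in characteristic zero $\mathcal A$ is a Ritt algebra'' fails twice over: there is no characteristic assumption in this section, and characteristic zero does not give $\mathbb Q\subseteq\mathcal A$, which is what a Ritt algebra requires. Moreover the implication on which you rest everything, $\{S\}_0=\emptyset\Rightarrow 1\in[S]$, is genuinely false for general differential rings, even in characteristic zero: in $\mathbb Z[x]$ with $\partial=d/dx$, no prime differential ideal contains $x^2$ (such an ideal would contain $x$, hence $\partial x=1$), yet $[x^2]=(2,x^2)$ is proper. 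What is misdiagnosed is your claim that quasicompactness ``hinges'' on that implication: it does not. Quasicompactness holds for every differential ring, but the prime differential ideal in $\{S\}_0$ must be produced directly from the finite intersection property rather than from properness of $[S]$. Concretely, keep your reduction to covers by basic opens, suppose every finite $F\subseteq S$ has $\{F\}_0\neq\emptyset$, and apply Zorn's lemma to the collection of sets $T\supseteq S$ all of whose finite subsets lie in some prime differential ideal. Maximality forces such a $T$ to be closed under addition, under multiplication by elements of $\mathcal A$, and under $\partial$, to omit $1$, and to be prime: if $ab\in T$ with $a\notin T$ and $b\notin T$, choose finite $F_a,F_b\subseteq T$ such that no prime differential ideal contains $F_a\cup\{a\}$ nor $F_b\cup\{b\}$, and then a prime differential ideal containing the finite set $F_a\cup F_b\cup\{ab\}$ must contain $a$ or $b$, a contradiction. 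Hence $T\in\{S\}_0$, and (1) holds with no hypothesis on $\mathcal A$.
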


 Here we review some of the topological properties of
the differential spectrum of Keigher rings. 

\begin{lemma}\label{LM3.8}
Assume that $\mathcal A$ is a Keigher ring. Then each minimal prime
ideal is a differential ideal.
\end{lemma}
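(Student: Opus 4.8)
The plan is to reduce the statement to characterization (a) of Theorem~\ref{C2THE2.1.6} by means of the differential core. Let $\mathfrak p$ be a minimal prime ideal of $\mathcal A$ and consider its differential core $\mathfrak p_\sharp = \{a\in\mathfrak p\colon \forall n\,(\partial^n a\in\mathfrak p)\}$. The first observation I would record is the trivial inclusion $\mathfrak p_\sharp\subseteq\mathfrak p$, which is immediate from the definition upon taking $n=0$.

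Next I would invoke the hypothesis that $\mathcal A$ is Keigher. By the equivalence of \emph{(e)} and \emph{(a)} in Theorem~\ref{C2THE2.1.6}, the differential core $\mathfrak p_\sharp$ of the prime ideal $\mathfrak p$ is itself a prime differential ideal. Thus $\mathfrak p_\sharp$ is a \emph{prime} ideal contained in $\mathfrak p$.

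Finally I would use minimality. Since $\mathfrak p$ is minimal among the prime ideals of $\mathcal A$ and $\mathfrak p_\sharp$ is a prime ideal with $\mathfrak p_\sharp\subseteq\mathfrak p$, minimality forces $\mathfrak p_\sharp=\mathfrak p$. As $\mathfrak p_\sharp$ is a differential ideal, so is $\mathfrak p$, which is exactly the claim.

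There is essentially no obstacle beyond correctly citing the characterization theorem: all the genuine content is packaged in the implication that Keigher rings satisfy \emph{(a)}, which we are entitled to assume. The only point requiring a moment's care is to recall that $\mathfrak p_\sharp$ is by construction a differential ideal (indeed the largest differential ideal contained in $\mathfrak p$, which one checks in one line: if $\partial^n a\in\mathfrak p$ for all $n$ then $\partial^n(\partial a)\in\mathfrak p$ for all $n$), so that its primality supplied by \emph{(a)} is exactly what is needed. I would therefore keep the proof short, presenting it as a two-line consequence of Theorem~\ref{C2THE2.1.6}\emph{(a)} together with the definition of minimality.
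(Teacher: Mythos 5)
Your proof is correct and follows exactly the paper's argument: pass to the differential core $\mathfrak p_\sharp$, use characterization \emph{(a)} of Theorem~\ref{C2THE2.1.6} to see it is a prime differential ideal contained in $\mathfrak p$, and conclude $\mathfrak p_\sharp=\mathfrak p$ by minimality. In fact you spell out the final step (minimality forcing equality) that the paper's proof leaves implicit, which is a welcome clarification rather than a deviation.
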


\begin{proof}
  Then, let $\mathfrak p$ be a minimal prime. By Theorem \ref{C2THE2.1.6} (a),
$\mathfrak p_\sharp$ is a prime differential ideal and $\mathfrak
p_\sharp\subseteq \mathfrak p$.
\end{proof}

\begin{proposition}
Assume that $\mathcal A$ is Keigher. Then, $X$ is an irreducible
topological space if and only if $X'$ is an irreducible topological
space.
\end{proposition}

\begin{proof}
  Just note that the irreducible components of $X'$ are the Kolchin closure of
  minimal prime ideals of $\mathcal A$.
\end{proof}

\begin{proposition}
Assume $\mathcal A$ is Keigher. If $X'$ is connected, then $X$ is
connected.
\end{proposition}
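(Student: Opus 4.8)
The plan is to prove the contrapositive: if $X = \Spec(\mathcal A)$ is disconnected, then $X' = \DiffSpec(\mathcal A)$ is disconnected as well. A disconnection of $X$ is equivalent to the existence of a nontrivial idempotent $e\in\mathcal A$, that is, an element with $e^2 = e$ and $0\neq e\neq 1$; such an $e$ produces the decomposition $X = (e)_0 \sqcup (1-e)_0$ into disjoint nonempty closed subsets. So the first step is to extract such an idempotent from the assumed disconnection of $X$.

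The key observation is that idempotents are automatically constants. Indeed, differentiating $e^2 = e$ yields $\partial e = 2e\,\partial e$; multiplying this identity by $e$ and using $e^2 = e$ gives $e\,\partial e = 2e\,\partial e$, whence $e\,\partial e = 0$, and substituting back we obtain $\partial e = 0$. Consequently $e$ and $1-e$ lie in $C_{\mathcal A}$, and the principal ideals $(e)$ and $(1-e)$ are \emph{differential} ideals. Moreover they are proper: since $e$ is a nontrivial idempotent, neither $e$ nor $1-e$ is a unit, so $1\notin(e)$ and $1\notin(1-e)$.

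Next I would transport this decomposition to $X'$. Every prime differential ideal $\mathfrak p$ contains the product $e(1-e) = 0$, hence contains $e$ or $1-e$; it cannot contain both, for otherwise $1 = e + (1-e)\in\mathfrak p$. Thus $X' = \{e\}_0 \sqcup \{1-e\}_0$ is a disjoint union of two closed subsets in the Kolchin topology. It remains only to verify that both pieces are nonempty, and this is exactly where the Keigher hypothesis enters: applying the characterization of Theorem \ref{C2THE2.1.6}(b) to the proper differential ideal $(e)$ together with the multiplicative system $S = \{1\}$ produces a prime differential ideal containing $e$, so $\{e\}_0\neq\emptyset$; by symmetry $\{1-e\}_0\neq\emptyset$. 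Therefore $X'$ is disconnected, as desired.

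I expect the existence of prime differential ideals containing $(e)$ and $(1-e)$ to be the only genuine obstacle. Without the Keigher property a nonzero differential ring need not possess any prime differential ideal at all, so one of the two pieces of the candidate disconnection of $X'$ could be empty and the implication would collapse. The Keigher condition, through Theorem \ref{C2THE2.1.6}, is precisely the tool that guarantees enough prime differential ideals to keep both pieces nonempty, and that is why it is indispensable here.
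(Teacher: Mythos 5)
Your proof is correct, and it takes a genuinely different route from the paper's. Both arguments prove the contrapositive, but the paper works with the ring decomposition directly: from $X = Y \sqcup Z$ it obtains an isomorphism $\mathcal A \simeq \mathcal O_X(Y)\times \mathcal O_X(Z)$, asserts that the kernels of the two projections are intersections of minimal prime ideals, and invokes Lemma \ref{LM3.8} (minimal primes of a Keigher ring are differential) to conclude that these kernels are differential ideals, so that $X' = \DiffSpec(\mathcal O_X(Y)) \sqcup \DiffSpec(\mathcal O_X(Z))$. You instead extract the idempotent $e$ underlying the disconnection and observe that idempotents are automatically constants --- an elementary computation valid in any differential ring, needing no Keigher hypothesis --- so that $(e)$ and $(1-e)$ are differential ideals for free. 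Your route has two advantages. First, it isolates the role of the Keigher hypothesis to exactly one point: the existence of prime differential ideals containing a proper differential ideal (Theorem \ref{C2THE2.1.6}(b)), which is what keeps both pieces of the disconnection of $X'$ nonempty; the paper leaves this nonemptiness check implicit, although it is needed there as well. Second, the paper's claim that the kernels of the projections are intersections of minimal primes is strictly accurate only when the factor rings are reduced (in general the intersection of the minimal primes lying over a factor is its nilradical, not zero), whereas your idempotent computation sidesteps this subtlety entirely: the kernel of each projection is literally the principal ideal generated by a constant idempotent. What the paper's formulation buys is compatibility with the sheaf-theoretic language used in the surrounding sections; yours is the more self-contained and, on the nonemptiness and reducedness points, the more careful argument.
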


\begin{proof}
  Assume that $X = Y \sqcup Z$, then we have an isomorphism of rings
  $$(p_1,p_2)\colon\mathcal A \mapsto \mathcal O_X(Y) \times \mathcal O_X(Z), \quad
  a\mapsto (a|_X,a|_Y),$$
  the kernel of each restriction $p_i$ is intersection of minimal prime ideals, so
  by Lemma \ref{LM3.8} they are differential ideals. Hence, the rings $\mathcal O_X(Y)$ and $\mathcal O_X(Z)$ are also
  differential rings. Then,
  $$X' = Y' \sqcup Z',$$
  being $Y' = \DiffSpec(\mathcal O_X(Y))$, $Z' = \DiffSpec(\mathcal
  O_X(Z))$. We have proven that if $X$ disconnects, then $X'$
  disconnects.
\end{proof}

\subsection{Structure Sheaf}

  We define the structure sheaf $\mathcal O_{X'}$ as in \cite{Kov1}.
Let us consider the projection,
   $$\pi\colon\bigsqcup_{x\in X'} \mathcal A_x \to X'.$$
being $\bigsqcup_{x\in X'} \mathcal A_x$ the disjoint union of all
the localized rings $\mathcal A_x$. We say that a section $s$ of
$\pi$ defined in an open subset $U\subset X'$ is \emph{a regular
function} if it verifies the following: for all $x\in U$ there exist
an open neighborhood $x\in U_x$ and $a,b\in \mathcal A$ with
$b(x)\neq 0$ $(b\not\in x)$, such that for all $y\in U_x$ with
$b(y)\neq 0$, $s(y) = \frac{a}{b}\in \mathcal A_y$. Thus, a regular
function is a section which is locally representable as a quotient.
We write $\mathcal O_{X'}$ for the sheaf of regular functions in
$X'$. By the above construction we can state:

\begin{proposition}
  The stalk $\mathcal O_{X',x}$ is a ring isomorphic to $\mathcal
  A_x$.
\end{proposition}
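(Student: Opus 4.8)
The plan is to reproduce, on the subspace $X'\subset X$, the classical computation of the stalks of the structure sheaf of an affine scheme, namely the proof that the stalk of $\mathcal O_{\Spec(\mathcal A)}$ at a prime is the corresponding localization. The one structural fact that makes this transfer work is already in hand: $X'=\DiffSpec(\mathcal A)$ carries the subspace topology of $X=\Spec(\mathcal A)$. Consequently the sets $D(b)\cap X'$, where $b\in\mathcal A$ and $D(b)=\{y\in X : b\notin y\}$, form a basis of open sets of $X'$, and by the very definition of the structure sheaf every regular function is, near each point, a quotient $a/b$ with respect to such a basic set. This basis property is what allows the local-quotient representations and the neighborhood-shrinking arguments below.

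First I would define the evaluation map $\varphi\colon\mathcal O_{X',x}\to\mathcal A_x$ sending the germ of a regular section $(U,s)$ to its value $s(x)\in\mathcal A_x$. It is well defined, since two sections that agree on a neighborhood of $x$ have the same value at $x$, and it is a ring homomorphism because the ring operations on sections are computed stalkwise in the rings $\mathcal A_y$. For surjectivity, given $a/b\in\mathcal A_x$ with $b\notin x$, the assignment $y\mapsto a/b\in\mathcal A_y$ is a regular section on $D(b)\cap X'$ — the single pair $(a,b)$ witnesses regularity at every point — and $\varphi$ sends its germ to $a/b$.

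The only delicate step is injectivity. Suppose $\varphi$ kills the germ of $(U,s)$, so that $s(x)=0$ in $\mathcal A_x$. Shrinking $U$, I may assume $s=a/b$ on a basic neighborhood $D(b)\cap X'$ with $b\notin x$. From $a/b=0$ in $\mathcal A_x$ I obtain some $c\notin x$ with $ca=0$ in $\mathcal A$. On the smaller neighborhood $D(bc)\cap X'\subset D(b)\cap X'$ every point $y$ satisfies $c\notin y$, so $ca=0$ forces $a=0$ in $\mathcal A_y$ and hence $s(y)=a/b=0$. Thus $s$ vanishes on the open neighborhood $D(bc)\cap X'$ of $x$, its germ is zero, and $\varphi$ is injective.

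I expect no essential obstacle here: once the subspace-topology fact supplies the basic open sets $D(b)\cap X'$ together with the local-quotient description of regular functions, the reasoning is exactly the standard affine computation restricted to $X'$. The only point that demands a little care is the bookkeeping with the localizations $\mathcal A_y$ at differential primes, but these are ordinary localizations at the underlying prime ideals, so no differential-algebraic ingredient beyond the already-established embedding $X'\hookrightarrow X$ enters the argument.
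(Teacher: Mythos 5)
Your proof is correct and is exactly the standard localization argument that the paper's own (unwritten) proof relies on: the paper simply asserts the proposition ``by the above construction,'' following Kovacic, and your evaluation-map argument with surjectivity via constant fraction sections and injectivity via an annihilator $c\notin x$ is the canonical way to fill in those details. The only cosmetic imprecision is your claim that after shrinking the section equals $a/b$ on the full basic set $D(b)\cap X'$; the definition of regularity only yields equality $s=a/b$ on some open neighborhood of $x$ contained in $D(b)\cap X'$, but that weaker statement is all your injectivity step actually uses, so nothing breaks.
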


\begin{theorem}
  Let us consider the natural inclusion $j\colon X'\hookrightarrow
  X$. The sheaf of regular functions  $\mathcal O_{X'}$ is the restriction $\mathcal
  O_X|_{X'}$ of the sheaf of regular function in $X$.
\end{theorem}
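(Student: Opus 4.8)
The plan is to exhibit a canonical morphism of sheaves between $\mathcal O_{X'}$ and the topological restriction $j^{-1}\mathcal O_X = \mathcal O_X|_{X'}$, and then to verify that it is an isomorphism stalk by stalk. Throughout I would use that, by the previous proposition, $X'$ carries the subspace topology induced from $X$, so every open subset of $X'$ has the form $V\cap X'$ with $V\subset X$ open, and that the stalk $\mathcal O_{X',x}$ is canonically $\mathcal A_x$. I also recall that the structure sheaf $\mathcal O_X$ of the affine scheme $X=\Spec(\mathcal A)$ is built by exactly the same ``locally a quotient'' recipe used to define $\mathcal O_{X'}$, only over the larger base $X$ and with the same fibres $\mathcal A_x$.

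First I would construct a restriction morphism $\rho\colon \mathcal O_X\to j_*\mathcal O_{X'}$. Given $V\subset X$ open and a section $s\in\mathcal O_X(V)$, the section $s$ assigns to each $x\in V$ an element $s(x)\in\mathcal A_x$ and is locally representable as a quotient $a/b$ with $b\notin x$. Restricting the domain of $s$ from $V$ to $V\cap X'$ produces a section of $\pi$ over $V\cap X'$ that is still locally of the form $a/b$; hence it is a regular function in the sense of $\mathcal O_{X'}$, i.e. an element of $\mathcal O_{X'}(V\cap X')=(j_*\mathcal O_{X'})(V)$. This assignment commutes with further restriction, so $\rho$ is a morphism of sheaves on $X$.

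Next, by the adjunction between the inverse image functor $j^{-1}$ and the direct image $j_*$, the morphism $\rho$ corresponds to a morphism $\theta\colon j^{-1}\mathcal O_X\to\mathcal O_{X'}$ of sheaves on $X'$. The final step is to check that $\theta$ is an isomorphism, for which it suffices to verify that it is an isomorphism on every stalk. At a point $x\in X'$ one has $(j^{-1}\mathcal O_X)_x=\mathcal O_{X,j(x)}=\mathcal A_x$, while $\mathcal O_{X',x}=\mathcal A_x$ by the earlier proposition; under these identifications $\theta_x$ sends the germ of a local quotient $a/b$ to the germ of the same quotient $a/b$, hence is the identity of $\mathcal A_x$. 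Since a morphism of sheaves that is an isomorphism on all stalks is an isomorphism, $\theta$ is an isomorphism and $\mathcal O_{X'}=\mathcal O_X|_{X'}$.

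The part requiring the most care is the stalk computation in the last step: one must make sure that the two \emph{a priori} unrelated rings $(j^{-1}\mathcal O_X)_x=\mathcal O_{X,x}$ and $\mathcal O_{X',x}$ are being identified through one and the same localization $\mathcal A_x$, in a manner compatible with $\theta$, so that $\theta_x$ really is the canonical identity and not merely some abstract isomorphism. Everything else — the existence of $\rho$ and the passage to $\theta$ — is formal once one observes that the defining ``locally a quotient'' condition for $\mathcal O_X$ restricts verbatim to the condition defining $\mathcal O_{X'}$.
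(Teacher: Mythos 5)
Your proposal is correct and follows essentially the same route as the paper: both construct the canonical comparison map between $j^{-1}\mathcal O_X$ and $\mathcal O_{X'}$ (the paper defines it directly on the inverse image presheaf and sheafifies, while you equivalently obtain it from $\rho\colon\mathcal O_X\to j_*\mathcal O_{X'}$ via adjunction) and then conclude by observing that on each stalk it is the identity of $\mathcal A_x$. The adjunction detour is just a repackaging of the paper's presheaf-level construction, so there is no substantive difference.
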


\begin{proof}
First, let us define a natural morphism of presheaves of rings on
$X'$ between the inverse image presheaf $j^{-1}\mathcal O_{X}$ and
$\mathcal O_{X'}$. Let us consider an open subset $U\subset X'$ and
a section $s$ of the presheaf $j^{-1}\mathcal O_X$ defined in $U$. By
definition of inverse image, there is an open subset $W$ of $X$ such
that $W\cap X' \cap U$ and for what $s$ is written as a fraction
$\frac{a}{b}\in\mathcal A_W$. This fraction is a section of
$\mathcal O_{X'}(U)$, and it defines the presheaf morphism
$$j^{-1}\mathcal O_X \to \mathcal O_{X'}.$$
This presheaf morphism induces a morphism between associated sheaves
$\mathcal O_X|_{X'}$ and $\mathcal O_{X'}$. It is clear that this
natural morphism induce the identity between fibers $\mathcal
(j^{-1}\mathcal O_X)_x = \mathcal A_x \to \mathcal O_{X',x} =
\mathcal A_x$, and then it is an isomorphism.
\end{proof}

\subsection{Global Sections}

  One of the main facts of the differential algebraic
geometry is that the ring of global regular sections of $X'$
does not coincide with the differential ring $\mathcal A$. Of course
there is a canonical morphism from $\mathcal A$ to $\mathcal O_{X'}(X')$.
However there are non-vanishing elements giving rise to the zero section and
non invertible elements giving rise to invertible sections. An element $a$
of $\mathcal A$ is called a \emph{differential zero} if its annihilator ideal is
not contained in any proper differential ideal. The set of 
differential zeroes is denoted by $\mathfrak Z$. An element is called
a differential unit if it is not contained in any proper differential ideal.
The set of differential units is denoted by $\mathfrak U$. Then, there
is a canonical \emph{injective} morphism,
$\mathfrak U^{-1}\mathcal A/\mathfrak Z \hookrightarrow \mathcal O_{X'}(X').$
But in general this morphism is not surjective, \emph{id est}, there are regular
functions that are not representable as fractions of $\mathcal A$. Therefore,
the differential spectrum of $\mathcal O_{X'}(X')$ is not always isomorphic to
$X'$. This problem is extensively discussed in \cite{Be2008}.

\subsection{Differential Schemes}

  The study of differential schemes started within
the work of Keigher  \cite{Ke1, Ke2} and was continued by Carra'
Ferro \cite{Ca0}, Buium  \cite{Bu} and Kovacic \cite{Kov1}.
Definitions are slightly different in each author approach, here we
follow Kovacic.

  Let us remind that a \emph{locally ringed space} is a topological space
$X$ endowed with an structure sheaf of rings $\mathcal O_X$ such that for all
$x\in X$ the stalk $\mathcal O_{X,x}$ is a local ring. Thus, a 
\emph{locally differential ringed space} is a locally ringed space whose
structure sheaf $\mathcal O_X$ is a sheaf of differential rings. A morphism 
of locally differential ringed spaces $f\colon X\to Y$ consist of a continous
map together with a sheaves morphism $f^\natural\colon \mathcal O_X \to f_*\mathcal O_Y$.
For the differential ring $\mathcal A$ it is clear that its differential spectrum
$X'$ endowed with the structure sheaf $\mathcal O_{X'}$ is a locally differential 
ringed space. 

\begin{definition}
  An affine differential scheme is a locally differentially ringed space $X$
  which is isomorphic to $\DiffSpec(\mathcal A)$ for
  some differential ring $\mathcal A$.
\end{definition}

\begin{definition}
  A differential scheme is a locally differentially ringed space
  $X$ in which every point has a neighborhood
  that is an affine differential scheme.
\end{definition}

\begin{remark} 
Schemes are differential schemes, endowed with the
trivial derivation. The category of differential schemes is an
extension of the category of schemes, in the same way that the
category of differential rings is an extension of the category of
rings.
\end{remark}

%
%

  By a \emph{morphism of differential schemes} $f\colon X\to Y$ we mean a
morphism of locally ringed spaces, such that $f^{\sharp}\colon \mathcal O_Y \to
f_*\mathcal O_X$ is a morphism of sheaves of differential rings.

  Let $\mathcal K$ be a differential field. A \emph{$\mathcal K$-differential
scheme} is a differential scheme $X$ provided with a morphism $X\to
\DiffSpec(\mathcal K)$, it means that $\mathcal O_X$ is a sheaf of
differential $\mathcal K$-algebras.

  A morphism of differential schemes $f\colon X \to Y$ between two
differential $\mathcal K$-schemes is a \emph{morphism of
differential $\mathcal K$-schemes} if the sheaf morphism $f^\sharp
\colon \mathcal O_Y \to f_*\mathcal O_X$ is a morphism of sheaves of
differential $\mathcal K$-algebras.

\subsection{Product of Differential Schemes}

  There is not a direct product in the category of differential
schemes relative to a given basic differential scheme. This problem
is discussed in \cite{Kov1}. However, in the case of differential
schemes over a differential field $\mathcal K$ we can construct
the direct product by patching tensor products, as it is usually
done in algebraic geometry. Therefore,
$$\DiffSpec(\mathcal A)\times_{\mathcal K}\DiffSpec(\mathcal B) =
\DiffSpec(\mathcal A\otimes_{\mathcal K}\mathcal B).$$
Moreover, if $X$ and $Y$ are reduced differential $\mathcal K$-schemes
then $X\times_{\mathcal K} Y$ is also reduced (see \cite{Kov2} Proposition
25.2).  

\subsection{Split of Differential Schemes}

\begin{definition}
  Let $X$ be a differential scheme. Define the presheaf of rings $\mathcal C_X$ on
$X$ by the formula,
  $$\mathcal C_X(U) = C_{\mathcal O_X(U)},$$
for any open subset $U\subseteq X$.
\end{definition}

  From this definition it follows that $\mathcal C_X$ is a sheaf
of rings and its fiber $\mathcal C_{X,x}$ is isomorphic to
the ring of constants $\mathcal C_{\mathcal O_{X,x}}$.  In particular,
if $X$ is a $\mathcal K$-differential scheme $\mathcal C_X$ is a sheaf
of $\mathcal C_{\mathcal K}$-algebras. 

\begin{definition}
  We call space of constants of $X$, $\Const(X)$ to the locally
ringed space $(X,\mathcal C_X)$.
\end{definition}

\begin{definition}
  We say that $X$ is an almost-constant differential scheme if
its space of constants $\Const(X)$ is a scheme.
\end{definition}

  Let $X$ be an almost-constant scheme. Then, each open subset
$U\subset X$ is also almost-constant. If $Y$ is a reduced closed
subscheme of $X$ then $Y$ is almost-constant. In this way if $Y$ is
a locally closed reduced subscheme of $X$, then $Y$ is
almost-constant.

Let $\mathcal K$ be a differential field, and $\mathcal C$ its field
of constants.

\begin{definition}\label{C2DEFsplitDS}
  A differential $\mathcal K$-scheme $X$ splits if there is a
$\mathcal C$-scheme $Y$ and an isomorphism of $\mathcal
K$-differential schemes,
$$\phi\colon X \xrightarrow{\sim} Y \times_{\mathcal C} \DiffSpec(\mathcal K).$$
The isomorphism $\phi$ is called an splitting isomorphism for $X$.
\end{definition}

%

\begin{proposition}\label{Kov28.2}
  If $X$ is reduced and splits, then it is almost-constant and
$$X \xrightarrow{\sim} \Const(X)\times_{\mathcal C} \DiffSpec(\mathcal K).$$
\end{proposition}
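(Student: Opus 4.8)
The plan is to exploit the splitting isomorphism $\phi\colon X \xrightarrow{\sim} Y\times_{\mathcal C}\DiffSpec(\mathcal K)$ and reduce the whole statement to a local computation of the constants of a tensor product. Since $\phi$ is an isomorphism of differential $\mathcal K$-schemes it carries the sheaf of constants $\mathcal C_X$ to the sheaf of constants of $Y\times_{\mathcal C}\DiffSpec(\mathcal K)$, so $\Const(X)\cong \Const(Y\times_{\mathcal C}\DiffSpec(\mathcal K))$ as locally ringed spaces, and it suffices to prove the proposition for the model $Y\times_{\mathcal C}\DiffSpec(\mathcal K)$. First I would note that reducedness of $X$ forces $Y$ to be reduced: on an affine chart $\Spec(\mathcal B)\subset Y$ the model is $\DiffSpec(\mathcal B\otimes_{\mathcal C}\mathcal K)$, and since $\mathcal K$ is faithfully flat over $\mathcal C$ the inclusion $\mathcal B\hookrightarrow \mathcal B\otimes_{\mathcal C}\mathcal K$ realises $\mathcal B$ as a subring of a reduced ring. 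Thus everything may be checked on affine charts $\Spec(\mathcal B)$ with $\mathcal B$ a reduced $\mathcal C$-algebra carrying the trivial derivation.

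The first key step is the computation $C_{\mathcal B\otimes_{\mathcal C}\mathcal K}=\mathcal B\otimes 1$. The inclusion $\mathcal B\otimes 1\subseteq C_{\mathcal B\otimes_{\mathcal C}\mathcal K}$ is clear, as $\mathcal B$ is constant and $\partial(1)=0$. For the reverse inclusion take $z=\sum_i b_i\otimes k_i$ constant, with the $b_i$ chosen $\mathcal C$-linearly independent in $\mathcal B$; completing them to a $\mathcal C$-basis $\{b_\beta\}$ of $\mathcal B$ exhibits $\mathcal B\otimes_{\mathcal C}\mathcal K$ as $\bigoplus_\beta b_\beta\otimes\mathcal K$, so $\partial z=\sum_i b_i\otimes\partial k_i=0$ forces $\partial k_i=0$, i.e. $k_i\in\mathcal C$, whence $z\in\mathcal B\otimes 1$. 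This is exactly the geometric content of Kolchin's linear disjointness lemma invoked above. The same computation applied to localizations gives $C_{\mathcal B_f\otimes_{\mathcal C}\mathcal K}=\mathcal B_f$.

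The second, and harder, step is to identify the underlying space of $\DiffSpec(\mathcal B\otimes_{\mathcal C}\mathcal K)$ with $\Spec(\mathcal B)$ and to match the structure sheaves. I would show that every prime differential ideal $\mathfrak p\subset\mathcal B\otimes_{\mathcal C}\mathcal K$ has the form $\mathfrak p=(\mathfrak p\cap\mathcal B)\otimes_{\mathcal C}\mathcal K$. Writing $\mathfrak q=\mathfrak p\cap\mathcal B$, the inclusion $\mathfrak q\otimes_{\mathcal C}\mathcal K\subseteq\mathfrak p$ is immediate. For the converse take $z=\sum_{i=1}^n b_i\otimes k_i\in\mathfrak p$ with $k_1,\ldots,k_n$ linearly independent over the constant field $\mathcal C$; then their Wronskian $W\in\mathcal K$ is nonzero, hence invertible in the field $\mathcal K$. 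Applying the derivation repeatedly gives $\partial^j z=\sum_i b_i\otimes\partial^j k_i\in\mathfrak p$ for $0\le j\le n-1$, a linear system whose coefficient matrix $(1\otimes\partial^j k_i)$ has invertible determinant $1\otimes W$; solving by Cramer's rule yields $b_i\otimes 1\in\mathfrak p$, so $b_i\in\mathfrak q$ and $z\in\mathfrak q\otimes_{\mathcal C}\mathcal K$. Conversely, for a prime $\mathfrak q\subset\mathcal B$ the quotient $(\mathcal B/\mathfrak q)\otimes_{\mathcal C}\mathcal K$ is a domain because $\mathcal C$ is algebraically closed of characteristic zero, so $\mathfrak q\otimes_{\mathcal C}\mathcal K$ is a prime differential ideal. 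This sets up a bijection $\mathfrak p\mapsto\mathfrak p\cap\mathcal B$ which one checks is a homeomorphism for the Kolchin and Zariski topologies and which, on distinguished opens, matches $\mathcal C_X$ with $\mathcal O_{\Spec\mathcal B}$ by the constants computation of the previous step.

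I expect the Wronskian argument of the third step to be the main obstacle, since it is what forces a differential prime to be extended from $\mathcal B$ and thereby pins down both the topology and the sheaf of constants; the hypothesis that $\mathcal C$ is algebraically closed in characteristic zero is essential here to keep the extended ideals prime. Granting these steps, $\Const(X)\cong\Const(Y\times_{\mathcal C}\DiffSpec(\mathcal K))\cong Y$ is a scheme, so $X$ is almost-constant, and composing with the splitting isomorphism gives $X\xrightarrow{\sim}Y\times_{\mathcal C}\DiffSpec(\mathcal K)\xrightarrow{\sim}\Const(X)\times_{\mathcal C}\DiffSpec(\mathcal K)$, as required.
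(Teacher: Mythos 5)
Your overall strategy is sound, and in fact the paper gives no argument at all for this proposition (its ``proof'' is a citation of Kovacic's Proposition 28.2), so a self-contained argument is worthwhile; your two computations --- $C_{\mathcal B\otimes_{\mathcal C}\mathcal K}=\mathcal B$, and the Wronskian argument showing that every differential ideal of $\mathcal B\otimes_{\mathcal C}\mathcal K$ is extended from $\mathcal B$, with primality of the extended ideals supplied by $\mathcal C$ being algebraically closed --- are exactly the heart of the matter. The genuine gap is in the last sentence of your third step, where you match the sheaves ``on distinguished opens\dots by the constants computation of the previous step''. For a differential scheme, the sections over the open set corresponding to $D(f)$ are \emph{not} $\mathcal B_f\otimes_{\mathcal C}\mathcal K$: as the paper itself stresses in its subsection on global sections, the section rings of $\DiffSpec(\mathcal A)$ are in general strictly larger than the corresponding localizations of $\mathcal A$. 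Concretely, for $\mathcal B=\mathcal C[x]$ the only differential primes of $\mathcal K[x]$ are $(0)$ and $(x-a)$ with $a\in\mathcal C$, so $1/(x-k)$ is a global section of the structure sheaf of $\DiffSpec(\mathcal K[x])$ for every $k\in\mathcal K\setminus\mathcal C$, and it does not lie in $\mathcal K[x]$. Hence knowing $C_{\mathcal B_f\otimes_{\mathcal C}\mathcal K}=\mathcal B_f$ does not compute $\mathcal C_X$ on that open, and this sheaf comparison is precisely the point where the proposition could fail.

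The gap can be closed with tools you already have, but at the level of stalks rather than sections. The product projection $Y\times_{\mathcal C}\DiffSpec(\mathcal K)\to Y$ induces a morphism of locally ringed spaces $\Const(X)\to Y$ whose underlying map of spaces is your homeomorphism $\mathfrak p\mapsto\mathfrak p\cap\mathcal B$, so it suffices to prove that each stalk map $\mathcal B_{\mathfrak q}\to C_{(\mathcal B\otimes_{\mathcal C}\mathcal K)_{\mathfrak q\otimes\mathcal K}}$ is bijective. Injectivity is an annihilator/flatness check. For surjectivity, let $c$ be a constant of the stalk; after dividing out the kernel of the localization map (a differential ideal, hence extended from $\mathcal B$ by your own argument), the set $I=\{w\in\mathcal B\otimes_{\mathcal C}\mathcal K : wc\in\mathcal B\otimes_{\mathcal C}\mathcal K\}$ is an ideal, and it is a \emph{differential} ideal because $\partial(wc)=(\partial w)\,c$ when $\partial c=0$; it contains a denominator of $c$, so it is not contained in $\mathfrak q\otimes\mathcal K$, hence it equals $I_0\otimes_{\mathcal C}\mathcal K$ with $I_0\not\subseteq\mathfrak q$. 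Picking $f\in I_0\setminus\mathfrak q$ gives $fc\in C_{\mathcal B\otimes_{\mathcal C}\mathcal K}=\mathcal B$, and therefore $c=(fc)/f$ lies in the image of $\mathcal B_{\mathfrak q}$, as required. A second, minor, flaw: reducedness of the differential scheme $X$ is a condition on its structure sheaf, not on the ring $\mathcal B\otimes_{\mathcal C}\mathcal K$ (a nonzero nilpotent could be a differential zero, invisible in every stalk), so your one-line deduction that $\mathcal B$ is reduced is not licit as stated; it is repaired by embedding $\mathcal B$ into the product of the stalks $(\mathcal B\otimes_{\mathcal C}\mathcal K)_{\mathfrak q\otimes\mathcal K}$ over all primes $\mathfrak q\subset\mathcal B$, using that every such prime does extend to a differential prime --- and in any case reducedness of $Y$ is never used again in your argument.
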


\begin{proof}
\cite{Kov2} proposition  28.2.
\end{proof}

%

\subsection{Strongly Normal Extensions}

  Strongly normal extensions are introduced by Kolchin \cite{Ko0}.
They are differential field extensions whose group of automorphisms
admits an structure of algebraic group. This notion has been
recently characterized in terms of differential schemes by
Kovacic \cite{Kov3}. This characterization is more convenient
for our presentation of differential Galois theory, so that
we will use it as a new definition. 

\begin{definition}
$\mathcal K\to\mathcal L$ is a strongly normal extension if and only
if the differential scheme  $\DiffSpec(\mathcal L\otimes_{\mathcal
K} \mathcal L)$ splits. In such case denote $\Gal(\mathcal
L/\mathcal K)$ to the scheme $\Const(\DiffSpec(\mathcal
L\otimes_{\mathcal K}\mathcal L))$.
\end{definition}

  Note that prime differential ideals of $\mathcal L\otimes_{\mathcal
K} \mathcal L$ whose quotient field is $\mathcal L$, correspond to 
$\mathcal K$-automorphisms of $\mathcal L$.
If $\sigma$ is a $\mathcal K$-automorphism of $\mathcal L$, the kernel 
of the differential $\mathcal K$-algebra morphism,
$$\mathcal L \otimes_{\mathcal K}\mathcal L \to \mathcal L,\quad a\otimes b \mapsto a\sigma(b),$$
is a prime differential ideal $\mathfrak p_{\sigma}$. Then, the set of rational
points of $\DiffSpec(\mathcal L\otimes_{\mathcal
K} \mathcal L)$ is naturally endowed with a group structure. This group
structure descent to a structure of $\mathcal C$-algebraic group precisely
when  $\DiffSpec(\mathcal L\otimes_{\mathcal
K} \mathcal L)$ splits. In such case the space of constant $\Gal(\mathcal L/\mathcal K)$ is
endowed with an structure of algebraic group. This problem is axhaustively treated in \cite{Kov3}.


  This approach gives us a parallelism with Galois extensions in classical
theory of fields. Note that a field extension $k\to K$ is a Galois
extension if and only if $\Spec(K\otimes_k K) = G \times_k \Spec(K)$
(see \cite{Sa}).
  We also obtain the scheme structure of the Galois group:
it is the scheme of constants of $\DiffSpec(\mathcal
L\otimes_{\mathcal K}\mathcal L)$.

\subsection{Galois Correspondence for Strongly Normal Extensions}

Let us consider as above $\mathcal K\subset\mathcal L$ a strongly
normal extension of differential fields. To each subgroup $H\subset
\Gal(\mathcal L/\mathcal K)$ we assign the intermediate extension
$\mathcal K\subset \mathcal L^H\subset \mathcal L$ of
$H$-invariants. Reciprocally to each intermediate extension
$\mathcal K \subset \mathcal F \subset \mathcal L$ we assign the
subgroup $\Gal(\mathcal L/\mathcal F)\subset \Gal(\mathcal
L/\mathcal K)$ of automorphisms of $\mathcal L$ that are
differential $\mathcal F$-algebra automorphism. The Galois
correspondence between closed subgroups and intermediate extensions
is first shown by Kolchin (see \cite{Ko0} and \cite{Ko1}).

\index{Galois!correspondence}
\begin{theorem}\label{ThGaloisCorrespondence}
 The maps
$$ H \mapsto \mathcal L^H \subset \mathcal L$$
from group subschemes of $\Gal(\mathcal L/\mathcal K)$ to
intermediate differential extensions and
$$\mathcal F \mapsto \Gal(\mathcal L/\mathcal F) \subset \Gal(\mathcal L/\mathcal K)$$
from intermediate differential extensions subgroup schemes, are
bijective and inverse each other. The extension $\mathcal K\subset
\mathcal F$ is strongly normal if and only if $\Gal(\mathcal
L/\mathcal F)$ is a normal subgroup of $\Gal(\mathcal L/\mathcal
K)$. In such case $\Gal(\mathcal F/\mathcal K)$ is isomorphic to the
quotient $\Gal(\mathcal L/\mathcal K)/\Gal(\mathcal L/\mathcal F)$.
\end{theorem}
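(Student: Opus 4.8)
The plan is to treat separately the well-definedness of the two assignments, their mutual inverseness, and finally the normality statement together with the quotient identification.

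\emph{Well-definedness.} First I would check that $\mathcal F\mapsto\Gal(\mathcal L/\mathcal F)$ really lands among subgroup schemes of $\Gal(\mathcal L/\mathcal K)$; the assignment $H\mapsto\mathcal L^H$ is unproblematic. The crux is a restriction lemma: for every intermediate differential field $\mathcal K\subset\mathcal F\subset\mathcal L$ the extension $\mathcal F\subset\mathcal L$ is again strongly normal. Indeed there is a canonical surjection of differential $\mathcal L$-algebras $\mathcal L\otimes_{\mathcal K}\mathcal L\twoheadrightarrow\mathcal L\otimes_{\mathcal F}\mathcal L$, so $\DiffSpec(\mathcal L\otimes_{\mathcal F}\mathcal L)$ is a closed reduced differential subscheme of $\DiffSpec(\mathcal L\otimes_{\mathcal K}\mathcal L)$. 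Since the latter splits it is almost-constant, and closed reduced subschemes of almost-constant schemes are almost-constant; being reduced, such a subscheme splits again by Proposition \ref{Kov28.2}. Hence $\mathcal L/\mathcal F$ is strongly normal, $\Gal(\mathcal L/\mathcal F)=\Const(\DiffSpec(\mathcal L\otimes_{\mathcal F}\mathcal L))$ is an algebraic group, and the closed immersion exhibits it as a closed subgroup of $\Gal(\mathcal L/\mathcal K)$.

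\emph{The maps are mutually inverse.} For the identity $\mathcal L^{\Gal(\mathcal L/\mathcal F)}=\mathcal F$ I would use faithfully flat descent along $\mathcal F\subset\mathcal L$: an element $a\in\mathcal L$ lies in $\mathcal F$ if and only if $a\otimes 1=1\otimes a$ in $\mathcal L\otimes_{\mathcal F}\mathcal L$. A $\mathcal C$-point $\sigma$ of $\Gal(\mathcal L/\mathcal F)$ is the kernel of $a\otimes b\mapsto a\,\sigma(b)$, under which $a\otimes 1-1\otimes a\mapsto a-\sigma(a)$; since $\mathcal C$ is algebraically closed the algebraic group $\Gal(\mathcal L/\mathcal F)$ has enough $\mathcal C$-points to detect nonvanishing of the global section $a\otimes 1-1\otimes a$, so $a$ fixed by all $\sigma$ forces $a\in\mathcal F$. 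Conversely, given a closed subgroup $H\subset\Gal(\mathcal L/\mathcal K)$ put $\mathcal F=\mathcal L^H$; then $H\subseteq\Gal(\mathcal L/\mathcal F)$ tautologically, and by the previous step $\mathcal L^{\Gal(\mathcal L/\mathcal F)}=\mathcal F=\mathcal L^H$. Inclusion-reversal of both maps is immediate from the definitions.

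\emph{Recovering $H$ (the main obstacle).} The step I expect to be hardest is the reverse inclusion $\Gal(\mathcal L/\mathcal F)\subseteq H$ for $\mathcal F=\mathcal L^H$. The splitting gives an isomorphism $\DiffSpec(\mathcal L\otimes_{\mathcal K}\mathcal L)\cong G\times_{\mathcal C}\DiffSpec(\mathcal L)$ with $G=\Gal(\mathcal L/\mathcal K)$, realizing the second factor as a $G$-torsor over $\mathcal L$. By Rosenlicht's theorem the geometric quotient $G/H$ exists, and passing to the quotient identifies $\mathcal L^H$ with the functions pulled back from $G/H$; comparing transcendence degrees yields $\dim\Gal(\mathcal L/\mathcal F)=\dim G-\dim(G/H)=\dim H$, so $H\subseteq\Gal(\mathcal L/\mathcal F)$ is an inclusion of closed subgroups of equal dimension. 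Producing from the tensor $T$ of Chevalley's theorem \ref{ThChevalley} in the linear case, or from the defining data of $G/H$ in general, invariant rational elements of $\mathcal L^H$ that are stabilized exactly by $H$, one forces every $\sigma\in\Gal(\mathcal L/\mathcal F)$ into $H$; hence $\Gal(\mathcal L/\mathcal F)=H$ and the bijection is complete.

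\emph{Normality and the quotient.} Finally, $G$ acts on intermediate fields by $\sigma\cdot\mathcal F=\sigma(\mathcal F)$, and under the correspondence $\sigma(\mathcal F)$ corresponds to the conjugate $\sigma\,\Gal(\mathcal L/\mathcal F)\,\sigma^{-1}$. The extension $\mathcal K\subset\mathcal F$ is strongly normal exactly when every $\mathcal K$-automorphism of $\mathcal L$ maps $\mathcal F$ into itself, i.e. when $\mathcal F$ is $G$-stable, which by the conjugation formula is equivalent to $\Gal(\mathcal L/\mathcal F)\triangleleft G$. In that case restriction of automorphisms gives a morphism $G\to\Gal(\mathcal F/\mathcal K)$ with kernel $\Gal(\mathcal L/\mathcal F)$; surjectivity follows because every $\mathcal K$-automorphism of $\mathcal F$ extends to $\mathcal L$ (the obstruction sits in an $H^1$ that vanishes over the algebraically closed field $\mathcal C$), and the Rosenlicht quotient $G/\Gal(\mathcal L/\mathcal F)$ then supplies the isomorphism $\Gal(\mathcal F/\mathcal K)\cong G/\Gal(\mathcal L/\mathcal F)$ of algebraic groups.
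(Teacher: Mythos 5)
The first thing to note is that the paper does not prove Theorem \ref{ThGaloisCorrespondence} at all: it quotes it from Kolchin \cite{Ko0}, \cite{Ko1}, and later (Proposition \ref{Prop318}) from Kovacic \cite{Kov2}, Theorem 20.5. So your attempt is measured against those proofs, and against them it has a genuine gap exactly at the step you yourself single out as the main obstacle, namely $\Gal(\mathcal L/\mathcal L^H)\subseteq H$. Two things go wrong there. First, the identification of $\mathcal L^H$ with ``the functions pulled back from $G/H$'' is asserted, not proved, and it is essentially equivalent to the statement being established. Second, even granting it, the transcendence-degree count only gives $\dim\Gal(\mathcal L/\mathcal L^H)=\dim H$, and an inclusion $H\subseteq\Gal(\mathcal L/\mathcal L^H)$ of closed subgroups of equal dimension forces only equality of identity components; the two groups may still differ by finitely many components, so the correspondence could fail precisely for non-connected $H$ (already $H=\{e\}$ inside a finite cyclic subgroup is undetected by dimension). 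Your closing sentence --- that one ``forces every $\sigma\in\Gal(\mathcal L/\mathcal F)$ into $H$'' by producing invariant elements stabilized exactly by $H$ --- is a restatement of the conclusion, not an argument. The actual content of Kolchin's and Kovacic's proofs is the construction, through the torsor structure, of elements of $\mathcal L$ itself (not merely of the function field of $G/H$) that are $H$-invariant and are moved by every $\sigma\notin H$, for instance from coordinates of the Chevalley tensor of Theorem \ref{ThChevalley}; none of that work appears in your proposal.

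There is a second, more local gap in your restriction lemma. Proposition \ref{Kov28.2} states that reduced and split implies almost-constant; you use the converse, ``reduced and almost-constant implies split'', which is false in general: for any proper differential extension $\mathcal K\subset\mathcal M$ with $C_{\mathcal M}=\mathcal C$, the scheme $\DiffSpec(\mathcal M)$ is reduced and almost-constant (its space of constants is $\Spec(\mathcal C)$) yet does not split, since splitting would force $\mathcal M\simeq\mathcal K$. What is true, and what you actually need, is that a closed reduced differential subscheme of a split scheme $G\times_{\mathcal C}\DiffSpec(\mathcal L)$ splits; this is the differential analogue of Lemma \ref{lmLV3.5} (radical differential ideals of $\mathcal O_G(U)\otimes_{\mathcal C}\mathcal L$ are generated by their constants), and with that substitution your lemma becomes sound. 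Finally, in the normality part you assert without proof both the criterion ``$\mathcal K\subset\mathcal F$ is strongly normal if and only if $\mathcal F$ is stable under $\Gal(\mathcal L/\mathcal K)$'' and the surjectivity of the restriction morphism onto $\Gal(\mathcal F/\mathcal K)$; the remark that ``the obstruction sits in an $H^1$ that vanishes'' does not, by itself, extend a differential automorphism of $\mathcal F$ to one of $\mathcal L$. These are exactly the points Kolchin's proof has to labour over, so they cannot be waved through.
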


\subsection{Lie Extensions}

  The algebraic differential approach to Lie-Vessiot systems,
in terms of differential fields, was initiated by K. Nishioka
\cite{Ni3}. He relates the differential extensions generated by
solutions of a Lie-Vessiot system with \emph{algebraic dependence on
initial conditions}; a concept introduced by H. Umemura
\cite{Umemura1985} in relation with the analysis of Painlev\'e differential
equations. He also
introduces the notion of \emph{Lie extension}, a differential field
extension that carry the infinitesimal structure of  a Lie-Vessiot
system. Here we review some of his results, in order to relate them
with the Galois theory of automorphic systems. Consider a
differential field $\mathcal K$ of characteristic zero with
algebraically closed constant field $\mathcal C$. Any considered
differential extension of $\mathcal K$ is a subfield of certain
fixed universal extension of $\mathcal K$.

\begin{definition}\label{DefRationalDependence}
 We say that a differential extension
$\mathcal K  \subset \mathcal R$ depends rationally on arbitrary
constants if there exist a differential field extension $\mathcal
K\subset \mathcal M$ such that $\mathcal R$ and $\mathcal M$ are
free over $\mathcal K$ and $\mathcal R\cdot \mathcal M = \mathcal M
\cdot C_{\mathcal R \cdot \mathcal M}$.
\end{definition}




  For  a differential extension $\mathcal K \subset \mathcal L$
denote $\Der_{\mathcal K}(\mathcal L)$ the space of derivations of
$\mathcal L$ that vanish over $\mathcal K$. This space is a
$\mathcal K$-Lie algebra.

\begin{definition}\label{DefLieExtension}
  We say that a differential extension $\mathcal K\subset \mathcal
  L$ is a Lie extension if $\mathcal C = C_{\mathcal L}$, there
  exists a $\mathcal C$-Lie sub algebra $\mathfrak g\subset \Der_{\mathcal
  K}(\mathcal L)$ such that $[\partial, \mathfrak g] \subset
  \mathcal K \mathfrak g$, and $\mathcal L \mathfrak g =
  \Der_{\mathcal K}(\mathcal L)$.
\end{definition}

\begin{theorem}[\cite{Ni3}]\label{ThNishioka2}
  Suppose that $\mathcal K$ is algebraically closed. Then every
  intermediate differential field of a strongly normal extension of $\mathcal K$
  is a Lie extension.
\end{theorem}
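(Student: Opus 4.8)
The plan is to combine the Galois correspondence (Theorem \ref{ThGaloisCorrespondence}) with the algebraic structure of the Galois group to produce, on the intermediate field, a frame of derivations of ``Lie type''. Write $\mathcal K\subset\mathcal F\subset\mathcal L$ with $\mathcal K\subset\mathcal L$ strongly normal, set $G=\Gal(\mathcal L/\mathcal K)$ and, via the correspondence, $H=\Gal(\mathcal L/\mathcal F)$, so that $\mathcal F=\mathcal L^{H}$. I must verify the three clauses of Definition \ref{DefLieExtension} for $\mathcal K\subset\mathcal F$: equality of constants, existence of a $\mathcal C$-Lie subalgebra $\mathfrak g\subset\Der_{\mathcal K}(\mathcal F)$ with $[\partial,\mathfrak g]\subset\mathcal K\mathfrak g$, and the spanning condition $\mathcal F\mathfrak g=\Der_{\mathcal K}(\mathcal F)$. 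The constants are immediate: a strongly normal extension has no new constants, so $C_{\mathcal L}=\mathcal C$, and since $\mathcal C\subset\mathcal K\subset\mathcal F\subset\mathcal L$ we get $C_{\mathcal F}=\mathcal C$.

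For the Lie algebra I would first use that $\DiffSpec(\mathcal L\otimes_{\mathcal K}\mathcal L)$ splits, so the coaction dual to the $G$-action on $\mathcal L$ takes values in the constants of that product; differentiating it along a tangent vector $\xi\in\mathcal R(G)$ at the identity yields a $\mathcal K$-derivation $D_{\xi}$ of $\mathcal L$, with $\xi\mapsto D_{\xi}$ a homomorphism of $\mathcal C$-Lie algebras. To descend to $\mathcal F$, realize $\mathcal F$ as the function field $\mathcal K(V)$ of an irreducible $\mathcal K$-variety $V$ which becomes isomorphic to the homogeneous space $G/H$ after base change to $\mathcal L$; the fundamental fields $\mathcal R(G,G/H)$ of Definition \ref{DFfundamentalfield} then restrict to $\mathcal K$-derivations of $\mathcal F$, and I take $\mathfrak g$ to be their $\mathcal C$-span. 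This is a finite-dimensional $\mathcal C$-Lie subalgebra of $\Der_{\mathcal K}(\mathcal F)$ since the bracket of fundamental fields is again a fundamental field with $\mathcal C$-structure constants. The spanning condition is then a dimension count with transitivity: one has $\dim_{\mathcal F}\Der_{\mathcal K}(\mathcal F)=\mathrm{tr.deg}(\mathcal F/\mathcal K)=\dim(G/H)=\dim_{\mathcal C}\mathfrak g$, and because $G$ acts transitively on the homogeneous model the fundamental fields span the tangent space at the generic point, hence are $\mathcal F$-linearly independent and $\mathcal F\mathfrak g=\Der_{\mathcal K}(\mathcal F)$.

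The main obstacle is the relation $[\partial,\mathfrak g]\subset\mathcal K\mathfrak g$. For a basis element $A_j\in\mathfrak g$ one checks directly that $[\partial,A_j]$ annihilates $\mathcal K$: for $k\in\mathcal K$ we have $[\partial,A_j](k)=\partial(A_j k)-A_j(\partial k)=-A_j(\partial k)=0$ since $\partial k\in\mathcal K$. Hence $[\partial,A_j]\in\Der_{\mathcal K}(\mathcal F)=\mathcal F\mathfrak g$, so $[\partial,A_j]=\sum_i g_{ij}A_i$ with $g_{ij}\in\mathcal F$, and the whole point is to prove $g_{ij}\in\mathcal K$. This is where strong normality is indispensable: the frame $\{A_i\}$ is $\mathcal C$-rational and equivariant for the $G$-action, while every $\sigma\in G$ acts as a differential automorphism commuting with $\partial$; applying $\sigma$ to the identity defining the $g_{ij}$ therefore leaves the coefficients invariant, so they lie in the fixed field $\mathcal L^{G}=\mathcal K$. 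Equivalently, the splitting of $\DiffSpec(\mathcal L\otimes_{\mathcal K}\mathcal L)$ descends $\partial$ to a Lie--Vessiot system on the $\mathcal C$-model $G/H$, whose connection coefficients are by construction in $\mathcal K$. Making this descent precise — in particular controlling the $g_{ij}$ and proving their $\mathcal K$-rationality, rather than merely $\mathcal F$-rationality — is the delicate point I expect to occupy the bulk of the argument; once it is in hand, the three clauses of Definition \ref{DefLieExtension} are all established and $\mathcal K\subset\mathcal F$ is a Lie extension.
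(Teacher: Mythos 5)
You should first note a point of reference: the paper itself gives no proof of this theorem --- it is quoted from Nishioka \cite{Ni3} --- so your attempt can only be compared with the correct argument and with the closest in-paper relative, the proof of case (b) of Theorem \ref{Nihioka}, which carries out precisely the strategy you are reaching for in the untwisted situation. Your overall plan (model $\mathcal F$ on a homogeneous space and use fundamental fields) is the right one, but it has a genuine gap: you never use the hypothesis that $\mathcal K$ is algebraically closed, and the step where it must be used is exactly the one you assert without proof, namely that the fundamental fields of $\mathcal R(G,G/H)$ ``restrict to $\mathcal K$-derivations of $\mathcal F$.'' A priori you only have a $\mathcal K$-model $V$ of $\mathcal F$ together with an isomorphism $V_{\mathcal L}\simeq (G/H)_{\mathcal L}$; transporting the fundamental fields across this isomorphism yields vector fields on $V_{\mathcal L}$ that are only $\mathcal L$-rational, and whether they descend to $\mathcal K$-rational fields on $V$ is precisely the question of whether $V$ is an untwisted form of $G/H$. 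Algebraic closedness of $\mathcal K$ is what kills the twist: it guarantees a $\mathcal K$-rational point on the torsor underlying $\mathcal L$, hence a birational identification of $\mathcal L$ with $\mathcal K(G_{\mathcal K})$ under which $\partial$ becomes an automorphic derivation $\partial_0+\vec A$ with $\vec A\in\mathcal R(G)\otimes_{\mathcal C}\mathcal K$; then $\mathcal F=\mathcal L^{H}$ is literally the function field of $(G/H)_{\mathcal K}$ with the induced Lie--Vessiot derivation, and the fundamental fields are defined over $\mathcal C\subset\mathcal K$. (This is also what hypothesis (b) of Theorem \ref{Nihioka} buys in place of algebraic closedness: Galois group equal to $G$ forces the model to be untwisted. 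Without one of these hypotheses the descent step can fail, which is why the theorem carries the hypothesis at all.)

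The repair you sketch for proving $g_{ij}\in\mathcal K$ --- Galois equivariance --- does not work, for two reasons: (i) an element $\sigma\in G=\Gal(\mathcal L/\mathcal K)$ sends $\mathcal F$ to $\mathcal L^{\sigma H\sigma^{-1}}$, so it does not act on $\mathcal F$, nor on the identity defining the $g_{ij}$, unless $H$ is normal in $G$; (ii) the fundamental fields are not invariant under the $G$-action on $G/H$ in any case --- they transform through the adjoint representation --- so the premise that the frame is equivariant is false. In the correct (untwisted) setup no such argument is needed: writing $\partial|_{\mathcal F}=\partial_0+\sum_j f_j\vec X_j$ with $f_j\in\mathcal K$, one gets $[\partial,\vec X_i]=\sum_j f_j[\vec X_j,\vec X_i]\in\mathcal K\,\mathcal R(G,G/H)$ because the bracket of two fundamental fields is again a fundamental field; the condition $[\partial,\mathfrak g]\subset\mathcal K\mathfrak g$ is automatic rather than the delicate point. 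Finally, a smaller error: your dimension count $\dim_{\mathcal C}\mathfrak g=\dim(G/H)$ is false in general (for $G=SL(2,\mathcal C)$ and $H$ a Borel subgroup, $\mathcal R(G,G/H)$ is three-dimensional while $G/H$ is $\mathbb P(1,\mathcal C)$), and the fundamental fields need not be $\mathcal F$-linearly independent; the spanning condition $\mathcal F\mathfrak g=\Der_{\mathcal K}(\mathcal F)$ should be deduced from transitivity of the action alone.
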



\subsection{Schemes with Derivation}

  In this section we present some facts of the theory of schemes
with derivations. This is mainly the point of view of \cite{Bu}.
However we consider only regular derivations whereas A. Buium
considers the more general case of meromorphic derivations. Our
purpose is to relate schemes with derivations to differential
schemes. Note that the regularity of the derivation is essential to
Theorem \ref{C2THE2.3.2} below; hence it does not hold under Buium's
definition.

Let $X$ be a scheme. A derivation $\partial_X$ of the structure
sheaf $\mathcal O_X$ is a law that assigns to each open subset
$U\subset X$ a derivation $\partial_X(U)$ of the ring $\mathcal
O_X(U)$. This law is assumed to be compatible with restriction
morphisms.

\begin{definition}\index{scheme!with derivation}
  A scheme with derivation is a pair $(X,\partial_X)$ consisting of a scheme $X$
and a derivation $\partial_X$ of the structure sheaf $\mathcal O_X$.
\end{definition}

  Thus, a scheme with derivation is a scheme such that its structure
sheaf is a sheaf of differential rings. A \emph{morphism of schemes
with derivation} is a scheme morphism such that induces a morphism
of sheaves of differential rings.

  Let $\mathcal K$ be a differential field. A \emph{$\mathcal
K$-scheme with derivation} is a scheme with derivation
$(X,\partial)$ together with a morphism
$(X,\partial)\to(\Spec(\mathcal K),\partial)$. Thus, the structure
sheaf of $X$ is a sheaf of differential $\mathcal K$-algebras.

  Let $(X,\partial_X)$, $(Y, \partial_Y)$ be two $\mathcal K$-schemes with
derivation. Then the direct product $X\times_{\mathcal K}Y$ admits
the derivation $\partial_X\otimes 1 + 1\otimes \partial_Y$. Then,
$$(X\times_{\mathcal K} Y, \partial_X\otimes 1 + 1\otimes \partial_Y)$$
is the direct product of $(X,\partial_X)$ and $(Y,\partial_Y)$ in
the category of schemes with derivation.

\subsection{Differential Schemes and Schemes with Derivation}

\begin{theorem}\label{C2THE2.3.2}
  Given a scheme with derivation $(X,\partial)$ there exist a
unique topological subspace $X' \subset X$ verifying
\begin{enumerate}
\item[(1)] $X'$ endowed with the structure sheaf $\mathcal
O_X|_{X'}$ and the derivation $\partial|_{X'}$ is a differential
scheme. This differential scheme will be denoted
$\Diff(X,\partial)$.
\item[(2)] For each open affine subset $U\subset X$, $U\cap X'
\simeq \DiffSpec(\mathcal O_X(U),\partial)$.
\end{enumerate}
Furthermore, each morphism of schemes with derivation $\mathcal
(X,\partial_X) \to (Y, \partial_Y)$ induces a morphism of
differential schemes $\Diff(X,\partial_X) \to \Diff(Y,\partial_Y)$.
The assignation $(X,\partial)\leadsto \Diff(X,\partial)$ is
functorial.
\end{theorem}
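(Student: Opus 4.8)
The plan is to single out the underlying set of $\Diff(X,\partial)$ by a purely local condition on stalks, so that independence of the chosen affine chart is automatic, and then to import the affine results of the previous subsections to equip this set with the structure of a differential scheme. Concretely, I would set
$$X' = \{x\in X : \mathfrak m_x \text{ is a differential ideal of the stalk } \mathcal O_{X,x}\},$$
where $\mathfrak m_x$ is the maximal ideal of the local ring $\mathcal O_{X,x}$, which carries the germ of $\partial$. Since both the stalk and its derivation are intrinsic to $(X,\partial)$, membership in $X'$ does not depend on any affine neighbourhood of $x$.

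The first key step is to identify $X'$ on affines, which gives property (2). On an open affine $U=\Spec(\mathcal A)$ a point $x$ corresponds to a prime $\mathfrak p$ with $\mathcal O_{X,x}=\mathcal A_{\mathfrak p}$ and $\mathfrak m_x = \mathfrak p\mathcal A_{\mathfrak p}$, and the germ derivation is the localization of $\partial_{\mathcal A}$. I would verify the elementary lemma that $\mathfrak p$ is a differential ideal of $\mathcal A$ if and only if $\mathfrak p\mathcal A_{\mathfrak p}$ is a differential ideal of $\mathcal A_{\mathfrak p}$: one direction is the stability of differential ideals under localization, and the other is that $\mathfrak p = \mathcal A\cap \mathfrak p\mathcal A_{\mathfrak p}$ is the contraction of a differential ideal along the differential map $\mathcal A\to\mathcal A_{\mathfrak p}$, hence differential. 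Consequently $U\cap X' = \DiffSpec(\mathcal A)$ as sets.

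Endowing $X'$ with the subspace topology and the restricted sheaf $\mathcal O_X|_{X'}$, I would then invoke the affine results already established: on each chart the subspace topology induced on $\DiffSpec(\mathcal A)$ is the Kolchin topology, and $\mathcal O_X|_{\DiffSpec(\mathcal A)}$ coincides with the structure sheaf of $\DiffSpec(\mathcal A)$. As these charts cover $X'$, the triple $(X',\mathcal O_X|_{X'},\partial|_{X'})$ is locally isomorphic to an affine differential scheme and is therefore a differential scheme, yielding (1) together with the isomorphisms of (2). Uniqueness is then immediate: any subspace satisfying (2) must agree with $\DiffSpec(\mathcal A)$ on every affine chart, so it has the same underlying set, topology, and structure sheaf as $X'$.

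For functoriality, given a morphism $f\colon(X,\partial_X)\to(Y,\partial_Y)$ of schemes with derivation, the induced maps on stalks $\mathcal O_{Y,f(x)}\to\mathcal O_{X,x}$ are local homomorphisms of differential rings, so $\mathfrak m_{f(x)}$ is the contraction of $\mathfrak m_x$ and hence a differential ideal whenever $\mathfrak m_x$ is. Thus $f(X')\subset Y'$, and restricting $f$ together with $f^\sharp$ produces a morphism $\Diff(X,\partial_X)\to\Diff(Y,\partial_Y)$ of differential schemes; compatibility with composition and identities is inherited directly from that of $f$. The main obstacle is the consistency of the local construction across overlapping charts, but this is precisely what the intrinsic stalk characterization of $X'$ and the affine comparison results of the previous subsections resolve; once those are in hand, the global statement reduces to a routine patching argument.
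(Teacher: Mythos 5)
Your proposal is correct, but it takes a genuinely different route from the paper. The paper constructs $X'$ extrinsically: it fixes an affine covering $\{U_i\}$, sets $X' = \bigcup_i \DiffSpec(\mathcal O_X(U_i))$, and then spends most of the proof verifying chart-independence --- that $U_i\cap X' = U_i'$ and, more generally, that $U\cap X' \simeq \DiffSpec(\mathcal O_X(U))$ for \emph{every} affine open $U$ --- by comparing affine neighbourhoods inside overlaps; functoriality is then obtained by covering $Y$ and the fibres by affines and gluing the induced local morphisms. You instead define $X'$ intrinsically by the stalk condition that $\mathfrak m_x\subset\mathcal O_{X,x}$ be a differential ideal, which makes chart-independence automatic and shifts the technical weight onto the localization lemma: a prime $\mathfrak p\subset\mathcal A$ is differential if and only if $\mathfrak p\mathcal A_{\mathfrak p}$ is differential in $\mathcal A_{\mathfrak p}$ (extension via the quotient rule in one direction, contraction along the differential morphism $\mathcal A\to\mathcal A_{\mathfrak p}$ in the other --- both sound, and neither needs any Keigher hypothesis). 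Your route buys near-automatic uniqueness and a gluing-free proof of functoriality (differential local homomorphisms on stalks contract differential maximal ideals), and it proves en passant the pointwise characterization that the paper can only record \emph{after} the theorem, as item (c) of its corollary on differential points; in effect you invert the paper's logical order by taking that corollary as the definition. What the paper's patching argument buys in exchange is that it manipulates only affine differential spectra and restriction maps, staying entirely inside the toolkit already set up, whereas your argument additionally requires knowing that the stalk derivation is the localized derivation and that the earlier affine comparison results (Kolchin topology as subspace topology, and $\mathcal O_{X'} = \mathcal O_X|_{X'}$ on affines) can be cited chart by chart --- which they can, so both proofs stand.
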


\begin{proof}
  If $X$ is an affine scheme then the theorem holds,
and $$X' = \DiffSpec(\mathcal O_X(X)).$$

Let us consider the non-affine case. Let $(X,\partial_X)$ be an
scheme with derivation, and let $\{U_i\}_{i\in\Lambda}$ be a
covering of $X$ by affine subsets. The ring of sections $\mathcal
O_X(U_i)$ is a differential ring for al $i\in\Lambda$, and its
spectrum $\Spec(\mathcal O(U_i))$ is canonically isomorphic to
$U_i$.

 For each $i\in \Lambda$ we take take $U_i'$ the differential
spectrum $\DiffSpec(\mathcal O_X(U_i))$, which is a topological
subspace of $U_i$. Then $U_i'\subset U_i \subset X$. Let us define
$X' = \bigcup_{i\in\Lambda} U'_i$. Thus, $X'$ is a locally
differential ringed space with the sheaf $\mathcal O_X|_{X'}$.

  Let us prove that $X'$ is a differential scheme.

  First, let us prove that $U_i\cap X' = U'_i$. By construction
we have, $U'_i \subset U_i\cap X'$. Let us consider $x\in U_i\cap
X'$. It means that for certain $j\in\Lambda$, $x \in U_i\cap U_j$,
and $x\in U'_j\subset U_j$. Let us consider an affine neighborhood
$U_x$ of $x$ contained in such intersection. Because the inclusion
$U_x\to U_j$, we have that $x\in U'_x = \DiffSpec(\mathcal
O_X(U_x))$. Then we have inclusions and restriction as follows:
$$\xymatrix{U_x \ar[r]\ar[rd] & U_i \\ & U_j}\quad\quad \xymatrix{\mathcal O_X(U_x) & \ar[l] \mathcal O_X(U_i)
\\ & \mathcal O_X(U_j) \ar[ul] }\quad\quad\xymatrix{U_x' \ar[r]\ar[rd] & U'_i \\ & U'_j}$$
We conclude that $x\in U'_i$.

Secondly, let us prove that for any affine subset $U$, the
intersection $U\cap X'$ is an affine differential scheme
$\DiffSpec(\mathcal O_X(U))$. Let $U$ be an affine subset, and let
us denote $U'$ the differential spectrum $\DiffSpec(\mathcal O_X(U))
$ that we consider as a subset of $U$. Let us consider $x\in U'$.
Then, for certain $i\in\Lambda$, $x\in U\cap U_i$. Let $U_x$ be an
affine neighborhood of $x$ such that $U_x\subset U\cap U_i$. Denote
by $U'_x$ the differential spectrum of $\mathcal O_X(U_x)$. We have
that $U'_x\subset U'_i$, and then $x\in U\cap X'$. Reciprocally let
us consider $x\in U\cap X'$. Then for certain $i\in\Lambda$ we have
$x\in U'_i$. By the same argument, we have that $x\in U$ is a prime
differential ideal of $\mathcal O_X(U)$.

  The derivation $\partial$ induces derivations on the structure
sheaf of $U\cap X$ for each affine open subset $U\subset X$. Then,
it induce a derivation $\partial\colon \mathcal O_{X'}\to\mathcal
O_{X'}$ and $\Diff(X,\partial) = (X', \mathcal
O_{X}|_{X'},\partial|_{X'})$ is a differential scheme.

  Finally, let us consider $f\colon(X,\partial_X)\to(Y,\partial_Y)$ a
morphism of schemes with derivation. If we assume that they are both
affine schemes, then the theorem holds. In the general case, we
cover $Y$ by affine subsets $\{U_i\}_{i\in\Lambda}$, and each fiber
$f^{-1}(U_i)$ by affine subsets $\{V_{ij}\}_{i\in\Lambda, j\in\Pi}$.
Then $f$ is induced by the family of differential ring morphisms
$$f^\sharp_{ij} \colon\mathcal O_Y(U_j) \to \mathcal O_X(V_{ij}).$$
These morphisms induce morphisms,
$$f'_{ij}\colon V_{ij}'\to U'_{i},$$
of locally differential ringed spaces which coincide on the
intersections, and then they induce a unique morphism,
$$f'\colon X'\to Y'.$$
\end{proof}

\index{differential!point}
\begin{definition}
  Let $(X,\partial)$ be an scheme with derivation. We will say that
$x\in X$ is a differential point if $x\in \Diff(X,\partial)$.
\end{definition}

\begin{corollary}
  Let us consider $(X,\partial)$ an scheme with derivation, and $x$ a point
of $X$. Then; the following are equivalent:
\begin{enumerate}
\item[(a)] $x\in X$ is a differential point.
\item[(b)] For each affine neighborhood $U$, $x$ correspond to a
differential ideal of $\mathcal O_X(U)$.
\item[(c)] The maximal ideal $\mathfrak m_x$ of the local ring $\mathcal O_{X,x}$ is a
differential ideal.
\item[(d)] The derivation $\partial$ induces
a structure of differential field in quotient field $\kappa(x)$.
\item[(e)] The derivation $\partial$ restricts to the Zariski
closure of $x$.
\end{enumerate}
\end{corollary}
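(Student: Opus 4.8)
The plan is to fix, once and for all, an affine neighborhood $U=\Spec(\mathcal A)$ of $x$ and to translate each of the five conditions into a single assertion about the prime ideal $\mathfrak p\subset\mathcal A$ corresponding to $x$: namely, that $\mathfrak p$ is a differential ideal. Since the structure sheaf is a sheaf of differential rings, the stalk $\mathcal O_{X,x}$ is canonically the differential localization $\mathcal A_{\mathfrak p}$ (localization being admissible in differential rings), its maximal ideal is $\mathfrak m_x=\mathfrak p\,\mathcal A_{\mathfrak p}$, and the residue field is $\kappa(x)=\mathcal A_{\mathfrak p}/\mathfrak m_x$. With this dictionary in place the whole corollary becomes local algebra.

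First I would treat (a)$\Leftrightarrow$(b). By part (2) of Theorem~\ref{C2THE2.3.2} we have $U\cap X'=\DiffSpec(\mathcal O_X(U))$ for every affine open $U$, so $x\in X'$ means precisely that $\mathfrak p$ is a prime differential ideal of $\mathcal A$. If $x\in X'$ then $x\in U\cap X'$ for \emph{every} affine neighborhood $U$, which is exactly (b); conversely a single affine neighborhood in which $x$ corresponds to a differential ideal already places $x$ in $X'$, so the universal reading in (b) and the existential one coincide. Next, both (c)$\Leftrightarrow$(d) and (b)$\Leftrightarrow$(e) are instances of the elementary descent principle that a derivation of a ring $R$ passes to a quotient $R/I$ if and only if $\partial(I)\subseteq I$: applying it to $R=\mathcal O_{X,x}$, $I=\mathfrak m_x$, $R/I=\kappa(x)$ gives (c)$\Leftrightarrow$(d), while applying it affine-locally to $R=\mathcal A$ and $I=\mathfrak p$, the reduced ideal of the Zariski closure $\overline{\{x\}}=V(\mathfrak p)$, gives (b)$\Leftrightarrow$(e).

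The one genuinely computational link is (b)$\Leftrightarrow$(c), the compatibility of the differential-ideal property with localization and contraction; this is the step I expect to be the main obstacle, though it is entirely routine and in fact repeats the calculation already carried out for the localization of Keigher rings. For the forward direction, if $\mathfrak p$ is differential and $a/s\in\mathfrak m_x$ with $a\in\mathfrak p$, the quotient rule
$$\partial\!\left(\frac{a}{s}\right)=\frac{\partial a}{s}-\frac{a\,\partial s}{s^{2}}$$
keeps both summands in $\mathfrak p\,\mathcal A_{\mathfrak p}$, since $\partial a\in\mathfrak p$ and $a\in\mathfrak p$; hence $\mathfrak m_x$ is differential. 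For the converse, if $\mathfrak m_x$ is differential then for $a\in\mathfrak p$ we have $a/1\in\mathfrak m_x$, so $\partial a/1\in\mathfrak m_x$, and contracting to $\mathcal A$ gives $\partial a\in\mathfrak m_x\cap\mathcal A=\mathfrak p$. Chaining the four equivalences (a)$\Leftrightarrow$(b), (b)$\Leftrightarrow$(c), (c)$\Leftrightarrow$(d) and (b)$\Leftrightarrow$(e) through the common statement that $\mathfrak p$ is a differential ideal closes the argument and proves the corollary.
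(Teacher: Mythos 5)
Your proof is correct in substance and follows the route the paper intends: the corollary is stated there without proof, as an immediate consequence of Theorem \ref{C2THE2.3.2}, and your reduction of all five conditions to the single statement that the prime $\mathfrak p\subset\mathcal A$ of $x$ is a differential ideal --- part (2) of that theorem for (a)$\Leftrightarrow$(b), quotient descent for (c)$\Leftrightarrow$(d), and the quotient-rule computation (the same one the paper uses for localizations of Keigher rings) for (b)$\Leftrightarrow$(c) --- is exactly the natural fleshing-out of that claim.

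The one step you should tighten is (b)$\Leftrightarrow$(e). The identification $\overline{\{x\}}=V(\mathfrak p)$ is only valid inside the fixed chart $U$; the Zariski closure in (e) is taken in $X$, and as a topological space it is in general strictly larger than $V(\mathfrak p)$ (take $X=\mathbb P^1$, $x$ its generic point, $U=\mathbb A^1$). Condition (e) asks that $\partial$ preserve the ideal sheaf of $\overline{\{x\}}$ on every open subset of $X$, not just on $U$, so your single-chart descent argument does not literally prove (b)$\Rightarrow$(e). The repair is one observation: $x$ is the generic point of the irreducible set $\overline{\{x\}}$, so every affine open $V$ with $V\cap\overline{\{x\}}\neq\emptyset$ already contains $x$; on such a $V$ the ideal of $\overline{\{x\}}\cap V$ (with its reduced structure) is precisely the prime $\mathfrak p_V$ corresponding to $x$, because $\overline{\{x\}}\cap V$ is the closure of $x$ in $V$ and prime ideals are radical. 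Hence the condition in (e) ranges exactly over the affine neighborhoods of $x$ quantified in (b), and your descent principle applied chart by chart then closes the equivalence.
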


\subsection{Split of Schemes with Derivation}

Let $Z$ be a scheme provided with the zero
derivation. Then we will write $Z$ instead of the pair $(Z,0)$.
Consider  a differential field $\mathcal K$ and let $\mathcal C$ be
its field of constants.

\index{split!of schemes with derivation}
\begin{definition}\label{C2DEFsplitSD}
 We say that a $\mathcal K$-scheme with derivation $(X,\partial)$ splits, if there is
a $\mathcal C$-scheme $Y$, and an isomorphism
$$\phi\colon(X,\partial) \xrightarrow{\sim} Y\times_{\mathcal C}
(\Spec(\mathcal K), \partial),$$ $\phi$ is called a splitting
isomorphism for $(X,\partial)$.
\end{definition}

\begin{definition}
  The space of constants $\Const(X,\partial)$ is locally ringed space defined as follows:
it is the topological subspace of differential points of $X$,
endowed with restriction of the sheaf of constant regular functions.
\end{definition}

\begin{proposition}
 Suppose $(X,\partial)$ is Keigher, then
$$\Const(X,\partial) = \Const(\Diff(X,\partial)).$$
\end{proposition}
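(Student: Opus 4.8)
The plan is to prove the equality of locally ringed spaces in two stages: first show that the two sides are carried by the same underlying topological space, and then that their structure sheaves are naturally isomorphic, the latter being checked on stalks. Throughout write $X' = \Diff(X,\partial)$ for the differential scheme attached to $(X,\partial)$ by Theorem \ref{C2THE2.3.2}, and $j\colon X'\hookrightarrow X$ for the inclusion. For the topological spaces, note that by definition $\Const(X,\partial)$ is carried by the subspace of differential points of $X$, which is precisely the set of points of $X'$; on the other hand $\Const(\Diff(X,\partial))$, being the space of constants of the differential scheme $X'$, has the very same underlying space $X'$, since for a differential scheme $Y$ one has $\Const(Y) = (Y,\mathcal C_Y)$ with unchanged topology. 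Hence both members are sheaves of rings on one and the same subspace $X'$, and only the structure sheaves remain to be compared.

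Next I would exhibit a natural morphism between these two sheaves. The sheaf of constant regular functions $\mathcal C_X$ is a subsheaf of $\mathcal O_X$, since $\mathcal C_X(U) = C_{\mathcal O_X(U)}\subseteq \mathcal O_X(U)$ and both are sheaves; indeed $\mathcal C_X$ is the kernel of $\partial_X\colon \mathcal O_X\to\mathcal O_X$. As inverse image along a subspace inclusion is exact, restriction gives an inclusion $\mathcal C_X|_{X'}\hookrightarrow \mathcal O_X|_{X'}$, and by Theorem \ref{C2THE2.3.2} the target is exactly $\mathcal O_{X'}$. A section of $\mathcal C_X|_{X'}$ over an open $V$ is locally the restriction of a $\partial$-constant of $\mathcal O_X$, hence is annihilated by $\partial|_{X'}$, so its image lands in $C_{\mathcal O_{X'}(V)} = \mathcal C_{X'}(V)$, the structure sheaf of $\Const(\Diff(X,\partial))$. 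This produces an injective morphism $\theta\colon \mathcal C_X|_{X'}\to \mathcal C_{X'}$ of sheaves of rings on $X'$.

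Finally I would prove that $\theta$ is an isomorphism by passing to stalks at an arbitrary differential point $x\in X'$. The stalk of a restriction is the original stalk, so $(\mathcal C_X|_{X'})_x = (\mathcal C_X)_x$; and since taking constants commutes with the filtered colimit computing the stalk, $\varinjlim_{U\ni x} C_{\mathcal O_X(U)} = C_{\mathcal O_{X,x}}$, which is the identification of the fibre of a constant sheaf already recorded earlier. On the other side $(\mathcal C_{X'})_x = C_{\mathcal O_{X',x}}$ by the same direct-limit computation, and $\mathcal O_{X',x} = \mathcal O_{X,x}$ because $\mathcal O_{X'} = \mathcal O_X|_{X'}$. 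Thus both stalks equal $C_{\mathcal O_{X,x}}$ and $\theta_x$ is the identity morphism, so $\theta$ is an isomorphism and the two locally ringed spaces coincide.

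The step I expect to require the most care is this stalk identification together with its globalization: one must check that taking constants commutes with the filtered colimits defining the stalks, that $\mathcal O_{X,x}$ is a differential local ring at each differential point (so that $C_{\mathcal O_{X,x}}$ is meaningful and itself local), and that the stalkwise identity arises from the globally defined map $\theta$ rather than from a merely abstract isomorphism of stalks. The Keigher hypothesis is what keeps this framework in force: through Lemma \ref{LM3.8} and Theorem \ref{C2THE2.3.2} it guarantees that the differential points assemble into a differential scheme $X'$ with $\mathcal O_{X'} = \mathcal O_X|_{X'}$ and well-behaved differential local rings, which is exactly the input making the two constant-sheaf constructions share their stalks and hence agree.
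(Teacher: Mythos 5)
Your proof is correct, but it takes a genuinely different route from the paper's. After the shared first step (both sides live on the same underlying space of differential points), the paper compares the two structure sheaves at the level of sections over an arbitrary open $U\subseteq X'$: it invokes the formula $\mathcal O_{X'}(U)=\varinjlim_{U\subseteq V}\mathcal O_X(V)$, cited from Carra' Ferro \cite{Ca0} and valid for Keigher rings, and then commutes the constants functor with that filtered colimit to identify $\mathcal C_{X'}(U)$ with the sections of the restricted constant sheaf. You instead construct an explicit comparison morphism $\theta\colon\mathcal C_X|_{X'}\to\mathcal C_{X'}$ and verify it is an isomorphism on stalks, applying the same algebraic fact --- constants are the kernel of $\partial$, and kernels commute with filtered colimits --- to the colimit over neighbourhoods of a point rather than over opens $V$ containing $U$. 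What your route buys: it is self-contained, needing only the restriction theorem $\mathcal O_{X'}=\mathcal O_X|_{X'}$, exactness of $j^{-1}$, and the fibre identification $\mathcal C_{X,x}\simeq C_{\mathcal O_{X,x}}$ already recorded in the paper, and it makes visible that the Keigher hypothesis is not actually consumed by this equality (in the paper it is consumed precisely by the appeal to \cite{Ca0}). What the paper's route buys: the stronger section-level statement $\mathcal C_{X'}(U)=\varinjlim_{U\subseteq V}C_{\mathcal O_X(V)}$ on every open, rather than an isomorphism detected stalkwise. One small correction to your closing paragraph: you locate the role of Keigher in Lemma \ref{LM3.8} and Theorem \ref{C2THE2.3.2}, but those are stated in the paper without any Keigher hypothesis; the hypothesis enters the paper's argument only through the Carra' Ferro colimit formula.
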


\begin{proof}
  As topological subspaces of $X$ they coincide by construction.
Let $X' = \Diff(X,\partial)$. If $X$ is Keigher then $\mathcal
O_{X'}(U) = \lim_{\substack{\to \\ U\subseteq V}}\mathcal O_X(V)$
(see \cite{Ca0}). And because of that we have,
$$C\left(\lim_{\substack{\to \\ U\subseteq V}}\mathcal
O_X(V)\right) = \lim_{\substack{\to \\ U\subseteq V}} C_{\mathcal
O_X(V)},$$ and we finish.
\end{proof}

\index{scheme!with derivation!almost-constant}
\begin{definition}
  $(X,\partial)$ is almost-constant if $\Const(X,\partial)$ is a
scheme.
\end{definition}

\begin{proposition}
  If $(X,\partial)$ splits, then $\Diff(X,\partial)$ splits. If
$(X,\partial)$ is reduced and split, then it is almost-constant and
  $$(X,\partial) \xrightarrow{\sim} \Const(X,\partial) \times_{\mathcal
C} (\Spec(\mathcal K),\partial).$$
\end{proposition}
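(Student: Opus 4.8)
The plan is to deduce both assertions from the behaviour of the functor $\Diff$ on a splitting, and then to transport the conclusion of Proposition \ref{Kov28.2} back into the category of schemes with derivation. For the first assertion I would fix a splitting isomorphism $\phi\colon(X,\partial)\xrightarrow{\sim}Y\times_{\mathcal C}(\Spec(\mathcal K),\partial)$ as in Definition \ref{C2DEFsplitSD}, with $Y$ a $\mathcal C$-scheme carrying the zero derivation. Since $\Diff$ is functorial (Theorem \ref{C2THE2.3.2}), applying it to $\phi$ yields an isomorphism of differential schemes $\Diff(X,\partial)\xrightarrow{\sim}\Diff(Y\times_{\mathcal C}(\Spec(\mathcal K),\partial))$, so it suffices to identify the target with $Y\times_{\mathcal C}\DiffSpec(\mathcal K)$. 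This I would check affine-locally: writing $Y=\Spec(B)$ with zero derivation, the scheme with derivation is $(\Spec(B\otimes_{\mathcal C}\mathcal K),1\otimes\partial)$, and by the affine case of Theorem \ref{C2THE2.3.2} its image under $\Diff$ is $\DiffSpec(B\otimes_{\mathcal C}\mathcal K)$; by the product formula for differential schemes this is $\Spec(B)\times_{\mathcal C}\DiffSpec(\mathcal K)=Y\times_{\mathcal C}\DiffSpec(\mathcal K)$. These identifications are natural in $B$, hence glue, giving $\Diff(X,\partial)\xrightarrow{\sim}Y\times_{\mathcal C}\DiffSpec(\mathcal K)$; that is, $\Diff(X,\partial)$ splits in the sense of Definition \ref{C2DEFsplitDS}.

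For the second assertion, assume in addition that $X$ is reduced. Then $X':=\Diff(X,\partial)$ is reduced as well, since its structure sheaf is the restriction of $\mathcal O_X$ and its stalks are localizations of the reduced rings $\mathcal O_X(U)$. By the first part $X'$ splits, so Proposition \ref{Kov28.2} applies and gives that $X'$ is almost-constant with $X'\xrightarrow{\sim}\Const(X')\times_{\mathcal C}\DiffSpec(\mathcal K)$. Because $\mathcal K$ has characteristic zero, every ring $\mathcal O_X(U)$ is a Ritt algebra and hence Keigher, so the space of constants of $(X,\partial)$ agrees with that of $\Diff(X,\partial)$; thus $\Const(X,\partial)=\Const(X')$ is a scheme and $(X,\partial)$ is almost-constant.

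It remains to promote this to an isomorphism of schemes with derivation. Here the splitting $\phi$ already furnishes $(X,\partial)\xrightarrow{\sim}Y\times_{\mathcal C}(\Spec(\mathcal K),\partial)$, so the only missing ingredient is the identification $Y\cong\Const(X,\partial)$. I would obtain this by computing the constants of the split object directly: locally the constants of $B\otimes_{\mathcal C}\mathcal K$ for the derivation $1\otimes\partial$ form the kernel of $1_B\otimes\partial$, and since $B$ is a $\mathcal C$-vector space, hence flat, this kernel equals $B\otimes_{\mathcal C}\ker(\partial)=B\otimes_{\mathcal C}\mathcal C=B$. Thus $\Const(Y\times_{\mathcal C}(\Spec(\mathcal K),\partial))=Y$, and applying the space-of-constants functor to $\phi$ gives $Y\cong\Const(X,\partial)$. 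Substituting this identification into $\phi$ produces the desired isomorphism $(X,\partial)\xrightarrow{\sim}\Const(X,\partial)\times_{\mathcal C}(\Spec(\mathcal K),\partial)$.

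I expect the main obstacle to be precisely this last bridge between the two categories: ensuring that the a priori abstract $\mathcal C$-factor $Y$ of a splitting is canonically the intrinsic space of constants, and that the local identifications above are compatible enough to glue into a global isomorphism. The flat base-change computation of constants is what makes this identification canonical, and it is also what ties the scheme-with-derivation notion of splitting to the differential-scheme notion underlying Proposition \ref{Kov28.2}; the reducedness hypothesis enters only to license the appeal to that proposition.
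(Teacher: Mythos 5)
Your strategy is the same as the paper's: push the splitting isomorphism $\phi$ through the functor $\Diff$ of Theorem \ref{C2THE2.3.2}, identify $\Diff(Y\times_{\mathcal C}(\Spec(\mathcal K),\partial))$ with $Y\times_{\mathcal C}\DiffSpec(\mathcal K)$ affine-locally, conclude that $\Diff(X,\partial)$ splits, and in the reduced case invoke Proposition \ref{Kov28.2}. Your additional steps --- reducedness of $\Diff(X,\partial)$ via the stalks, and the equality $\Const(X,\partial)=\Const(\Diff(X,\partial))$ through the Ritt/Keigher property in characteristic zero --- are legitimate fillings-in of what the paper leaves implicit, and the first assertion of the proposition is fully proved by your argument.

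The gap sits exactly at the bridge you yourself single out as the main obstacle. You deduce $\Const(Y\times_{\mathcal C}(\Spec(\mathcal K),\partial))=Y$ from the flat base-change computation $\ker(1_B\otimes\partial)=B\otimes_{\mathcal C}\ker(\partial)=B$. That computation correctly identifies the \emph{ring} of constants, but by the paper's definition $\Const(X,\partial)$ is the locally ringed space whose underlying \emph{topological space} is the set of differential points, endowed with the restricted sheaf of constants; nothing in your argument determines which points of $\Spec(B\otimes_{\mathcal C}\mathcal K)$ are differential points. To conclude $\Const(Y_{\mathcal K},1\otimes\partial)\simeq Y$ you also need that the prime differential ideals of $B\otimes_{\mathcal C}\mathcal K$ are precisely the ideals $\mathfrak p\cdot(B\otimes_{\mathcal C}\mathcal K)$ with $\mathfrak p\in\Spec(B)$ --- equivalently, that every radical differential ideal of $B\otimes_{\mathcal C}\mathcal K$ is generated by its constants. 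This is a genuine differential-algebra lemma (a minimal-length/Wronskian-type argument using that $\mathcal C$ is exactly the constant field of $\mathcal K$, plus algebraic closedness of $\mathcal C$ to keep the extended primes prime); the kernel computation does not imply it, since an isomorphism of rings of constants says nothing about ideals of $B\otimes_{\mathcal C}\mathcal K$ that fail to be extended from $B$. It is the same fact the paper later asserts inside the proof of Lemma \ref{lmLV3.5} (``each radical differential ideal is generated by constants''), and it is the content that Kovacic's Proposition \ref{Kov28.2} packages for reduced differential schemes. You can close the gap either by proving this lemma, or by citing it from Kovacic's treatment of \ref{Kov28.2} and only then substituting the resulting identification $Y\simeq\Const(X,\partial)$ into $\phi$, exactly as you do in your last step.
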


\begin{proof}
 Let us consider the splitting isomorphism $(X,\partial) \to Y\times_{\mathcal C}
(\Spec(\mathcal K), \partial)$. It is clear that
$\Diff(Y\times_{\mathcal C} (\Spec(\mathcal K), \partial)) = Y
\times_{\mathcal C}\DiffSpec(\mathcal K)$. Then the above splitting
isomorphism induces the splitting isomorphism of the differential
scheme $\Diff(X,\partial)$. If $X$ is reduced, then
$\Diff(X,\partial)$ is also reduced, and then we apply Proposition
\ref{Kov28.2}.
\end{proof}


\section{Galois theory of Algebraic Lie-Vessiot Systems}\label{C3}

  In this chapter we discuss the Galois theory of Lie-Vessiot
systems on algebraic homogeneous spaces. The field of functions
of the independent variable is here a differential
field $\mathcal K$ of characteristic zero and with a field of
constants $\mathcal C$ that we assume to be algebraically closed. 
We modelize algebraic Lie-Vessiot systems with coefficients
in $K$ as certain $\mathcal K$-schemes with derivation. We
study the general solution of algebraic Lie-Vessiot systems. 
It means that we study the differential extensions of $\mathcal K$ that allow us
to \emph{split} the Lie-Vessiot system, and the associated
automorphic system. We find that they are strongly normal extensions
in the sense of Kolchin \cite{Ko0}, and then we can apply Kovacic's
approach to Kolchin's differential Galois theory. In fact, the
Galois theory presented here should be seen as a generalization of
the classical Picard-Vessiot theory, obtained by replacing the
general linear group by an arbitrary algebraic group. However, 
the particular case of Picard-Vessiot theory contains all obstructions 
to solvability, because the
non-linear part of an algebraic group over $\mathcal C$ is an
abelian variety: abelian groups do not give obstruction to
integration by quadratures.

\subsection{Differential Algebraic Dynamical Systems}

  Here we establish a parallelism between
dynamical systems and differential algebraic terminology. 
\emph{
  From now on let us consider a differential field $\mathcal K$,
and $\mathcal C$ its field of constants. We assume that $\mathcal C$
is algebraically closed and of characteristic zero.}
We
modelize non-autonomous dynamical systemas as schemes
with derivation. The phase space is an algebraic variety $M$ over
the constant field $\mathcal C$, and the extended phase space
is $M_\mathcal K = M \times_{\mathcal C}\Spec(\mathcal K)$. Therefore,
non-autonomous dynamical system on $M$ with coefficients in 
$\mathcal K$ is a derivation on $M_{\mathcal K}$.

\begin{definition}
  A differential algebraic dynamical system is a
$\mathcal K$-scheme with derivation $(M,\partial_M)$ such that $M$
is an algebraic variety over $\mathcal K$. We say that
$(M,\partial_M)$ is non-autonomous if $\mathcal K$ is a non-constant
differential field.
\end{definition}

  There is a huge class of dynamical systems that can be seen as
differential algebraic dynamical systems, as polynomial or
meromorphic vector fields. It includes Lie-Vessiot systems in
algebraic homogeneous spaces, hence it also includes systems of
linear differential equations. Furthermore, a differential algebraic
study of a dynamical system is suitable in the most general case,
but results depend on the choice of an adequate differential field
$\mathcal K$.

  For a differential algebraic dynamical system $(M,\partial_M)$ we
have the associated differential scheme $\Diff(M,\partial_M)$. As a
topological space this differential scheme is the set of \emph{all
irreducible algebraic invariant subsets of the dynamical system}. By
algebraic, we mean that they are objects defined by algebraic
equations with coefficients in $\mathcal K$.

  Let us recall that for a $\mathcal
K$-algebra $\mathcal L$ we denote by $M(\mathcal L)$ the set of
$\mathcal L$-points of $M$. This sets consist of all the morphisms
of $\mathcal K$-schemes from $\Spec(\mathcal L)$ to $M$, or
equivalently, of all the rational points of the extended scheme
$$M_{\mathcal L} = M \times_{\mathcal K} \Spec\mathcal L.$$

\begin{definition}
  Let $(M,\partial_M)$ be a $\mathcal K$-scheme with derivation.
 We call rational solution of $(M,\partial_M)$ any rational
differential point $x\in\Diff(M,\partial_M)$. Let us consider a
differential extension $\mathcal K\subset \mathcal L$. A solution
with coefficients in $\mathcal L$ is an $\mathcal L$-point $x\in
M(\mathcal L)$ such that the morphism
$$x\colon (\Spec(\mathcal L),\partial)\to (M,\partial_M),$$
is a morphism of schemes with derivation. In such a case the image
$x(0)=\mathfrak x$ of the ideal $(0)\subset\mathcal L$ by $x$ is a differential point $\mathfrak
x\in \Diff(M,\partial_M)$ and its quotient field $\kappa(\mathfrak
x)$ is an intermediate extension,
$$\mathcal K \subset \kappa(\mathfrak x) \subset \mathcal L,$$
we say that $\kappa(\mathfrak x)$ is the differential field
generated by $x\in M(\mathcal L)$.
\end{definition}

  As in classical algebraic geometry, there is a one-to-one
correspondence between solutions with coefficients in $\mathcal L$
of $(M,\partial_M)$ and rational solutions  of the differential
algebraic dynamical system after a base change,
$(M,\partial_M)\times_{\mathcal K}(\Spec(\mathcal L),\partial)$.

\begin{definition}
  Let us consider two differential algebraic dynamical systems
over $\mathcal K$, $(M,\partial)$ and $(N,\partial)$. We say that
$(M,\partial)$ reduces to $(N,\partial)$ if there is an algebraic
variety $Z$ over $\mathcal C$ and,
$$(M,\partial) = (N,\partial) \times_{\mathcal C}  Z.$$
\end{definition}

  The notion of reduction is a generalization of the notion of split.
In particular, to split means reduction to $(\Spec(\mathcal
K),\partial)$.

  Given a differential algebraic dynamical system; what does it
mean to \emph{integrate} the dynamical system? As algebraists, we
shall use this term for writing down the general solution of the
dynamical system by terms of known operations, mainly algebraic
operations and quadratures. However, in the general context of
dynamical systems there is not a general definition for
\emph{integrability}. We are tempted to say that integrability is
equivalent to split. Notwithstanding, there are several situations
in which the general solution can be given, but there is not a
situation of split. For example, algebraically completely integrable
Hamiltonian systems \cite{AMV}. In such cases the flux is tangent to
a global lagrangian bundle, and the generic fibers of this bundle
are affine subsets of abelian varieties. It allows us to write down
the global solution by terms of Riemann theta functions and Jacobi's
inversion problem. However, this general solution can not be
expressed in terms of the splitting of a scheme with derivation.

  Split is the differential algebraic equivalent to \emph{Lie's
canonical form of a vector field}. The scheme with derivation
$Z\times_{\mathcal C} (\Spec(\mathcal K),\partial)$ should be seen
as an extended phase space, and $\partial$ as the derivative with
respect to the time parameter. The splitting morphism,
$$(M,\partial) \to Z\times_{\mathcal C} (\Spec(\mathcal
K),\partial),$$ can be seen as Lie's canonical form, usually
referred to, in dynamical system argot, as the \emph{flux box
reduction}. Then $Z$ is simultaneously the algebraic variety of
initial conditions, and the \emph{space of global solutions} of the
dynamical system. Our conclusion is that the split differential
algebraic dynamical systems are characterized by following the
property: \emph{its space of solutions is parameterized by a
scheme over the constants}.

  In the context of algebraic Lie-Vessiot systems we will see that algebraic
solvability of the problem, is equivalent to the notion of
\emph{split} (Theorem \ref{C3THE3.1.17}). And then, this notion
plays a fundamental role in our theory. We will see that
generically, a Lie-Vessiot equation does not split. If we want to
solve it, then we need to admit some new functions by means of a
differential extension of $\mathcal K\subset \mathcal L$. Thus, the
dynamical system splits after a base change to $\mathcal L$. The
Galois theory will provide us with the techniques for obtaining such
extensions and studying their algebraic properties (Proposition
\ref{C3PRO3.2.5}).

\subsection{Algebraic Lie-Vessiot Systems}

  From now on we will consider a fixed characteristic zero 
differential field $\mathcal K$ whose field of
constants $\mathcal C$ is algebraically closed.
Let $G$ be a $\mathcal C$-algebraic group, and
$M$ a faithful homogeneous $G$-space.

\begin{definition}
 A non-autonomous algebraic vector field $\vec X$ in $M$ with
 coefficients in $\mathcal K$ is an element of the vector space
 $\mathfrak X(M)\otimes_{\mathcal
C}\mathcal K$.
\end{definition}

A non-autonomous algebraic vector field $\vec X$ in $M$ is written
in the form,
$$\vec X = \sum_{i=1}^s f_i\vec X_i,$$
for certain elements $f_i\in \mathcal K$ and $\vec X_i\in\mathfrak X(M)$. We
define the \emph{derivation $\partial_{\vec X}$ associated to $\vec
X$} as the following derivation of the extended scheme $M_{\mathcal
K}$:
$$\partial_{\vec X}\colon  \mathcal K \otimes_{\mathcal C} \mathcal O_M \to 
\mathcal K  \otimes_{\mathcal C} \mathcal O_M,\quad a \otimes f \mapsto \partial a \otimes f
+ \sum_{i=1}^s(af_i\otimes \vec X_if).$$

\index{Lie-Vessiot system!algebraic}
\begin{definition}
A non-autonomous algebraic vector field $\vec X$ in $M$ with
coefficients in $\mathcal K$ is called a Lie-Vessiot vector field if
belongs to $\mathcal R(G,M)\otimes_{\mathcal C}
\mathcal K$. The differential algebraic dynamical system
$(M_{\mathcal K},\partial_{\vec X})$ is called a Lie-Vessiot system
in $M$ with coefficients in $\mathcal K$.
\end{definition}

  The group $G$ is, in particular, a faithful homogeneous $G$-space.
Let us recall that the Lie algebra of fundamental fields on the
group $G$ coincides with the Lie algebra of right invariant vector
field $\mathcal R(G)$. Then, a Lie-Vessiot vector field in $G$ with
coefficients in $\mathcal K$ is an element of $\mathcal
R(G)\otimes_{\mathcal C} \mathcal K$.

\begin{definition}
 We call automorphic vector fields to the Lie-Vessiot vector fields in $G$. An automorphic
vector field $\vec A$ in $G$ with coefficients in $\mathcal K$ is an
element of $\mathcal R(G)\otimes_{\mathcal C} \mathcal K$.
\end{definition}

The canonical isomorphism between $\mathcal R(G)$ and $\mathcal
R(G,M)$ allows us to translate Lie-Vessiot vector fields in $M$ to
automorphic vector fields in $G$.

\index{automorphic!system!algebraic}
\begin{definition}
  We call automorphic system associated to $(M,\partial_{\vec X})$ to the
Lie-Vessiot system $(G_{\mathcal K},\partial_{\vec A})$, where $\vec
A$ is the automorphic vector field whose corresponding Lie-Vessiot
vector field in $M$ is $\vec X$.
\end{definition}

\emph{
  From now on let $\vec X$ be a Lie-Vessiot vector field
in $M$, with coefficients in $\mathcal K$, and let $\vec A$  be the
associated automorphic vector field in $G$.}

\subsection{Logarithmic Derivative}

  A $\mathcal K$-point of the algebraic group $G$ has coefficients
in a differential field, so that it can be differentiated. The derivative
of a $\mathcal K$-point of $G$ gives a tangent vector at a $\mathcal K$-point
of $G_{\mathcal K}$. If we translate this tangent vector to a right invariant
vector field, we obtain the logarithmic derivative. In order to do so we identify
systematically the Lie algebra $\mathcal R(G)$ with the tangent
space $T_eG = \Der_{\mathcal C}(\mathcal O_{G,e}, \mathcal C)$. It
is also important to remark that the tangent space is compatible
with extensions of the base field in the following way:
$$\mathcal R(G)\otimes_{\mathcal C}\mathcal K \xrightarrow{\sim}
T_e(G_{\mathcal K}) = \Der_{\mathcal K}(\mathcal O_{G_{\mathcal
K},e}, \mathcal K).$$
 In classical algebraic geometry it is assumed that derivations of
$T_e(G_{\mathcal K})$ vanish on $\mathcal K$. However, automorphic
systems are by definition compatible with the derivation $\partial$
of $\mathcal K$. Thus, the restriction of an automorphic vector
field $\partial_{\vec A}$ to $e\in G_{\mathcal K}$ is not a tangent
vector of $T_e(G_{\mathcal K})$: it is shifted by $\partial$. We
have identifications of $\mathcal K$-vector spaces:
$$\xymatrix{\mathcal R(G)\otimes_{\mathcal C}\mathcal K \ar[r]^-{\sim}
& \mathcal R(G)\otimes_{\mathcal C} \mathcal K + \partial  \ar[r]^-{-\partial} & T_e(G_{\mathcal K}) \\
\vec A \ar[r] & \partial_{\vec A} = \partial + \vec A  \ar[r] & \vec
A_{e}}$$

  Let us consider $\sigma\in G(\mathcal K)$ and the canonical
  morphism $\sigma^\sharp$ of \emph{taking values in $\sigma$:}
$$\sigma^\sharp\colon \mathcal O_{G_{\mathcal K},{\sigma}} \to \mathcal K,
\quad f\mapsto f(\sigma).$$

Let us remember that there is a canonical form of extension of the
derivation $\partial$ in $\mathcal K$ to a derivation in
$G_{\mathcal K}$. We consider the direct product $G\times_{\mathcal
C}(\Spec(\mathcal K),\partial)$ in the category of schemes with
derivation. By abuse of notation \emph{we denote by $\partial$ this
canonical derivation in $G_{\mathcal K}$}. By construction we have
that $(G_{\mathcal K},\partial)$ splits -- the identity is the
splitting morphism -- and $\Const(G_{\mathcal K},\partial) = G$. Let
us consider the following \emph{non-commutative} diagram,
\begin{equation}\label{EnonComm}
\xymatrix{ \mathcal O_{G_{\mathcal K},{\sigma}}
\ar[rr]^-{\sigma^\sharp}\ar[d]_-{\partial} & & \mathcal K
\ar[d]^-{\partial}
\\ \mathcal O_{G_{\mathcal K},{\sigma}} \ar[rr]^-{\sigma^\sharp} & & \mathcal K}.
\end{equation}

\begin{lemma}
 The commutator $\sigma' = [\partial,\sigma^\sharp]$ of the diagram
\eqref{EnonComm} is a derivation vanishing on $\mathcal K$, and then
$\sigma'$ belong to the tangent space $T_\sigma (G_{\mathcal K})$
(id est, the space of derivations $\Der_{\mathcal K}(\mathcal
O_{G_{\mathcal K},\sigma},\mathcal K)$).
\end{lemma}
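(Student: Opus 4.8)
The plan is to verify directly that $\sigma' = [\partial, \sigma^\sharp] = \partial \circ \sigma^\sharp - \sigma^\sharp \circ \partial$ is a $\mathcal{K}$-linear derivation with values in $\mathcal{K}$, regarded as an $\mathcal{O}_{G_{\mathcal{K}},\sigma}$-module through the evaluation morphism $\sigma^\sharp$. There are two things to check: first, that $\sigma'$ is a derivation over $\mathcal{C}$ satisfying the appropriate Leibniz rule relative to $\sigma^\sharp$; second, that it vanishes on the constants $\mathcal{K}$, which is exactly the condition placing it in $T_\sigma(G_{\mathcal{K}}) = \Der_{\mathcal{K}}(\mathcal{O}_{G_{\mathcal{K}},\sigma}, \mathcal{K})$.

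First I would establish the Leibniz rule. Both $\partial$ (the canonical derivation on $G_{\mathcal{K}}$ coming from the splitting $G \times_{\mathcal{C}} (\Spec(\mathcal{K}), \partial)$) and the derivation $\partial$ on $\mathcal{K}$ satisfy Leibniz, while $\sigma^\sharp$ is a ring homomorphism. Expanding $\sigma'(fg)$ for germs $f, g \in \mathcal{O}_{G_{\mathcal{K}},\sigma}$ and using these two facts, the non-derivation cross terms cancel because $\sigma^\sharp$ is multiplicative, leaving
\begin{equation*}
\sigma'(fg) = \sigma^\sharp(f)\,\sigma'(g) + \sigma^\sharp(g)\,\sigma'(f).
\end{equation*}
This is precisely the Leibniz rule for a $\kappa$-valued derivation with module structure induced by $\sigma^\sharp$, so $\sigma'$ is a derivation in the required sense.

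Next I would check the vanishing condition. For $a \in \mathcal{K}$ viewed as a germ in $\mathcal{O}_{G_{\mathcal{K}},\sigma}$, the evaluation $\sigma^\sharp(a) = a$ since evaluating a constant (along the $\Spec(\mathcal{K})$ factor) returns that same scalar, and the canonical derivation $\partial$ on $G_{\mathcal{K}}$ restricts on this copy of $\mathcal{K}$ to the derivation $\partial$ of $\mathcal{K}$. Hence
\begin{equation*}
\sigma'(a) = \partial(\sigma^\sharp(a)) - \sigma^\sharp(\partial a) = \partial a - \partial a = 0,
\end{equation*}
which is exactly the statement that $\sigma'$ vanishes on $\mathcal{K}$. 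Combined with the Leibniz computation, this shows $\sigma' \in \Der_{\mathcal{K}}(\mathcal{O}_{G_{\mathcal{K}},\sigma}, \mathcal{K}) = T_\sigma(G_{\mathcal{K}})$.

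The main subtlety, and the step I would be most careful about, is the identification of how $\partial$ and $\sigma^\sharp$ interact with the $\mathcal{K}$-structure, since the whole point of the lemma is that $\sigma^\sharp$ is \emph{not} a differential morphism (the diagram \eqref{EnonComm} is deliberately non-commutative). I would emphasize that the commutator is well-defined on the stalk because $\partial$ preserves $\sigma$ as a point of $G_{\mathcal{K}}$ only up to the shift recorded earlier, so one must confirm that $\sigma^\sharp \circ \partial$ and $\partial \circ \sigma^\sharp$ have a common sensible source and target; tracking the local rings and their images under evaluation is the genuinely delicate bookkeeping. Once the domains and codomains are pinned down, the algebra is the routine cancellation above, so I expect no real obstacle beyond this careful setup.
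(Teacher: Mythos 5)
Your proof is correct and takes essentially the same approach as the paper: the paper's own argument likewise notes that $[\partial,\sigma^\sharp]$ is a derivation (as a difference of derivations along $\sigma^\sharp$) and then checks vanishing on $\mathcal K$ by exactly your computation $\sigma'(a)=\partial a-\partial a=0$. Your explicit expansion of the Leibniz rule, with the cross terms cancelling because $\sigma^\sharp$ is multiplicative, simply spells out what the paper asserts in a single line.
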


\begin{proof}
  $[\partial, \sigma^\sharp]$ is the difference between two
derivations, and then it is a derivation. Let us consider
$f\in\mathcal K\subset \mathcal O_{G_{\mathcal K}\sigma}$, then
$\sigma'(f) = \partial f - \partial f = 0$.
\end{proof}

  If $\sigma$ is a geometric point of $G_{\mathcal K}$, then
$R_{\sigma^{-1}}$ is a automorphism of $G_{\mathcal K}$ sending
$\sigma$ to $e$. It induces an isomorphism between the ring of germs
$\mathcal O_{G_{\mathcal K},\sigma}$ and $\mathcal O_{G_{\mathcal
K},e}$, and then an isomorphisms between the corresponding spaces of
derivations:
$$\xymatrix{T_\sigma(G_{\mathcal K}) \ar[rr]^-{R_{\sigma^{-1}}'} & &
T_e(G_{\mathcal K}) \simeq \mathcal R(G) \otimes_{\mathcal C}
\mathcal K}$$

\index{logarithmic derivative!algebraic}
\begin{definition}\label{DEFLogDerAlg}
  Let $\sigma$ be a geometric point of $G_{\mathcal K}$; we call
logarithmic derivative of $\sigma$, $l\partial(\sigma)$, to the
automorphic vector fiel
$R_{\sigma^{-1}}'([\partial,\sigma^\sharp])$. The logarithmic
derivative is then a map:
$$l\partial \colon G(\mathcal K) \to \mathcal R(G) \otimes_{\mathcal C}
\mathcal K.$$
\end{definition}

\begin{proposition}
Properties of logarithmic derivative:
\begin{enumerate}
\item[(1)] Logarithmic derivative is functorial in $\mathcal K$; for
each differential extension $\mathcal K\subset\mathcal L$ we have a
commutative diagram:
$$\xymatrix{G(\mathcal K) \ar[r]\ar[d]& \mathcal R(G) \otimes_{\mathcal C} \mathcal K \ar[d] \\
 G(\mathcal L) \ar[r]& \mathcal R(G) \otimes_{\mathcal C} \mathcal L}$$
\item[(2)] Let us consider $\sigma$ and $\tau$ in $G(\mathcal K)$:
$$l\partial(\sigma\tau) = l\partial(\sigma) +
\Adj_\sigma(l\partial(\tau))$$
\item[(3)] Let us consider $\sigma\in G(\mathcal K)$:
$$l\partial(\sigma^{-1}) = -\Adj_{\sigma}(l\partial(\sigma)).$$
\end{enumerate}
\end{proposition}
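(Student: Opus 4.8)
The plan is to prove the three statements in the order (1), (2), (3), with (2) carrying essentially all the content. Throughout I abbreviate $\sigma' = [\partial,\sigma^\sharp]\in T_\sigma(G_{\mathcal K})$ for the tangent vector produced by the preceding lemma, so that $l\partial(\sigma) = R_{\sigma^{-1}}'(\sigma')$ by definition.

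For (1) I would simply check that each of the four operations composing $l\partial$ is stable under the base change $\mathcal K\subset\mathcal L$: the evaluation $\sigma^\sharp$ at the $\mathcal L$-point is obtained by tensoring the $\mathcal K$-evaluation with $\mathcal L$; the derivation of $\mathcal L$ extends that of $\mathcal K$, so the commutator $[\partial,\sigma^\sharp]$ is compatible; the translation $R_{\sigma^{-1}}$ and its differential $R_{\sigma^{-1}}'$ are defined functorially in the base; and the identification $T_e(G_{\mathcal L})\simeq\mathcal R(G)\otimes_{\mathcal C}\mathcal L$ is the base change of the one over $\mathcal K$. Commutativity of the square is then a diagram chase through these compatibilities, with no computation.

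The core is (2). The structural input I would isolate first is that the multiplication $\mu\colon G_{\mathcal K}\times_{\mathcal K}G_{\mathcal K}\to G_{\mathcal K}$ is a morphism of schemes with derivation: indeed $\mu$ is the base change to $(\Spec(\mathcal K),\partial)$ of the $\mathcal C$-group law, hence it intertwines the product derivation $\partial\otimes 1 + 1\otimes\partial$ with $\partial$. Factoring the evaluation at the product as $(\sigma\tau)^\sharp = (\sigma,\tau)^\sharp\circ\mu^\sharp$ and using that $\mu^\sharp$ is a differential morphism, I would obtain $[\partial,(\sigma\tau)^\sharp] = [\partial,(\sigma,\tau)^\sharp]\circ\mu^\sharp$; that is, $(\sigma\tau)'$ is the image under $\mu'$ of the tangent vector $(\sigma,\tau)'$. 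Since the product derivation is the sum $\partial\otimes 1+1\otimes\partial$, this vector decomposes as $(\sigma',\tau')$ under $T_{(\sigma,\tau)}(G_{\mathcal K}\times_{\mathcal K}G_{\mathcal K})\simeq T_\sigma(G_{\mathcal K})\oplus T_\tau(G_{\mathcal K})$, and the standard formula for the differential of a group law, $\mu'_{(\sigma,\tau)}(u,v) = R_\tau'(u)+L_\sigma'(v)$, gives
$$(\sigma\tau)' = R_\tau'(\sigma') + L_\sigma'(\tau').$$

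It then remains to translate back to $e$. Applying $R_{(\sigma\tau)^{-1}}'$ and using $R_{(\sigma\tau)^{-1}}'\circ R_\tau' = R_{\sigma^{-1}}'$ (because $\tau(\sigma\tau)^{-1}=\sigma^{-1}$), the first summand becomes $R_{\sigma^{-1}}'(\sigma') = l\partial(\sigma)$. For the second, I would write $\tau' = R_\tau'(l\partial(\tau))$ and use that left and right translations commute, so that $R_{(\sigma\tau)^{-1}}'\circ L_\sigma'\circ R_\tau' = R_{\sigma^{-1}}'\circ L_\sigma' = \Adj_\sigma$; this produces $\Adj_\sigma(l\partial(\tau))$ and proves (2). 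Finally (3) is formal: $e$ is a $\mathcal C$-point, hence constant, so $[\partial,e^\sharp]=0$ and $l\partial(e)=0$; specializing (2) to the pair $(\sigma,\sigma^{-1})$ yields $0 = l\partial(\sigma) + \Adj_\sigma(l\partial(\sigma^{-1}))$, and solving, via $\Adj_{\sigma^{-1}}=\Adj_\sigma^{-1}$, gives the inversion formula. The one genuine obstacle is the embedding-free proof of the differential-of-multiplication identity $\mu'_{(\sigma,\tau)}(u,v)=R_\tau'(u)+L_\sigma'(v)$ and the careful bookkeeping of $[\partial,-]$ across $\mu^\sharp$; since $G$ need not be affine I would verify this directly on the local rings of $G$ rather than reduce to a matrix group, after which everything is formal.
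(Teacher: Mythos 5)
Your argument is correct and, in substance, it is the paper's own proof written out in full: the paper disposes of the proposition in one line (``(1) comes directly from the differential field extension, (2) comes from the right invariance, and (3) is corollary to (2)''), and your computation $(\sigma\tau)' = R_\tau'(\sigma') + L_\sigma'(\tau')$, followed by right translation to the identity via $R_{(\sigma\tau)^{-1}}\circ R_\tau = R_{\sigma^{-1}}$ and $R_{(\sigma\tau)^{-1}}\circ L_\sigma\circ R_\tau = R_{\sigma^{-1}}\circ L_\sigma$ (conjugation by $\sigma$, whose differential at $e$ is $\Adj_\sigma$), is exactly the content hidden behind the words ``right invariance''. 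The two facts you defer --- that $[\partial,(\sigma,\tau)^\sharp]$ decomposes as $(\sigma',\tau')$ under the product derivation $\partial\otimes 1 + 1\otimes\partial$, and the formula for the differential of the group law --- are standard, and your plan to verify them on local rings is sound.

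One point must be made explicit, however. Solving $0 = l\partial(e) = l\partial(\sigma) + \Adj_\sigma(l\partial(\sigma^{-1}))$ gives
$$l\partial(\sigma^{-1}) = -\Adj_\sigma^{-1}(l\partial(\sigma)) = -\Adj_{\sigma^{-1}}(l\partial(\sigma)),$$
which is \emph{not} the formula printed in item (3); the two coincide only when $\Adj_\sigma^2$ fixes $l\partial(\sigma)$, e.g.\ for abelian $G$. The printed item (3) is a typo in the paper. One can see this already in $GL(n,\mathcal K)$, where $l\partial(\sigma)=\partial\sigma\cdot\sigma^{-1}$ and hence $l\partial(\sigma^{-1}) = -\sigma^{-1}\cdot\partial\sigma = -\Adj_{\sigma^{-1}}(l\partial(\sigma))$; moreover the paper's later gauge-transformation formula $\vec B = \Adj_{\tau}(\vec A) + l\partial(\tau)$ (Theorem \ref{C4THE4.1.5}) fixes the convention that $\Adj_\tau$ is conjugation by $\tau$, which is precisely the convention under which your item (2) holds, so one cannot rescue (3) by reinterpreting $\Adj$. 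Your proof is therefore right, but its conclusion for (3) should be stated as the corrected identity above; as written, the phrase ``gives the inversion formula'' glosses over the fact that what your computation yields differs from the statement as printed.
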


\begin{proof}
  (1) comes directly from the differential field extension, (2) 
comes from the right invariance, and (3) is corollary to (2).
\end{proof}

\subsection{Automorphic Equation}

\index{automorphic!equation}
\begin{theorem}
  Let us consider $\mathcal K\subset\mathcal L$ a differential
extension. Then  $\sigma\in G(\mathcal L)$ is a solution of the
differential algebraic dynamical system $(G_{\mathcal
K},\partial_{\vec A})$ if and only if $l\partial(\sigma) = \vec A$.
\end{theorem}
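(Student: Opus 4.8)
The plan is to reduce the statement to a single computation in the tangent space of $G_{\mathcal L}$ at $\sigma$, and there to recognize both sides of the claimed identity. First I would unwind the definition of a solution: $\sigma\in G(\mathcal L)$ is a solution of $(G_{\mathcal K},\partial_{\vec A})$ exactly when the associated point morphism $\sigma\colon(\Spec(\mathcal L),\partial)\to(G_{\mathcal K},\partial_{\vec A})$ is a morphism of schemes with derivation. Passing to the stalk at the image point gives the local homomorphism $\sigma^\sharp\colon\mathcal O_{G_{\mathcal K},\sigma}\to\mathcal L$, and the morphism-of-schemes-with-derivation condition becomes the commutation $\partial\circ\sigma^\sharp=\sigma^\sharp\circ\partial_{\vec A}$, where on the right $\partial$ is the derivation of $\mathcal L$. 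Because $\sigma$ has coefficients in $\mathcal L$ rather than $\mathcal K$, I would perform the whole argument over $\mathcal L$ after base change, invoking the functoriality of $l\partial$ from part (1) of the preceding proposition so that the identity $l\partial(\sigma)=\vec A$ is read in $\mathcal R(G)\otimes_{\mathcal C}\mathcal L$.

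Next I would substitute $\partial_{\vec A}=\partial+\vec A$, the identification established in the diagram just before Definition \ref{DEFLogDerAlg}. The solution condition then rearranges to
\[
[\partial,\sigma^\sharp]=\partial\circ\sigma^\sharp-\sigma^\sharp\circ\partial=\sigma^\sharp\circ\vec A .
\]
By the earlier lemma the left-hand side is a $\mathcal K$-vanishing derivation $\sigma'$, hence a tangent vector in $T_\sigma(G_{\mathcal L})$; and the right-hand side $\sigma^\sharp\circ\vec A$, which sends $f\mapsto(\vec A f)(\sigma)$, is by inspection the value $\vec A_\sigma$ of the right-invariant vector field $\vec A$ at the point $\sigma$. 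Thus $\sigma$ being a solution is equivalent to the tangent-vector equality $\sigma'=\vec A_\sigma$ in $T_\sigma(G_{\mathcal L})$.

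Finally I would transport this equality to $T_e(G_{\mathcal L})\simeq\mathcal R(G)\otimes_{\mathcal C}\mathcal L$ through the isomorphism $R_{\sigma^{-1}}'$. By Definition \ref{DEFLogDerAlg} the image of $\sigma'$ is precisely $l\partial(\sigma)$, while the image of $\vec A_\sigma$ is $\vec A$: indeed $R_{\sigma^{-1}}$ carries $\sigma$ to $e$, and right invariance of $\vec A$ means its differential sends $\vec A_\sigma$ to $\vec A_e=\vec A$. Since $R_{\sigma^{-1}}'$ is an isomorphism, $\sigma'=\vec A_\sigma$ holds if and only if $l\partial(\sigma)=\vec A$, which yields both implications simultaneously.

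The statement is in essence a tautological consequence of the way $l\partial$ was defined, so I do not expect a deep obstruction; the delicate point is purely one of bookkeeping in the middle step. Specifically, I must verify that $\sigma^\sharp\circ\vec A$ is genuinely the geometric value $\vec A_\sigma$ and not a translate of it, and that right invariance is applied with the correct variance so that $R_{\sigma^{-1}}'(\vec A_\sigma)=\vec A_e$ rather than an adjoint-twisted version (compare parts (2)--(3) of the preceding proposition, where the adjoint action genuinely intervenes). Once this identification of $\vec A_\sigma$ with the right-invariant extension of $\vec A$ is pinned down, the base change to $\mathcal L$ and the commutator computation are entirely routine.
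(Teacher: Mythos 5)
Your proof is correct and takes essentially the same route as the paper's: both arguments reduce the solution condition to the stalk-level commutation $[\partial,\sigma^\sharp]=\sigma^\sharp\circ\vec A=\vec A_\sigma$ and then transport this identity to $T_e(G_{\mathcal L})\simeq \mathcal R(G)\otimes_{\mathcal C}\mathcal L$ via the isomorphism $R_{\sigma^{-1}}'$, where it becomes $l\partial(\sigma)=\vec A$. The only difference is one of direction and bookkeeping: the paper starts from $\vec B=l\partial(\sigma)$ and shows it is the unique right-invariant field making $\mathfrak m_\sigma$ a differential ideal, whereas you rearrange the solution condition directly into the tangent-vector equality; the mathematical content is identical.
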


\begin{proof}
  Let us consider $\sigma\in G(\mathcal L)$, and let $\vec B$ be its
logarithmic derivative. The space $\mathcal R(G)\otimes_{\mathcal C}
\mathcal L$ is canonically identified with the Lie algebra of right
invariant vector fields on the \emph{base extended} $\mathcal
L$-algebraic group $G_{\mathcal L}$:
$$\mathcal R(G)\otimes_{\mathcal C}\mathcal L = \mathcal
R(G_{\mathcal L}).$$

By this identification, the automorphic vector field $\vec B$ is
seen as a derivation $\vec B$ of the structure sheaf $\mathcal
O_{G_{\mathcal L}}$. The germ $\vec B_{(\sigma)}$ at $\sigma$ of
$\vec B$ is a derivation of the ring $\mathcal O_{G_{\mathcal
L},\sigma}$. The composition with $\sigma^\sharp$ give us the
tangent vector $\vec B_\sigma\in T_{\sigma}(G_{\mathcal L})$:
$$\xymatrix{\mathcal O_{G_{\mathcal K},\sigma} \ar[r]^-{\vec B_{(\sigma)}}  \ar[rrd]_-{\vec
B_{\sigma}} & \mathcal O_{G_{\mathcal K},\sigma}
\ar[rd]^-{\sigma^\sharp} &  \\ & & \mathcal K}$$

The value of $\vec B$ at the identity point is, by definition,
$l\partial(\sigma)$. Since $\vec B$ is a right invariant vector
field we have $l\partial(\sigma) = R_{\sigma^{-1}}'(B_{\sigma}) =
\sigma^\sharp\circ \vec B_{(\sigma)} \circ R_{\sigma^{-1}}^\sharp$
hence $\vec B_{\sigma}$ is equal to the commutator $[\partial,
\sigma^\sharp]$ of Definition \ref{DEFLogDerAlg}. Then, $\vec
B_{(\sigma)}$ is the defect of the diagram \eqref{EnonComm};
therefore the following diagram commutes:
$$\xymatrix{ \mathcal O_{G_{\mathcal K},{\sigma}}
\ar[rr]^-{\sigma^\sharp}\ar[d]_-{\partial + \vec B_{(\sigma)}} & &
\mathcal K \ar[d]^-{\partial}
\\ \mathcal O_{G_{\mathcal K},{\sigma}} \ar[rr]^-{\sigma^\sharp} & & \mathcal
K}.$$ Furthermore, $\vec B$ is determined by the commutator $\vec
B_\sigma = [\partial,\sigma^{\sharp}]$ and then it is unique right
invariant vector field in $G_{\mathcal L}$ that forces the diagram
to commute.

  Let us note that the commutation of the above diagram holds
if and only if the kernel $\mathfrak m_{\sigma}$ of $\sigma^\sharp$
is a differential ideal. Then $\vec B$ is the unique right invariant
vector field in $G_{\mathcal L}$ such that the maximal ideal
$\mathfrak m_{\sigma}$ is a differential ideal. Let us note also
that, this derivation $\partial + \vec B_{\sigma}$ is the germ in
$\sigma$ of the automorphic derivation
$$\partial_{\vec B} = \partial + \vec B,$$
we conclude that $\vec B$, the logarithmic derivative of $\sigma$,
is the unique element of $\mathcal R(G)\otimes_{\mathcal C} \mathcal
L$ such that $\sigma$ is a differential point of $(G_{\mathcal L},
\partial_{\vec B})$.
\end{proof}

  Because of that we can substitute the automorphic system $\vec A$, for the
so-called \index{equation!automorphic} \emph{automorphic equation}:
\begin{equation}\label{EqAutomorphicAlgebraic}
l\partial(x) = \vec A
\end{equation}

\subsection{Solving Lie-Vessiot Systems}

\index{gauge!transformation}
\begin{definition}
  Let us consider $\sigma\in G(\mathcal K)$. We call gauge transformation
induced by $\sigma$ to the left translation $L_\sigma\colon
G_{\mathcal K}\to G_{\mathcal K}$.
\end{definition}

\begin{lemma}\label{lmLV1}
  $(G_{\mathcal K},\partial_{\vec A})$ splits if and only if the automorphic
equation \eqref{EqAutomorphicAlgebraic} has at least one solution in
$G(\mathcal K)$.
\end{lemma}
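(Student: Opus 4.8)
The plan is to prove the two implications by means of the gauge transformation attached to the hypothetical solution, using the characterization of solutions of the automorphic equation proved in the previous theorem. For the ``if'' direction I would conjugate the automorphic derivation $\partial_{\vec A}$ into the trivial product derivation $\partial$ (whose split is already known), and for the ``only if'' direction I would extract a rational solution from a split by exploiting that $\mathcal C$ is algebraically closed.

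Assume first that the automorphic equation has a solution $\sigma\in G(\mathcal K)$, so $l\partial(\sigma)=\vec A$. I would consider the gauge transformation $L_{\sigma^{-1}}\colon G_{\mathcal K}\to G_{\mathcal K}$ and claim it is an isomorphism of schemes with derivation $(G_{\mathcal K},\partial_{\vec A})\xrightarrow{\sim}(G_{\mathcal K},\partial)$, where $\partial$ is the canonical product derivation on $G_{\mathcal K}=G\times_{\mathcal C}(\Spec(\mathcal K),\partial)$. The cocycle identities for the logarithmic derivative give, for any extension $\mathcal K\subset\mathcal L$ and any $\tau\in G(\mathcal L)$,
$$l\partial(\sigma^{-1}\tau)=\Adj_{\sigma^{-1}}\bigl(l\partial(\tau)-\vec A\bigr),$$
so $L_{\sigma^{-1}}$ carries the solutions of $\partial_{\vec A}$ onto the solutions of $\partial$; in particular it sends the differential point $\sigma$ to the identity $e$.

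The crux, and the step I expect to be the main obstacle, is upgrading this bijection of solution sets to an equality of derivations, the difficulty being that $L_{\sigma^{-1}}$ does \emph{not} commute with $\partial$ (the coordinates of $\sigma$ are non-constant), so one cannot simply transport $\vec A$. I would set $\partial^{*}=(L_{\sigma^{-1}})_{*}\partial_{\vec A}$ and argue in two moves. First, $\partial^{*}$ is again automorphic: the right translations $R_\rho$ by geometric points of $G$ are defined over $\mathcal C$, hence commute with $L_{\sigma^{-1}}$ and with $\partial$, and they preserve the right invariant field $\vec A$; therefore $(R_\rho)_{*}\partial^{*}=\partial^{*}$, and since $L_{\sigma^{-1}}$ is a $\mathcal K$-morphism the difference $\vec B=\partial^{*}-\partial$ kills $\mathcal K$ and is right invariant, i.e. $\vec B\in\mathcal R(G)\otimes_{\mathcal C}\mathcal K$ and $\partial^{*}=\partial_{\vec B}$. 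Second, $L_{\sigma^{-1}}$ is by construction an isomorphism $(G_{\mathcal K},\partial_{\vec A})\to(G_{\mathcal K},\partial_{\vec B})$, so it maps the differential point $\sigma$ to the differential point $e$; by the solution characterization, $\vec B=l\partial(e)=0$, because $e\in G(\mathcal C)$ is constant. Hence $\partial^{*}=\partial$, and $L_{\sigma^{-1}}$ is the desired isomorphism. Composing with the identity splitting of $(G_{\mathcal K},\partial)$ exhibits a splitting of $(G_{\mathcal K},\partial_{\vec A})$ in the sense of Definition \ref{C2DEFsplitSD} (with $\mathcal C$-scheme $Y=G$).

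Conversely, suppose $(G_{\mathcal K},\partial_{\vec A})$ splits, say $\phi\colon(G_{\mathcal K},\partial_{\vec A})\xrightarrow{\sim}Y\times_{\mathcal C}(\Spec(\mathcal K),\partial)$ for a $\mathcal C$-scheme $Y$. Passing to underlying schemes, $Y\times_{\mathcal C}\Spec(\mathcal K)\cong G_{\mathcal K}$, so $Y$ is a nonempty $\mathcal C$-scheme of finite type, since both finite type and nonemptiness descend along the faithfully flat extension $\mathcal C\hookrightarrow\mathcal K$. As $\mathcal C$ is algebraically closed, $Y$ has a $\mathcal C$-point $y_0$; this is precisely the spot where over a non-closed field a Galois-cohomological obstruction would enter, but here it vanishes automatically. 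The point $y_0$ yields a constant section of $Y\times_{\mathcal C}(\Spec(\mathcal K),\partial)$, which is a morphism of schemes with derivation and hence a rational differential point. Transporting it by $\phi^{-1}$ produces a rational differential point $\sigma\in G(\mathcal K)$ of $(G_{\mathcal K},\partial_{\vec A})$, and by the solution characterization $l\partial(\sigma)=\vec A$; thus the automorphic equation \eqref{EqAutomorphicAlgebraic} has a solution in $G(\mathcal K)$, completing the equivalence.
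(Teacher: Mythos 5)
Your proof is correct and takes essentially the same route as the paper: the gauge transformation $L_{\sigma^{-1}}$ sending $\sigma$ to $e$ (where $l\partial(e)=0$) for the solution-implies-split direction, and a $\mathcal C$-rational point of the constant factor transported through the splitting isomorphism for the converse. Your two-step verification that $(L_{\sigma^{-1}})_{*}\partial_{\vec A}$ is again automorphic and hence must equal $\partial$, and the descent argument showing $Y$ is nonempty of finite type, simply make explicit steps the paper leaves implicit.
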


\begin{proof}
  Assume $(G_{\mathcal K},\partial_{\vec A})$ splits. Let us consider the
splitting isomorphism
$$\psi\colon(G_{\mathcal K},\partial_{\vec A})\to Z \times_{\mathcal C}
(\Spec(\mathcal K),\partial).$$
 Let $x$ be a $\mathcal C$-rational point of
$Z$. Let us denote by $x_{\mathcal K}$ the corresponding $\mathcal
K$-point of $G_{\mathcal K}$ obtained after the extension of the
base field. Thus, $\psi^{-1}(x_{\mathcal K})$ is a solution of
\eqref{EqAutomorphicAlgebraic}. Reciprocally, let us assume that
there exists a solution $\sigma$  of \eqref{EqAutomorphicAlgebraic}
in $G(\mathcal K)$. Let us consider the gauge transformation:
$$L_{\sigma^{-1}}\colon G_{\mathcal K} \to G_{\mathcal K}.$$
It applies $\sigma$ onto the identity element $e\in G_{\mathcal K}$.
But the logarithmic derivative $l\partial(e)$ vanishes, so that
$L_{\sigma^{-1}}$ transforms $\partial_{\vec A}$ into the canonical
derivation $\partial$. We conclude that $L_{\sigma^{-1}}$ is an
splitting isomorphism.
\end{proof}

\begin{lemma}\label{lmLV2}
  Assume that $(G_{\mathcal K},\partial_{\vec A})$ splits. In such
case we can choose the splitting isomorphism between the gauge
transformations of $G_{\mathcal K}$. This gauge transformation
induces the split of any associated Lie-Vessiot system $(M_{\mathcal
K},\partial_{\vec X})$.
\end{lemma}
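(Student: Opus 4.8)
The plan is to build directly on Lemma \ref{lmLV1}. Since $(G_{\mathcal K},\partial_{\vec A})$ splits, that lemma produces a solution $\sigma\in G(\mathcal K)$ of the automorphic equation \eqref{EqAutomorphicAlgebraic}, and its proof shows that the gauge transformation $L_{\sigma^{-1}}$ carries $\partial_{\vec A}$ to the canonical derivation $\partial$, hence is a splitting isomorphism. This already settles the first assertion: among all splitting isomorphisms we may take one of the form $L_{\sigma^{-1}}$, a gauge transformation of $G_{\mathcal K}$.

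For the second assertion I would use the same $\sigma$ to produce the corresponding gauge transformation of $M_{\mathcal K}$. Since $\sigma^{-1}\in G(\mathcal K)$ and $M$ is a $G$-space, the action gives an automorphism $\rho_{\sigma^{-1}}\colon M_{\mathcal K}\to M_{\mathcal K}$, $x\mapsto \sigma^{-1}\cdot x$, and the claim is that $\rho_{\sigma^{-1}}$ transforms $\partial_{\vec X}$ into $\partial$. To relate the two systems I would fix a $\mathcal C$-rational point $p_0\in M$, which exists since $\mathcal C$ is algebraically closed and $M$ is homogeneous, and consider the orbit map
$$\pi\colon G\to M,\quad \tau\mapsto \tau\cdot p_0,$$
which is surjective because $M$ is a homogeneous $G$-space.

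The key step is to check that $\pi$ is a morphism of schemes with derivation $(G_{\mathcal K},\partial_{\vec A})\to(M_{\mathcal K},\partial_{\vec X})$. This is where Definition \ref{DFfundamentalfield} enters: $\pi$ is the restriction of the action to the slice $G\times\{p_0\}$, and by construction $\vec X=\vec A^M$ is the projection of $\vec A$ under that action, so $\pi_*\vec A=\vec X$. As $\pi$ is defined over $\mathcal C$ it commutes with the canonical $\mathcal K$-derivation $\partial$, and therefore it intertwines $\partial_{\vec A}=\partial+\vec A$ with $\partial_{\vec X}=\partial+\vec X$; the same reasoning shows $\pi$ also intertwines the two canonical derivations $(G_{\mathcal K},\partial)\to(M_{\mathcal K},\partial)$. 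I expect verifying this intertwining to be the main obstacle, since it requires tracking the projection $\vec A\mapsto\vec A^M$ of fundamental fields carefully through the base change to $\mathcal K$.

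Finally I would combine these facts using the equivariance $\pi\circ L_{\sigma^{-1}}=\rho_{\sigma^{-1}}\circ\pi$, which follows from $\sigma^{-1}\tau\cdot p_0=\sigma^{-1}\cdot(\tau\cdot p_0)$. The composite $\pi\circ L_{\sigma^{-1}}$ is a morphism of schemes with derivation $(G_{\mathcal K},\partial_{\vec A})\to(M_{\mathcal K},\partial)$, because $L_{\sigma^{-1}}$ splits $\partial_{\vec A}$ and $\pi$ respects $\partial$. Writing this composite instead as $\rho_{\sigma^{-1}}\circ\pi$ and using that $\pi^\sharp$ is injective, since $\pi$ is dominant onto the reduced scheme $M$, one descends the derivation compatibility from the composite to $\rho_{\sigma^{-1}}$ itself, obtaining that $\rho_{\sigma^{-1}}$ sends $\partial_{\vec X}$ to $\partial$. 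As $\rho_{\sigma^{-1}}$ is an isomorphism of $\mathcal K$-schemes, it is a splitting isomorphism for $(M_{\mathcal K},\partial_{\vec X})$, so every associated Lie-Vessiot system splits via the gauge transformation induced by $\sigma$.
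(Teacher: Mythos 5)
Your proposal is correct, and its first half coincides with the paper's own argument: both take the solution $\sigma\in G(\mathcal K)$ furnished by Lemma \ref{lmLV1} and observe that the gauge transformation $L_{\sigma^{-1}}$ is a splitting isomorphism for $(G_{\mathcal K},\partial_{\vec A})$. Where you genuinely differ is in the second assertion. The paper argues through solutions: for every constant point $x_0\in M(\mathcal C)$ the translate $\sigma\cdot x_0$ is a solution of $(M_{\mathcal K},\partial_{\vec X})$, so the translation by $\sigma$ carries solutions of $\partial$ to solutions of $\partial_{\vec X}$, and its inverse is then declared to be a splitting isomorphism -- a step the paper leaves at the level of an assertion, since passing from ``maps rational solutions to solutions'' to ``is a morphism of schemes with derivation'' would itself need a density argument. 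You instead verify the derivation compatibility directly: the orbit map $\pi\colon G\to M$ through a rational point intertwines $\partial_{\vec A}$ with $\partial_{\vec X}$ because each $\vec A_i$ is $\pi$-related to its fundamental field $\vec A_i^M$ (the projectability in Definition \ref{DFfundamentalfield} restricted to the slice $G\times\{p_0\}$), and then the equivariance $\pi\circ L_{\sigma^{-1}}=\rho_{\sigma^{-1}}\circ\pi$ together with injectivity of $\pi^\sharp$ (surjectivity of the orbit map onto the reduced scheme $M_{\mathcal K}$) descends the compatibility from the composite to $\rho_{\sigma^{-1}}$ itself. Your route costs two extra verifications (surjectivity of the orbit map and the relatedness of fundamental fields), but it buys a complete, sheaf-level proof that the induced gauge transformation of $M_{\mathcal K}$ is an isomorphism of schemes with derivation, which is exactly the point the paper's shorter solution-tracking argument glosses over.
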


\begin{proof}
  We use the same argument as above. If it splits,
  $$s\colon (G_{\mathcal K},\partial_{\vec A})\to
  G \times_{\mathcal C} (\Spec(\mathcal K),\partial) = (G,\partial),$$
then the preimage of the identity element $s^{-1}(e) = \sigma$ is a
solution of the automorphic system. So that the gauge transformation
$L_{\sigma^{-1}}\colon \sigma\mapsto e$ maps solutions of
$(G_{\mathcal K},\partial_{\vec A})$ to solutions of $(G_{\mathcal
K},\partial)$ and it is an splitting isomorphism. For any associated
Lie-Vessiot system $(M_{\mathcal K}, \partial_{\vec X})$, and any
point $x_0\in M(\mathcal C)$ we have that $L_{\sigma}(x_0)$ is a
solution of $(M_{\mathcal K},\partial_{\vec X})$. So that
$L_{\sigma}$ sends solutions of the canonical derivation $\partial$
to solutions of $\partial_{\vec X}$. Thus, its inverse
$L_{\sigma^{-1}}$ is an splitting isomorphism for $(M_{\mathcal
K},\partial_{\vec X})$.
\end{proof}

\begin{lemma}\label{LmAlmostConstantSplit}
  Let $Z$ be a $\mathcal C$-algebraic variety and $(Z_{\mathcal
K},\vec D)$ a non-autonomous differential algebraic dynamical system
over $\mathcal K$. If $(Z_{\mathcal K},\vec D)$ splits then
$(Z_{\mathcal K},\vec D)$ is almost-constant and $\Const(Z_{\mathcal
K},\vec D) \simeq Z$.
\end{lemma}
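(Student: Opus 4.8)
The plan is to reduce the statement to the earlier proposition on reduced, split schemes with derivation, and then to descend a field-extension isomorphism back down to $\mathcal C$ using that $\mathcal C$ is algebraically closed. First I would check that the underlying scheme $Z_{\mathcal K}=Z\times_{\mathcal C}\Spec(\mathcal K)$ is reduced: since $Z$ is an algebraic variety over the algebraically closed --- hence perfect --- field $\mathcal C$, it is geometrically reduced, so its base change along the field extension $\mathcal C\subset\mathcal K$ stays reduced. Thus $(Z_{\mathcal K},\vec D)$ is a \emph{reduced} scheme with derivation which, by hypothesis, splits. Applying the proposition quoted above (``if $(X,\partial)$ is reduced and split then it is almost-constant and $(X,\partial)\xrightarrow{\sim}\Const(X,\partial)\times_{\mathcal C}(\Spec(\mathcal K),\partial)$'') immediately yields the first assertion, that $(Z_{\mathcal K},\vec D)$ is almost-constant, and furnishes a splitting isomorphism onto its own space of constants.

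Write $Y=\Const(Z_{\mathcal K},\vec D)$, a $\mathcal C$-scheme. Forgetting the derivations, the splitting isomorphism becomes an isomorphism of $\mathcal K$-schemes $Z_{\mathcal K}\xrightarrow{\sim}Y\times_{\mathcal C}\Spec(\mathcal K)=Y_{\mathcal K}$. The remaining content of the lemma is therefore that $Y\simeq Z$ over $\mathcal C$: a priori the split only identifies the two base changes $Z_{\mathcal K}$ and $Y_{\mathcal K}$, and one must rule out that $Y$ is a nontrivial $\mathcal K$-form of $Z$. I expect this descent step to be the main obstacle, and it is exactly here that the algebraic closedness of $\mathcal C$ enters; the twisting produced by $\vec D$ is precisely what prevents us from simply reading off $Y=Z$.

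To carry out the descent I would spread out and specialize. Note first that $Y$ is of finite type over $\mathcal C$, since $Y_{\mathcal K}\simeq Z_{\mathcal K}$ is of finite type over $\mathcal K$ and finite type descends along the faithfully flat extension $\mathcal C\subset\mathcal K$. The isomorphism $Y_{\mathcal K}\to Z_{\mathcal K}$ and its inverse are then given by finitely many regular data with coefficients in $\mathcal K$, all of which lie in some finitely generated $\mathcal C$-subalgebra $\mathcal B\subset\mathcal K$; after inverting one suitable element of $\mathcal B$ (so that the two composites already equal the respective identities over $\mathcal B$) these data define an isomorphism $Y\times_{\mathcal C}W\xrightarrow{\sim}Z\times_{\mathcal C}W$ over the nonempty affine $\mathcal C$-variety $W=\Spec(\mathcal B)$. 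Since $\mathcal C$ is algebraically closed, $W$ has a rational point $w\in W(\mathcal C)$; pulling the isomorphism back along $\Spec(\mathcal C)\xrightarrow{w}W$ identifies the fibres and gives $Y\xrightarrow{\sim}Z$ over $\mathcal C$. Combined with $Y=\Const(Z_{\mathcal K},\vec D)$ this proves $\Const(Z_{\mathcal K},\vec D)\simeq Z$ and completes the argument.
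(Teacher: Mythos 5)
Your proof is correct and follows the same skeleton as the paper's, but it supplies real arguments where the paper has essentially none. The paper's own proof is three lines: it takes the splitting model $Y$ from the definition of split, observes $Z_{\mathcal K}\simeq Y_{\mathcal K}$, and then simply asserts ``and then $Z\simeq Y$'', leaving both the descent step and the identification of $Y$ with $\Const(Z_{\mathcal K},\vec D)$ implicit. You do two things the paper does not. First, you check that $Z_{\mathcal K}$ is reduced (geometric reducedness of the variety $Z$ over the perfect field $\mathcal C$), so that the proposition on reduced split schemes with derivation applies; this is what legitimately yields almost-constancy and identifies the splitting model with $\Const(Z_{\mathcal K},\vec D)$, a point the paper glosses over. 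Second, and more importantly, you actually prove the descent $Y_{\mathcal K}\simeq Z_{\mathcal K}\Rightarrow Y\simeq Z$: spreading the isomorphism and its inverse out over a finitely generated $\mathcal C$-subalgebra $\mathcal B\subset\mathcal K$ (after descending finite type along the faithfully flat extension) and specializing at a $\mathcal C$-rational point of $\Spec(\mathcal B)$, which exists by the Nullstellensatz because $\mathcal C$ is algebraically closed. This is exactly where algebraic closedness of the constants enters --- over a non-closed constant field $Y$ could a priori be a nontrivial form of $Z$ --- and it is the step the paper's proof silently skips. One cosmetic remark: to force the two composites to equal the identities you may need to enlarge $\mathcal B$ rather than invert a single element of it, but this is harmless, since any nonzero finitely generated $\mathcal C$-subalgebra of $\mathcal K$ is a domain of finite type over $\mathcal C$ and hence still has a $\mathcal C$-point.
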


\begin{proof}
  Assume that $(Z_{\mathcal K},\vec D)$ splits. It implies that
there exist an $\mathcal C$-scheme $Y$, such that  $Z_{\mathcal K} =
Y\times_{\mathcal C} \Spec(\mathcal K)$. We have that $Z_{\mathcal
K} \simeq Y_{\mathcal K}$, and then $Z\simeq Y$.
\end{proof}

\begin{lemma}\label{lmLV3.5}
  Let $Z$ be a reduced $\mathcal C$-scheme. There is a one-to-one
correspondence between  closed subschemes of $Z$ and  closed
subschemes with derivation of $(Z_{\mathcal K},\partial) = Z
\times_{\mathcal C}(\Spec(\mathcal K),\partial)$.
\end{lemma}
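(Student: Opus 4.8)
The plan is to translate the geometric statement into a purely algebraic one about ideals and then glue over an affine cover. The \emph{forward map} sends a closed subscheme $Y\subseteq Z$ to $Y_{\mathcal K}=Y\times_{\mathcal C}\Spec(\mathcal K)\subseteq Z_{\mathcal K}$. Since $\partial$ is the product derivation and acts as zero along the $Z$-factor, it restricts to $\mathcal O_{Y_{\mathcal K}}$, so $(Y_{\mathcal K},\partial)$ is a closed subscheme with derivation. To see this map is a bijection onto the $\partial$-stable closed subschemes I would argue locally. Covering $Z$ by affine opens $U=\Spec(\mathcal A)$ --- where $\mathcal A$ carries the zero derivation because $Z$ is a $\mathcal C$-scheme --- we have $U_{\mathcal K}=\Spec(\mathcal A\otimes_{\mathcal C}\mathcal K)$ with derivation $1\otimes\partial_{\mathcal K}$. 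Closed subschemes of $U$ correspond to ideals $\mathfrak a\subseteq\mathcal A$, and a closed subscheme of $U_{\mathcal K}$ is stable under $\partial$ precisely when its defining ideal $\mathfrak I\subseteq\mathcal A\otimes_{\mathcal C}\mathcal K$ is a differential ideal. Hence everything reduces to the claim: \emph{the differential ideals of $\mathcal A\otimes_{\mathcal C}\mathcal K$ are exactly the extended ideals $\mathfrak a\otimes_{\mathcal C}\mathcal K$ for $\mathfrak a\subseteq\mathcal A$ an ideal.}

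Given a differential ideal $\mathfrak I$, set $\mathfrak a=\{a\in\mathcal A : a\otimes 1\in\mathfrak I\}$; this is an ideal of $\mathcal A$ and the inclusion $\mathfrak a\otimes_{\mathcal C}\mathcal K\subseteq\mathfrak I$ is immediate. For the reverse inclusion take $x\in\mathfrak I$ and let $V_x\subseteq\mathcal A$ be the finite-dimensional $\mathcal C$-subspace spanned by the coefficients of $x$ in any expression $x=\sum_i a_i\otimes k_i$. Consider $N_x=\mathfrak I\cap(V_x\otimes_{\mathcal C}\mathcal K)$. Because $\mathfrak I$ is a differential ideal --- hence stable under multiplication by $1\otimes\mathcal K$ and under $\partial$ --- and $V_x\otimes_{\mathcal C}\mathcal K$ is itself $\partial$-stable, the subset $N_x$ is a $\partial$-stable $\mathcal K$-subspace of the \emph{trivial} differential module $(V_x\otimes_{\mathcal C}\mathcal K,\,1\otimes\partial_{\mathcal K})$, which after choosing a $\mathcal C$-basis of $V_x$ is just $(\mathcal K^{\dim V_x},\partial_{\mathcal K})$ with the componentwise derivation.

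The heart of the proof is the classical fact that a $\partial$-stable $\mathcal K$-subspace $N$ of such a trivial module equals $(N\cap V_x)\otimes_{\mathcal C}\mathcal K$, i.e. is spanned by constant vectors. This is the usual no-new-constants (Wronskian) argument, and it rests on $\mathcal C$ being exactly the field of constants of $\mathcal K$: among nonzero vectors of $N$ choose one, $v$, with the fewest nonzero coordinates and rescale a nonzero coordinate to $1$; then $\partial v\in N$ has strictly fewer nonzero coordinates, forcing $\partial v=0$ by minimality, so $v$ is a constant vector, and one finishes by induction on $\dim V_x$ by projecting away the coordinate of $v$. Applying this to $N_x$ gives $N_x=(N_x\cap V_x)\otimes_{\mathcal C}\mathcal K$; but $N_x\cap V_x\subseteq\mathfrak I\cap(\mathcal A\otimes 1)=\mathfrak a$, whence $x\in N_x\subseteq\mathfrak a\otimes_{\mathcal C}\mathcal K$. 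This establishes the claim, and with it the local bijection.

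Finally I would check that the correspondence is compatible with localization: both $\mathfrak a\mapsto\mathfrak a\otimes_{\mathcal C}\mathcal K$ and its inverse commute with passage to a basic open $D(f)=\Spec(\mathcal A_f)$, since $(\mathcal A\otimes_{\mathcal C}\mathcal K)_f=\mathcal A_f\otimes_{\mathcal C}\mathcal K$. Thus the local bijections between quasi-coherent ideal sheaves on $Z$ and $\partial$-stable quasi-coherent ideal sheaves on $Z_{\mathcal K}$ glue to the desired global bijection between closed subschemes of $Z$ and closed subschemes with derivation of $(Z_{\mathcal K},\partial)$. The hypothesis that $Z$ be reduced is not needed for the ideal bijection itself; it keeps us inside the category of varieties and, via the earlier fact that products of reduced differential $\mathcal K$-schemes are reduced, guarantees $Z_{\mathcal K}$ is reduced, while also ensuring that radical ideals correspond to radical ideals so that reduced closed subschemes match reduced closed subschemes with derivation. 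The only genuinely delicate point is the constants lemma invoked above; everything else is formal descent along the constant extension $\mathcal C\to\mathcal K$.
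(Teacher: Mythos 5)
Your proof is correct, and its global skeleton (reduce to an affine chart, establish an ideal correspondence, glue over a covering) coincides with the paper's; the genuine difference lies in how the key affine step is handled. The paper disposes of it in one line: it invokes the fact that $\mathcal R\otimes_{\mathcal C}\mathcal K$ has ring of constants $\mathcal R$ and is almost-constant, so that each \emph{radical} differential ideal is generated by constants; this yields a bijection only between radical ideals of $\mathcal R$ and radical differential ideals of $\mathcal R\otimes_{\mathcal C}\mathcal K$, i.e.\ between \emph{reduced} closed subschemes on either side --- which suffices for the paper, since the lemma is only ever applied to subvarieties. You instead prove the affine statement from scratch and for \emph{arbitrary} differential ideals, via the classical no-new-constants (Wronskian / minimal-support) argument: every $\partial$-stable $\mathcal K$-subspace of a trivial module $V\otimes_{\mathcal C}\mathcal K$ is spanned by its constant vectors, hence every differential ideal $\mathfrak I\subseteq\mathcal A\otimes_{\mathcal C}\mathcal K$ equals $(\mathfrak I\cap\mathcal A)\otimes_{\mathcal C}\mathcal K$. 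This buys two things: a self-contained justification of the step the paper leaves as an assertion (implicitly resting on Kolchin's linear-disjointness of constants and the Keigher--Kovacic almost-constant machinery), and a strictly stronger conclusion in which non-reduced closed subschemes with derivation also correspond to closed subschemes of $Z$, confirming your observation that reducedness of $Z$ plays no role in the correspondence itself. Two small points deserve a sentence rather than being subsumed under ``formal descent'': the injectivity direction $(\mathfrak a\otimes_{\mathcal C}\mathcal K)\cap\mathcal A=\mathfrak a$, which follows from faithful flatness of $\mathcal C\to\mathcal K$ (or by choosing a $\mathcal C$-basis of $\mathcal K$ containing $1$), and the induction step of the constants lemma, where after producing a constant vector $v$ with a coordinate equal to $1$ one should note that $N$ splits as $\mathcal K v$ plus a $\partial$-stable subspace of the remaining coordinates before invoking the inductive hypothesis.
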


\begin{proof}
  First, let us consider the affine case. Assume $Z = \Spec \mathcal R$ for a
$\mathcal C$-algebra $\mathcal R$. The ring of constants
$C_{\mathcal R\otimes_{\mathcal C} \mathcal K}$ is $\mathcal R$
itself. It follows that $\Const(Z_{\mathcal K},\partial) = Z$. It
is clear that $\mathcal R\otimes_{\mathcal C}\mathcal
K$ is an almost-constant ring: each radical differential ideal is
generated by constants. Because of that there is an one-to-one
correspondence between radical ideals of $\mathcal R$ and radical
differential ideals of $\mathcal K$.

  In the non-affine case, let us consider $Y$ a  closed
sub-$\mathcal C$-scheme of $Z$. The canonical immersion $(Y_\mathcal
K,\partial)\subset (Z_{\mathcal K},\partial)$ identifies $Y$ with a
 closed sub-$\mathcal K$-scheme with derivation of
$(Z_{\mathcal K},\partial)$. Reciprocally, let $(\tilde
Y,\partial|_{\tilde Y})$ be a  closed sub-$\mathcal K$-scheme with
derivation of $(Z_{\mathcal K},\partial)$. Let us consider
$\{U_i\}_{i\in\Lambda}$ an affine covering of $Z$. The collection
$\{V_i\}_{i\in\Lambda}$ with $V_i = U_i\times_{\mathcal C} \mathcal
K$ is then an affine covering of $Z_{\mathcal K}$. Each intersection
$\tilde Y_i = \tilde Y|_{V_i}$ is an affine  closed sub-$\mathcal
K$-scheme of $V_i$. We are in the affine case: by the above argument
there are closed sub-$\mathcal C$-schemes $Y_i\subset U_i$ such that
$(\tilde Y_i,\partial|_{\tilde Y_i}) = Y_i \times_{\mathcal C}
(\Spec(\mathcal K),\partial)$. This family defines a covering of a
closed sub-$\mathcal C$-scheme $Y = \bigcup_{i\in\Lambda} Y_i$ of
$Z$.
\end{proof}

\begin{lemma}\label{lmLV4}
  Let $Z$ be a $\mathcal C$-algebraic variety and $(Z_{\mathcal
K},\vec D)$ a non autonomous algebraic dynamical system over
$\mathcal K$. Let $Y\subset Z$ a locally closed subvariety, and
assume that $\vec D$ is tangent to $Y$, so that $(Y_{\mathcal
K},\vec D|_Y)$ is a sub-$\mathcal K$-scheme with derivation. If
$(Z_{\mathcal K},\vec D)$ splits then $(Y_{\mathcal K},\vec D|_Y)$
splits.
\end{lemma}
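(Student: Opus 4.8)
The plan is to transport the invariant subvariety $Y_{\mathcal K}$ through a splitting isomorphism into the canonical split model, where the derivation is the canonical one and invariant locally closed subschemes are classified transparently by the sheaf of constants. First I would invoke the hypothesis together with Definition \ref{C2DEFsplitSD} to fix a splitting isomorphism of $\mathcal K$-schemes with derivation
$$\phi\colon (Z_{\mathcal K},\vec D)\xrightarrow{\sim} W\times_{\mathcal C}(\Spec(\mathcal K),\partial),$$
where $W$ is a $\mathcal C$-scheme. By Lemma \ref{LmAlmostConstantSplit} the model may be taken to be $W = \Const(Z_{\mathcal K},\vec D)\simeq Z$, which is reduced because $Z$ is an algebraic variety over the algebraically closed field $\mathcal C$ of characteristic zero. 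The point of passing to this model is that its derivation is the canonical split derivation $\partial$, which is exactly the setting of Lemma \ref{lmLV3.5}.

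By hypothesis $\vec D$ is tangent to $Y$, so $(Y_{\mathcal K},\vec D|_Y)$ is a locally closed sub-$\mathcal K$-scheme with derivation of $(Z_{\mathcal K},\vec D)$; since $\phi$ is an isomorphism of schemes with derivation, it carries $Y_{\mathcal K}$ isomorphically onto a locally closed sub-$\mathcal K$-scheme with derivation $\phi(Y_{\mathcal K})$ of the canonical split scheme $W\times_{\mathcal C}(\Spec(\mathcal K),\partial)$. The core of the argument is then the following locally closed strengthening of Lemma \ref{lmLV3.5}: for a reduced $\mathcal C$-scheme $W$, the locally closed sub-$\mathcal K$-schemes with derivation of the canonical split scheme are exactly the base changes $V\times_{\mathcal C}(\Spec(\mathcal K),\partial)$ of locally closed $\mathcal C$-subschemes $V\subset W$. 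Granting this, $\phi(Y_{\mathcal K}) = V\times_{\mathcal C}(\Spec(\mathcal K),\partial)$ for some locally closed $V\subset W$, and the restriction of $\phi$ becomes a splitting isomorphism for $(Y_{\mathcal K},\vec D|_Y)$, which is exactly the conclusion.

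The only nontrivial step, and the main obstacle, is this locally closed strengthening of Lemma \ref{lmLV3.5}, since that lemma is stated only for \emph{closed} subschemes. The simplification here is that in the model the functions pulled back from $W$ are constants, so every distinguished open invariant under $\partial$ has the form $D(f)_{\mathcal K}$ with $f$ a constant, and its coordinate ring $\mathcal R_f\otimes_{\mathcal C}\mathcal K$ retains the almost-constant product structure exploited in the proof of Lemma \ref{lmLV3.5}. I would write a locally closed subscheme with derivation as a closed subscheme with derivation of such an invariant open; the affine almost-constant argument of Lemma \ref{lmLV3.5} (radical differential ideals are generated by their constants, which form an ideal of $\mathcal R_f\simeq \mathcal O_W(D(f))$) then produces a locally closed $\mathcal C$-subscheme on each affine chart. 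These charts glue over an affine cover of $W$ because the correspondence is canonical, being recorded by the sheaf of constants $\mathcal C_X$. I expect the only care needed is the bookkeeping of this gluing and the verification that reducedness is preserved under base change from $\mathcal C$ to $\mathcal K$, both of which are routine in characteristic zero with $\mathcal C$ algebraically closed.
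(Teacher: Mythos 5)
You follow the same skeleton as the paper---fix a splitting isomorphism $\psi$, transport $Y_{\mathcal K}$ into the canonical model $Z\times_{\mathcal C}(\Spec(\mathcal K),\partial)$, and classify invariant subschemes there---but the statement you yourself isolate as ``the core of the argument'', your locally closed strengthening of Lemma \ref{lmLV3.5}, is false, and this is a genuine gap. Take $Z=\mathbb A^1_{\mathcal C}$, so the canonical split scheme is $\Spec(\mathcal K[x])$ with $\partial x=0$, and pick $t\in\mathcal K\setminus\mathcal C$. The distinguished open $U=\Spec\bigl(\mathcal K[x,(x-t)^{-1}]\bigr)$ is a locally closed sub-$\mathcal K$-scheme with derivation, because a derivation of the structure sheaf restricts to \emph{every} open subscheme, so every open immersion is a morphism of schemes with derivation; this same remark already refutes your assertion that an invariant distinguished open must be of the form $D(f)_{\mathcal K}$ with $f$ constant. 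Yet $U$ is not the base change of any locally closed $V\subset\mathbb A^1_{\mathcal C}$: its ring of constants is exactly $\mathcal C[x]$, since for $(x-t)\nmid g$ and $n\geq 1$ one has $\partial\bigl(g/(x-t)^n\bigr)=\bigl(\partial g\,(x-t)+ng\bigr)/(x-t)^{n+1}\neq 0$, so a splitting would give a $\mathcal K$-algebra isomorphism $\mathcal K[x,(x-t)^{-1}]\simeq\mathcal C[x]\otimes_{\mathcal C}\mathcal K=\mathcal K[x]$, which is impossible because the left-hand ring has units (namely $x-t$) outside $\mathcal K^*$ while the units of $\mathcal K[x]$ are just $\mathcal K^*$. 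So $U$ does not even split, let alone equal some $V_{\mathcal K}$.

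The gap cannot be closed at the level of generality at which you work, because your argument never uses anything beyond ``the derivation restricts to $Y_{\mathcal K}$'', and with only that hypothesis the conclusion itself can fail: pulling the example back through $\psi$, the system $\vec D=\partial+\partial/\partial x$ (the equation $\dot x=1$) on $Z=\mathbb A^1$ splits via the constant $x-t$, the locally closed $\mathcal C$-subvariety $Y=Z\setminus\{0\}$ carries the restricted derivation, and $(Y_{\mathcal K},\vec D|_Y)$ does not split. The missing input, which holds in the paper's applications (orbits, in Lemma \ref{lmLV5}) and which the paper's proof invokes implicitly in its first sentence, is that $\vec D$ is tangent not only to $Y$ (equivalently to $\bar Y$) but also to the boundary $\bar Y\setminus Y$. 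Granting this, one applies Lemma \ref{lmLV3.5} \emph{in its stated, closed, form} twice: $\psi(\bar Y_{\mathcal K})=\bar V\times_{\mathcal C}\Spec(\mathcal K)$ and $\psi\bigl((\bar Y\setminus Y)_{\mathcal K}\bigr)=W'\times_{\mathcal C}\Spec(\mathcal K)$ for closed $\mathcal C$-subschemes $W'\subset\bar V$, whence $\psi(Y_{\mathcal K})=(\bar V\setminus W')\times_{\mathcal C}(\Spec(\mathcal K),\partial)$ and $(Y_{\mathcal K},\vec D|_Y)$ splits. That is what the paper's reduction ``substitute $Z$ by an open subset so that $Y$ is closed'' amounts to: the open subset is the complement of the invariant closed set $\bar Y\setminus Y$, and invariance of that complement is precisely what makes the shrunken system split; your proposal, by contrast, tries to classify \emph{all} locally closed invariant subschemes of the model and therefore proves too much.
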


\begin{proof}
  By substituting $Z$ for certain open subset we can assume that $Y$ is closed.
Let us consider the splitting isomorphism,
$$\psi\colon(Z_{\mathcal K},\vec D)
\to Z\times_{\mathcal C} (\Spec(\mathcal K),\partial).$$ The image
$\psi(Y_{\mathcal K},\vec D|_Y)$ is a locally closed subscheme with
derivation of  $Z\times_{\mathcal C}(\Spec(\mathcal K),\partial)$.
By Lemma \ref{lmLV3.5} it splits.
\end{proof}

\begin{lemma}\label{lmLV5}
  Assume that the action of $G$ on $M$ is faithful. Then $(G_{\mathcal
K},\partial_{\vec A})$ splits if and only if $(M_{\mathcal K},
\partial_{\vec X})$ splits.
\end{lemma}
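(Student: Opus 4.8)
The plan is to treat the two implications separately. The implication ``$(G_{\mathcal K},\partial_{\vec A})$ splits $\Rightarrow$ $(M_{\mathcal K},\partial_{\vec X})$ splits'' is already contained in Lemma \ref{lmLV2}, where the gauge transformation induced by a solution of the automorphic equation is shown to split every associated Lie-Vessiot system. Hence the whole content of the statement lies in the converse, and it is precisely there that faithfulness enters. So I would assume that $(M_{\mathcal K},\partial_{\vec X})$ splits and aim to split the automorphic system $(G_{\mathcal K},\partial_{\vec A})$.

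First I would exploit faithfulness to realize $G$ as a locally closed subvariety of a power of $M$. Since the action is faithful, $\bigcap_{x} H_x = \{e\}$, the intersection running over closed points of $M$; as the finite partial intersections form a descending chain of closed subgroups, there is a finite family of points $x_1,\ldots,x_k$ with $\bigcap_{i=1}^k H_{x_i} = \{e\}$, and since $\mathcal C$ is algebraically closed these may be taken in $M(\mathcal C)$. Consider the product orbit map
\[
o\colon G \to M^k,\qquad \sigma \mapsto (\sigma\cdot x_1,\ldots,\sigma\cdot x_k).
\]
Its scheme-theoretic fibre over $o(\sigma)$ is the coset $\sigma\cdot\bigcap_i H_{x_i} = \{\sigma\}$, so $o$ has trivial stabilizer and identifies $G$ with the orbit $G\cdot(x_1,\ldots,x_k)$, a locally closed subvariety of $M^k$ (in characteristic zero the orbit map onto its orbit is an isomorphism).

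Next I would endow $M^k$ with the diagonal $G$-action and the Lie-Vessiot vector field $\vec X^{(k)} = \sum_i f_i\,\vec A_i^{M^k}$ built from the same automorphic field $\vec A = \sum_i f_i \vec A_i$. The map $o$ is $G$-equivariant for left translation on $G$ and the diagonal action on $M^k$, so it intertwines $\vec A$ with $\vec X^{(k)}$; equivalently, $o$ is a morphism of schemes with derivation $(G_{\mathcal K},\partial_{\vec A}) \to (M^k_{\mathcal K},\partial_{\vec X^{(k)}})$, and by the previous paragraph it is a locally closed immersion. In particular $\partial_{\vec X^{(k)}}$ is tangent to the image of $G$ and restricts there to $\partial_{\vec A}$.

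Finally, since $(M_{\mathcal K},\partial_{\vec X})$ splits and the product derivation on $(M_{\mathcal K},\partial_{\vec X})^{\times k}$ equals $\partial_{\vec X^{(k)}}$ (the product of the splitting isomorphisms carries it to $\partial$), the system $(M^k_{\mathcal K},\partial_{\vec X^{(k)}})$ splits. Applying Lemma \ref{lmLV4} to the locally closed subvariety $o(G)\subset M^k$, to which $\partial_{\vec X^{(k)}}$ is tangent, I would conclude that $(G_{\mathcal K},\partial_{\vec A})$ splits. The main obstacle is the first step: checking that faithfulness together with a finite choice of points really produces a $G$-equivariant locally closed immersion $G \hookrightarrow M^k$ of schemes with derivation. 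Once this equivariant embedding is in place, the splitting descends formally from the product system by Lemma \ref{lmLV4}.
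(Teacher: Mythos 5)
Your proposal is correct and takes essentially the same route as the paper's own proof: the forward implication via Lemma \ref{lmLV2}, and the converse by embedding $G$ equivariantly as a principal orbit inside a cartesian power of $M$ (with the lifted Lie-Vessiot derivation), noting that the power splits, and restricting the splitting to the orbit via Lemma \ref{lmLV4}. The only difference is that you make explicit, through the descending chain of finite intersections of isotropy groups, why faithfulness produces a point of $M^k$ with trivial stabilizer and why the orbit map is an isomorphism onto a locally closed subvariety --- a step the paper merely asserts with ``for $r$ big enough there is a point $x\in M^r$ such that its orbit $O_x$ is a principal homogeneous space isomorphic to $G$''.
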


\begin{proof}
  Lemma \ref{lmLV2} says that if $(G_{\mathcal K}, \partial_{\vec A})$ splits, then
$(M_{\mathcal K},\partial_{\vec X})$ splits. Reciprocally, let us
assume that $(M_{\mathcal K},\partial_{\vec X})$ splits. For each
positive number $r$ we consider the natural lifting to the cartesian
power $(M^r_{\mathcal K},\partial_{\vec X}^r)$. The splitting of
$(M_{\mathcal K},\partial_{\vec X})$ induces the splitting of those
cartesian powers differential algebraic dynamical system
$(M^r_{\mathcal K},\partial_{\vec X}^r)$. For $r$ big enough there
is a point $x\in M^r$ such that its orbit $O_x$ is a principal
homogeneous space \emph{isomorphic} to $G$. 
Then $(O_{x,\mathcal K},\partial_{\vec X})$
is a locally closed sub-$\mathcal K$-scheme with derivation of
$(M^r_{\mathcal K},\partial_{\vec X}^r)$. By Lemma \ref{lmLV4} it
splits. We also know that $(O_{x,\mathcal K},\partial_{\vec X})$ is
isomorphic to $(G_{\mathcal K},\partial_{\vec A})$. Finally,
$(G_{\mathcal K},\partial_{\vec A})$ splits.
\end{proof}

\begin{theorem}\label{C3THE3.1.17}
  Assume that the action of $G$ on $M$ is faithful. Then the
following are equivalent.
\begin{enumerate}
\item[(1)] The automorphic equation \eqref{EqAutomorphicAlgebraic}
has a solution in $G(\mathcal K)$
\item[(2)] $(G_{\mathcal K},\partial_{\vec A})$ splits.
\item[(3)] There is a gauge transformation of $G_{\mathcal K}$ sending $\vec
A$ to $0$.
\item[(4)] $(M_{\mathcal K},\partial_{\vec X})$ splits.
\item[(5)] $(G_{\mathcal K},\partial_{\vec A})$ splits, is almost-constant, and
$\Const(G_{\mathcal K},\partial_{\vec A}) \simeq G$.
\item[(6)] $(M_{\mathcal K},\partial_{\vec A})$ splits, is almost-constant, and
$\Const(M_{\mathcal K},\partial_{\vec X}) \simeq M$.
\end{enumerate}
\end{theorem}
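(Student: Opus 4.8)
The plan is to close a cycle of implications linking all six conditions, relying almost entirely on the lemmas already established in this subsection. I would organize the argument around the two core equivalences (1)$\iff$(2) and (2)$\iff$(4), then append the almost-constant refinements (5) and (6) and the gauge-transformation reformulation (3). The substance has been front-loaded into the preceding lemmas, so the theorem is essentially a matter of threading them together correctly.

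First I would obtain (1)$\iff$(2) by a direct appeal to Lemma \ref{lmLV1}, which states precisely that $(G_{\mathcal K},\partial_{\vec A})$ splits if and only if the automorphic equation \eqref{EqAutomorphicAlgebraic} has a solution in $G(\mathcal K)$. Next, since the action of $G$ on $M$ is assumed faithful, Lemma \ref{lmLV5} yields (2)$\iff$(4) without further work; this is the only place where faithfulness enters, and it is exactly the hypothesis required by that lemma.

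For the reformulation (3), I would invoke Lemma \ref{lmLV2}: when $(G_{\mathcal K},\partial_{\vec A})$ splits one may choose the splitting isomorphism among the gauge transformations, that is, a left translation $L_{\sigma^{-1}}$ carrying $\partial_{\vec A}$ to the canonical derivation $\partial$, which gives (2)$\Rightarrow$(3). Conversely, a gauge transformation sending $\vec A$ to $0$ is by definition an isomorphism onto $(G_{\mathcal K},\partial)=G\times_{\mathcal C}(\Spec(\mathcal K),\partial)$, and the latter splits by construction; hence (3)$\Rightarrow$(2). The only point requiring care here is the bookkeeping verifying that the gauge transformation attached to a solution of the automorphic equation is exactly one that annihilates $\vec A$ — a fact already implicit in the proofs of Lemmas \ref{lmLV1} and \ref{lmLV2}, since $l\partial(e)=0$.

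Finally, the refinements (5) and (6) follow from Lemma \ref{LmAlmostConstantSplit} applied to the $\mathcal C$-algebraic varieties $Z=G$ and $Z=M$: whenever a split occurs, the system is automatically almost-constant with space of constants isomorphic to the phase space, yielding (2)$\Rightarrow$(5) and (4)$\Rightarrow$(6). The reverse implications (5)$\Rightarrow$(2) and (6)$\Rightarrow$(4) are immediate, since conditions (5) and (6) explicitly assert that the respective system splits. I do not anticipate a genuine obstacle: the technical weight has already been discharged in the lemmas, and this theorem merely packages them into a single equivalence.
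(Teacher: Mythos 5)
Your proposal is correct and follows essentially the same route as the paper's own proof: (1)$\iff$(2) via Lemma \ref{lmLV1}, (2)$\iff$(3) via Lemma \ref{lmLV2}, (2)$\iff$(4) via Lemma \ref{lmLV5} using faithfulness, and the passage to (5) and (6) via Lemma \ref{LmAlmostConstantSplit}. Your explicit treatment of the trivial converse directions (3)$\Rightarrow$(2), (5)$\Rightarrow$(2), and (6)$\Rightarrow$(4) is slightly more careful than the paper's terse wording, but the substance is identical.
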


\begin{proof}
  Equivalence between (1) and (2) comes from Lemma \ref{lmLV1}.
Equivalence between (2) and (3) comes from Lemma \ref{lmLV2}. (2)
and (4) are equivalent by Lemma \ref{lmLV5}. By Lemma
\ref{LmAlmostConstantSplit}, they all imply (5) and (6).
\end{proof}

\subsection{Splitting Field of an Automorphic System}

  Note that a differential extension $\mathcal K \subset
  \mathcal L$, induces a canonical inclusion,
  $$\mathcal R(G,M)\otimes_{\mathcal C}\mathcal K \subset
  \mathcal R(G,M)\otimes_{\mathcal C}\mathcal L;$$
  so that a Lie-Vessiot vector field with coefficients in $\mathcal
  K$ is a particular case of a Lie-Vessiot vector field
  with coefficients in $\mathcal L$. So that if $(M_{\mathcal
  K},\partial_{\vec X})$ is a Lie-Vessiot system, then $(M_{\mathcal
  L},\partial_{\vec X})$ makes sense.

\index{splitting extension}
\begin{definition}
  We say that a differential extension $\mathcal K\subset\mathcal L$
is a splitting extension for $(M_{\mathcal K},\partial_{\vec X})$ if
$(M_{\mathcal L},\partial_{\vec X})$ splits.
\end{definition}

   From theorem \ref{C3THE3.1.17}, we know that $\mathcal
K\subset\mathcal L$ is a splitting extension of
{\nolinebreak$(M_{\mathcal K},\partial_{\vec X})$} if and only it is
a splitting extension of $(G_{\mathcal K},\partial_{\vec A})$. Then
we will center our attention in the automorphic vector field $\vec
A$.

\subsection{Action of $G(\mathcal C)$ on $G_{\mathcal
K}$}\label{C3SS3.3.1}

  For each $\sigma\in G(\mathcal C)$, $R_{\sigma}$ is an
automorphism of $G_{\mathcal K}$. The composition law is an action
of $G$ on $G_{\mathcal K}$ by the right side,
  $$G_{\mathcal K} \times_{\mathcal C} G \to G_{\mathcal K}.$$
  The vector field $\vec A$ is right invariant, so that we expect the differential
points of $(G_{\mathcal K},\partial_{\vec A})$ to be invariant under
right translations. In fact, the above morphism is a morphism of
schemes with derivation,
  $$(G_{\mathcal K},\partial_{\vec A}) \times_{\mathcal C} G \to
(G_{\mathcal K},\partial_{\vec A}).$$ We apply the functor $\Diff$,
and then we obtain an action of the $\mathcal C$-algebraic group $G$
on the differential scheme $\Diff(G_{\mathcal K},\partial_{\vec
A})$,
$$\Diff(G_{\mathcal K},\partial_{\vec A}) \times_{\mathcal C} G \to
\Diff(G_{\mathcal K},\partial_{\vec A}).$$ Assume that $(G_{\mathcal
K},\partial_{\vec A})$ split. In such case, when we apply the
functor $\Const$ to the previous morphism, we obtain a morphism of
schemes,
$$\Const(G_{\mathcal K},\partial_{\vec A})\times_{\mathcal C} G \to
\Const(G_{\mathcal K},\partial_{\vec A}).$$ Because of the split we
already knew that $\Const(G_{\mathcal K},\partial_{\vec A})$ is a
$\mathcal C$-scheme isomorphic to $G$. Furthermore, the above
morphism says that the action of $G$ by the right side on this
$G$-scheme is canonical. We have proven the following:

\begin{lemma}
  Assume that $(G_{\mathcal K},\partial_{\vec A})$ splits. Then
$\Const(G_{\mathcal K},\partial_{\vec A})$ is a principal
$G$-homogeneous space by the right side.
\end{lemma}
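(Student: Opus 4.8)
The scheme $\Const(G_{\mathcal K},\partial_{\vec A})$ has already been equipped, in the discussion preceding the statement, with a right action of $G$ and identified with $G$ as a bare $\mathcal C$-scheme; what remains is to verify that this action is \emph{free and transitive}, equivalently---since $\mathcal C$ is algebraically closed---that, writing $N = \Const(G_{\mathcal K},\partial_{\vec A})$, the morphism $(a\times Id)\colon N\times_{\mathcal C} G \to N\times_{\mathcal C} N$ is an isomorphism. The plan is to transport this question, through the splitting, to the canonical right-translation action of $G$ on itself, where freeness and transitivity are manifest.

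First I would invoke Lemma \ref{lmLV2}: since $(G_{\mathcal K},\partial_{\vec A})$ splits, the splitting isomorphism may be chosen to be a gauge transformation $L_{\sigma^{-1}}\colon (G_{\mathcal K},\partial_{\vec A}) \xrightarrow{\sim} (G_{\mathcal K},\partial)$, where $\sigma \in G(\mathcal K)$ is a solution of the automorphic equation supplied by Lemma \ref{lmLV1}. The crucial structural fact is that left and right translations on a group commute, so that $L_{\sigma^{-1}}\circ R_\tau = R_\tau \circ L_{\sigma^{-1}}$ for every $\tau \in G(\mathcal C)$. Thus $L_{\sigma^{-1}}$ is not merely an isomorphism of schemes with derivation but is \emph{equivariant} for the right $G$-action on both sides, intertwining the right action on $(G_{\mathcal K},\partial_{\vec A})$ with the right-translation action on the canonically split model $(G_{\mathcal K},\partial)$.

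Next I would apply the functor $\Const$. By its functoriality and its compatibility with base products, $L_{\sigma^{-1}}$ induces a $G$-equivariant isomorphism of $\mathcal C$-schemes $\Const(G_{\mathcal K},\partial_{\vec A}) \xrightarrow{\sim} \Const(G_{\mathcal K},\partial) = G$, the target carrying the right-translation action of $G$ on itself. This action is the archetypal principal homogeneous structure: the map $(x,\tau)\mapsto (x\tau, x)$ on $G\times_{\mathcal C} G$ is an isomorphism, with inverse $(x,y)\mapsto (y, y^{-1}x)$. Transporting this back through the equivariant identification shows that the right $G$-action on $N=\Const(G_{\mathcal K},\partial_{\vec A})$ is itself free and transitive, so that $N$ is a principal homogeneous $G$-space by the right side.

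The step needing the most care---and the main potential obstacle---is the compatibility of $\Const$ with the product by the constant group $G$, namely that $\Const\bigl((G_{\mathcal K},\partial_{\vec A})\times_{\mathcal C} G\bigr)\simeq \Const(G_{\mathcal K},\partial_{\vec A})\times_{\mathcal C} G$, so that applying $\Const$ to the action morphism genuinely yields the action morphism on constants rather than a weaker datum. Once this identification is secured from the functoriality and product-compatibility of $\Diff$ and $\Const$ recorded earlier, the equivariance of $L_{\sigma^{-1}}$ descends to constants and the conclusion follows at once.
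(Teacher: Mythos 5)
Your proposal is correct and takes essentially the same approach as the paper: the paper's own proof is just the discussion preceding the lemma (the right-translation action is a morphism of schemes with derivation, descends through $\Diff$ and $\Const$, and the split identifies $\Const(G_{\mathcal K},\partial_{\vec A})$ with $G$), ending with the bare assertion that the resulting action on this copy of $G$ is the canonical one. Your explicit justification of that assertion---choosing the splitting isomorphism to be the gauge transformation $L_{\sigma^{-1}}$ of Lemma \ref{lmLV2} and using that left translations commute with right translations, so that the identification is $G$-equivariant and transports the principal structure of right translation on $G$---is precisely the detail the paper leaves implicit.
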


\subsection{Existence and Uniqueness of the Splitting Field}\label{C3SSexistenceuniqueness}

\begin{lemma}\label{C3LEMcloseddiffpoint}
  There is a differential point $\mathfrak x\in \Diff(G_{\mathcal K},\partial_{\vec A})$ which
is closed in the Kolchin topology.
\end{lemma}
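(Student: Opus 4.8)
The plan is to produce a closed Kolchin point of $X'=\Diff(G_{\mathcal K},\partial_{\vec A})$ by a Noetherian minimality argument. First I would reformulate the statement in purely topological terms. A differential point $\mathfrak x\in X'$ is closed in the Kolchin topology precisely when its Kolchin closure reduces to $\{\mathfrak x\}$, that is, when $\mathfrak x$ is maximal among the prime differential ideals of the structure sheaf of $(G_{\mathcal K},\partial_{\vec A})$; equivalently, $\{\mathfrak x\}$ is a minimal nonempty Kolchin-closed subset of $X'$. So it suffices to exhibit one minimal nonempty Kolchin-closed subset and to verify that it consists of a single point.

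The key structural input is that $X'$ is a Noetherian topological space. Indeed $G$ is of finite type over $\mathcal C$, so after the base change $G_{\mathcal K}=G\times_{\mathcal C}\Spec(\mathcal K)$ is of finite type over $\mathcal K$, and its underlying space is Noetherian. By the proposition identifying the Kolchin topology with the subspace topology induced from the Zariski topology, $X'\subset G_{\mathcal K}$ carries the induced topology; a subspace of a Noetherian space is Noetherian, so the family of Kolchin-closed subsets of $X'$ satisfies the descending chain condition.

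Next I would check that $X'\neq\emptyset$, and this is the step I expect to be the main obstacle, because $G$ is an arbitrary (possibly non-affine, possibly disconnected) algebraic group, so one cannot simply point to a single generic point. Since $\mathcal C$, hence $\mathcal K$ and every ring of sections of $\mathcal O_{G_{\mathcal K}}$, has characteristic zero, all these rings are Ritt algebras and therefore Keigher rings (Theorem~\ref{C2THE2.1.6}). Consequently $(G_{\mathcal K},\partial_{\vec A})$ is Keigher, and by Lemma~\ref{LM3.8} each minimal prime of the structure rings is a differential ideal. Hence the generic points of the irreducible components of $G_{\mathcal K}$ are differential points; they lie in $X'$, so $X'$ is nonempty.

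Finally I would extract the point. By the descending chain condition there is a minimal nonempty Kolchin-closed subset $Y\subset X'$. Minimality forces $Y$ to be irreducible, since a decomposition into two proper closed subsets would produce a strictly smaller nonempty closed subset. For every $y\in Y$ the closure $\overline{\{y\}}$ is a nonempty closed subset of $Y$, so by minimality $\overline{\{y\}}=Y$; thus every point of $Y$ is a generic point of $Y$. By the property of the differential spectrum that each irreducible closed subspace admits a \emph{unique} generic point, $Y$ can contain only one point, say $\mathfrak x$, whence $Y=\{\mathfrak x\}$ is closed. This $\mathfrak x$ is the desired Kolchin-closed differential point of $\Diff(G_{\mathcal K},\partial_{\vec A})$.
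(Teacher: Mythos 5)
Your proof is correct, and it runs on the same engine as the paper's --- noetherianity of $G_{\mathcal K}$ transferred to the Kolchin topology --- but the packaging is genuinely different. The paper starts from the generic point $p_0$ of $G_{\mathcal K}$, observes that it is a differential point, and builds a specialization chain $p_0\to p_1\to p_2\to\cdots$ inside successive Kolchin closures, which must terminate at a Kolchin closed point because the scheme is noetherian. You instead take a minimal nonempty Kolchin-closed subset $Y$ (existence by the descending chain condition), show minimality forces $Y$ to be irreducible with every point generic, and invoke the unique-generic-point property of differential spectra to conclude $Y$ is a singleton. Your route buys two things the paper glosses over: first, an explicit verification that $\Diff(G_{\mathcal K},\partial_{\vec A})\neq\emptyset$, via the Ritt-algebra/Keigher property and Lemma~\ref{LM3.8}, which in particular handles a disconnected $G$, where there is no single generic point and the paper's opening sentence is strictly speaking loose (for irreducible reduced $G_{\mathcal K}$ the paper's claim is immediate, since the zero ideal is trivially differential); second, a clean justification of why the terminal object of the descent is literally one closed point, which in the paper is implicit in the phrase ``lead us to a Kolchin closed point.'' The paper's version is shorter and avoids any appeal to Keigher-ring theory; yours is more robust and self-contained. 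Both are valid.
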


\begin{proof}
  Let us consider the generic point $p_0\in G_{\mathcal K}$. In
particular it is a differential point $p_0\in\Diff(G_{\mathcal
K},\partial_{\vec A})$. If $p_0$ is Kolchin closed, then we finish
and the result holds. If not, then the Kolchin closure of $p_0$
contains a differential point point $p_1$ such that $p_0$
specializes on it $p_0\to p_1$. We continue this process with $p_1$.
As $G_{\mathcal K}$ is an algebraic variety, and then a noetherian
scheme, this process finish in a finite number of steps and lead us
to a Kolchin closed point.
\end{proof}

\begin{lemma}
  Let $\mathfrak x \in \Diff(G_{\mathcal K},\partial_{\vec A})$ be a closed
differential point. Then its field of quotients $\kappa(\mathfrak
x)$ is a differential extension of $\mathcal K$ with the same field
of constants; $C_{\kappa(\mathfrak x)} = \mathcal C$.
\end{lemma}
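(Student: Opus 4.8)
The plan is as follows. Since $\mathfrak x$ is a differential point, the derivation $\partial_{\vec A}$ descends to the quotient field, so by the characterization of differential points $\kappa(\mathfrak x)$ is \emph{automatically} a differential extension of $\mathcal K$; the entire content of the statement is therefore that no new constants appear, i.e.\ $C_{\kappa(\mathfrak x)}=\mathcal C$. To handle this I would fix an affine chart $\Spec(\mathcal R)$ of $G_{\mathcal K}$ containing $\mathfrak x$, with $\mathcal R=\mathcal O_G(U)\otimes_{\mathcal C}\mathcal K$, so that $\mathfrak x$ is a prime differential ideal $\mathfrak p\subset\mathcal R$ for $\partial_{\vec A}$ and $\mathcal A=\mathcal R/\mathfrak p$ is an integral, finitely generated differential $\mathcal K$-algebra whose fraction field is $\kappa(\mathfrak x)$. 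The guiding intuition is that a new constant would be a nonconstant first integral on the invariant subvariety $\overline{\{\mathfrak x\}}$, and its level sets are proper invariant subvarieties; those would furnish proper differential specializations of $\mathfrak x$, contradicting that $\mathfrak x$ is Kolchin closed.

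First I would argue by contradiction: suppose there is $c\in C_{\kappa(\mathfrak x)}\setminus\mathcal C$. Because $\mathcal C$ is algebraically closed and $c$ is a constant, $c$ cannot be algebraic over $\mathcal C$, hence $c$ is transcendental over $\mathcal C$. Applying Lemma \ref{LmDisjoint} to $\mathcal A$ I obtain, after shrinking to a suitable affine open $V\subseteq\Spec(\mathcal A)$, that $C_{\mathcal A_V}=\mathcal C[\lambda_1,\dots,\lambda_s]$ is a finitely generated $\mathcal C$-algebra with $\mathcal K[\lambda_1,\dots,\lambda_s]=\mathcal K\cdot C_{\kappa(\mathfrak x)}$; since $c\notin\mathcal C$, at least one $\lambda:=\lambda_j$ is transcendental over $\mathcal C$ and, crucially, is now a genuine regular function $\lambda\in\mathcal A_V$ rather than a mere element of the fraction field.

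Next I would manufacture a proper differential specialization from $\lambda$. As $\lambda$ is transcendental it is nonconstant and, taking values in the algebraically closed field $\mathcal C$, there is a suitable $\mu\in\mathcal C$ for which $\lambda-\mu$ is neither zero nor a unit in $\mathcal A_V$; moreover $(\lambda-\mu)$ is a differential ideal because $\partial_{\vec A}(\lambda-\mu)=0$. Being of characteristic zero, $\mathcal A_V$ is a Ritt algebra and hence Keigher, so $\mathcal A_V/(\lambda-\mu)$ is again Keigher; by Lemma \ref{LM3.8} its minimal primes are differential, and pulling one back yields a prime differential ideal $\mathfrak q\subset\mathcal A_V$ containing $\lambda-\mu$, proper and nonzero. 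Contracting $\mathfrak q$ along $\mathcal R\twoheadrightarrow\mathcal A\to\mathcal A_V$ produces a prime differential ideal $\mathfrak p'\supsetneq\mathfrak p$ of $\mathcal R$; it is strictly larger since the image of $\lambda-\mu$ lies in $\mathfrak p'/\mathfrak p$ yet is nonzero in $\mathcal A$ (because $\lambda\neq\mu$). Thus $\mathfrak p'$ is a differential point lying in the Kolchin closure of $\mathfrak x$ and distinct from it, contradicting the hypothesis that $\mathfrak x$ is Kolchin closed. Therefore $C_{\kappa(\mathfrak x)}=\mathcal C$.

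I expect the main obstacle to be precisely the construction in the last paragraph: one must ensure that the chosen first integral can be realized as a regular function on an affine piece (this is exactly what Lemma \ref{LmDisjoint} buys), that $\lambda-\mu$ is a genuinely proper ideal for some constant value $\mu$, and that it is the Keigher property which upgrades a minimal prime over the differential ideal $(\lambda-\mu)$ to an honest \emph{differential} prime strictly above $\mathfrak p$. Everything else — the differential-extension half of the statement, and the bookkeeping of contractions under localization — is routine.
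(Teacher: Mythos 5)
Your proof is correct and follows essentially the same route as the paper's: \emph{reductio ad absurdum}, Lemma \ref{LmDisjoint} to realize a non-trivial constant as a regular function on an affine piece, subtraction of a suitable value $\mu\in\mathcal C$ to produce a nonzero proper differential ideal, and a contradiction with the Kolchin-closedness (maximality) of $\mathfrak x$. The only difference is expository: where the paper lifts the principal ideal $(c_2)$ to the differential ideal $(a_2,\mathfrak x)$ and appeals directly to maximality, you make the Keigher-ring step explicit (via Lemma \ref{LM3.8}) to obtain an honest prime differential ideal strictly above $\mathfrak x$ — a refinement of detail, not a different method.
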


\begin{proof}
  Reasoning by \emph{reductio ad absurdum} let us assume that there
exists $c\in C_{\kappa(\mathfrak x)}$ not in $\mathcal C$.
Let us consider an affine open neighborhood $U$ of $\mathfrak x$ and
denote by $A$ its ring of regular functions. We identify $\mathfrak
x$ with a maximal differential ideal $\mathfrak x\subset A$. Denote
by $B$ the quotient ring $A/\mathfrak x$. $B$ is a differential
subring of the differential field $\kappa(\mathfrak x)$. By Lemma
\ref{LmDisjoint} there exist $b\in B$ such that the ring constants
$C_{B_b}$ -- of the localized ring $B_b$ -- is a finitely generated
$\mathcal C$-algebra. By reducing our original neighborhood $U$ --
removing the zeros of $b$ -- we can assume that $b$ is invertible
and then the localized ring $B_b$ is just $B$. $C_B$ is a
non-trivial finitely generated $\mathcal C$-algebra over $\mathcal
C$, because it contains an element $c$ not in $\mathcal C$. So that
there is a non-invertible element $c_2 \in C_B$. The principal ideal
$(c_2)$ is a non trivial differential ideal in $B$. Let us consider
a regular function $a_2$ such that $a_2(\mathfrak x) = c_2$. Then
$\partial_{\vec A} a_2 \in \mathfrak x$ and $(a,\mathfrak x)$ is a
non-trivial differential ideal of $A$ strictly containing $\mathfrak
x$. We arrive to contradiction with the maximality of $\mathfrak x$.
\end{proof}

\begin{proposition}\label{C3PRO3.2.5}
  Let $\mathfrak x \in \Diff(G_{\mathcal K},\partial_{\vec A})$ be a closed point. Then $\mathcal K\subset\mathcal
\kappa(\mathfrak x)$ is a splitting extension of $(G_{\mathcal
K},\vec A)$.
\end{proposition}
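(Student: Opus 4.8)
The plan is to exhibit, over the field $\mathcal L = \kappa(\mathfrak x)$, an explicit solution of the automorphic equation \eqref{EqAutomorphicAlgebraic}, and then invoke Lemma \ref{lmLV1} with $\mathcal L$ in the role of the base differential field. The whole point is that the results of this section hold for any characteristic zero differential field with algebraically closed field of constants, and $\kappa(\mathfrak x)$ is such a field.

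First I would check that $\mathcal L$ is a legitimate base field for the theory: by the previous lemma $C_{\mathcal L} = \mathcal C$, which is algebraically closed and of characteristic zero, so all the standing hypotheses hold with $\mathcal L$ in place of $\mathcal K$. Moreover $\vec A \in \mathcal R(G)\otimes_{\mathcal C}\mathcal K \subset \mathcal R(G)\otimes_{\mathcal C}\mathcal L$, so $(G_{\mathcal L},\partial_{\vec A})$ is again an automorphic system, and a splitting of it is by definition exactly what it means for $\mathcal K \subset \mathcal L$ to be a splitting extension of $(G_{\mathcal K},\vec A)$.

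The key step is to produce the solution. The point $\mathfrak x$ of $G_{\mathcal K}$ has residue field $\mathcal L$, so the tautological morphism $\Spec(\mathcal L)\to G_{\mathcal K}$ with image $\mathfrak x$ is an $\mathcal L$-point $\tilde\sigma\in G_{\mathcal K}(\mathcal L) = G(\mathcal L)$. Because $\mathfrak x$ is a differential point, the corollary to Theorem \ref{C2THE2.3.2} guarantees that $\partial_{\vec A}$ makes $\kappa(\mathfrak x)$ a differential field and $\mathfrak m_{\mathfrak x}$ a differential ideal; consequently $\tilde\sigma$ is a morphism of schemes with derivation $(\Spec(\mathcal L),\partial)\to(G_{\mathcal K},\partial_{\vec A})$, that is, a solution with coefficients in $\mathcal L$ in the sense defined above (equivalently, $\tilde\sigma$ is the rational differential point of the base-changed system $(G_{\mathcal L},\partial_{\vec A})$ associated to $\mathfrak x$). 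By the theorem of the automorphic equation this is equivalent to $l\partial(\tilde\sigma) = \vec A$, so $\tilde\sigma\in G(\mathcal L)$ solves \eqref{EqAutomorphicAlgebraic}.

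Finally I would apply Lemma \ref{lmLV1} over $\mathcal L$: since the automorphic equation has a solution in $G(\mathcal L)$, the system $(G_{\mathcal L},\partial_{\vec A})$ splits, which is precisely the assertion that $\mathcal K\subset\kappa(\mathfrak x)$ is a splitting extension. The main subtlety, and the step I would be most careful with, is the identification of the tautological point with a genuine solution: one must verify that the differential structure which $\partial_{\vec A}$ places on $\kappa(\mathfrak x)$ agrees with the one obtained by restricting $\partial$ from $\mathcal K$, so that $\tilde\sigma$ really is a morphism in the category of $\mathcal K$-schemes with derivation and not merely an abstract differential field sharing the same underlying field. This compatibility is exactly what being a differential point of $(G_{\mathcal K},\partial_{\vec A})$ over the differential base $\mathcal K$ is designed to encode.
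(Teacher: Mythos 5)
Your proof is correct and is essentially the paper's own argument: the paper likewise builds the tautological point $\sigma\in G(\kappa(\mathfrak x))$ from the valuation morphism $\mathfrak x^\sharp\colon\mathcal O_{G_{\mathcal K},\mathfrak x}\to\kappa(\mathfrak x)$ (a morphism of differential rings precisely because $\mathfrak x$ is a differential point), regards it as a rational differential point of $(G_{\kappa(\mathfrak x)},\partial_{\vec A})$, hence a solution of the automorphic equation, and concludes by Lemma \ref{lmLV1} applied over $\kappa(\mathfrak x)$. The only cosmetic difference is that the paper produces the point by composing $\mathfrak x^\sharp$ with the projection $\mathcal O_G(U)\to\mathcal O_{G_{\mathcal K},\mathfrak x}$, whereas you use the identification $G_{\mathcal K}(\mathcal L)=G(\mathcal L)$ directly; your explicit checks on constants and on the compatibility of derivations correspond to the lemmas the paper states just before the proposition.
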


\begin{proof}
  Let $\mathfrak x$ be a closed point. Then the canonical morphism
$\mathfrak x^\sharp$ of \emph{taking values in $\mathfrak x$,}
$\mathfrak x^\sharp\colon \mathcal O_{G_{\mathcal K},\mathfrak x}\to
\kappa(\mathfrak x)$ is a morphism of differential rings. Let $U$ be
an affine neighborhood of the image of $\pi(\mathfrak x)$ by the
canonical projection $\pi\colon G_{\mathcal K}\to G$. By composition
we construct a morphism $\Spec(\kappa(\mathfrak x))\to U$,
$$\xymatrix{ \mathcal O_G(U)\ar[rr]^-{\sigma^\sharp} \ar[d]_-{\pi^\sharp}
& & \kappa(\mathfrak x) \\ \mathcal O_{G_{\mathcal K},\mathfrak x}
\ar[rru] ^-{\mathfrak x^\sharp}}.$$ The morphism $\sigma^\sharp$ is
the dual of a morphisms $\sigma$ from $\Spec(\kappa(\sigma))$ to
$U$. In other words, $\sigma$ is a point of $G(\kappa(\mathfrak
x))$. We consider $\sigma$ as a rational differential point of
$(G_{\kappa(\mathfrak x)},\partial_{\vec A})$, and then it is a
solution of the automorphic equation. By Lemma \ref{lmLV1},
$(G_{\kappa(\mathfrak x)},\partial_{\vec A})$ splits.
\end{proof}

\index{fundamental solution}
\begin{definition}
  We say that $\sigma$, as defined in the above proof, is the fundamental solution
of $\vec A$ associated with the closed differential point $\mathfrak
x$.
\end{definition}

  Let us consider the action of $G$ on $G_{\mathcal K}$ by
right translations. The derivation $\partial_{\vec A}$ is invariant
by right translations, and then it is a morphism of schemes with
derivation:
$$(G_{\mathcal K},\partial_{\vec A})\times_{\mathcal C} G \to
(G_{\mathcal K},\partial_{\vec A})$$ We apply the functor $\Diff$,
thus we obtain a morphism of differential schemes which is an
algebraic action of $G$ on the set of differential points.
$$\Diff(G_{\mathcal K},\partial_{\vec A})\times_{\mathcal C} G \to
\Diff(G_{\mathcal K},\partial_{\vec A})$$

\begin{proposition}\label{C3PROtransitivity}
  The action of $G(\mathcal C)$ on the set of closed points of
 $\Diff(G_{\mathcal K},\partial_{\vec A})$ is transitive.
\end{proposition}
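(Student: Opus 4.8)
The plan is to fix one closed differential point, pass to the differential field it generates, where the system \emph{splits}, and there exhibit the closed points as a principal homogeneous space on which $G(\mathcal C)$ acts simply transitively; transitivity over $\mathcal K$ will then follow by descent along the base change $\mathcal K\subset\kappa(\mathfrak x)$. Concretely, let $\mathfrak x,\mathfrak y\in\Diff(G_{\mathcal K},\partial_{\vec A})$ be two closed points; I must produce $c\in G(\mathcal C)$ with $R_c\cdot\mathfrak x=\mathfrak y$. By Proposition \ref{C3PRO3.2.5} the field $\mathcal L=\kappa(\mathfrak x)$ is a splitting extension, and by the lemma preceding it $C_{\mathcal L}=\mathcal C$.

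Over $\mathcal L$ the system $(G_{\mathcal L},\partial_{\vec A})$ splits (Theorem \ref{C3THE3.1.17}), so $\Diff(G_{\mathcal L},\partial_{\vec A})\cong\Const(G_{\mathcal L},\partial_{\vec A})\times_{\mathcal C}\DiffSpec(\mathcal L)$. By the lemma of Section~\ref{C3SS3.3.1}, $\Const(G_{\mathcal L},\partial_{\vec A})$ is a principal homogeneous $G$-space for the right action; the fundamental solution $\sigma\in G(\mathcal L)$ attached to $\mathfrak x$ is a rational differential point and hence supplies a $\mathcal C$-rational point of this space, so the torsor is trivial and $\Const(G_{\mathcal L},\partial_{\vec A})\cong G$ over $\mathcal C$. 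Since $\mathcal C$ is algebraically closed, the closed differential points upstairs are exactly the $\mathcal C$-points of $G$, and the right action of $G(\mathcal C)$ on them is simply transitive.

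Next I would apply the functoriality of $\Diff$ (Theorem \ref{C2THE2.3.2}) to the projection $G_{\mathcal L}=G_{\mathcal K}\times_{\Spec\mathcal K}\Spec\mathcal L\to G_{\mathcal K}$, obtaining a map $\rho\colon\Diff(G_{\mathcal L},\partial_{\vec A})\to\Diff(G_{\mathcal K},\partial_{\vec A})$. Because right translation by $G(\mathcal C)$ is defined over $\mathcal C$ and commutes with base change, $\rho$ is $G(\mathcal C)$-equivariant. The crucial step is that $\rho$ is \emph{surjective onto closed points}: for closed $\mathfrak y$ downstairs the fibre is cut out by the nonzero ring $\kappa(\mathfrak y)\otimes_{\mathcal K}\mathcal L$, hence is nonempty, and a repetition of the specialization argument of Lemma \ref{C3LEMcloseddiffpoint} yields a Kolchin-closed differential point lying over $\mathfrak y$. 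Granting this, transitivity upstairs together with equivariance and surjectivity of $\rho$ forces transitivity downstairs: lift $\mathfrak x,\mathfrak y$ to closed points $\tilde{\mathfrak x},\tilde{\mathfrak y}$, choose $c\in G(\mathcal C)$ with $\tilde{\mathfrak y}=\tilde{\mathfrak x}\cdot c$, and push down by $\rho$.

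The main obstacle is precisely the control of constants hidden in this surjectivity step. Viewed through the automorphic equation, $\mathfrak x$ and $\mathfrak y$ carry fundamental solutions $\sigma,\tau$, and the logarithmic-derivative identities of properties~(2) and~(3) give $l\partial(\sigma^{-1}\tau)=0$, so $\sigma^{-1}\tau$ is a \emph{constant} point $c$. The entire difficulty is to see that $c$ is defined over $\mathcal C$ rather than over some larger constant field of a compositum $\kappa(\mathfrak x)\cdot\kappa(\mathfrak y)$; the descent above resolves this because upstairs the ambient torsor is already defined over $\mathcal C$ with $\mathcal C$ algebraically closed, so no new constants can intervene. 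I would therefore spend most of the effort verifying that $\rho$ neither collapses nor misses closed points, and that the $G(\mathcal C)$-equivariant identification of the closed points upstairs with $G(\mathcal C)$ is faithful.
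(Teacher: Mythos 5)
Your proposal is correct and follows essentially the same route as the paper's own proof: pass to the splitting field $\mathcal L=\kappa(\mathfrak x)$, identify the Kolchin closed points of $\Diff(G_{\mathcal L},\partial_{\vec A})$ with a $G(\mathcal C)$-torsor via the split, and descend transitivity through the $G(\mathcal C)$-equivariant base-change projection, whose surjectivity onto closed points is obtained by exactly the same specialization-to-a-closed-point argument (nonempty fibre, specialize, use closedness of the target point). The only differences are cosmetic: the paper asserts surjectivity of the projection without your tensor-product justification, and it gets triviality of the torsor directly from $\mathcal C$ being algebraically closed rather than from the fundamental solution.
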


\begin{proof}
  Let us consider a Kolchin closed point $\mathfrak x\in\Diff(G_{\mathcal
K},\partial_{\vec A})$. Let $\mathcal L$ be the rational field of
$\mathfrak x$. It is an splitting field for $(G_{\mathcal
K},\partial_{\vec A})$. We have that $(G_{\mathcal L},\partial_{\vec
A})$ splits, hence $\Diff(G_{\mathcal L},\partial_{\vec A})$ is an
almost-constant differential scheme. Thus $\Diff(G_{\mathcal
L},\partial_{\vec A})$ is homeomorphic to the principal homogeneous
$G$-space $\Const(G_{\mathcal L},\partial_{\vec A})$. The
differential extension $\mathcal K\subset\mathcal L$ induces a
commutative diagram of schemes with derivation,
$$\xymatrix{(G_{\mathcal L}, \partial_{\vec A}) \ar[rr] \ar[d]\times_{\mathcal C} G & &
(G_{\mathcal L},\partial_{\vec A}) \ar[d]^-{\pi_1}\\
(G_{\mathcal K}, \partial_{\vec A}) \times_{\mathcal C} G \ar[rr] &
& (G_{\mathcal K},\partial_{\vec A})}$$ and thus, a commutative
diagram of differential schemes,
$$\xymatrix{\Diff(G_{\mathcal L}, \partial_{\vec A}) \ar[rr] \ar[d]\times_{\mathcal C} G & &
\Diff(G_{\mathcal L},\partial_{\vec A}) \ar[d]^-{\pi_2}\\
\Diff(G_{\mathcal K}, \partial_{\vec A}) \times_{\mathcal C} G
\ar[rr] & & \Diff(G_{\mathcal K},\partial_{\vec A})}.$$ Let
$\mathfrak s$ be a Kolchin closed point of $\Diff(G_{\mathcal K},
\partial_{\vec A})$. The projection $\pi_2$ of the above diagram is
exhaustive. Consider any $\mathfrak p \in \pi_2^{-1}(\mathfrak s)$,
and let us consider a Kolchin closed point $x$ in the closure
$\overline{\{\mathfrak p\}}$. Thus, $\pi_2(x)$ is in the closure
$\overline{\{\mathfrak s\}}$. As $\mathfrak s$ is a Kolchin closed
point we know that $\pi_2(x) = \mathfrak s$. Hence, there is a
Kolchin closed point $x\in \Diff(G_{\mathcal L},\partial_{\vec A})$
such that $\pi_2(x)=\mathfrak s$.

Consider two Kolchin closed points $\mathfrak s, \mathfrak y\in
\Diff(G_{\mathcal K},\partial_{\vec A})$. Because of the above
argument there are two Kolchin closed points $x,y\in
\Diff(G_{\mathcal L},\partial_{\vec A})$ such that $\pi_2(x) =
\mathfrak s$ and $\pi_2(y)=\mathfrak y$. The set of Kolchin closed
points of $\Diff(G_{\mathcal L},\partial_{\vec A})$ is a $G(\mathcal
C)$-homogeneous space in the set theoretical sense. Then there is
$\sigma\in G(\mathcal C)$ such that $x\cdot\sigma = y$, and by the
commutativity of the diagram we have $\mathfrak s \cdot \sigma =
\mathfrak y$.
\end{proof}

\begin{corollary}\label{C3CORuniqueness}
  Let $\mathfrak x$ and $\mathfrak y$ be two closed points of
$\Diff(G_{\mathcal K},\partial_{\vec A})$. Then there exists an
invertible $\mathcal K$-isomorphism of differential fields
$\kappa(\mathfrak x)\simeq \kappa(\mathfrak y)$.
\end{corollary}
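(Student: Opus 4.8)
The plan is to read this off directly from the transitivity statement just established. First I would invoke Proposition \ref{C3PROtransitivity}: since $\mathfrak x$ and $\mathfrak y$ are closed points of $\Diff(G_{\mathcal K},\partial_{\vec A})$, and $G(\mathcal C)$ acts transitively on the set of such points, there exists $\sigma\in G(\mathcal C)$ with $\mathfrak x\cdot\sigma=\mathfrak y$, where the action is the one induced on $\Diff$ by right translation.

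Next I would promote this set-theoretic relation to an honest isomorphism of residue fields, realized by the right translation $R_\sigma$ itself. Because $\sigma$ is a $\mathcal C$-point, $R_\sigma$ is an automorphism of the $\mathcal K$-scheme $G_{\mathcal K}=G\times_{\mathcal C}\Spec(\mathcal K)$ acting only on the $G$-factor; in particular it is a $\mathcal K$-scheme automorphism and fixes $\mathcal K$ pointwise. Moreover, as recalled in Section \ref{C3SS3.3.1}, the field $\vec A$ is right invariant, so $R_\sigma$ is an automorphism of the scheme with derivation $(G_{\mathcal K},\partial_{\vec A})$; that is, $R_\sigma^\sharp$ commutes with $\partial_{\vec A}$.

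Then I would pass to stalks. Since $R_\sigma$ carries $\mathfrak x$ to $\mathfrak y$, its comorphism is an isomorphism of local rings $R_\sigma^\sharp\colon\mathcal O_{G_{\mathcal K},\mathfrak y}\xrightarrow{\sim}\mathcal O_{G_{\mathcal K},\mathfrak x}$, with inverse induced by $R_{\sigma^{-1}}$. This isomorphism is differential (by right invariance) and $\mathcal K$-linear (because $\sigma\in G(\mathcal C)$). Passing to quotient fields, and using that $\mathfrak x$ and $\mathfrak y$ are differential points — so that $\partial_{\vec A}$ endows $\kappa(\mathfrak x)$ and $\kappa(\mathfrak y)$ with differential-field structures — yields the desired invertible $\mathcal K$-isomorphism of differential fields $\kappa(\mathfrak y)\xrightarrow{\sim}\kappa(\mathfrak x)$, hence $\kappa(\mathfrak x)\simeq\kappa(\mathfrak y)$.

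The real content lies entirely in Proposition \ref{C3PROtransitivity}; the corollary merely records that transitivity of an action by scheme-with-derivation automorphisms forces the residue fields at points of a single orbit to be isomorphic as differential $\mathcal K$-fields. The only point demanding care is the simultaneous verification that the induced residue-field isomorphism respects both the $\mathcal K$-structure and the derivation: the former is guaranteed by $\sigma$ being a constant point, $\sigma\in G(\mathcal C)$, and the latter by the right invariance of $\vec A$, which is precisely what makes $R_\sigma$ a morphism of schemes with derivation to begin with. I expect no genuine obstacle beyond this bookkeeping.
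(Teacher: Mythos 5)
Your proof is correct and follows essentially the same route as the paper: Proposition \ref{C3PROtransitivity} supplies $\sigma\in G(\mathcal C)$ with $\mathfrak x\cdot\sigma=\mathfrak y$, and the right translation $R_\sigma$, being an automorphism of $(G_{\mathcal K},\partial_{\vec A})$, induces the invertible $\mathcal K$-isomorphism $R_\sigma^\sharp\colon\kappa(\mathfrak y)\to\kappa(\mathfrak x)$ of differential fields. The extra bookkeeping you supply (why $R_\sigma$ is $\mathcal K$-linear and commutes with the derivation) is exactly what the paper leaves implicit.
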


\begin{proof}
  There is a closed point $\sigma\in G$, such that $\mathfrak x\cdot
\sigma = \mathfrak y$. Then
$$R_\sigma\colon (G_{\mathcal K},\partial_{\vec A})\to (G_{\mathcal K},\partial_{\vec A})$$
is an automorphism that maps $\mathfrak x$ to $\mathfrak y$. Then it
induces an invertible  $\mathcal K$-isomorphism
$$R_{\sigma}^\sharp\colon\kappa(\mathfrak y) \to \kappa(\mathfrak
x).$$
\end{proof}

\index{Galois!extension}
\begin{definition}\label{C3DEFGaloisExt}
  For each closed point $\mathfrak x\in\Diff(G_{\mathcal
K},\partial_{\vec A})$ we say that the differential extension
$\mathcal K\subset \kappa(\mathfrak x)$ is a Galois extension
associated to the non-autonomous differential algebraic dynamical
system $(G_{\mathcal K},\partial_{\vec A})$.
\end{definition}

{\bf Notation. }\emph{
  As we have proven, all Galois extensions associated to
$(G_{\mathcal K},\partial_{\vec A})$ are isomorphic. From now on let
us choose a closed point $\mathfrak x$ and denote by $\mathcal
K\subset \mathcal L$ its corresponding Galois extension.}

\begin{proposition}\label{PrpSplitL}
  A Galois extension is a minimal splitting extension for
$(G_{\mathcal K},\partial_{\vec A})$ in the following sense: If
$\mathcal K\subset \mathcal S$ is any splitting extension for
$(G_{\mathcal K},\partial_{\vec A})$ then there is a $\mathcal
K$-isomorphism of differential fields $\mathcal
L\hookrightarrow\mathcal S$.
\end{proposition}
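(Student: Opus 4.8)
The plan is to show that any splitting extension $\mathcal S$ of $(G_{\mathcal K},\partial_{\vec A})$ must contain a copy of the Galois extension $\mathcal L = \kappa(\mathfrak x)$, by producing inside $\mathcal S$ a closed differential point that plays the role of $\mathfrak x$ and then invoking the uniqueness results already proven. First I would use Lemma \ref{lmLV1}: since $\mathcal K\subset\mathcal S$ is a splitting extension, $(G_{\mathcal S},\partial_{\vec A})$ splits, so the automorphic equation \eqref{EqAutomorphicAlgebraic} has a solution $\tau\in G(\mathcal S)$. This rational solution is a rational differential point of $(G_{\mathcal S},\partial_{\vec A})$, and in particular it is a differential point of $(G_{\mathcal K},\partial_{\vec A})$ whose associated differential field is contained in $\mathcal S$.

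Next I would relate this solution back to the chosen closed point $\mathfrak x$. The solution $\tau$ determines a point of $\Diff(G_{\mathcal K},\partial_{\vec A})$, namely the image of the zero ideal under the morphism $\tau\colon(\Spec(\mathcal S),\partial)\to(G_{\mathcal K},\partial_{\vec A})$; call it $\mathfrak t$, with $\kappa(\mathfrak t)\subset\mathcal S$. I would then take a Kolchin closed point in the closure $\overline{\{\mathfrak t\}}$ (this exists by the Noetherian argument of Lemma \ref{C3LEMcloseddiffpoint}). By Proposition \ref{C3PROtransitivity}, the action of $G(\mathcal C)$ on the closed points of $\Diff(G_{\mathcal K},\partial_{\vec A})$ is transitive, so this closed point is of the form $\mathfrak x\cdot\sigma$ for some $\sigma\in G(\mathcal C)$. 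By Corollary \ref{C3CORuniqueness}, the right translation $R_\sigma$ gives a $\mathcal K$-isomorphism $\kappa(\mathfrak x\cdot\sigma)\xrightarrow{\sim}\kappa(\mathfrak x)=\mathcal L$. Composing with the inclusion $\kappa(\mathfrak x\cdot\sigma)\hookrightarrow\mathcal S$ coming from the specialization would then yield the desired $\mathcal K$-embedding $\mathcal L\hookrightarrow\mathcal S$.

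The main obstacle I expect is the specialization step: I must guarantee that passing from the generic differential point $\mathfrak t$ (with field $\kappa(\mathfrak t)\subseteq\mathcal S$) to a \emph{closed} differential point in its Kolchin closure does not enlarge the field beyond $\mathcal S$, and more delicately, that the closed point so obtained still has its residue field embedding into $\mathcal S$ in a way compatible with the derivation. The cleanest route is to observe that since $(G_{\mathcal S},\partial_{\vec A})$ splits and hence is almost-constant (Theorem \ref{C3THE3.1.17}), $\Diff(G_{\mathcal S},\partial_{\vec A})$ is homeomorphic to the principal homogeneous $G$-space $\Const(G_{\mathcal S},\partial_{\vec A})$; its closed points correspond to $\mathcal C$-points of a $G$-homogeneous space, and the projection $\pi\colon\Diff(G_{\mathcal S},\partial_{\vec A})\to\Diff(G_{\mathcal K},\partial_{\vec A})$ carries closed points to closed points whose residue fields sit inside $\mathcal S$. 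I would therefore phrase the argument directly in terms of this projection rather than an ad hoc specialization, so that the residue field of the resulting closed point is manifestly a subfield of $\mathcal S$, and then apply the transitivity and uniqueness statements as above to identify it with $\mathcal L$.
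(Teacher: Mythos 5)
Your final argument is correct and is essentially the paper's own proof: the paper likewise uses that the splitting of $(G_{\mathcal S},\partial_{\vec A})$ forces every Kolchin closed point of $\Diff(G_{\mathcal S},\partial_{\vec A})$ to have rational field $\mathcal S$, takes such a closed point lying over $\mathfrak x$, and reads off the embedding $\mathcal L=\kappa(\mathfrak x)\hookrightarrow\mathcal S$ from the residue-field morphism of the projection $\pi$. The only cosmetic deviation is that you project an arbitrary closed point and then adjust by a right translation (Proposition \ref{C3PROtransitivity}, Corollary \ref{C3CORuniqueness}) rather than choosing the closed point upstairs directly above $\mathfrak x$ --- and you were right to discard the specialization route of your second paragraph, since residue fields do not embed along Kolchin specializations.
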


\begin{proof}
  If $\mathcal K\subset \mathcal S$ is an splitting extension, then
$(G_{\mathcal S},\partial_{\vec A})$ splits. Hence, for each Kolchin
closed differential point $x\in\Diff(G_{\mathcal S},\partial_{\vec
A})$ the rational field of $x$ is $\mathcal S$. Let us consider the
natural projection $\pi\colon(G_{\mathcal S},\partial_{\vec A})\to
(G_{\mathcal K},\partial_{\vec A})$. We can choose a Kolchin closed
point $x\in \Diff(G_{\mathcal K},\partial_{\vec A})$ such that
$\pi(x) = \mathfrak x$. We have a morphism of $\mathcal
K$-differential algebras between the corresponding rational fields
$\pi^{\sharp}\colon \mathcal L \to \mathcal S$.
\end{proof}

\index{Picard-Vessiot extension}
\begin{example}[Picard-Vessiot extensions]
Let us consider system of $n$ linear differential equations
$$\partial x = Ax,\quad A\in gl(n,\mathcal K),$$
and let us denote $a_{ij}$ for the matrix elements of $A$. The
algebraic construction of the Picard-Vessiot extension is done as
follows (cf. \cite{Ko1} and \cite{Vanderput}):

Let us consider the algebra $\mathcal K[u_{ij}, \Delta]$, being
$\Delta=|u_{ij}|^{-1}$ the inverse of the determinant. Note that it
is the algebra of regular functions on the affine group
$GL(n,\mathcal K)$. If is an affine group, and then it is isomorphic
to the spectrum $$GL(n,\mathcal K) = \Spec(\mathcal K[u_{ij},
\Delta]).$$ We define the following derivation,
$$\partial_{\vec A} u_{ij} = \sum_{k=1}^n a_{ik}u_{jk},$$
that gives to $\mathcal K[u_{ij},\Delta]$ the structure of
differential $\mathcal K$-algebra, and to  $(GL(n,\mathcal
K),\partial_{\vec A})$ the structure of automorphic system. The set
of Kolchin closed differential points od $\Diff(GL(n,\mathcal
K),\partial_{\vec A})$ is the set of maximal differential ideals of
$\mathcal R$. A Picard-Vessiot algebra is a quotient algebra
 $\mathcal K \subset \mathcal \mathcal K[u_{ij},
\Delta]/\mathfrak m$, and a Picard-Vessiot extension is a rational
differential field $\mathcal K \subset \kappa(\mathfrak m)$. It is
self-evident that the Picard-Vessiot extension is the particular
case of Galois extension when the considered group is the general
linear group.
\end{example}

\begin{lemma}\label{lmClosedPoint}
  Let $\mathcal K\subset \mathcal S$ be a splitting extension. The canonical
projection  $$\pi\colon \Diff(G_{\mathcal S},\partial_{\vec A}) \to
\Diff(G_{\mathcal K}, \partial_{\vec A})$$ is a closed map.
\end{lemma}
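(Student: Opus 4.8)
The plan is to reduce to the affine case, identify the source with the group $G$ itself, and then invoke a noetherian closedness criterion, the only delicate point being a lifting of specializations.

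First, since closedness of a continuous map can be checked locally on the target, and since $\pi^{-1}(\Diff(U_{\mathcal K},\partial_{\vec A})) = \Diff(U_{\mathcal S},\partial_{\vec A})$ for every affine open $U\subseteq G$, I would cover $G$ by affine opens and treat each separately. Each such $U$ is an open, hence locally closed and $\partial_{\vec A}$-invariant, subvariety of $G$; therefore by Lemma \ref{lmLV4} the restricted system $(U_{\mathcal S},\partial_{\vec A})$ still splits. Thus I may assume $G=\Spec\mathcal R$ affine and $(G_{\mathcal S},\partial_{\vec A})$ split. Because $\mathcal R\otimes_{\mathcal C}\mathcal S$ is a Ritt algebra it is Keigher, and by Lemma \ref{LmAlmostConstantSplit} the split system is almost-constant with $\Const(G_{\mathcal S},\partial_{\vec A})\cong G$; since the splitting of a scheme with derivation induces the splitting of its associated differential scheme, $\Diff(G_{\mathcal S},\partial_{\vec A})\cong G\times_{\mathcal C}\DiffSpec(\mathcal S)$, and as $\DiffSpec(\mathcal S)$ is a single point this exhibits a homeomorphism $\Diff(G_{\mathcal S},\partial_{\vec A})\cong G$ with a noetherian scheme. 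In particular the source is noetherian, and so is the target $\Diff(G_{\mathcal K},\partial_{\vec A})$ as a subspace of the noetherian scheme $G_{\mathcal K}$.

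Next I would use the noetherian criterion: a continuous map of noetherian spaces is closed exactly when it carries the closure of each point onto the closure of its image, because every closed set is a finite union of sets $\overline{\{\mathfrak q\}}$. Since each $\overline{\{\mathfrak q\}}$ has a unique generic differential point and $\pi$ is continuous, one always has $\pi(\overline{\{\mathfrak q\}})\subseteq\overline{\{\pi(\mathfrak q)\}}$ with $\pi(\mathfrak q)$ generic on the right. The equality therefore reduces to showing that $\pi$ maps $\overline{\{\mathfrak q\}}$ onto $\overline{\{\pi(\mathfrak q)\}}$. Here I would exploit that, under the identification above, $\overline{\{\mathfrak q\}}$ is an irreducible subvariety $V\subseteq G$ whose closed ($\mathcal C$-rational) points are dense. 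By Proposition \ref{C3PRO3.2.5} the image of a closed differential point is again a Kolchin-closed point (it is a fundamental-solution point), and by the transitivity of the $G(\mathcal C)$-action of Proposition \ref{C3PROtransitivity} the map $\pi$ sends the closed points of the source onto the closed points of the target. Thus it suffices to prove that every closed point of $\overline{\{\pi(\mathfrak q)\}}$ is the image of a closed point of $V$.

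The main obstacle is precisely this last surjectivity onto closed points, equivalently the lifting of specializations: given $\pi(\mathfrak q)\rightsquigarrow\mathfrak y'$ in the target, I must produce $\mathfrak q'\in\overline{\{\mathfrak q\}}$ with $\pi(\mathfrak q')=\mathfrak y'$. I would attack this with the Keigher machinery: writing the specialization as an inclusion of prime differential ideals $\mathfrak p\subsetneq\mathfrak p'$ of $\mathcal R\otimes_{\mathcal C}\mathcal K$ with $\mathfrak q\cap(\mathcal R\otimes_{\mathcal C}\mathcal K)=\mathfrak p$, I would pass to the Keigher ring $(\mathcal R\otimes_{\mathcal C}\mathcal S)/\mathfrak q$, consider the differential ideal generated there by $\mathfrak p'$, and apply part \emph{(b)} of Theorem \ref{C2THE2.1.6} to find a prime differential ideal lying over $\mathfrak p'$; faithful flatness of $\mathcal R\otimes_{\mathcal C}\mathcal K\to\mathcal R\otimes_{\mathcal C}\mathcal S$ is what should guarantee that the contraction is exactly $\mathfrak p'$ and that the fibre is nonempty. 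Combining this with the density of closed points on $V$ and the transitivity of Proposition \ref{C3PROtransitivity} to reduce to the case of closed $\mathfrak y'$, one obtains $\pi(V)=\overline{\{\pi(\mathfrak q)\}}$, and hence that $\pi$ is closed. I expect the verification that the lifted differential prime contracts correctly --- that is, the interplay between the Keigher existence statement and faithful flatness --- to be the technically demanding step.
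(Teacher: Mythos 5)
Your topological frame is sound: the reduction to affine opens via Lemma \ref{lmLV4}, the homeomorphism $\Diff(G_{\mathcal S},\partial_{\vec A})\cong G\times_{\mathcal C}\DiffSpec(\mathcal S)\cong G$ coming from the splitting, and the noetherian/sober criterion reducing closedness to $\pi(\overline{\{\mathfrak q\}})=\overline{\{\pi(\mathfrak q)\}}$ are all correct, and they make explicit what the paper leaves implicit. But the step you defer to the end is not a technical verification; it is exactly where the proposal breaks. Write $A=\mathcal R\otimes_{\mathcal C}\mathcal K$, $B=\mathcal R\otimes_{\mathcal C}\mathcal S$, $\varphi\colon A\to B$. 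To apply Theorem \ref{C2THE2.1.6}\,(b) in $B/\mathfrak q$ with the multiplicative set $T=$ image of $\varphi(A\setminus\mathfrak p')$, you must first know that the differential ideal generated by $\varphi(\mathfrak p')$ in $B/\mathfrak q$ is disjoint from $T$, i.e.\ that $(\mathfrak q+\mathfrak p'B)\cap A=\mathfrak p'$. Faithful flatness gives only $\mathfrak p'B\cap A=\mathfrak p'$; once $\mathfrak q$ is added this fails in general, and your two ingredients are insensitive to the derivation: they hold verbatim for the zero derivation (where every ideal is differential and (b) is the ordinary prime-existence lemma), yet there the lifting statement is false. For instance, in $\mathcal K[x]\subset\mathcal S[x]$ the prime $\mathfrak q=(x-s)$, with $s\in\mathcal S$ transcendental over $\mathcal K$, contracts to $(0)$, but the specialization $(0)\subset(x-a)$, $a\in\mathcal K$, admits no lift containing $\mathfrak q$, since $s-a\in\mathfrak q+(x-a)B$ is a unit. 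So Keigher theory plus flatness cannot yield the lifting; the derivation, the splitting and the group law must enter, and your argument never uses them at this point. Two further defects: Proposition \ref{C3PRO3.2.5} does not say that $\pi$ carries closed points to closed points (it says that $\kappa(\mathfrak x)$ is a splitting extension); the claim you attribute to it is precisely the heart of this lemma and cannot be cited. And even granting it, knowing that the generic point and all closed points of $\overline{\{\pi(\mathfrak q)\}}$ are hit does not show that \emph{all} its points are hit, unless you already know the image is closed (circular) or that the Kolchin topology of the target is Jacobson, neither of which is available.

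The paper's proof avoids lifting specializations altogether by using the right action of $G(\mathcal C)$, for which $\pi$ is equivariant. It reduces to showing that the image $\mathfrak y=\pi(y)$ of a closed point $y$ is closed, and argues as follows: pick a closed point $\mathfrak z\in\overline{\{\mathfrak y\}}$; the set $\pi^{-1}(\mathfrak z)$ is closed and nonempty --- the only lifting needed is of \emph{points}, which is easy because the differential fibre ring is a nonzero Ritt algebra over the differential field $\kappa(\mathfrak z)$ and so has a maximal differential ideal, necessarily prime --- hence it contains a closed point $z$. Since the split source is a principal homogeneous $G$-space (the lemma of Subsection \ref{C3SS3.3.1}), $G(\mathcal C)$ acts transitively on its closed points, so there is $\sigma$ with $z\cdot\sigma=y$; equivariance then gives $\mathfrak y=\pi(z)\cdot\sigma=\mathfrak z\cdot\sigma$, a translate of a closed point by the homeomorphism $R_\sigma$, hence closed, and in fact $\mathfrak z=\mathfrak y$. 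In other words, the group action converts the would-be specialization into an equality. If you wish to keep your noetherian framework, it is this homogeneity argument, not commutative algebra, that must replace your final paragraph.
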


\begin{proof}
  It is enough to prove that the projection $\mathfrak y =\pi(y)$ of
a closed point $y\in \Diff(G_{\mathcal S},\partial_{\vec A})$ is a
closed point. Let us take a closed point $\mathfrak z\in
\overline{\{\mathfrak y\}}$. Then $\pi^{-1}(\mathfrak z)$ is closed
and there is a closed point $z\in \pi^{-1}(\mathfrak z)$.
$\Diff(G_{\mathcal S},
\partial_{\vec A})$ is a principal homogeneous $G$-space,
there is a $\sigma\in G(\mathcal C)$ such that $z\cdot \sigma = y$,
and then $\mathfrak z \cdot \sigma = \mathfrak y$. $G(\mathcal C)$
acts transitively in the space of closed points, and $\mathfrak z$
is closed, so that we have proven that $\mathfrak y$ is closed. In
fact $\mathfrak y $ and $\mathfrak z$ are the same differential
point.
\end{proof}

\begin{proposition}\label{PrpClosedProj}
  Let us consider any intermediate
differential extension, $\mathcal K \subset \mathcal F \subset
\mathcal S$, with $\mathcal K \subset \mathcal S$ an splitting
extension. The projection, $$\pi\colon\Diff(G_{\mathcal
F},\partial_{\vec A})\to \Diff(G_{\mathcal K},\partial_{\vec A}),$$
is a closed map.
\end{proposition}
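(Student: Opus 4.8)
The plan is to factor the projection through the top field $\mathcal S$, where closedness is already available from Lemma \ref{lmClosedPoint}, and then transfer it down by a soft topological argument.

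First I would observe that $\mathcal F\subset\mathcal S$ is itself a splitting extension for $(G_{\mathcal F},\partial_{\vec A})$. Indeed $\vec A$ has coefficients in $\mathcal K\subseteq\mathcal F$, so it is an automorphic vector field over $\mathcal F$, and the splitting isomorphism $(G_{\mathcal S},\partial_{\vec A})\simeq Y\times_{\mathcal C}(\Spec(\mathcal S),\partial)$ witnessing that $\mathcal K\subset\mathcal S$ splits $(G_{\mathcal K},\partial_{\vec A})$ is equally a splitting over $\mathcal F$: the splitting $\mathcal C$-scheme $Y$ and the factor $\Spec(\mathcal S)$ are unchanged, and $\mathcal F$ has constant field $\mathcal C$ since $\mathcal C\subseteq C_{\mathcal F}\subseteq C_{\mathcal S}=\mathcal C$. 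Thus the projection factors as
$$\Diff(G_{\mathcal S},\partial_{\vec A}) \xrightarrow{\ \pi_1\ } \Diff(G_{\mathcal F},\partial_{\vec A}) \xrightarrow{\ \pi_2\ } \Diff(G_{\mathcal K},\partial_{\vec A}),$$
with $\pi=\pi_2\circ\pi_1$, and Lemma \ref{lmClosedPoint} applies twice: applied to $\mathcal K\subset\mathcal S$ it gives that $\pi$ is closed, and applied with base field $\mathcal F$ to $\mathcal F\subset\mathcal S$ it gives that $\pi_1$ is closed. The map $\pi_2$ is the one whose closedness we want.

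Next I would reduce to a purely topological fact: if $h\colon X\to Y$ is continuous and surjective and $g\circ h\colon X\to Z$ is closed, then $g\colon Y\to Z$ is closed, because for closed $C\subseteq Y$ one has $g(C)=g(h(h^{-1}(C)))=(g\circ h)(h^{-1}(C))$, which is closed. Taking $h=\pi_1$ and $g=\pi_2$, it remains only to prove that $\pi_1$ is surjective. Since $\pi_1$ is closed, it suffices to show its image contains the generic point of every irreducible component of $\Diff(G_{\mathcal F},\partial_{\vec A})$: a closed set containing such a generic point contains the whole component, and $\Diff(G_{\mathcal F},\partial_{\vec A})$ is the union of its finitely many components. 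These generic points are precisely the minimal prime ideals of $G_{\mathcal F}$, which are differential ideals because the coordinate rings involved are Ritt algebras, hence Keigher (Lemma \ref{LM3.8}). As $\mathcal C$ is algebraically closed, the components of $G$ are geometrically irreducible, so the faithfully flat base change $G_{\mathcal S}=G_{\mathcal F}\times_{\mathcal F}\Spec(\mathcal S)\to G_{\mathcal F}$ carries the generic point of each component of $G_{\mathcal S}$ onto the generic point of the corresponding component of $G_{\mathcal F}$; the former are again minimal primes, hence differential points of $\Diff(G_{\mathcal S},\partial_{\vec A})$ mapped by $\pi_1$ onto the generic points downstairs. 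Therefore $\pi_1$ is surjective and $\pi_2$ is closed.

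The main obstacle is this last step: checking that surjectivity survives the passage from schemes to differential schemes. A scheme-theoretic preimage of a differential point need not be a differential point, so one cannot invoke faithful flatness of $G_{\mathcal S}\to G_{\mathcal F}$ naively to lift an arbitrary $\mathfrak y\in\Diff(G_{\mathcal F},\partial_{\vec A})$. The generic-point argument circumvents this precisely because minimal primes are automatically differential in the Keigher setting and because geometric irreducibility over the algebraically closed $\mathcal C$ makes generic points correspond under base change; everything else in the proof is formal.
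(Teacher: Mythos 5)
Your proof is correct and follows the same strategy as the paper's own: factor the projection through $\Diff(G_{\mathcal S},\partial_{\vec A})$, apply Lemma \ref{lmClosedPoint} both to $\mathcal K\subset\mathcal S$ and to $\mathcal F\subset\mathcal S$, and descend closedness along a surjection. The one place where you genuinely add something is the surjectivity of $\Diff(G_{\mathcal S},\partial_{\vec A})\to\Diff(G_{\mathcal F},\partial_{\vec A})$: the paper simply declares its two projections ``closed and surjective'' by Lemma \ref{lmClosedPoint}, although that lemma only asserts closedness, so your argument --- the image is Kolchin-closed and contains the generic point of every irreducible component, these generic points being minimal primes, hence differential points by Lemma \ref{LM3.8} in the Ritt/Keigher setting, and lifting to minimal primes upstairs because the components are geometrically irreducible over the algebraically closed $\mathcal C$ --- fills in precisely the step the paper leaves tacit, and it is sound. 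One blemish: your claim that $C_{\mathcal F}=\mathcal C$ because $C_{\mathcal S}=\mathcal C$ is unjustified, since the definition of a splitting extension puts no constraint on the constants of $\mathcal S$ (a splitting extension can acquire new constants). This is inessential, however: what the application of Lemma \ref{lmClosedPoint} to $\mathcal F\subset\mathcal S$ really requires is that $(G_{\mathcal S},\partial_{\vec A})$ splits, which is given, and the paper's own proof makes exactly the same tacit use of that lemma over the intermediate field.
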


\begin{proof}
  Let us consider the following diagram of projections:
$$\xymatrix{ \Diff(G_{\mathcal S},\partial_{\vec A})
\ar[rr]^-{\pi_1} \ar[rd]^-{\pi_2} & & \Diff(G_{\mathcal K},\partial_{\vec A}) \\
 & \Diff(G_{\mathcal F},\partial_{\vec A}) \ar[ru]^-{\pi} }$$ By
Lemma \ref{lmClosedPoint} $\pi_1$ and $\pi_2$ are closed and
surjective. Then $\pi$ is closed.
\end{proof}

\begin{lemma}\label{lmFSBaseChange}
  Let $\mathcal K \subset \mathcal F \subset \mathcal L$
be an intermediate differential extension of the Galois extension of
$(G_{\mathcal K},\partial_{\vec A})$, and $\sigma$ the fundamental
solution associated to $\mathfrak x$. Let us consider the sequence
of base changes,
$$\xymatrix{ \Diff(G_{\mathcal L},\partial_{\vec A}) \ar[r]^{\pi_1} & \Diff(G_{\mathcal F},\partial_{\vec A}) \ar[r]^{\pi_2} & \Diff(G_{\mathcal K},\partial_{\vec A}) \\
\quad \sigma \quad \ar[r] & \quad \mathfrak y \quad \ar[r] & \quad
 \mathfrak x \quad,}$$ then $\mathfrak y$ is closed in Kolchin topology, $\kappa(\mathfrak y)$ is the
Galois extension $\mathcal L$ and $\sigma$ is the fundamental
solution associated with $\mathfrak y$.
\end{lemma}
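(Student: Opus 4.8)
The plan is to exploit the fact that $\sigma$ is a rational point of $G_{\mathcal L}$ which moreover generates the whole Galois extension, so that all three assertions can be read off by tracking $\sigma$ through the two base-change projections $\pi_1,\pi_2$. First I would recall, from the construction of the fundamental solution in the proof of Proposition~\ref{C3PRO3.2.5}, that $\mathcal O_{G_{\mathcal K},\mathfrak x}$ is a localization of $\mathcal K\otimes_{\mathcal C}\mathcal O_G$ and that $\kappa(\mathfrak x)=\mathcal L$ is generated over $\mathcal K$ by the images under $\mathfrak x^\sharp$ of the regular functions on $G$; these images are precisely the coordinates of $\sigma$. Hence $\mathcal L=\mathcal K(\sigma)$. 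Furthermore $\sigma\in G(\mathcal L)$ is a solution of the automorphic equation, so by the theorem on the automorphic equation it is a differential point of $(G_{\mathcal L},\partial_{\vec A})$, and being rational it is Kolchin closed.

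For the closedness of $\mathfrak y$, observe that since $\mathcal K\subset\mathcal L$ splits $(G_{\mathcal K},\partial_{\vec A})$, the scheme $(G_{\mathcal L},\partial_{\vec A})$ splits, so that $\mathcal F\subset\mathcal L$ is itself a splitting extension of $(G_{\mathcal F},\partial_{\vec A})$. Then Lemma~\ref{lmClosedPoint}, applied with base field $\mathcal F$ and splitting extension $\mathcal F\subset\mathcal L$, shows that $\pi_1$ carries Kolchin closed points to Kolchin closed points (this is exactly what its proof establishes). Since $\sigma$ is closed, $\mathfrak y=\pi_1(\sigma)$ is Kolchin closed, which is the first assertion.

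To compute $\kappa(\mathfrak y)$ and to identify the fundamental solution, I would use that $\mathfrak y$ is by construction the image of the $\mathcal L$-rational point $\sigma$ under the base-change projection $G_{\mathcal L}\to G_{\mathcal F}$. The residue field of the image of a rational point is generated over the base field by the coordinates of that point, so $\kappa(\mathfrak y)=\mathcal F(\sigma)=\mathcal F\cdot\mathcal K(\sigma)=\mathcal F\cdot\mathcal L=\mathcal L$, using $\mathcal F\subseteq\mathcal L$ together with $\mathcal L=\mathcal K(\sigma)$; this gives the second assertion. For the third, once $\kappa(\mathfrak y)=\mathcal L$ the fundamental solution associated with $\mathfrak y$ is obtained by projecting $\mathfrak y$ along the structure morphism $G_{\mathcal F}\to G$ to a point of $G(\mathcal L)$; since the structure projections $G_{\mathcal L}\to G_{\mathcal F}\to G$ compose to the structure morphism $G_{\mathcal L}\to G$ and send $\sigma$ back to the point $\sigma\in G(\mathcal L)$, this fundamental solution is exactly $\sigma$. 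The only delicate point I expect is the bookkeeping of residue-field identifications: one must verify that the embeddings $\kappa(\mathfrak x)\hookrightarrow\kappa(\mathfrak y)\hookrightarrow\kappa(\sigma)$ induced by $\pi_2$ and $\pi_1$ are compatible with the identifications $\kappa(\mathfrak x)=\kappa(\sigma)=\mathcal L$ furnished by the fundamental solution, so that projecting to $G$ genuinely recovers $\sigma$ rather than a twist of it. This compatibility follows from the commutativity of the base-change squares that define $\pi_1$ and $\pi_2$.
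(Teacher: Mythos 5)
Your proof is correct and follows essentially the same route as the paper's: closedness of $\mathfrak y$ is obtained exactly as in the paper, by applying Lemma \ref{lmClosedPoint} to $\pi_1$, with $\mathcal F\subset\mathcal L$ as the splitting extension. The only differences are cosmetic: where the paper sandwiches $\kappa(\mathfrak y)$ in the chain $\kappa(\mathfrak x)\subseteq\kappa(\mathfrak y)\subseteq\kappa(\sigma)$ whose two ends equal $\mathcal L$, you compute $\kappa(\mathfrak y)=\mathcal F(\sigma)=\mathcal L$ directly from $\mathcal L=\mathcal K(\sigma)$, and your explicit verification of the fundamental-solution compatibility makes precise a point the paper leaves implicit.
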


\begin{proof}
  By Proposition \ref{lmClosedPoint} $\pi_1$ is a closed map, so that
$\mathfrak y$ is a closed point. The chain of projections induces a
chain of differential extensions  $\kappa(\mathfrak x) \subseteq
\kappa(\mathfrak y) \subseteq \kappa(\sigma)$ but $\kappa(\mathfrak
x) = \kappa(\sigma)$, and then we have the equality.
\end{proof}

\subsection{Galois Group}

  Here we give a purely geometrical definition for the Galois group
associated to a Kolchin closed differential point. We prove strong
normality of the Galois extensions, and identify our
geometrically-defined Galois group with the group of automorphisms
of the Galois extension. Let us consider the action of $G$ on
$\Diff(G_{\mathcal K},\partial_{\vec A})$ shown in Subsection
\ref{C3SS3.3.1}:
$$\Diff(G_{\mathcal K},\partial_{\vec A}) \times_{\mathcal C} G \to
\Diff(G_{\mathcal K},\partial_{\vec A}).$$

\index{Galois!group}
\begin{definition}\label{C3DEFGalois}
  Let $\mathfrak x\in \Diff(G_{\mathcal K},\partial_{\vec A})$ be
a Kolchin closed differential point. We call Galois group of the
system $(G_{\mathcal K},\partial_{\vec A})$ in $\mathfrak x$ to the
isotropy subgroup of $\mathfrak x$ in $G$ by the above action, and
denote it by $\Gal_{\mathfrak x}(G_{\mathcal K},\partial_{\vec A})$.
\end{definition}

\begin{proposition}\label{C3PROalgebraicgroup}
  $\Gal_{\mathfrak x}(G_{\mathcal K},\partial_{\vec A})$ is an
algebraic subgroup of $G$.
\end{proposition}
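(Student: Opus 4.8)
The plan is to recognize $\Gal_{\mathfrak x}(G_{\mathcal K},\partial_{\vec A})$ as the stabilizer of a closed subvariety of $G_{\mathcal K}$ under right translation, and then to reduce the algebraicity of such a stabilizer to the isotropy result for algebraic $G$-spaces recalled in Section 2. First I would note that the action of $G$ on $\Diff(G_{\mathcal K},\partial_{\vec A})$ from Subsection \ref{C3SS3.3.1} is merely the restriction of the ordinary right-translation action $G_{\mathcal K}\times_{\mathcal C}G\to G_{\mathcal K}$, since it was produced by applying the functor $\Diff$ to the morphism of schemes with derivation underlying right translation, and $\Diff(G_{\mathcal K},\partial_{\vec A})$ is a topological subspace of $G_{\mathcal K}$. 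Hence the isotropy of $\mathfrak x$ for the $\Diff$-action equals the isotropy of the point $\mathfrak x\in G_{\mathcal K}$ for the scheme-theoretic right action. Each $R_\tau$ is an automorphism of the scheme $G_{\mathcal K}$, so it carries the generic point of an irreducible closed set to the generic point of its image; writing $V=\overline{\{\mathfrak x\}}$ for the Zariski closure of $\mathfrak x$ (irreducible, with $\mathfrak x$ as generic point), the relation $\mathfrak x\cdot\tau=\mathfrak x$ is equivalent to $V\cdot\tau=V$. Therefore
$$\Gal_{\mathfrak x}(G_{\mathcal K},\partial_{\vec A}) = \{\tau\in G\colon V\cdot\tau=V\},$$
the stabilizer of the closed subvariety $V$.

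The substance is to show this stabilizer is an algebraic subgroup of $G$. The one genuine obstacle is the base-field mismatch: $G$ is defined over $\mathcal C$ whereas $V$ is defined over $\mathcal K$, so $G_{\mathcal K}$ is not of finite type over $\mathcal C$ and the Section 2 isotropy statement does not apply verbatim (indeed the usual proof, via openness of the projection $G\times V\to G$, requires finite presentation over the base). I would remove this obstruction by spreading out. Since $V$ is cut out in $G_{\mathcal K}$ by finitely many equations, their coefficients generate a finitely generated extension $\mathcal C\subset\mathcal C_0\subset\mathcal K$, and $V=V_0\times_{\mathcal C_0}\mathcal K$ for a closed irreducible $V_0\subset G_{\mathcal C_0}$. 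Choosing a finite-type integral $\mathcal C$-variety $W$ with $\mathcal C(W)=\mathcal C_0$ and spreading $V_0$ over a suitable open $W'\subset W$, one obtains an irreducible closed subvariety $\mathcal V\subset G\times_{\mathcal C}W'$ of a genuine $\mathcal C$-algebraic $G$-space, on which $G$ acts by right translation on the first factor. Here $V$ is the generic fibre of $\mathcal V\to W'$, and passing to closures shows that $V\cdot\tau=V$ holds if and only if $\mathcal V\cdot\tau=\mathcal V$, so that $\Gal_{\mathfrak x}$ is exactly the stabilizer of $\mathcal V$.

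Finally, $\mathcal V$ is the Zariski closure of its generic point $\eta$, a (not necessarily rational) point of the algebraic $\mathcal C$-$G$-space $G\times_{\mathcal C}W'$, and $\tau$ stabilizes $\mathcal V$ precisely when $\eta\cdot\tau=\eta$. Thus $\Gal_{\mathfrak x}(G_{\mathcal K},\partial_{\vec A})$ coincides with the isotropy subgroup $H_\eta$ of $\eta$, which by the fact recalled in Section 2 is an algebraic subgroup of $G$ (concretely, the intersection of the two Zariski-closed transporters $\{\tau\colon \mathcal V\cdot\tau\subseteq\mathcal V\}$ and $\{\tau\colon \mathcal V\subseteq\mathcal V\cdot\tau\}$). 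This exhibits $\Gal_{\mathfrak x}(G_{\mathcal K},\partial_{\vec A})$ as a Zariski-closed subgroup of $G$, completing the argument. The hard part is entirely the descent from $\mathcal K$ to $\mathcal C$; once the spreading-out reduces everything to a finite-type $\mathcal C$-$G$-space, the algebraicity is the standard closedness of stabilizers of closed subvarieties. An alternative to spreading out, worth recording, is to argue affinely and translate $V\cdot\tau=V$ into closed conditions on the comodule structure of the coordinate Hopf algebra, patching by quasi-projectivity of $G$ furnished by Chevalley's Theorem \ref{ThChevalley}.
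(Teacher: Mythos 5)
Your proof is correct, and it reaches the conclusion by a genuinely different route than the paper's. Both arguments begin with the same reduction --- a closed point $\tau\in G(\mathcal C)$ fixes $\mathfrak x$ if and only if it stabilizes the Zariski closure $\overline{\{\mathfrak x\}}$ (an identification the paper itself records, just after this proposition, in Remark \ref{RemarkGaloisK}) --- and both must confront the same obstacle: the stabilized object is a $\mathcal K$-variety while $G$ is a $\mathcal C$-group, so $G_{\mathcal K}$ is not a $\mathcal C$-algebraic $G$-space and the isotropy fact of Section 2 cannot be quoted as stated. The paper resolves this in coordinates: it fixes an affine neighbourhood, takes generators $\eta_i$ of the maximal differential ideal $\mathfrak x$, writes the stabilizing condition $R_\sigma^\sharp(\eta_i)\in\mathfrak x$, and expands it over a $\mathcal C$-basis $\{e_\lambda\}$ of $\kappa(\mathfrak x)$; the coefficients $G_{i\alpha}(\xi)\in\mathcal O(W_1)$ of that expansion are explicit algebraic equations for $\Gal_{\mathfrak x}(G_{\mathcal K},\partial_{\vec A})$ over $\mathcal C$ --- the same linear-algebra-over-the-constants device used in the classical proof that Picard--Vessiot Galois groups are algebraic. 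You instead move the geometry down to $\mathcal C$: descend $V=\overline{\{\mathfrak x\}}$ to a finitely generated field of definition $\mathcal C_0$, spread out to an irreducible closed $\mathcal V\subseteq G\times_{\mathcal C}W'$ whose generic fibre is the descended variety $V_0$ (so that $V=V_0\times_{\mathcal C_0}\Spec(\mathcal K)$, a harmless abuse in your write-up), check by faithfully flat descent and passage to generic fibres that the stabilizer is unchanged at each stage, and finally invoke the closedness of isotropy subgroups of (not necessarily rational) points of finite-type $\mathcal C$-$G$-spaces, which is exactly the Section 2 fact, made concrete by the transporter argument. What the paper's computation buys is self-containment --- nothing is used beyond the definitions, at the price of heavy chartwise notation. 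What your argument buys is modularity and conceptual clarity --- the differential algebra is completely decoupled from the group theory --- at the price of invoking spreading-out and field-of-definition results (of EGA IV 8.8 type) that the paper never develops. Your intermediate verifications (integrality of $V_0$, the equivalences $V\cdot\tau=V\Leftrightarrow V_0\cdot\tau=V_0\Leftrightarrow\mathcal V\cdot\tau=\mathcal V$, and $\mathcal V\cdot\tau=\mathcal V\Leftrightarrow\eta\cdot\tau=\eta$) are all standard and hold as claimed, so the proof is complete.
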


\begin{proof}
Denote by $H_{\mathfrak x}$ the Galois group in $\mathfrak x$. Let
us consider the projection $\pi_1$ from $G_{\mathcal K}$ to $G$
induced by the extension $\mathcal C\subset \mathcal K$. Denote by
$x$ the point $\pi_1(\mathfrak x)$, and let $U$ be an affine
neighborhood of $x$. Then $U = G \setminus Y$ with $Y$ closed in
$G$.

$U_{\mathcal K}$ is an affine neighborhood of $\mathfrak x$ in
$G_{\mathcal K}$. We have that the ring of regular functions in
$U_{\mathcal K}$ is the tensor product $\mathcal
O_{G}(U)\otimes_{\mathcal C} \mathcal K$. We identify $\mathfrak x$
with a maximal prime differential ideal $\mathfrak x\subset \mathcal
O_G(U)\otimes_{\mathcal C}\mathcal K$. Let us consider a $\mathcal
C$-point $\sigma$ of $G$. Then, for each $f\in \mathcal
O_G(U)\otimes_{\mathcal C}\mathcal K$ we have that the right
translate $R_{\sigma}^\sharp(f)$ is in $\mathcal O_G(U\cdot
\sigma^{-1})\otimes_{\mathcal C}\mathcal K$.

The morphism
$$\pi_2\colon G \to G, \quad \sigma \mapsto R_{\sigma}(x),$$
is algebraic, and let $W$ be the complementary in $G$ of
$\pi_2^{-1}(Y)$,
$$W = G \setminus \pi_2^{-1}(Y),$$
$W$ is an open subset in $G$ verifying:
\begin{enumerate}
\item[(a)] for all $\sigma\in W(\mathcal C)$, $x\in U\cap U\cdot
\sigma^{-1}$,
\item[(b)] $H_{\mathfrak x}\subset W$.
\end{enumerate}
 We will prove that the equations of $H_{\mathfrak x}$ in $W$ are algebraic.
Let us consider $W_1$ an affine open subset in $W$. Let
$\{\xi_1,\ldots, \xi_r\}$ be a system of generators of $\mathcal
O_G(W)$ as $\mathcal C$-algebra. The composition is algebraic,
$$\pi_3\colon U \times_{\mathcal C} W_1 \to G, \quad (y, \sigma)\mapsto y\cdot\sigma,$$
and it induces a morphism,
$$\pi_3^\sharp \colon \mathcal O_{G,x} \to (\mathcal O_{G}(U)\otimes_{\mathcal
C} \mathcal O(W_1))_{\pi_3^{-1}(x)},$$ and then for each $f\in
\mathcal O_{G,x}$, $\pi_3^{\sharp}(f) = F(\xi)$, is a rational
function in the $\xi_i$ with coefficients in $\mathcal O_{G,x}$. We
identify $\mathfrak x$ with a prime ideal of $\mathcal
O_{G}(U)\otimes_{\mathcal C}\mathcal K$. We consider a system of
generators,
$$\mathfrak x = (\eta_1,\ldots,\eta_r), \quad \eta_i\in\mathcal
O_{G}(U)\otimes_{\mathcal C}\mathcal K.$$
 Property (b) says that by the natural inclusion,
$$j\colon \mathcal O_{G}(U) \otimes_{\mathcal C} \mathcal K \to (\mathcal O_{G}(U)\otimes_{\mathcal
C} \mathcal O(W_1))_{\pi_3^{-1}(x)}\otimes_{\mathcal C}  \mathcal
K,$$ $j(\mathfrak x)$ spans a non trivial ideal of $(\mathcal
O_{G}(U)\otimes_{\mathcal C} \mathcal
O(W_1))_{\pi_3^{-1}(x)}\otimes_{\mathcal C} \mathcal K$, and then we
have a commutative diagram:
$$\xymatrix{\mathcal O_{G}(U) \otimes_{\mathcal C} \mathcal K
\ar[rr]\ar[d]& & (\mathcal O_{G}(U)\otimes_{\mathcal C} \mathcal
O(W_1))_{\pi_3^{-1}(x)}\otimes_{\mathcal
C}  \mathcal K \ar[d]^-{\pi_4} \\
\kappa(\mathfrak x) \ar[rr] & & (\kappa(\mathfrak x)
\otimes_{\mathcal C} \mathcal O(W_1))_{\pi_3^{-1}(x)}}.$$
 An element $\sigma\in W_1$ stabilizes
$\mathfrak x$ if and only if $R_{\sigma}^\sharp(\eta_i) \in
\mathfrak x$, and this is so if and only if $\pi_4(j(\eta_i)) = 0$
for $i=1,\ldots, r$. Let us consider a basis
$\{e_\lambda\}_{\lambda\in\Lambda}$ of $\kappa(\mathfrak x)$ over
$\mathcal C$. For each $i$, we have a finite sum:
$$\pi_4(j(\eta_i)) = \frac{\sum_{\alpha} G_{i\alpha}(\xi)e_\alpha}{\sum_{\beta} H_{i\beta}(\xi)e_\beta},$$
and then $G_{i\alpha}(\xi)\in \mathcal O(W_1)$ are the algebraic
equations of $H_{\mathfrak x}$ in $W_1$.
\end{proof}

\begin{remark}\label{RemarkGaloisK}
  Let $\mathfrak x$ be a Kolchin closed differential point as above,
and $H\subset G$ the Galois group of $(G_{\mathcal K},\partial_{\vec
A})$ in $\mathfrak x$. Then $H_{\mathcal K} = H \times_{\mathcal
C}\Spec(\mathcal K)$ is the stabilizer subgroup of $\overline{\{\mathfrak x\}}$, 
the Zariski closure of $\mathfrak x$, by
the action of composition by the right side:
$$G_{\mathcal K}\times_{\mathcal K} G_{\mathcal K} \to
G_{\mathcal K}.$$ However, the morphisms $R_{\sigma}$ for $\sigma\in
H_{\mathcal K}$ are not in general morphisms of schemes with
derivation. In the same sense, for any field extension $\mathcal
K\subset \mathcal L$, $H_{\mathcal L}\subset G_{\mathcal L}$ is the
stabilizer group of $\overline{\pi^{-1}(\mathfrak x)}$, the Zariski closure
of the preimage of $\mathfrak x$, where $\pi$ is the
natural projection from $G_{\mathcal L}$ to $G_{\mathcal K}$. This
means that $H_{\mathcal L}$ stabilizes the fiber, in the following
sense: for each $\mathcal L$-point $\sigma\in H_{\mathcal L}$,
$R_{\sigma}\colon G_{\mathcal L}\to G_{\mathcal L}$ induces,
$$R_{\sigma}|_{\overline{\pi^{-1}(\mathfrak x)}}\colon \overline{\pi^{-1}(\mathfrak x)}\to
\overline{\pi^{-1}(\mathfrak x)}.$$

\end{remark}

\begin{proposition}\label{C3PROcongugated}
Consider two Kolchin closed differential points $\mathfrak x,
\mathfrak y$ in $\Diff(G_{\mathcal K},
\partial_{\vec A})$. The groups $\Gal_{\mathfrak x}(G_{\mathcal
K},\partial_{\vec A})$ and $\Gal_{\mathfrak y}(G_{\mathcal
K},\partial_{\vec A})$ are isomorphic conjugated algebraic subgroups
of $G$.
\end{proposition}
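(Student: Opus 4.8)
The plan is to use the standard orbit--stabilizer principle, reducing the statement to the transitivity already established in Proposition \ref{C3PROtransitivity}. By Definition \ref{C3DEFGalois}, each group $\Gal_{\mathfrak x}(G_{\mathcal K},\partial_{\vec A})$ is the isotropy subgroup of the corresponding Kolchin closed point under the right action of $G$ on $\Diff(G_{\mathcal K},\partial_{\vec A})$, and by Proposition \ref{C3PROalgebraicgroup} it is a genuine algebraic subgroup of $G$. The first step is to invoke Proposition \ref{C3PROtransitivity}: since $\mathfrak x$ and $\mathfrak y$ are both Kolchin closed, the action of $G(\mathcal C)$ on the set of closed points is transitive, so there is $\sigma\in G(\mathcal C)$ with $\mathfrak x\cdot\sigma = \mathfrak y$.

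Next I would verify the conjugation relation at the level of $\mathcal C$-points. Writing $H_{\mathfrak x}$ and $H_{\mathfrak y}$ for the two isotropy groups and using that the action is on the right, for $\tau\in G(\mathcal C)$ one computes
$$\mathfrak y\cdot\tau = (\mathfrak x\cdot\sigma)\cdot\tau = \mathfrak x\cdot(\sigma\tau),$$
and acting on the right by $\sigma^{-1}$ shows that $\mathfrak y\cdot\tau = \mathfrak y$ holds if and only if $\mathfrak x\cdot(\sigma\tau\sigma^{-1}) = \mathfrak x$. Hence $\tau\in H_{\mathfrak y}(\mathcal C)$ exactly when $\sigma\tau\sigma^{-1}\in H_{\mathfrak x}(\mathcal C)$, which is the identity $H_{\mathfrak y}(\mathcal C) = \sigma^{-1}H_{\mathfrak x}(\mathcal C)\,\sigma$.

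Finally I would promote this equality of $\mathcal C$-points to an equality of algebraic subgroups. Since $\mathcal C$ is algebraically closed and we work in characteristic zero, algebraic groups are smooth, hence reduced, and so are determined by their $\mathcal C$-points; therefore $H_{\mathfrak y} = \sigma^{-1}H_{\mathfrak x}\,\sigma$ as subgroups of $G$. The inner automorphism $\mathrm{Int}_{\sigma}\colon G\to G$, $g\mapsto \sigma^{-1}g\sigma$, is an isomorphism of $\mathcal C$-algebraic groups, and it restricts to an isomorphism $H_{\mathfrak x}\xrightarrow{\sim}H_{\mathfrak y}$. This exhibits the two Galois groups as conjugate, and in particular isomorphic, algebraic subgroups of $G$.

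I expect no serious obstacle here, the content being entirely the orbit--stabilizer computation together with Proposition \ref{C3PROtransitivity}. The only point requiring a little care is the passage from $\mathcal C$-points to honest algebraic subgroups in the last step; this is where the hypotheses that $\mathcal C$ is algebraically closed and of characteristic zero (guaranteeing smoothness, hence reducedness, of algebraic groups) are genuinely used, so that conjugation by $\sigma$ transports $H_{\mathfrak x}$ onto $H_{\mathfrak y}$ scheme-theoretically and not merely on points.
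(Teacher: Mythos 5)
Your proposal is correct and follows essentially the same route as the paper's own proof, which likewise invokes Proposition \ref{C3PROtransitivity} to produce $\sigma\in G(\mathcal C)$ with $\mathfrak x\cdot\sigma = \mathfrak y$ and then concludes $H_{\mathfrak x}\cdot\sigma = \sigma\cdot H_{\mathfrak y}$ by the orbit--stabilizer computation. The only difference is that you spell out the conjugation identity at the level of $\mathcal C$-points and the promotion to scheme-theoretic subgroups via reducedness, details the paper leaves implicit.
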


\begin{proof}
  The group of $\mathcal C$-points of $G$ acts transitively in the set of closed
differential points. Hence, there exists $\sigma\in G(\mathcal C)$
with $\mathfrak x \cdot \sigma = \mathfrak y$, and then
$H_{\mathfrak x} \cdot \sigma = \sigma \cdot H_{\mathfrak y}$.
\end{proof}

\begin{theorem}\label{C3THEsne}
  The Galois extensions associated to
$(G_{\mathcal K},\partial_{\vec A})$ are strongly normal extensions.
\end{theorem}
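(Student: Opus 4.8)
The plan is to verify Kovacic's criterion directly: I must show that the differential scheme $\DiffSpec(\mathcal L\otimes_{\mathcal K}\mathcal L)$ splits, where $\mathcal K\subset\mathcal L=\kappa(\mathfrak x)$ is the Galois extension attached to a Kolchin‑closed point $\mathfrak x\in\Diff(G_{\mathcal K},\partial_{\vec A})$. The ingredients already in hand are: $\mathcal L$ has constant field $\mathcal C$ (the lemma on constants of a closed differential point); $\mathcal L$ is a splitting extension (Proposition \ref{C3PRO3.2.5}), so $(G_{\mathcal L},\partial_{\vec A})$ splits and, being reduced, is almost‑constant with $\Const(G_{\mathcal L},\partial_{\vec A})\simeq G$; and there is a fundamental solution $\sigma\in G(\mathcal L)$ with $l\partial(\sigma)=\vec A$, whose coordinates generate $\mathcal L$ over $\mathcal K$ as a field, since $\mathcal L$ is the function field of the Zariski closure $\overline{\{\mathfrak x\}}$.

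The heart of the argument is a ``two fundamental solutions differ by a constant'' computation, the exact analogue of comparing two fundamental matrices in Picard–Vessiot theory. I set $\mathcal D=\mathcal L\otimes_{\mathcal K}\mathcal L$, equipped with the tensor derivation $\partial\otimes 1+1\otimes\partial$; it is reduced because $\DiffSpec(\mathcal L)\times_{\mathcal K}\DiffSpec(\mathcal L)$ is a product of reduced differential $\mathcal K$‑schemes. The two canonical differential inclusions $i_1,i_2\colon\mathcal L\to\mathcal D$ agree on $\mathcal K$, hence carry $\sigma$ to two points $\sigma_1=i_1(\sigma)$ and $\sigma_2=i_2(\sigma)$ in $G(\mathcal D)$ that both solve the automorphic equation: since $\vec A\in\mathcal R(G)\otimes_{\mathcal C}\mathcal K$ and $i_1|_{\mathcal K}=i_2|_{\mathcal K}$, functoriality of the logarithmic derivative gives $l\partial(\sigma_1)=l\partial(\sigma_2)=\vec A$ in $\mathcal R(G)\otimes_{\mathcal C}\mathcal D$. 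Then, using properties (2) and (3) of the logarithmic derivative for $c:=\sigma_1^{-1}\sigma_2$,
$$l\partial(c)=l\partial(\sigma_1^{-1})+\Adj_{\sigma_1^{-1}}\bigl(l\partial(\sigma_2)\bigr)=\Adj_{\sigma_1^{-1}}\bigl(l\partial(\sigma_2)-l\partial(\sigma_1)\bigr)=0,$$
so $c$ is a horizontal point of the split system $(G_{\mathcal D},\partial)$, i.e. $c\in G(C_{\mathcal D})$.

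To turn this into a splitting I would analyse the multiplication map
$$\mathcal L\otimes_{\mathcal C}C_{\mathcal D}\longrightarrow\mathcal D=\mathcal L\otimes_{\mathcal K}\mathcal L,\qquad a\otimes\gamma\mapsto i_1(a)\,\gamma.$$
It is injective by Kolchin's linear disjointness lemma (\cite{Ko1}, p.~87), since $C_{\mathcal D}$ is linearly disjoint from $\mathcal L$ over $C_{\mathcal L}=\mathcal C$. For the reverse direction, $\mathcal D$ is generated over $i_1(\mathcal L)$ by $i_2(\mathcal L)$, which in turn is generated over $i_2(\mathcal K)\subseteq i_1(\mathcal L)$ by the coordinates of $\sigma_2=\sigma_1\cdot c$; because group multiplication is polynomial in the coordinates, these lie in the subring generated by $i_1(\mathcal L)$ and the entries of $c\in G(C_{\mathcal D})$. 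Thus $\mathcal D$ is recovered from $\mathcal L\otimes_{\mathcal C}C_{\mathcal D}$ by inverting elements of its image, so the induced map of differential spectra is an isomorphism. This identifies $\DiffSpec(\mathcal D)$ with (an open part of) $\Spec(C_{\mathcal D})\times_{\mathcal C}\DiffSpec(\mathcal L)$; being reduced and split, Proposition \ref{Kov28.2} yields $\DiffSpec(\mathcal D)\simeq\Const(\DiffSpec\mathcal D)\times_{\mathcal C}\DiffSpec(\mathcal L)$, which is exactly the splitting required, with $\Gal(\mathcal L/\mathcal K)=\Const(\DiffSpec(\mathcal L\otimes_{\mathcal K}\mathcal L))$.

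The step I expect to be delicate is the passage from the ring‑theoretic relation $\sigma_2=\sigma_1 c$ to the scheme‑level split, precisely because global sections of a differential spectrum need not recover the ambient ring (the phenomenon of differential units and zeroes discussed earlier); so the correct statement is that $\mathcal L\otimes_{\mathcal C}C_{\mathcal D}\to\mathcal D$ becomes an isomorphism only after localization, and one must argue this at the level of $\DiffSpec$. Two subsidiary points also require care: extending the logarithmic derivative and the automorphic equation from points over a differential \emph{field} to points over the reduced differential \emph{ring} $\mathcal D$ (handled by working over its residue fields, or by checking the construction of Definition \ref{DEFLogDerAlg} is purely formal in $\sigma^\sharp$), and confirming reducedness of $\mathcal D$. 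A more geometric alternative, which sidesteps the ring‑level log‑derivative, is to identify $\DiffSpec(\mathcal L\otimes_{\mathcal K}\mathcal L)$ with the fibre over $\mathfrak x$ of the base‑change map $\Diff(G_{\mathcal L},\partial_{\vec A})\to\Diff(G_{\mathcal K},\partial_{\vec A})$ and to inherit the split from that of the total space; I would keep this as a fallback if the localization bookkeeping becomes unwieldy.
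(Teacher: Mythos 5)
The heart of your plan --- two solutions of the automorphic equation differing by a horizontal point --- is the classical Picard--Vessiot torsor argument, and for \emph{affine} $G$ it can indeed be completed: there $c^{\sharp}$ maps the Hopf algebra $\mathcal O_G(G)\otimes 1$ into $C_{\mathcal D}$, multiplication is globally polynomial, and the localization bookkeeping you describe is the standard one. The genuine gap is the inference ``$l\partial(c)=0$, hence $c\in G(C_{\mathcal D})$''. Horizontality of $c$ only says that on each affine chart $U\subset G$ the pullback $c^{\sharp}(\mathcal O_G(U)\otimes 1)$ lands in the constants of the \emph{local} sections $\mathcal O_{\Spec(\mathcal D)}(c^{-1}(U_{\mathcal K}))$; to assemble these into a point of $G$ with values in the ring $C_{\mathcal D}$ of \emph{global} constants you need $G$ to have enough global regular functions, i.e.\ to be affine. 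For the groups this theorem is actually meant to add to Picard--Vessiot theory, the step is not merely unjustified but false: if the Galois group $H$ is an abelian variety (Weierstrass and abelian extensions), then $C_{\mathcal D}$ injects into the constants of the global sections of $\mathcal O_{\DiffSpec(\mathcal D)}$ (a nonzero constant of a reduced Ritt algebra is not a differential zero, its annihilator being a differential ideal), and the splitting $\DiffSpec(\mathcal D)\simeq H\times_{\mathcal C}\DiffSpec(\mathcal L)$ identifies those constants with $\mathcal O_H(H)=\mathcal C$, because $H$ is complete and connected. So $C_{\mathcal D}=\mathcal C$, your multiplication map $\mathcal L\otimes_{\mathcal C}C_{\mathcal D}\to\mathcal D$ is just $\mathcal L\hookrightarrow\mathcal L\otimes_{\mathcal K}\mathcal L$, and no localization of its image can recover $\mathcal D$ unless $\mathcal L=\mathcal K$; equivalently, $c\in G(C_{\mathcal D})=G(\mathcal C)$ would force $i_2(\mathcal L)\subseteq i_1(\mathcal L)$. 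The Galois group of a general strongly normal extension simply is not $\Spec(C_{\mathcal D})$; it exists only as the sheaf-level object $\Const(\DiffSpec(\mathcal D))$, which is the whole reason Kovacic's theory is formulated with differential schemes.

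This is exactly why the paper does not argue with the ring $\mathcal D$ and its global constants: its proof identifies $\DiffSpec(\mathcal L\otimes_{\mathcal K}\mathcal L)$, via the stalk formula, with the differential points of the Zariski closure of the fibre $\pi^{-1}(\mathfrak x)\subset G_{\mathcal L}$, proves that this closure equals the subgroup scheme $H_{\mathcal L}$ using the stabilizer description of the Galois group (Remark \ref{RemarkGaloisK}), and transports the canonical split derivation through the gauge transformation $L_{\sigma}$; the $\mathcal C$-scheme required by Definition \ref{C2DEFsplitDS} is then produced directly as $Y=H$. That is precisely the ``geometric alternative'' you relegate to a fallback. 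It is not an optional variant: of your two routes it is the only one that survives non-affine $G$, so it must be the main argument, with your tensor-product computation kept, at most, as the special case of linear groups.
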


\begin{proof}
  Let us consider a Galois extension $\mathcal K \subset \mathcal L$. Thus,
$\mathcal L$ is the rational field of certain Kolchin closed
differential point that we denote by $\mathfrak x$. Let us consider
$\sigma\in G_{\mathcal L}$ the fundamental solution associated to
$\mathfrak x$. We have that $\sigma$ projects onto $\mathfrak x$ and
the gauge transformation $L_{\sigma^{-1}}$ is a splitting morphism.
We define the morphism $\psi$ of schemes with derivation trough the
following commutative diagram:
$$\xymatrix{(G_{\mathcal L}, \partial_{\vec A})\ar[rr]^-{\pi} & & (G_{\mathcal K},\partial_{\vec A}) \\
(G_{\mathcal L}, \partial) = G \times_{\mathcal C} (\Spec(\mathcal
L),\partial) \ar[u]^-{L_{\sigma}}\ar[urr]_-{\psi}}$$

Denote by $H$ the Galois group in $\mathfrak x$. We have that
$(H_{\mathcal L},\partial)\subset (G_{\mathcal L},\partial)$ is a
closed subscheme with derivation. The group $H_\mathcal L$ is the
preimage of $H$ by the projection from $G_{\mathcal L}$ to $G$. By
remark \ref{RemarkGaloisK} $H_{\mathcal L}$ is the stabilizer of the
$\overline{\pi^{-1}(\mathfrak x)}$ in $G_{\mathcal L}$. It means that for any
point $z$ of $G_{\mathcal L}$ whose projection is addherent to $\mathfrak x$ and any
$\mathcal L$-point $\tau$ of $H_{\mathcal L}$, the right translate
$z\cdot\tau$ is also addherent to $\mathfrak x$. In particular we have
that $ \psi(\tau) = \mathfrak x$, and then
$$(H_{\mathcal L},\partial) \subset \overline{\psi^{-1}(\mathfrak x)}.$$
 Reciprocally, let us consider an $\mathcal L$-point $\tau \in \psi^{-1}(\mathfrak x)$.
Therefore $\pi(\sigma\cdot \tau)$ is addherent to $\mathfrak x$. The following
diagram is commutative:
$$\xymatrix{G_{\mathcal L} \times_{\mathcal L} G_{\mathcal L} \ar[rr] \ar[d] & & G_{\mathcal L}\ar[d]\\
G_{\mathcal K} \times_{\mathcal K} G_{\mathcal K} \ar[rr] & &
G_{\mathcal K}}$$ We deduce that, for any other preimage
$\bar\sigma$ of $\mathfrak x$ by $\pi$, the right translated
$\bar\sigma\cdot\tau$ also projects onto $\overline{\{\mathfrak x\}}$. Thus, $\tau$
stabilizes $\overline{\pi^{-1}(\mathfrak x)}$, so that $\tau\in (H_{\mathcal
L},\partial)$. Finally we have the identity:
$$\psi^{-1}(\mathfrak x) = (H_{\mathcal L},\partial) =
H\times_{\mathcal C} (\Spec(\mathcal L),\partial).$$

On the other hand we apply the affine stalk formula (Proposition
\ref{CBformula}, that comes from the classical stalk formula,
Theorem \ref{StalkFormula}, in Appendix \ref{ApA}) to $\mathfrak x$.
We obtain the isomorphism:
$$\pi^{-1}(\mathfrak x) \simeq (\Spec(\mathcal L\otimes_{\mathcal K}
\mathcal L), \partial).$$

From the definition of $\psi$ we know that $L_{\sigma}$ gives us an
isomorphism between the fibers $\pi^{-1}(\mathfrak x)$ and
$\psi^{-1}(\mathfrak x)$. This restricted morphism
$L_\sigma|_{(H_{\mathcal L},\partial)}$ is a splitting morphism
$$\xymatrix{\overline{(\Spec(\mathcal
L\otimes_{\mathcal K} \mathcal L), \partial)}\ar[rr]^-{\pi} & & \{\mathfrak x\} \\
 H \times_{\mathcal C}(\Spec(\mathcal L), \partial) \ar[u]^-{L_\sigma|_{(H_{\mathcal L},\partial)}}
 \ar[urr]_-{\psi}}$$
of the tensor product $\mathcal L\otimes_{\mathcal K}\mathcal L$. All differential point
$\tau\in \overline{(\Spec(\mathcal L\otimes_{\mathcal K} \mathcal L, \partial)}$ must be
be in the preimage of $\mathfrak x$, because of the maximality of $\mathfrak x$ as
differential point if $G_{\mathcal L}$. If follows that
$\Diff(\Spec(\mathcal L\otimes_{\mathcal K} \mathcal L, \partial) = 
\DiffSpec(\mathcal L\otimes_{\mathcal K} \mathcal L)$. And then, we obtain an isomorphism
$$\DiffSpec(\mathcal L\otimes_{\mathcal K} \mathcal L) \to H\times_{\mathcal C}\DiffSpec(\mathcal L),$$
it follows that $\mathcal K \subset \mathcal L$ is strongly normal. 
\end{proof}

\begin{remark}\label{C3REM3.2.20}
Following \cite{Kov2}, $\DiffSpec(\mathcal L \otimes_{\mathcal
K}\mathcal L)$ is the set of admissible $\mathcal K$-isomorphism of
$\mathcal L$, modulo generic specialization. In the case of a
strongly normal extension $\mathcal K\subset \mathcal L$ the space
of constants  $\Const(\DiffSpec(\mathcal L \otimes_{\mathcal
K}\mathcal L))$ is an algebraic group and its closed points
correspond to differential $\mathcal K$-algebra automorphisms of
$\mathcal L$. Let us consider the previous splitting morphism,
$$H \times_{\mathcal C} (\Spec(\mathcal L), \partial) \to
(\Spec(\mathcal L \otimes_{\mathcal K}\mathcal L), \partial)$$ if we
apply the constant functor $\Const$, we obtain a isomorphism of
$\mathcal C$-algebraic varieties,
$$H \xrightarrow{s} \Gal(\mathcal L/\mathcal K),$$ where $H$ and $\Gal(\mathcal L
/\mathcal K)$ are algebraic groups. To each $\tau\in H$, we have
$\mathfrak x\cdot \tau = \mathfrak x$, and the $R_{\tau}^\sharp
\colon \mathcal L \to \mathcal L$. We have $R_{\tau}^\sharp \circ
R_{\bar\tau}^\sharp = R_{\tau\bar\tau}^\sharp$ and it realizes $H$
as a group of differential $\mathcal K$-algebra automorphisms of
$\mathcal L$.

\end{remark}

\begin{theorem}\label{C3THEauto}
  The Galois group $\Gal_{\mathfrak x}(G_{\mathcal K},\partial_{\vec
  A})$ is the group of differential $\mathcal
K$-algebra automorphisms of the Galois extension $\mathcal K \subset
\kappa(\mathfrak x)$.
\end{theorem}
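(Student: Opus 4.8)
The plan is to show that the assignment $\tau \mapsto R_\tau^\sharp$ furnishes a group isomorphism between the geometrically defined Galois group $H = \Gal_{\mathfrak x}(G_{\mathcal K},\partial_{\vec A})$ and the group of differential $\mathcal K$-algebra automorphisms of $\mathcal L = \kappa(\mathfrak x)$. First I would check that this map is well defined. For $\tau \in G(\mathcal C)$ the right translation $R_\tau$ is a morphism of schemes with derivation of $(G_{\mathcal K},\partial_{\vec A})$, by the right invariance established in Subsection \ref{C3SS3.3.1}; and $\tau$ lies in $H(\mathcal C)$ precisely when $R_\tau$ fixes the closed differential point $\mathfrak x$. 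In that case $R_\tau$ induces an automorphism of the local ring $\mathcal O_{G_{\mathcal K},\mathfrak x}$ commuting with the derivation, hence a differential $\mathcal K$-algebra automorphism $R_\tau^\sharp$ of the residue field $\kappa(\mathfrak x) = \mathcal L$. The homomorphism property $R_{\tau\bar\tau}^\sharp = R_\tau^\sharp\circ R_{\bar\tau}^\sharp$, which makes $\tau\mapsto R_\tau^\sharp$ a group homomorphism, is exactly what was recorded in Remark \ref{C3REM3.2.20}.

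For bijectivity I would invoke the two facts already at hand. From the Kovacic description of strongly normal extensions recalled in the Strongly Normal Extensions subsection, the closed points of $\Gal(\mathcal L/\mathcal K) = \Const(\DiffSpec(\mathcal L\otimes_{\mathcal K}\mathcal L))$ are in natural bijection with the differential $\mathcal K$-algebra automorphisms of $\mathcal L$, the prime differential ideal $\mathfrak p_\rho = \ker(a\otimes b\mapsto a\rho(b))$ being attached to each automorphism $\rho$. On the other hand, Remark \ref{C3REM3.2.20} produced, by applying $\Const$ to the splitting morphism $L_\sigma|_{(H_{\mathcal L},\partial)}$, an isomorphism of algebraic groups $s\colon H \xrightarrow{\sim} \Gal(\mathcal L/\mathcal K)$. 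Since $\mathcal C$ is algebraically closed, $s$ is determined by a bijection on $\mathcal C$-points, so it suffices to identify the composite $H(\mathcal C)\to\Gal(\mathcal L/\mathcal K)(\mathcal C)\to\{\text{automorphisms of }\mathcal L\}$ with the map $\tau\mapsto R_\tau^\sharp$.

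The main obstacle, therefore, is this compatibility check: verifying that the abstract isomorphism $s$, built through the fundamental solution $\sigma$ and the stalk-formula identification $\pi^{-1}(\mathfrak x)\simeq(\Spec(\mathcal L\otimes_{\mathcal K}\mathcal L),\partial)$, sends a point $\tau\in H(\mathcal C)$ to the point of $\Gal(\mathcal L/\mathcal K)$ carrying the automorphism $R_\tau^\sharp$. Here I would trace an element $\tau$ fixing $\mathfrak x$ through $L_\sigma$: the translate $\sigma\cdot\tau$ is another preimage of $\mathfrak x$, and under $a\otimes b\mapsto a\rho(b)$ the prime differential ideal it determines in $\mathcal L\otimes_{\mathcal K}\mathcal L$ is exactly $\mathfrak p_\rho$ with $\rho = R_\tau^\sharp$. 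The right invariance of $\partial_{\vec A}$, together with the commutative square relating the $G_{\mathcal L}$- and $G_{\mathcal K}$-level actions already used in Proposition \ref{C3PROtransitivity} and Theorem \ref{C3THEsne}, guarantees that this correspondence respects the group laws.

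Once this identification is in place the conclusion is immediate: $s$ is an isomorphism of algebraic groups, its effect on $\mathcal C$-points is $\tau\mapsto R_\tau^\sharp$, and the target $\Gal(\mathcal L/\mathcal K)(\mathcal C)$ is, by the Kovacic correspondence, precisely the group of differential $\mathcal K$-algebra automorphisms of $\mathcal L$. Hence $\Gal_{\mathfrak x}(G_{\mathcal K},\partial_{\vec A})$ is identified with that automorphism group, proving the theorem.
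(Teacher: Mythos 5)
Your proposal is correct and follows essentially the same route as the paper: both rest on the isomorphism $s\colon H \to \Gal(\mathcal L/\mathcal K)$ from Remark \ref{C3REM3.2.20}, and both reduce the theorem to the compatibility check $s(\tau) = R_\tau^\sharp$, carried out by tracing $\tau$ through $L_\sigma$ to the preimage $R_\tau(\sigma) = \sigma\cdot\tau$ of $\mathfrak x$ and identifying, via the stalk formula $\pi^{-1}(\mathfrak x)\simeq(\Spec(\mathcal L\otimes_{\mathcal K}\mathcal L),\partial)$, the resulting prime differential ideal with the one defining $R_\tau^\sharp$. The paper merely executes in explicit commutative diagrams the diagram chase you outline as the ``main obstacle.''
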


\begin{proof}
  Denote, as above, by $H\subset G$ the Galois group and by
$\mathcal L$ the Galois extension $\kappa(\mathfrak x)$. We consider
the isomorphism $s$ stated in remark \ref{C3REM3.2.20}. Let us prove
that $s$ is an isomorphism of algebraic groups over $\mathcal C$,
and that for $\tau\in H(\mathcal C)$, $s(\tau)$ is the automorphism
$R^{\sharp}_{\tau}$ of $\mathcal L$, induced by the translation
$R_\tau$.

  We already know that $s$ is a scheme isomorphism. We have to prove that it is a group
morphism. For $\tau\in H$, let us compute $s(\tau)$. First, let us
denote by $\bar\tau$ the point of $H_{\mathcal L}$ obtained from
$\tau$ after the base extension from $\mathcal C$ to $\mathcal L$.
It is a differential point of $(H_{\mathcal L},\partial)$. Then
$L_{\sigma}(\bar\tau) = R_{\tau}(\sigma)\in\pi^{-1}(\mathfrak x)$.
We identify $R_{\tau}(\sigma)$ with a differential point of
$\pi^{-1}(\mathfrak x)$. By the stalk formula we have that
$\pi^{-1}(\mathfrak x) = (\Spec(\mathcal O_{G_{\mathcal K},\mathfrak
x}\otimes_{\mathcal K}\mathcal L),\partial)$. We identify
$R_\tau(\sigma)$ with a prime differential ideal of $\mathcal
O_{G_{\mathcal K},\mathfrak x}\otimes_{\mathcal K}\mathcal L$.
Because $\pi(R_{\tau}(\sigma)) = \mathfrak x$, the morphism
$R_{\tau}(\sigma)^\sharp$ factorizes,
$$\xymatrix{\mathcal O_{G_{\mathcal K},\mathfrak x}\otimes_{\mathcal K}\mathcal L
\ar[d]^-{\mathfrak x^{\sharp}\otimes Id} \ar[rrd]^-{R_\tau(\sigma)^{\sharp}} \\
\kappa(\mathfrak x) \otimes_{\mathcal K} \mathcal L \ar[rr]_-{\psi}
& & \mathcal L}$$ and then the kernel of $\psi$ is the prime
differential ideal defining the automorphism $s(\tau)$,
$$\psi(a\otimes b) = s(\tau)(a)\cdot b$$

Let us consider the right translation $R_{\tau}$,
$$\xymatrix{G_{\mathcal L} \ar[r]^-{R_\tau}\ar[d] & G_{\mathcal L} \ar[d] \\ G_{\mathcal K} \ar[r] &
G_{\mathcal K}}\quad\quad\xymatrix{\sigma \ar[r]^-{R_\tau}\ar[d]& L_{\sigma}(\bar\tau) \ar[d] \\
\mathfrak x \ar[r] & \mathfrak x}$$ we have a commutative diagram
between the local rings,
$$\xymatrix{\mathcal L &  \mathcal L \ar[l]_-{Id} \\ \mathcal  O_{G_{\mathcal L},\pi^{-1}(\mathfrak x)}
\ar[u]^-{\sigma^\sharp}
& \mathcal O_{G_{\mathcal L},\pi^{-1}(\mathfrak x)} \ar[l]_-{R_{\tau}^{\sharp}} \ar[u]_-{R_\tau(\sigma)^\sharp} \\
\mathcal O_{G_{\mathcal K},\mathfrak x} \ar[u] & \mathcal
O_{G_{\mathcal K},\mathfrak x} \ar[l]^-{R_{\tau}^\sharp} \ar[u] },$$
where $\mathcal O_{G,\pi^{-1}(\mathfrak x)} = \mathcal
O_{G_{\mathcal K},\mathfrak x}\otimes_{\mathcal K}\mathcal L$, and
the morphism $R_{\tau}^\sharp$ on these rings is defined as follows:
$$\mathcal O_{G_{\mathcal K},\mathfrak x}\otimes_{\mathcal K}\mathcal
L \to \mathcal O_{G_{\mathcal K},\mathfrak x}\otimes_{\mathcal
K}\mathcal L, \quad a\otimes b \mapsto R_{\tau}^\sharp(a)\cdot b.$$
It is then clear that morphism $\psi$ defined above sends,
$$\psi\colon (a\otimes b) \mapsto R_{\tau}^\sharp(a)\cdot b$$
and then its kernel defines the automorphism $R_{\tau}^\sharp$ and
we finally have found $R_{\tau}^\sharp = s(\tau)$.
\end{proof}

\index{Galois!correspondence}
\subsection{Galois Correspondence}

  There is a Galois correspondence for strongly normal extensions (theorem
\ref{ThGaloisCorrespondence}). It is naturally transported to the
context of algebraic automorphic systems. Let $\mathcal L$ be a
Galois extension, which is the rational field $\kappa(\mathfrak x)$
of a Kolchin closed point $\mathfrak x$ as above. Let $\mathcal F$
be an intermediate differential extension,
$$\mathcal K \subset \mathcal F \subset \mathcal L.$$
We make base extensions sequentially so that we obtain a sequence of
schemes with derivations,
$$(G_{\mathcal L},\partial_{\vec A}) \to (G_{\mathcal F},\partial_{\vec A})) \to (G_{\mathcal K},\partial_{\vec A}),$$
and the associated sequence of differential schemes,
$$\Diff(G_{\mathcal L},\partial_{\vec A}) \to \Diff(G_{\mathcal
F},\partial_{\vec A}) \to \Diff(G_{\mathcal K},\partial_{\vec A}).$$
Let $\sigma\in G(\mathcal L)$ be the fundamental solution induced by
$\mathfrak x$. We obtain a sequence of differential points:
$$\sigma \mapsto \mathfrak y \mapsto \mathfrak x.$$
They are Kolchin closed and $\sigma$ is the fundamental solution
associated to $\mathfrak x$ and $\mathfrak y$ (Lemma
\ref{lmFSBaseChange}). The stabilizer subgroup of $\mathfrak y$ is a
subgroup of the stabilizer subgroup of $\mathfrak x$. We have
inclusions of algebraic groups,
$$\Gal_{\mathfrak y}(G_{\mathcal F},\partial_{\vec A}) \subset
\Gal_{\mathfrak x}(G_{\mathcal K},\partial_{\vec A}) \subset G.$$ In
particular we have that $\mathcal K \subset \mathcal F$ is a
strongly normal extension if and only if
$$\Gal_{\mathfrak y}(G_{\mathcal F},\partial_{\vec A}) \lhd
\Gal_{\mathfrak x}(G_{\mathcal K},\partial_{\vec A}).$$

\begin{proposition}
  Assume that $\Gal_{\mathfrak x}(G_{\mathcal K},\partial_{\vec A})$ is the whole group $G$, and
$\mathcal K\subset \mathcal F$ is a strongly normal extension. Then
the quotient group $$\bar G = G/\Gal_{\mathfrak y}(G_{\mathcal
F},\partial_{\vec A})$$ exists. Let $\vec B$ be the projection of
$\vec A$ in $\mathcal R(\bar G)\otimes_{\mathcal C}\mathcal K$.
Then, there is a unique closed differential point $\mathfrak z\in
\Diff(\bar G_{\mathcal K},\partial_{\vec B})$, and,
$$\Gal_{\mathfrak z}(\bar G_{\mathcal K},\partial_{\vec B}) = \bar G.$$
\end{proposition}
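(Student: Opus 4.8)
The plan is to realize $\mathcal F$ as the Galois extension of the quotient system $(\bar G_{\mathcal K},\partial_{\vec B})$ and then to read off its Galois group from the strongly normal Galois correspondence. Write $N = \Gal_{\mathfrak y}(G_{\mathcal F},\partial_{\vec A})$. Since $\mathcal K\subset\mathcal F$ is strongly normal, the discussion preceding the statement gives $N\lhd G$ together with the identifications $\mathcal F = \mathcal L^N$ and $\Gal(\mathcal F/\mathcal K)=G/N$ coming from Theorem \ref{ThGaloisCorrespondence}. The existence of $\bar G = G/N$ as a $\mathcal C$-algebraic group then follows from the quotient construction recorded after Theorem \ref{ThChevalley} (Rosenlicht's theorem applied to the right action of $N$ on $G$, whose invariant open set is all of $G$), normality providing the group structure. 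The canonical surjection $\pi\colon G\to\bar G$ induces the surjection $\mathcal R(G)\to\mathcal R(\bar G)$ of Definition \ref{DFfundamentalfield}, under which $\vec A$ maps to $\vec B$, and $\pi\colon(G_{\mathcal K},\partial_{\vec A})\to(\bar G_{\mathcal K},\partial_{\vec B})$ is a morphism of schemes with derivation; applying the functor $\Diff$ (Theorem \ref{C2THE2.3.2}) yields an equivariant map $\Diff(\pi)$ intertwining the right $G$- and $\bar G$-actions.

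Next I would descend the fundamental solution. Let $\sigma\in G(\mathcal L)$ be the fundamental solution attached to $\mathfrak x$ (Proposition \ref{C3PRO3.2.5}), so that $\mathcal L = \mathcal K\langle\sigma\rangle$ and, by Theorem \ref{C3THEauto} together with Remark \ref{C3REM3.2.20}, the element of $\Gal(\mathcal L/\mathcal K)=G$ corresponding to $\tau$ acts on $\sigma$ by the right translation $\sigma\mapsto\sigma\cdot\tau$. Naturality of the logarithmic derivative under the homomorphism $\pi$ gives $l\partial(\pi(\sigma)) = \vec B$, so $\pi(\sigma)$ solves the automorphic equation for $\vec B$. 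For $\tau\in N=\ker\pi$ one computes $\tau(\pi(\sigma)) = \pi(\sigma\cdot\tau) = \pi(\sigma)$, whence $\pi(\sigma)$ is fixed by $\Gal(\mathcal L/\mathcal F)=N$ and therefore lies in $\bar G(\mathcal L^N)=\bar G(\mathcal F)$. By Lemma \ref{lmLV1}, $\mathcal F$ is thus a splitting extension for $(\bar G_{\mathcal K},\partial_{\vec B})$. Finally, since $\mathcal L^N$ is exactly the field of right-$N$-invariant functions of $\sigma$, that is, the functions on $G/N=\bar G$ evaluated at $\pi(\sigma)$, I obtain the sharper identification $\mathcal F = \mathcal L^N = \mathcal K\langle\pi(\sigma)\rangle$.

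To conclude, choose a Kolchin closed differential point $\mathfrak z\in\Diff(\bar G_{\mathcal K},\partial_{\vec B})$ (Lemma \ref{C3LEMcloseddiffpoint}). By Proposition \ref{PrpSplitL} its Galois extension $\kappa(\mathfrak z)$ embeds into the splitting field $\mathcal F$, so $\kappa(\mathfrak z)\subseteq\mathcal F\subseteq\mathcal L$. As a splitting field inside $\mathcal L$, $\kappa(\mathfrak z)$ must contain a solution of $l\partial(x)=\vec B$; every such solution in $\bar G(\mathcal L)$ has the form $\pi(\sigma)\cdot c$ with $c\in\bar G(C_{\mathcal L})=\bar G(\mathcal C)\subseteq\bar G(\mathcal K)$, and hence generates the same field $\mathcal K\langle\pi(\sigma)\rangle=\mathcal F$ over $\mathcal K$. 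Therefore $\mathcal F\subseteq\kappa(\mathfrak z)$, giving $\kappa(\mathfrak z)=\mathcal F$ and, by Theorem \ref{C3THEauto}, $\Gal_{\mathfrak z}(\bar G_{\mathcal K},\partial_{\vec B}) = \Gal(\mathcal F/\mathcal K) = \bar G$. Since this isotropy group is all of $\bar G$, the transitivity of the $\bar G(\mathcal C)$-action on closed points (Proposition \ref{C3PROtransitivity}) forces the orbit of $\mathfrak z$ to be a single point coinciding with the whole set of closed points; hence $\mathfrak z$ is unique. The main obstacle is the middle step, namely proving $\pi(\sigma)\in\bar G(\mathcal F)$ and the invariant-theoretic identification $\mathcal F=\mathcal K\langle\pi(\sigma)\rangle$, since it is there that the explicit description of the Galois action on the fundamental solution and the fact that right-$N$-invariant functions are functions on $G/N$ must be combined; by contrast the quotient existence and the final orbit count are comparatively routine.
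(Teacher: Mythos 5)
Your proof is correct, and it takes a genuinely different route from the paper's. The paper's own argument is purely topological: from the hypothesis $\Gal_{\mathfrak x}(G_{\mathcal K},\partial_{\vec A})=G$ and the transitivity of the $G(\mathcal C)$-action on Kolchin closed points (Proposition \ref{C3PROtransitivity}) it deduces that $\mathfrak x$ is simultaneously Kolchin closed and Zariski-generic, so that $\Diff(G_{\mathcal K},\partial_{\vec A})=\{\mathfrak x\}$ as a set; it then pushes this forward along the induced map $\Diff(G_{\mathcal K},\partial_{\vec A})\to\Diff(\bar G_{\mathcal K},\partial_{\vec B})$, asserted to be surjective, so the target is a singleton $\{\mathfrak z\}$, which yields uniqueness and $\Gal_{\mathfrak z}(\bar G_{\mathcal K},\partial_{\vec B})=\bar G$ in one stroke. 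You instead argue field-theoretically: you descend the fundamental solution to $\pi(\sigma)\in\bar G(\mathcal F)$, identify $\mathcal F$ with the differential field generated by $\pi(\sigma)$, show via Proposition \ref{PrpSplitL} and the description of all solutions as $\pi(\sigma)\cdot c$ with $c\in\bar G(\mathcal C)$ that $\kappa(\mathfrak z)\cong\mathcal F$, and then read $\Gal_{\mathfrak z}(\bar G_{\mathcal K},\partial_{\vec B})=\Gal(\mathcal F/\mathcal K)=G/N$ off Theorem \ref{C3THEauto} and the strongly normal correspondence, obtaining uniqueness afterwards from transitivity. Each approach buys something: the paper's is much shorter and proves the stronger fact that the entire differential spectrum downstairs is one point, but it hinges on the surjectivity of the induced map on differential points, which is asserted without proof; yours avoids that surjectivity altogether and, as a byproduct, establishes $\kappa(\mathfrak z)\cong\mathcal F$, in effect anticipating parts (1)--(3) of the later Theorem \ref{ThGM} in this special case. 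Two of your steps are glossed but harmless at the paper's level of rigor: the identification of $\mathcal L^N$ with the field generated by $\pi(\sigma)$ is obtained most cleanly from the correspondence itself (an element $\tau\in G=\Gal(\mathcal L/\mathcal K)$ fixes $\pi(\sigma)$ iff $\pi(\sigma)\cdot\pi(\tau)=\pi(\sigma)$ iff $\tau\in N$, so the subgroup fixing that field is exactly $N$) rather than from invariant rational functions on $G$; and passing from the abstract isomorphism $\Gal_{\mathfrak z}\simeq\bar G$ to equality of subgroups of $\bar G$ uses the small observation that a closed subgroup of $\bar G$ of full dimension whose component group has the same order is $\bar G$ itself.
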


\begin{proof}
  The quotient realizes itself as the group of automorphisms of
the differential $\mathcal K$-algebra $\mathcal F$. The extension
$\mathcal K \subset\mathcal F$ is strongly normal, and then this
group is algebraic by Galois correspondence (Theorem
\ref{ThGaloisCorrespondence}). The induced morphism
$$\pi\colon \Diff(G_{\mathcal
K},\partial_{\vec A}) \to \Diff(\bar G_{\mathcal K},\partial_{\vec
A})$$ restricts to the differential points, and it is surjective.
The hypothesis $\Gal_{\mathfrak x}(G_{\mathcal K},\vec A)=G$ implies
that $\Diff(G_{\mathcal K},\partial_{\vec A})$ consist in the only
point $\{\mathfrak x\}$, and then $\Diff(\bar G_{\mathcal
K},\partial_{\vec A}) = \{\mathfrak z\}$. Hence, $\mathfrak z$ is
the generic point of $G_{\mathcal K}$ and the Galois group is the
total group.
\end{proof}

Reciprocally let us consider an algebraic subgroup $H\subset
\Gal_{\mathfrak x}(G_{\mathcal K},\partial_{\vec A})$. Then $H$ is a
subgroup of differential $\mathcal K$-algebra automorphisms of
$\mathcal L$. Let $\mathcal F =\mathcal L^H$ be its field of
invariants. We have again a sequence of non-autonomous algebraic
dynamical systems
$$(G_{\mathcal L},\partial_{\vec A}) \to (G_{\mathcal F}, \partial_{\vec A})
\to (G_{\mathcal K}, \partial_{\vec A}).$$ Let again $\sigma$ be the
fundamental solution induced by $\mathfrak x$, we have the sequence
of closed differential points,
$$\sigma\mapsto \mathfrak y \mapsto \mathfrak x$$

\begin{proposition}\label{Prop318}
Let us consider an intermediate differential field,
$$\mathcal K\subset \mathcal F \subset \mathcal L,$$ as above, and $H = \Aut(\mathcal L/\mathcal
F)$, then
\begin{enumerate}
\item[(a)] $H$ is the Galois group $\Gal_{\mathfrak y}(G_{\mathcal F},\partial_{\vec A})
\subset \Gal_{\mathfrak x}(G_{\mathcal K},\partial_{\vec A})$.
\item[(b)] $\mathcal K\subset\mathcal F$ is strongly normal if and
only if $H\lhd \Gal_{\mathfrak x}(G_{\mathcal K},\partial_{\vec
A})$. In such case $\Aut(\mathcal F/\mathcal K) = \Gal_{\mathfrak
x}(G_{\mathcal K},\partial_{\vec A})/H$.
\end{enumerate}
\end{proposition}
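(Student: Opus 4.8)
The plan is to deduce both assertions from the abstract Galois correspondence for strongly normal extensions (Theorem \ref{ThGaloisCorrespondence}), transported through the geometric dictionary already established in Theorem \ref{C3THEauto}. The essential preliminary observation is that the situation over the intermediate field $\mathcal F$ is formally identical to the situation over $\mathcal K$. Indeed, since $\mathcal L$ is a splitting field for $(G_{\mathcal K},\partial_{\vec A})$, the scheme with derivation $(G_{\mathcal L},\partial_{\vec A})$ splits, and this is a property of $(G_{\mathcal L},\partial_{\vec A})$ itself; hence $\mathcal L$ is a fortiori a splitting field for $(G_{\mathcal F},\partial_{\vec A})$. Lemma \ref{lmFSBaseChange}, applied to the intermediate extension $\mathcal K\subset\mathcal F\subset\mathcal L$, then guarantees that the image $\mathfrak y$ of $\mathfrak x$ under the base change to $\mathcal F$ is again a Kolchin closed differential point with rational field $\kappa(\mathfrak y)=\mathcal L$, and that $\sigma$ remains its fundamental solution. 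Consequently $\mathcal F\subset\mathcal L$ is itself a Galois extension of $(G_{\mathcal F},\partial_{\vec A})$ in the sense of Definition \ref{C3DEFGaloisExt}.

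For part (a), I would apply Theorem \ref{C3THEauto} twice. Over the base field $\mathcal K$ it identifies $\Gal_{\mathfrak x}(G_{\mathcal K},\partial_{\vec A})$ with the group $\Aut(\mathcal L/\mathcal K)$ of differential $\mathcal K$-algebra automorphisms of $\mathcal L$; over the base field $\mathcal F$ — legitimate by the preceding paragraph — it identifies $\Gal_{\mathfrak y}(G_{\mathcal F},\partial_{\vec A})$ with $\Aut(\mathcal L/\mathcal F)$. Since $H$ is by construction a closed algebraic subgroup and $\mathcal F=\mathcal L^H$, the Galois correspondence (Theorem \ref{ThGaloisCorrespondence}) gives $\Aut(\mathcal L/\mathcal F)=\Gal(\mathcal L/\mathcal L^H)=H$, whence the equality $H=\Gal_{\mathfrak y}(G_{\mathcal F},\partial_{\vec A})$. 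The inclusion $\Gal_{\mathfrak y}(G_{\mathcal F},\partial_{\vec A})\subset\Gal_{\mathfrak x}(G_{\mathcal K},\partial_{\vec A})$ is then automatic: the projection $\Diff(G_{\mathcal F},\partial_{\vec A})\to\Diff(G_{\mathcal K},\partial_{\vec A})$ is $G(\mathcal C)$-equivariant and carries $\mathfrak y$ to $\mathfrak x$, so the isotropy subgroup of $\mathfrak y$ lies inside that of $\mathfrak x$; dually, an $\mathcal F$-automorphism of $\mathcal L$ is in particular a $\mathcal K$-automorphism.

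For part (b), I would invoke Theorem \ref{ThGaloisCorrespondence} for the strongly normal extension $\mathcal K\subset\mathcal L$, whose strong normality is Theorem \ref{C3THEsne}. That theorem asserts that $\mathcal K\subset\mathcal F$ is strongly normal precisely when $\Gal(\mathcal L/\mathcal F)$ is normal in $\Gal(\mathcal L/\mathcal K)$, and that in that case $\Gal(\mathcal F/\mathcal K)\cong\Gal(\mathcal L/\mathcal K)/\Gal(\mathcal L/\mathcal F)$. Substituting the identifications $\Gal(\mathcal L/\mathcal F)=H$ from part (a) and $\Gal(\mathcal L/\mathcal K)=\Gal_{\mathfrak x}(G_{\mathcal K},\partial_{\vec A})$ from Theorem \ref{C3THEauto}, the normality criterion becomes $H\lhd\Gal_{\mathfrak x}(G_{\mathcal K},\partial_{\vec A})$ and the quotient formula becomes $\Aut(\mathcal F/\mathcal K)=\Gal_{\mathfrak x}(G_{\mathcal K},\partial_{\vec A})/H$, exactly as stated.

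The substance of the argument is thus concentrated in the reduction to the relative picture over $\mathcal F$; the step I expect to carry the real weight, and which must be checked with care, is that Theorem \ref{C3THEauto} genuinely applies with base field $\mathcal F$ — that is, that $\mathfrak y$ is a Kolchin closed point of $\Diff(G_{\mathcal F},\partial_{\vec A})$ with rational field $\mathcal L$ and fundamental solution $\sigma$. Once Lemma \ref{lmFSBaseChange} secures this, the remainder is a purely formal transcription of the classical strongly normal Galois correspondence into the geometric language of isotropy subgroups of the $G(\mathcal C)$-action on differential points.
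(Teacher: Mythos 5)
Your proposal is correct and follows essentially the same route as the paper: the paper's proof is a one-line appeal to the identification of the geometric Galois group with the automorphism group (Theorem \ref{C3THEauto}) combined with the strongly normal Galois correspondence (Theorem \ref{ThGaloisCorrespondence}), which is exactly your argument. The preparatory step you rightly flag as carrying the weight --- that $\mathfrak y$ is Kolchin closed with $\kappa(\mathfrak y)=\mathcal L$ and fundamental solution $\sigma$, via Lemma \ref{lmFSBaseChange}, so that Theorem \ref{C3THEauto} applies over $\mathcal F$ --- is precisely what the paper establishes in the discussion immediately preceding the proposition rather than inside its proof.
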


\begin{proof} By considering the identification of the Galois group with
the group of automorphisms, the result is a direct translation of
the Galois correspondence for strongly normal extensions (see
\cite{Kov2} Theorem 20.5, Theorem \ref{ThGaloisCorrespondence} in
this text).
\end{proof}

  In particular, each algebraic group admits a unique normal subgroup
of finite index, the connected component of the identity. Let
$\Gal^0_{\mathfrak x}(G_{\mathcal K},\partial_{\vec A})$ be the
connected component of the identity of $\Gal_{\mathfrak
x}(G_{\mathcal K},\partial_{\vec A})$ and,
$$\Gal^1_{\mathfrak
x}(G_{\mathcal K},\partial_{\vec A}) = \Gal_{\mathfrak
x}(G_{\mathcal K},\partial_{\vec A})/\Gal^0_{\mathfrak
x}(G_{\mathcal K},\partial_{\vec A}),$$ which is a finite group. In
such case we have:
\begin{enumerate}
\item[(a)] The invariant field $\mathcal L^{\Gal^0_{\mathfrak x}(G_{\mathcal K},\partial_{\vec A})}$
is the relative algebraic closure ${\mathcal K^\circ}$ of $\mathcal
K$ in $\mathcal L$.
\item[(b)] $\mathcal K \subset {\mathcal K^\circ}$ is an algebraic
Galois extension of Galois group $\Gal^1_{\mathfrak x}(G_{\mathcal
K},\partial_{\vec A})$.
\item[(c)] $\Gal_{\mathfrak y}(G_{{\mathcal K^\circ}},\partial_{\vec
A}) = \Gal^0_{\mathfrak x}(G_{\mathcal K},\partial_{\vec A})$.
\end{enumerate}
Thus, we can set out:

\begin{proposition}
  $\mathcal K$ is relatively algebraically closed in $\mathcal L$ if
and only if its Galois group is connected.
\end{proposition}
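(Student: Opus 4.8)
The plan is to read off this equivalence directly from the three facts labelled (a), (b), (c) immediately preceding the statement, combined with the Galois correspondence of Theorem \ref{ThGaloisCorrespondence} and the identification of Theorem \ref{C3THEauto}. Throughout I would abbreviate $\Gamma = \Gal_{\mathfrak x}(G_{\mathcal K},\partial_{\vec A})$ and write $\Gamma^0$ for its identity component. By Theorem \ref{C3THEauto} the group $\Gamma$ is exactly the group of differential $\mathcal K$-algebra automorphisms $\Aut(\mathcal L/\mathcal K)$, so the geometrically defined Galois group and the field-theoretic one coincide and the correspondence of Theorem \ref{ThGaloisCorrespondence} applies verbatim.

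The first step is to locate the base field at the top of the correspondence: applying the bijection $H\mapsto \mathcal L^H$ to $H=\Gamma$ gives $\mathcal L^{\Gamma}=\mathcal K$, since $\Gamma=\Gal(\mathcal L/\mathcal K)$ corresponds to $\mathcal K$ by construction. The second ingredient is fact (a), which identifies the invariant field of the identity component with the relative algebraic closure, namely $\mathcal L^{\Gamma^0}=\mathcal K^\circ$. With these two equalities in hand the argument reduces to pure bookkeeping on the pair $\Gamma^0\subset\Gamma$.

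For the implication from connectedness I would argue as follows: if $\Gamma$ is connected then $\Gamma=\Gamma^0$, whence $\mathcal K^\circ=\mathcal L^{\Gamma^0}=\mathcal L^{\Gamma}=\mathcal K$, so $\mathcal K$ is relatively algebraically closed in $\mathcal L$. For the converse: if $\mathcal K$ is relatively algebraically closed then $\mathcal K^\circ=\mathcal K$, so $\mathcal L^{\Gamma^0}=\mathcal K=\mathcal L^{\Gamma}$; since the assignment $H\mapsto \mathcal L^H$ is injective by Theorem \ref{ThGaloisCorrespondence}, this forces $\Gamma^0=\Gamma$, that is, $\Gamma$ is connected.

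The only point that genuinely requires care---and which has already been discharged by facts (a) and (b)---is that the intermediate field fixed by $\Gamma^0$ is really the relative algebraic closure and nothing smaller; this rests on $\Gamma/\Gamma^0$ being finite and on $\mathcal K\subset\mathcal K^\circ$ being realized as an honest finite Galois extension with group $\Gamma/\Gamma^0$. Granting that, I expect no substantive obstacle: the statement is a formal consequence of the injectivity of the Galois correspondence applied to $\Gamma^0\subset\Gamma$, so the \emph{hard part}, such as it is, lies entirely in the already-established identification of $\mathcal K^\circ$ with $\mathcal L^{\Gamma^0}$ rather than in any new computation.
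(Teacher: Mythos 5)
Your proposal is correct and follows essentially the paper's own route: the paper states this proposition as an immediate consequence (``Thus, we can set out'') of the facts (a)--(c) listed just before it, which is exactly the deduction you carry out, making explicit the bookkeeping with $\mathcal L^{\Gamma}=\mathcal K$, fact (a), and the injectivity of the correspondence $H\mapsto\mathcal L^{H}$ from Theorem \ref{ThGaloisCorrespondence}. No gap: both directions are exactly the formal argument the paper leaves implicit.
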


\subsection{Galois Correspondence and Group Morphisms}

 Here, we relate the Galois correspondence and the projection
of automorphic vector fields through algebraic group morphisms. It
is self evident that a group morphism $\pi\colon G\to \bar G$ sends
an automorphic system $\vec A$ in $G$ with coefficients in $\mathcal
K$ to an automorphic system $\pi(\vec A)$ in $\bar G$ with
coefficients in $\mathcal K$. Furthermore we know that $\pi(\vec A)$
is an automorphic system in the image of $\pi$ which is a subgroup
of $\bar G$. By restricting our analysis to this image, we can
assume that $\pi$ is a surjective morphism.

\begin{theorem}\label{ThGM}
  Let $\pi \colon G\to \bar G$ be a surjective morphism of algebraic groups,
and $\vec B$ the projected automorphic system $\pi(\vec A)$. Then:
\begin{itemize}
\item[(1)] $\mathfrak y = \pi(\mathfrak x)$ is a closed differential
point of $\Diff(\bar G_{\mathcal K},\partial_{\vec B})$.
\item[(2)] $\kappa(\mathfrak y)$ is a strongly normal intermediate extension of
$\mathcal K \subset \kappa(\mathfrak y) \subset \mathcal L$.
\item[(3)] $\Gal_{\mathfrak y}(\bar G_{\mathcal K},\partial_{\vec B}) =
\Gal_{\mathfrak x}(G_{\mathcal K},\partial_{\vec A})/(\ker (\pi)\cap
\Gal_{\mathfrak x}(G_{\mathcal K},\partial_{\vec A}))$.
\item[(4)] Let $\mathfrak z$ be a Kolchin closed point of $(G_{\kappa(\mathfrak
y)},\partial_{\vec A})$ in the fiber of $\mathfrak x$. Then
$\Gal_{\mathfrak z}(G_{\kappa(\mathfrak y)},\partial_{\vec A}) =
\ker(\pi) \cap \Gal_{\mathfrak x}(G_{\mathcal K},\partial_{\vec A})$
\end{itemize}
\end{theorem}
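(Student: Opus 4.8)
The plan is to upgrade the group morphism $\pi$ to a morphism of schemes with derivation and then read off the four assertions from the functoriality of $\Diff$ together with the Galois correspondence for strongly normal extensions. Since $\vec B=\pi(\vec A)$, the homomorphism $\pi$ induces a morphism of $\mathcal K$-schemes with derivation $(G_{\mathcal K},\partial_{\vec A})\to(\bar G_{\mathcal K},\partial_{\vec B})$, and applying the functor $\Diff$ yields a map $\Diff(G_{\mathcal K},\partial_{\vec A})\to\Diff(\bar G_{\mathcal K},\partial_{\vec B})$ carrying $\mathfrak x$ to $\mathfrak y=\pi(\mathfrak x)$; on residue fields this gives $\mathcal K\subset\kappa(\mathfrak y)\hookrightarrow\kappa(\mathfrak x)=\mathcal L$. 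To prove (1) I would form the commutative square whose horizontal arrows are the base-change projections from $\mathcal L$ to $\mathcal K$ and whose vertical arrows are induced by $\pi$. The fundamental solution $\sigma\in G(\mathcal L)$ satisfies $l\partial(\sigma)=\vec A$, so its image $\pi(\sigma)\in\bar G(\mathcal L)$ is a rational solution of the automorphic equation for $\vec B$, hence a Kolchin closed point of $\Diff(\bar G_{\mathcal L},\partial_{\vec B})$, and $\mathcal L$ is a splitting extension for $\vec B$. Since $\sigma$ projects to $\mathfrak x$ and $\pi(\sigma)$ to $\mathfrak y$, commutativity of the square identifies $\mathfrak y$ as the image of the closed point $\pi(\sigma)$ under the base-change projection $\Diff(\bar G_{\mathcal L},\partial_{\vec B})\to\Diff(\bar G_{\mathcal K},\partial_{\vec B})$, which is a closed map by Lemma \ref{lmClosedPoint}; therefore $\mathfrak y$ is closed, proving (1).

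For (2), once $\mathfrak y$ is Kolchin closed it is a closed differential point of the automorphic system $(\bar G_{\mathcal K},\partial_{\vec B})$, so $\kappa(\mathfrak y)$ is a Galois extension of $\mathcal K$ in the sense of Definition \ref{C3DEFGaloisExt}. By Theorem \ref{C3THEsne} every such extension is strongly normal, and by the first paragraph it sits as an intermediate extension $\mathcal K\subset\kappa(\mathfrak y)\subset\mathcal L$.

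The heart of the argument is (3), which I would derive from the identification of Galois groups with automorphism groups. The right actions of $G$ and $\bar G$ are intertwined by $\pi$, i.e. $\pi(\mathfrak x\cdot\tau)=\mathfrak y\cdot\pi(\tau)$, so $\pi$ restricts to a homomorphism of isotropy subgroups $\pi\colon\Gal_{\mathfrak x}(G_{\mathcal K},\partial_{\vec A})\to\Gal_{\mathfrak y}(\bar G_{\mathcal K},\partial_{\vec B})$ whose kernel is exactly $\ker(\pi)\cap\Gal_{\mathfrak x}(G_{\mathcal K},\partial_{\vec A})$. Under the identifications of Theorem \ref{C3THEauto} the source is $\Aut(\mathcal L/\mathcal K)$ and the target is $\Aut(\kappa(\mathfrak y)/\mathcal K)$, and the relation $\pi\circ R_\tau=R_{\pi(\tau)}\circ\pi$ shows that this homomorphism is precisely the restriction map $\rho\mapsto\rho|_{\kappa(\mathfrak y)}$. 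Since $\kappa(\mathfrak y)/\mathcal K$ is strongly normal, the Galois correspondence (Theorem \ref{ThGaloisCorrespondence}) makes this restriction surjective with kernel $\Aut(\mathcal L/\kappa(\mathfrak y))$; comparing the two descriptions of the kernel yields (3) together with the equality $\ker(\pi)\cap\Gal_{\mathfrak x}(G_{\mathcal K},\partial_{\vec A})=\Aut(\mathcal L/\kappa(\mathfrak y))$. Finally, for (4) I would apply Lemma \ref{lmFSBaseChange} and Proposition \ref{Prop318} to the intermediate field $\mathcal K\subset\kappa(\mathfrak y)\subset\mathcal L$: the closed point $\mathfrak z$ over $\mathfrak x$ has rational field $\mathcal L$ and fundamental solution $\sigma$, and $\Gal_{\mathfrak z}(G_{\kappa(\mathfrak y)},\partial_{\vec A})=\Aut(\mathcal L/\kappa(\mathfrak y))$, which by the kernel computation equals $\ker(\pi)\cap\Gal_{\mathfrak x}(G_{\mathcal K},\partial_{\vec A})$.

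The step I expect to be the main obstacle is the precise identification in (3) of $\pi|_{\Gal_{\mathfrak x}}$ with the field-automorphism restriction map: one must check that the geometric right-translation description of the Galois group (Definition \ref{C3DEFGalois}, Theorem \ref{C3THEauto}) is compatible with $\pi$ not merely as a map of sets but as a homomorphism of algebraic groups, so that the surjectivity furnished by the abstract Galois correspondence can legitimately be transported back to the isotropy subgroups inside $G$ and $\bar G$. Everything else is formal functoriality of $\Diff$ and bookkeeping of base changes.
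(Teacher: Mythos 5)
Your proof is correct, but for parts (1) and (2) it follows a genuinely different path from the paper's own argument. For (1), the paper argues by specialization: it takes a Kolchin closed point $\mathfrak s$ adherent to $\mathfrak y$, lifts it to a closed point of the fiber $\pi^{-1}(\mathfrak s)$ inside $\Diff(G_{\mathcal K},\partial_{\vec A})$, and uses transitivity of the $G(\mathcal C)$-action on closed points (Proposition \ref{C3PROtransitivity}) to exhibit $\mathfrak y$ as a right translate $\mathfrak s\cdot\pi(\tau)$ of a closed point, hence closed; you instead push the fundamental solution forward, observe that $\pi(\sigma)$ is a rational solution of the projected equation so that $\mathcal L$ splits $(\bar G_{\mathcal K},\partial_{\vec B})$ by Lemma \ref{lmLV1}, and conclude by the closedness of the base-change projection, Lemma \ref{lmClosedPoint} --- legitimately applied to the system $(\bar G_{\mathcal K},\partial_{\vec B})$ rather than $(G_{\mathcal K},\partial_{\vec A})$, since that lemma and the transitivity result it rests on hold for an arbitrary automorphic system. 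Your variant costs the (easy) compatibility of solutions with group morphisms but yields for free that $\mathcal L$ is a splitting extension of the projected system. For (2), the paper does not invoke Theorem \ref{C3THEsne}; it identifies the subgroup of $\Gal(\mathcal L/\mathcal K)\simeq\Gal_{\mathfrak x}(G_{\mathcal K},\partial_{\vec A})$ fixing $\kappa(\mathfrak y)$ as $\ker(\pi)\cap\Gal_{\mathfrak x}(G_{\mathcal K},\partial_{\vec A})$ and deduces strong normality from the normality of $\ker(\pi)$ in $G$ together with the Galois correspondence (Theorem \ref{ThGaloisCorrespondence}); you get strong normality at once from Theorem \ref{C3THEsne}, since $\mathfrak y$ is a closed differential point of the automorphic system $(\bar G_{\mathcal K},\partial_{\vec B})$, which is shorter and arguably cleaner. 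The trade-off is that the paper's detour puts the fixing-subgroup computation on the table early, which is exactly why it can then obtain (3) and (4) in one line from the Galois correspondence; in your write-up that same computation reappears inside (3), where you justify it via the intertwining relation $\pi\circ R_\tau = R_{\pi(\tau)}\circ\pi$ and the faithfulness of the identification in Theorem \ref{C3THEauto} --- a point the paper leaves implicit --- so your treatment of (3) and (4) is the paper's argument with the details (well-definedness and surjectivity of the restriction map, and the algebraic-group nature of $\pi|_{\Gal_{\mathfrak x}}$) made explicit.
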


\begin{proof} (1) Let $\mathfrak s$ be a closed point of
$\Diff(\bar G_{\mathcal K},\partial_{\vec B})$ adherent to
$\mathfrak y$. Then $\pi^{-1}(\mathfrak x)$ is a closed subset of
$\Diff(G_{\mathcal K},\partial_{\vec A})$ and it contains a closed
point $\mathfrak z$. $G(\mathcal C)$ acts transitively in the set of
closed points, and then there is $\tau\in G(\mathcal C)$ such as
$\mathfrak x = \mathfrak z \cdot \tau$. Thus, $\mathfrak y =
\mathfrak s\cdot \pi(\tau)$, so that $\mathfrak y$ is closed,
$\mathfrak s = \mathfrak x$, and furthermore $\pi(\tau) \in
\Gal_{\mathfrak y}(\bar G_{\mathcal K},\partial_{\vec B})$.

(2) $\pi^\sharp\colon \kappa(\mathfrak y)\to \mathcal L$ is a
differential $\mathcal K$-algebra morphism, and $\kappa(\mathfrak
y)$ is realized as an intermediate extension $\mathcal K \subset
\kappa(\mathfrak y) \subset \mathcal L$. It is a strongly normal if
and only if the subgroup of $\Gal(\mathcal L/\mathcal K)$ fixing
$\kappa(\mathfrak y)$ is a normal subgroup. We identify
$Gal(\mathcal L/\mathcal K)$ with $\Gal_{\mathfrak x}(G_{\mathcal
K}, \partial_{\vec A})$. Then $\tau$ fixes $\kappa(\mathfrak y)$ if
and only if $\pi(\tau) = e$. This subgroup fixing $\kappa(\mathfrak
y)$ is $\ker(\pi)\cap \Gal_{\mathfrak x}(G_{\mathcal K},
\partial_{\vec A})$. By hypothesis, $\ker(\pi)$ is a normal subgroup
of $G$, and then its intersection with $\Gal_{\mathfrak
x}(G_{\mathcal K}, \partial_{\vec A})$ is a normal subgroup.

Finally, be obtain (3) and (4) by Galois correspondence.
\end{proof}

\subsection{Lie Extension Structure on Intermediate Fields}

  Differential field approach to Lie-Vessiot systems was initiated
  by K. Nishioka, in terms of the notions of rational dependence on
  arbitrary constants and Lie extensions (see definitions \ref{DefRationalDependence} and
  \ref{DefLieExtension}). Here we relate our results
  with these notions.

\begin{theorem}\label{Nihioka}
  Assume one of the following:
\begin{enumerate}
\item[(a)]  $\mathcal K$ is algebraically closed.
\item[(b)] The Galois group of $(G_{\mathcal K},\partial_{\vec A})$
is $G$.
\end{enumerate}
  Let $y$ be a particular solution of $(M_{\mathcal K}, \partial_{\vec
  X})$ with coefficients in a differential field extension $\mathcal
  K\subset \mathcal R$. Assume that $\mathcal R$ is generated by
  $y$. Then:
  \begin{enumerate}
  \item[(i)] $\mathcal K \subset \mathcal R$ depends rationally on
  arbitrary constants.
  \item[(ii)] $\mathcal K \subset \mathcal R$ is a Lie extension.
  \end{enumerate}
\end{theorem}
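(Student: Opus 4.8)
The plan is to reduce everything to the automorphic system on $G$ and to the fundamental solution produced by the Galois construction of Subsection~\ref{C3SSexistenceuniqueness}. The geometric input is a base point $x_0\in M(\mathcal C)$ (which exists because $\mathcal C$ is algebraically closed and $M$ is a homogeneous $\mathcal C$-variety) and the orbit morphism $\rho\colon G\to M$, $g\mapsto g\cdot x_0$. Since $\vec A$ corresponds to $\vec X$ under $\mathcal R(G)\to\mathcal R(G,M)$ (Definition~\ref{DFfundamentalfield}), the map $\rho$ intertwines $\partial_{\vec A}$ and $\partial_{\vec X}$, so it induces a surjection $\rho_*\colon\Diff(G_{\mathcal K},\partial_{\vec A})\to\Diff(M_{\mathcal K},\partial_{\vec X})$. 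The elementary fact driving both parts, read off from Lemma~\ref{lmLV2} and Theorem~\ref{C3THE3.1.17}, is this: if $\sigma$ solves the automorphic equation $l\partial(\sigma)=\vec A$ over some $\mathcal N\supseteq\mathcal K$ and $y\in M(\mathcal N)$ solves $(M_{\mathcal N},\partial_{\vec X})$, then the gauge transform $c:=\sigma^{-1}\cdot y$ is a constant point, $c\in M(C_{\mathcal N})$; equivalently $y=\sigma\cdot c$.

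For (i) I would produce a splitting field $\mathcal M$ of the automorphic system that is free from $\mathcal R$ over $\mathcal K$. Working inside the fixed universal extension, a fundamental solution $\sigma$ of $l\partial(x)=\vec A$ exists, and the set of all such solutions is the orbit $\{\sigma\cdot g\}$ under the group of constant points; since the constant field of the universal extension is large, one may translate $\sigma$ by a generic constant $g$ so that $\mathcal M:=\mathcal K\langle\sigma\rangle$ is free from $\mathcal R$ over $\mathcal K$. Because $\sigma$ solves $l\partial(x)=\vec A$ with coefficients already in $\mathcal K$, Lemma~\ref{lmLV1} shows $\mathcal M$ is a splitting extension of $(G_{\mathcal K},\partial_{\vec A})$. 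Applying the identity above over $\mathcal N=\mathcal R\cdot\mathcal M=\mathcal R\langle\sigma\rangle$ gives $y=\sigma\cdot c$ with $c\in M(C_{\mathcal R\mathcal M})$, whence $\mathcal R=\mathcal K\langle y\rangle\subseteq\mathcal M\cdot C_{\mathcal R\mathcal M}$ and therefore $\mathcal R\cdot\mathcal M=\mathcal M\cdot C_{\mathcal R\mathcal M}$. Together with freeness this is precisely Definition~\ref{DefRationalDependence}.

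For (ii) I would first embed $\mathcal R$ into a Galois extension. Lifting the differential point $\mathfrak y_0$ cut out by $y$ along the surjection $\rho_*$ to a closed differential point $\mathfrak x_0\in\Diff(G_{\mathcal K},\partial_{\vec A})$ yields a field embedding $\mathcal R=\kappa(\mathfrak y_0)\hookrightarrow\kappa(\mathfrak x_0)=:\mathcal L$, where $\mathcal K\subseteq\mathcal L$ is a Galois extension, hence strongly normal by Theorem~\ref{C3THEsne}; in particular $C_{\mathcal R}=\mathcal C$. Under hypothesis (a) the conclusion is then immediate from Nishioka's Theorem~\ref{ThNishioka2}, since $\mathcal R$ is an intermediate differential field of the strongly normal extension $\mathcal K\subseteq\mathcal L$. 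Under hypothesis (b) the Galois group is all of $G$, so $\mathfrak x_0$ is the generic point and $\mathcal L$ is the function field of $G_{\mathcal K}$; here I would exhibit the Lie-extension structure directly, taking $\mathfrak g=\mathcal R(G,M)$ acting on $\mathcal R$ by fundamental fields. Homogeneity gives $\mathcal R\,\mathfrak g=\Der_{\mathcal K}(\mathcal R)$, and since each $\vec X_i$ kills $\mathcal K$ and is defined over $\mathcal C$ one computes $[\partial,\vec X_i]=\sum_j f_j[\vec X_j,\vec X_i]\in\mathcal K\mathfrak g$, so $[\partial,\mathfrak g]\subseteq\mathcal K\mathfrak g$; with $C_{\mathcal R}=\mathcal C$ this is Definition~\ref{DefLieExtension}.

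The hard part is the control of constants, which is exactly where the dichotomy (a)/(b) is forced. For (i), producing a splitting field $\mathcal M$ that is genuinely free from $\mathcal R$, so that $c=\sigma^{-1}y$ consists of \emph{new}, arbitrary constants rather than elements of $\mathcal C$, requires a careful use of the abundance of constants in the universal extension. For (ii), guaranteeing the embedding $\mathcal R\hookrightarrow\mathcal L$ with $C_{\mathcal R}=\mathcal C$ is delicate precisely when $y$ generates a non-closed differential point, and it is to rule this out that one assumes either that $\mathcal K$ is algebraically closed or that the Galois group is the full group $G$. Once $\mathcal R$ is placed inside a strongly normal extension with the correct field of constants, both conclusions follow from the structural results already established.
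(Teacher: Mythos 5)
Your part (i) is correct in outline but takes a genuinely different route from the paper. The paper disposes of (i) in one line: it asserts that $\mathcal R$ is an intermediate extension of the splitting field $\mathcal L$ and invokes the (Nishioka-type) fact that intermediate fields of strongly normal extensions depend rationally on arbitrary constants. You instead verify Definition \ref{DefRationalDependence} directly: choose a copy $\mathcal M$ of the splitting field free from $\mathcal R$ over $\mathcal K$, and use the superposition identity $y=\sigma\cdot c$ with $c\in M(C_{\mathcal R\cdot\mathcal M})$ (which is indeed what the gauge argument of Lemma \ref{lmLV2} gives) to obtain $\mathcal R\cdot\mathcal M=\mathcal M\cdot C_{\mathcal R\cdot\mathcal M}$. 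This buys something real: your argument never needs $\mathcal R$ to sit inside $\mathcal L$, so it survives even when $y$ introduces new constants. The price is the existence of the free copy $\mathcal M$, which you only gesture at ("translate by a generic constant"); to make this honest one should either invoke the standard properties of the universal extension, or quotient $\mathcal R\otimes_{\mathcal K}\mathcal L$ by a minimal prime differential ideal (minimal primes are differential by Lemma \ref{LM3.8}, these being Ritt algebras) and check freeness of the two factors in the quotient. Your treatment of (ii) under hypothesis (b) is essentially the paper's own argument: the generic point is the only differential point, $\mathcal R=\mathcal M(M_{\mathcal K})$, and one checks Definition \ref{DefLieExtension} directly with $\mathfrak g$ the algebra of fundamental fields.

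The genuine gap is in (ii) under hypothesis (a), at the step "lifting the differential point $\mathfrak y_0$ along the surjection $\rho_*$ to a \emph{closed} differential point $\mathfrak x_0$". A lift of $\mathfrak y_0$ does exist (the fiber over $\mathfrak y_0$ is a nonzero differential algebra over $\kappa(\mathfrak y_0)$, hence contains a prime differential ideal), but nothing allows you to take it Kolchin closed: if you lift first and then pass to a closed point in the Kolchin closure of the lift, its image under $\rho_*$ is a Kolchin specialization $\mathfrak y_0'$ of $\mathfrak y_0$, generally different from $\mathfrak y_0$, and you only get an embedding $\kappa(\mathfrak y_0')\hookrightarrow\mathcal L$, which says nothing about $\mathcal R=\kappa(\mathfrak y_0)$. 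Moreover, hypothesis (a) cannot repair this, because it does not prevent $y$ from introducing new constants. For instance, if $\vec A$ already splits over $\mathcal K$ (extreme case $\vec A=0$), then $\mathcal L=\mathcal K$, while a solution $y$ whose constant of integration $c=\sigma^{-1}\cdot y$ is generic over $\mathcal C$ generates $\mathcal R=\mathcal K\langle y\rangle$ with $C_{\mathcal R}\supsetneq\mathcal C$; such an $\mathcal R$ embeds in no strongly normal extension of $\mathcal K$ (their constant field is $\mathcal C$), and it fails the condition $C_{\mathcal R}=\mathcal C$ required by Definition \ref{DefLieExtension}. So your assertion that (a) "rules out" the non-closed case is not substantiated, and some control on the constants of $\mathcal R$ (automatic under (b), where the generic point is the unique differential point) must enter before Theorem \ref{ThNishioka2} can be applied.

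To be fair, the paper's own proof elides exactly this point: both its argument for (i) and its appeal to Theorem \ref{ThNishioka2} in case (a) rest on the unproved assertion that $\mathcal R$ is an intermediate extension of the splitting field. You have correctly located where the real difficulty lives --- the passage from an arbitrary generating solution to an intermediate field of $\mathcal L$ --- but the closed-lift mechanism you propose does not close it.
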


\begin{proof}

  (i) $\mathcal R$ is an intermediate extension of the splitting
  field of the automorphic system which is a strongly normal
  extension. It is a stronger condition than the one of Definition
  \ref{DefRationalDependence}, thus $\mathcal R$ depends rationally
  on arbitrary constants.

  (ii) If $\mathcal K$ is algebraically closed, then the result
  comes directly from Theorem \ref{ThNishioka2}. For the case $(b)$,
  some analysis on the infinitesimal structure of $\mathcal R$
  is must be done. If the Galois group is $G$, then there are not non-trivial
  differential points in $G_{\mathcal K}$, nor in $M_{\mathcal K}$.
  Then $\mathcal R$ coincides with $\mathcal M(M_{\mathcal
  K})$, the field of meromorphic functions in $M_{\mathcal K}$.
  Fundamental vector fields of the action of
  $G$ on $M$ induce derivations of the corresponding fields of
  meromorphic functions so that we have a Lie algebra morphism,
  $$\mathcal R(G) \to \Der_{\mathcal K}(\mathcal R), \quad \vec A_i
  \mapsto \vec X_{i},$$
  and the derivation in $\partial$ in $\mathcal R$ is seen in
  $\mathcal M(\mathcal R)$ as the Lie-Vessiot system
  $$\bar \partial = \partial + \sum_{i=1}^r f_i \vec X_i.$$
  From that, we have that,
  $$[\bar \partial, \mathcal R(G)] \subset \mathcal R(G)
  \otimes_{\mathcal C} \mathcal K,$$
  and because the vector fields $\vec X_{i}$ span the tangent vector
  space to $M$, we have that the morphism,
  $$\mathcal R(G) \otimes_{\mathcal C} \mathcal R \to \Der_{\mathcal
  K}(\mathcal R)$$
  is surjective. According to Definition \ref{DefLieExtension} we conclude that $\mathcal R$
  is a Lie extension.
\end{proof}

\section{Algebraic Reduction and Integration}\label{C4}

  Here we present the algebraic theory of reduction and integration
of algebraic automorphic and Lie-Vessiot systems. Our main tool is
an algebraic version of Lie's reduction method, that we call
\emph{Lie-Kolchin} reduction. Once we have developed this tool we
explore different applications.

\subsection{Lie-Kolchin Reduction Method}

  In \cite{BMCharris}, when discussing the general topic of analytic
Lie-Vessiot systems, we have shown the Lie's method for reducing an
automorphic equation to certain subgroups, once we know certain
solution of a Lie-Vessiot associated system. This method is local,
because it is assumed that we can choose a suitable curve in the
group for the application of the algorithm. A germ of such a curve
exists, but it is not true that a suitable global curve exists in
the general case. In the algebraic realm we will find obstructions
to the applicability of this method, highly related to the structure
of principal homogeneous spaces over a non algebraically closed
field, and then to Galois cohomology.

  We will show that the application of the Lie's method in the algebraic
case leads us directly to Kolchin reduction theorem of a linear
differential system to the Lie algebra of its Galois group. Because
of this, we decided to use the nomenclature of \emph{Lie-Kolchin
reduction method}.

\subsection{Lie-Kolchin Reduction}

\emph{From now on, let us consider a differential field $\mathcal K$
of characteristic zero. The field of constant is $\mathcal C$, that
we assume to be algebraically closed. Let $G$ be an algebraic group
over $\mathcal C$, and let $\vec A$ be an algebraic automorphic
vector field in $G$ with coefficients in $\mathcal K$. We also fix a
Kolchin closed point $\mathfrak x$ of $\Diff(G_{\mathcal K},
\partial_{\vec A})$ and denote by $\mathcal L$ its associated Galois
extension.}

\begin{lemma}\label{LmPrincipalStalk}
  Let $G'\subset G$ be an algebraic subgroup, and let $M$ be the
quotient homogeneous space $G/G'$. Then:
\begin{enumerate}
\item[(a)] $M_{\mathcal K} = G_{\mathcal K}/G'_{\mathcal K}$
\item[(b)]  Let us consider the natural projection morphism $\pi_{\mathcal
K}\colon G_{\mathcal K} \to M_{\mathcal K}$. For each rational point
$x\in M_{\mathcal K}$, $\pi_{\mathcal K}^{-1}(x) \subset G_{\mathcal
K}$ is an homogeneous space of group $G'_{\mathcal K}$.
\end{enumerate}
\end{lemma}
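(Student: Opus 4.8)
The plan is to reduce both parts to two structural facts about the quotient morphism $\pi\colon G\to M=G/G'$: that it is faithfully flat, and that its fibres are exactly the right $G'$-cosets, expressed scheme-theoretically by the \emph{fundamental isomorphism}
$$\theta\colon G'\times_{\mathcal C}G \xrightarrow{\sim} G\times_{M}G,\qquad (g',\sigma)\mapsto(\sigma,\sigma g').$$
Both facts hold in the present setting. The existence of $M$ and the flatness of $\pi$ come from Chevalley's Theorem \ref{ThChevalley} in the linear case (realizing $M$ as the orbit $O_{\langle T\rangle}$ of a line in a projective tensor space, with $\pi$ the orbit map) and from Rosenlicht's theorem for a general algebraic group $G$, where the right-invariant open subset carrying the geometric quotient must be all of $G$. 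Since $\mathcal C$ has characteristic zero, $G$, $G'$, $M$ are smooth and $\pi$ is a smooth surjection, so $M$ is a \emph{universal} geometric quotient and $\pi$ is a $G'$-torsor; the pair (flatness, $\theta$) characterizes $M$ among $\mathcal C$-schemes.

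For part (a) I would base change along the faithfully flat morphism $\Spec(\mathcal K)\to\Spec(\mathcal C)$. Flatness is preserved, so $\pi_{\mathcal K}\colon G_{\mathcal K}\to M_{\mathcal K}$ is faithfully flat; and since fibre products commute with base change, $\theta$ induces an isomorphism $G'_{\mathcal K}\times_{\mathcal K}G_{\mathcal K}\xrightarrow{\sim}G_{\mathcal K}\times_{M_{\mathcal K}}G_{\mathcal K}$. These two properties are exactly the characterization of $M_{\mathcal K}$ as the quotient of $G_{\mathcal K}$ by the right action of $G'_{\mathcal K}$. As that quotient $G_{\mathcal K}/G'_{\mathcal K}$ also exists over $\mathcal K$ (Chevalley or Rosenlicht again) and geometric quotients are unique, we conclude $M_{\mathcal K}=G_{\mathcal K}/G'_{\mathcal K}$. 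In the linear case one may instead argue concretely: the stabilizer equations for $\langle T\rangle$ have coefficients in $\mathcal C$, so the stabilizer of $\langle T\otimes 1\rangle$ in $G_{\mathcal K}$ is $G'_{\mathcal K}$, and the orbit $O_{\langle T\otimes 1\rangle}$ is the base change $(O_{\langle T\rangle})_{\mathcal K}$ because the image of the orbit map commutes with the field extension.

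For part (b), fix a rational point $x\in M_{\mathcal K}$ and set $F=\pi_{\mathcal K}^{-1}(x)$; faithful flatness of $\pi_{\mathcal K}$ makes $F$ a nonempty closed subscheme of $G_{\mathcal K}$. The right action $a'\colon G'_{\mathcal K}\times_{\mathcal K}G_{\mathcal K}\to G_{\mathcal K}$ satisfies $\pi_{\mathcal K}\circ a'=\pi_{\mathcal K}\circ\mathrm{pr}_2$, hence preserves $F$ and restricts to $a'\colon G'_{\mathcal K}\times_{\mathcal K}F\to F$. Pulling the isomorphism $\theta_{\mathcal K}$ of part (a) back along $x$ identifies $F\times_{\mathcal K}F$ (the fibre of $G_{\mathcal K}\times_{M_{\mathcal K}}G_{\mathcal K}$ over $x$) with $G'_{\mathcal K}\times_{\mathcal K}F$; this says precisely that $(a'\times\mathrm{Id})\colon G'_{\mathcal K}\times_{\mathcal K}F\to F\times_{\mathcal K}F$ is an isomorphism. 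Thus $F$ is a principal homogeneous $G'_{\mathcal K}$-space, in particular a homogeneous $G'_{\mathcal K}$-space as claimed.

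The step I expect to be the main obstacle is the heart of part (a): showing that the formation of the quotient commutes with the extension $\mathcal C\subset\mathcal K$. For nonlinear $G$ one cannot invoke Chevalley and must rely on the universality of Rosenlicht's geometric quotient (equivalently, the torsor property of $\pi$), taking care that base change preserves reducedness and the fibre structure. I would also emphasize, in line with the paper's Galois-cohomological theme, that a rational point $x\in M_{\mathcal K}$ need \emph{not} lift to a rational point of $G_{\mathcal K}$; accordingly $F$ is a homogeneous $G'_{\mathcal K}$-space that may fail to be isomorphic to $G'_{\mathcal K}$, its class being an element of $H^1(G'_{\mathcal K},\mathcal K)$.
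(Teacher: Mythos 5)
Your proposal is correct, but it takes a more descent-theoretic route than the paper. For (a) the paper disposes of the claim in one line: since $\mathcal C$ is algebraically closed the geometric quotient $G/G'$ is \emph{universal}, and commuting with the base change $\mathcal C\subset\mathcal K$ is exactly the defining property of a universal quotient (citing \cite{Sa}); you instead re-derive this universality from faithful flatness of $\pi$ together with the torsor isomorphism $\theta$, which is the same fact unpacked into a self-contained descent argument. The more interesting divergence is in (b). The paper works with the \emph{left} action of the isotropy subgroup $H_x\subset G_{\mathcal K}$ of the rational point $x$ (asserting $H_x$ is conjugate to $G'_{\mathcal K}$), and shows the action map on the fibre is an isomorphism because it is the restriction of the shear automorphism $(\tau,\sigma)\mapsto(\tau\sigma,\sigma)$ of $G_{\mathcal K}\times_{\mathcal K}G_{\mathcal K}$. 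You work with the \emph{right} action of $G'_{\mathcal K}$ itself and obtain the principal-homogeneous-space structure by pulling back $\theta_{\mathcal K}$ along $x$. Your version matches the statement more literally (the group acting is $G'_{\mathcal K}$, not a conjugate of it) and quietly sidesteps a delicate point in the paper's formulation: when the fibre has no $\mathcal K$-rational point, the conjugating element is not rational, so $H_x$ is a priori only a form of $G'_{\mathcal K}$ rather than a $\mathcal K$-conjugate. Your closing observation — that the fibre is a principal homogeneous $G'_{\mathcal K}$-space whose class lives in $H^1(G'_{\mathcal K},\mathcal K)$ and need not be trivial — is precisely how the lemma is exploited later (Theorem \ref{C4THE4.1.5} and Theorem \ref{C4THE4.1.10}), so the two proofs feed into the rest of the paper equally well.
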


\begin{proof} (a) $\mathcal C$ is algebraically closed, and then the geometric
quotient is universal; (a) is the fundamental property of geometric
universal quotients (see \cite{Sa}). (b) The isotropy subgroup $H_x$
of $x$ is certain algebraic subgroup isomorphic and conjugated with
$G'_\mathcal K$. The action of $(H_x)_{\mathcal K}$ on $G$ preserves
the stalk $\pi_{\mathcal K}^{-1}(x)$,
$$\psi\colon (H_x)_{\mathcal K} \times_{\mathcal K} \pi_{\mathcal K}^{-1}(x) \to \pi_{\mathcal
K}^{-1}(x),$$ the induced morphism
$$(\psi\times Id) \colon (H_x)_{\mathcal K} \times_{\mathcal K} \pi_{\mathcal K}^{-1}(x) \to \pi_{\mathcal
K}^{-1}(x)\times_{\mathcal K} \pi_{\mathcal K}^{-1}(x)$$ is the
restriction of the isomorphism
$$G_{\mathcal K} \times_{\mathcal K} G_{\mathcal K} \to G_{\mathcal
K} \times_{\mathcal K} G_{\mathcal K}, \quad (\tau,\sigma) \mapsto
(\tau\cdot\sigma, \sigma),$$ and then it is an isomorphism.
\end{proof}

 Let $M$ be an homogenous space over $G$, and $\vec X$ the Lie-Vessiot
vector field induced in $M$ by the automorphic vector field $\vec
A$. Let us fix a rational point $x_0$ of $M$ and denote by $H_{x_0}$
the isotropy subgroup at $x_0$.

\begin{lemma}\label{lmSolutionx0}
  Assume that $x_0\in M$ is a constant solution of $(M_{\mathcal K},
\partial _{\vec X})$. Then: $$\vec A \in \mathcal
R(H_{x_0})\otimes_{\mathcal C} \mathcal K.$$
\end{lemma}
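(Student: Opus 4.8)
The plan is to translate the hypothesis that the constant point $x_0$ is a solution into the vanishing of the Lie-Vessiot vector field $\vec X$ at $x_0$, then to identify the value of $\vec X$ at $x_0$ with the image of $\vec A$ under the differential of the orbit map, whose kernel is precisely $\mathcal R(H_{x_0})$.

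First I would unwind the meaning of a constant solution. The rational point $x_0\in M(\mathcal C)$ determines the valuation morphism $x_0^\sharp\colon \mathcal O_{M,x_0}\to\mathcal C$, and after base change a morphism $\mathcal K\otimes_{\mathcal C}\mathcal O_{M,x_0}\to\mathcal K$. By the definition of solution, $x_0$ being a solution of $(M_{\mathcal K},\partial_{\vec X})$ means that $x_0^\sharp$ commutes with the derivations, i.e. $\partial\circ x_0^\sharp = x_0^\sharp\circ\partial_{\vec X}$. Writing $\vec X=\sum_{i=1}^s f_i\vec X_i$ and using the explicit formula for $\partial_{\vec X}$ on $a\otimes f$, the contribution of $\partial a\otimes f$ agrees on both sides, while the value $f(x_0)$ lies in $\mathcal C$ and hence has vanishing derivative. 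The commutation is therefore equivalent to $\sum_{i=1}^s f_i(\vec X_i)_{x_0}=0$ in $T_{x_0}(M)\otimes_{\mathcal C}\mathcal K$; that is, $\vec X_{x_0}=0$.

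Next I would relate this vanishing to $\vec A$. Write $\vec A=\sum_{i=1}^s f_i\vec A_i$ with $\vec A_i\in\mathcal R(G)$ and $\vec X_i=\vec A_i^M$ the corresponding fundamental fields (Definition \ref{DFfundamentalfield}). Consider the orbit morphism $a_{x_0}\colon G\to M$, $\sigma\mapsto\sigma\cdot x_0$. Since $\vec A_i^M$ is by construction the projection of $\vec A_i\otimes 1$ along the action, evaluating at the point $(e,x_0)$ shows $(\vec A_i^M)_{x_0}=(da_{x_0})_e((\vec A_i)_e)$ under the identification $\mathcal R(G)\cong T_e G$. Consequently $\vec X_{x_0}$ is exactly the image of $\vec A$ under the $\mathcal K$-linear extension $(da_{x_0})_e\otimes\mathrm{id}_{\mathcal K}\colon \mathcal R(G)\otimes_{\mathcal C}\mathcal K\to T_{x_0}(M)\otimes_{\mathcal C}\mathcal K$.

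Finally I would identify the kernel. The fiber $a_{x_0}^{-1}(x_0)$ is the isotropy subgroup $H_{x_0}$, and in characteristic zero the orbit map factoring through $O_{x_0}\simeq G/H_{x_0}$ is smooth, so $(da_{x_0})_e$ has kernel exactly $\mathcal R(H_{x_0})$. Because $\mathcal C$ is a field, $-\otimes_{\mathcal C}\mathcal K$ is exact, and the kernel of $(da_{x_0})_e\otimes\mathrm{id}_{\mathcal K}$ is $\mathcal R(H_{x_0})\otimes_{\mathcal C}\mathcal K$. Combining with the previous steps, $\vec X_{x_0}=0$ holds if and only if $\vec A\in\mathcal R(H_{x_0})\otimes_{\mathcal C}\mathcal K$, which is the claim. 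The main obstacle is the clean identification of the kernel of the orbit-map differential with the isotropy Lie algebra $\mathcal R(H_{x_0})$; this is precisely where the characteristic zero hypothesis (smoothness and separability of the orbit map) is essential and must be invoked carefully, after which the scalar extension to $\mathcal K$ is routine.
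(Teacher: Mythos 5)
Your proof is correct, but it follows a genuinely different route from the paper's. The paper argues globally and Galois-theoretically: since the system splits over the Galois extension $\mathcal L$ and $G(\mathcal C)$ acts transitively on $M(\mathcal C)$, there is a solution $\tau\in G(\mathcal L)$ of the automorphic equation with $\tau\cdot x_0=x_0$; hence $\tau$ lies in $(H_{x_0})_{\mathcal L}$, so $l\partial(\tau)\in\mathcal R(H_{x_0})\otimes_{\mathcal C}\mathcal L$, and since $l\partial(\tau)=\vec A$ already lies in $\mathcal R(G)\otimes_{\mathcal C}\mathcal K$, intersecting gives $\vec A\in\mathcal R(H_{x_0})\otimes_{\mathcal C}\mathcal K$. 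You instead argue infinitesimally at the point $x_0$: constancy of the solution is equivalent to $\vec X_{x_0}=0$, the evaluation $\vec A\mapsto\vec X_{x_0}$ factors through the differential of the orbit map $a_{x_0}\colon G\to M$ at $e$, and in characteristic zero the kernel of $(da_{x_0})_e$ is exactly $\mathcal R(H_{x_0})$, which survives the flat base change $-\otimes_{\mathcal C}\mathcal K$. Your approach buys self-containedness and a slightly stronger conclusion: it never invokes the splitting field, the fundamental solution, or the logarithmic derivative, and it establishes the equivalence ($x_0$ is a constant solution if and only if $\vec A\in\mathcal R(H_{x_0})\otimes_{\mathcal C}\mathcal K$), not just the forward implication; the price is that you must carefully invoke smoothness/separability of the orbit map, as you note. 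The paper's approach buys brevity given the machinery already in place, and it sidesteps the tangent-space computation entirely; on the other hand, as written it is terse about why the solution $\tau$ fixing $x_0$ exists (this requires the splitting over $\mathcal L$ plus transitivity), a point your argument avoids altogether.
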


\begin{proof}
  There is a solution $\tau$ of $\vec A$ with coefficients in $\mathcal L$ such
that $x_0 = \tau\cdot x_0$. Therefore $\tau \in (H_{x_0})_{\mathcal
L}$ and its logarithmic derivative is an automorphic vector field in
$H_{x_0}$,
$$\l\partial(\tau) \in \mathcal R(H_{x_0}) \otimes_{\mathcal C}
\mathcal L.$$ Taking into account that $l\partial(\tau) = \vec A$,
we obtain $\vec A \in \mathcal R(H_{x_0})\otimes_{\mathcal
C}\mathcal K.$
\end{proof}

\begin{theorem}[Main Result]\label{C4THE4.1.5}
  Let us assume that $(M_{\mathcal K}, \partial_{\vec X})$ has a
solution $x$ with coefficients in $\mathcal K$. If $H^1(H_{x_0},
\mathcal K)$ is trivial, then there exists a gauge transformation
$L_{\tau}$ of $G_{\mathcal K}$ that sends the automorphic vector
field $\vec A$ to:
$$\vec B = \Adj_{\tau}(\vec A) + l\partial(\tau),$$
with $\vec B \in \mathcal R(H_{x_0}) \otimes_{\mathcal C} \mathcal
K$ an automorphic vector field in $H_{x_0}$.
\end{theorem}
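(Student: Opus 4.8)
The plan is to produce the required $\tau\in G(\mathcal K)$ as a rational point of a suitable principal homogeneous space and then to read off the conclusion from Lemma \ref{lmSolutionx0}. Recall that a gauge transformation $L_\tau$ carries the automorphic field $\vec A$ to $\Adj_\tau(\vec A)+l\partial(\tau)$: if $\sigma$ is a solution of the automorphic equation $l\partial(\sigma)=\vec A$, then $l\partial(\tau\sigma)=l\partial(\tau)+\Adj_\tau(l\partial(\sigma))$ by the additive cocycle property of the logarithmic derivative. Hence it suffices to exhibit $\tau\in G(\mathcal K)$ with $\tau\cdot x=x_0$; the induced action of $\tau$ on $M$ then sends the given $\mathcal K$-solution $x$ of $(M_{\mathcal K},\partial_{\vec X})$ to the constant point $x_0$, which therefore becomes a constant solution of the transformed system, and Lemma \ref{lmSolutionx0} yields $\vec B=\Adj_\tau(\vec A)+l\partial(\tau)\in\mathcal R(H_{x_0})\otimes_{\mathcal C}\mathcal K$.

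The heart of the argument, and the main obstacle, is the existence of such a $\tau$ \emph{over $\mathcal K$}. First I would use that $M$ is a homogeneous $G$-space with the rational point $x_0$, so that the orbit map identifies $M\simeq G/H_{x_0}$ over $\mathcal C$. Writing $\pi\colon G\to M=G/H_{x_0}$ for the projection and base-changing to $\mathcal K$, the given $\mathcal K$-solution is a rational point $x\in M(\mathcal K)$, and by Lemma \ref{LmPrincipalStalk} its fibre $\pi_{\mathcal K}^{-1}(x)\subset G_{\mathcal K}$ is a principal homogeneous space over $(H_{x_0})_{\mathcal K}$. A rational point $\tau'\in\pi_{\mathcal K}^{-1}(x)(\mathcal K)$ is exactly an element with $\tau'\cdot x_0=x$; its inverse $\tau=(\tau')^{-1}$ then satisfies $\tau\cdot x=x_0$, as required.

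It remains to guarantee that the torsor $\pi_{\mathcal K}^{-1}(x)$ has a $\mathcal K$-point. Since $\mathcal K$ has characteristic zero it is perfect, so the classification of principal homogeneous spaces by the first Galois cohomology set applies verbatim over $\mathcal K$: the torsor $\pi_{\mathcal K}^{-1}(x)$ determines a class $[\pi_{\mathcal K}^{-1}(x)]\in H^1(H_{x_0},\mathcal K)$, and this class is the distinguished point precisely when the torsor is isomorphic to $(H_{x_0})_{\mathcal K}$, hence admits a rational point. Under the hypothesis that $H^1(H_{x_0},\mathcal K)$ is trivial every such class is distinguished, so $\tau'$ exists. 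Feeding $\tau=(\tau')^{-1}$ into the gauge transformation $L_\tau$ and invoking Lemma \ref{lmSolutionx0} for the transformed Lie-Vessiot system on $M$ (whose constant solution is now $x_0$) gives $\vec B\in\mathcal R(H_{x_0})\otimes_{\mathcal C}\mathcal K$, completing the reduction. The one point requiring care is that $L_\tau$ is not a morphism of schemes with derivation but merely conjugates $\partial_{\vec A}$ into $\partial_{\vec B}$; one must check that it maps solutions of the original system bijectively onto solutions of the transformed one, which is what legitimizes transporting $x$ to the constant solution $x_0$.
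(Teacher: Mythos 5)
Your proposal is correct and takes essentially the same route as the paper's own proof: both identify $M\simeq G/H_{x_0}$, apply Lemma \ref{LmPrincipalStalk} (b) to view the fibre $\pi_{\mathcal K}^{-1}(x)$ as a principal homogeneous space, invoke the triviality of $H^1(H_{x_0},\mathcal K)$ to produce a rational point $\tau_1$ with $\tau_1\cdot x_0=x$, set $\tau=\tau_1^{-1}$, and conclude via Lemma \ref{lmSolutionx0} applied to the transformed system for which $x_0$ is a constant solution. The only (harmless) difference is that you phrase the fibre as a torsor under the right action of $(H_{x_0})_{\mathcal K}$, which in fact matches the cohomological hypothesis slightly more directly than the paper's use of the conjugate isotropy group $H_x$.
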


\begin{proof}
  Let us consider the canonical isomorphism $G/H_{x_0}\to M$ that sends the class $[\sigma]$ to
$\sigma\cdot x_0$. Now, let us consider the base extended morphism,
  $$\pi\colon G_{\mathcal K} \to M_{\mathcal
K}, \qquad \tau \mapsto \tau\cdot x_0.$$ We are under the hypothesis
of Lemma \ref{LmPrincipalStalk} (b). Therefore the stalk
$\pi^{-1}(x)$ is a principal homogeneous space of group
$(H_{\mathcal K})_x$ which is a subgroup of $G_{\mathcal K}$
conjugated to $(H_{x_0})_{\mathcal K}$. Because of the vanishing of
the Galois cohomology, there exist a rational point $\tau_1\in
\pi^{-1}(x)$, and then $\tau_1\cdot x_0 = x$. Define $\tau =
\tau_1^{-1}$. Let us consider the gauge transformation,
$$L_{\tau}\colon (G_{\mathcal K},\partial_{\vec A}) \to (G_{\mathcal
K}, \partial_{\vec B}) \quad\quad L_\tau\colon (M_{\mathcal
K},\partial_{\vec X}) \to (M_\mathcal K, \partial_{\vec Y}),$$ where
$\vec Y$ is the Lie-Vessiot vector field in $M$ induced by $\vec B$.
We have that $\tau\cdot x = x_0$ is a constant solution of
$(M_{\mathcal K},
\partial_{\vec Y})$. By Lemma \ref{lmSolutionx0},
$\vec B$ is an automorphic field in $H_{x_0}$.
\end{proof}

\begin{proposition}\label{PrpRationalX}
Assume that there is a rational point $x_0\in M$ such that
$\Gal_{\mathfrak x}(G_{\mathcal K},\partial_{\vec A})\subset
H_{x_0}$, then there exists a rational solution $x\in M(\mathcal K)$
of $\vec X$.
\end{proposition}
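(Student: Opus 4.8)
The plan is to produce the rational solution explicitly as $x = \sigma\cdot x_0$, where $\sigma\in G(\mathcal L)$ is the fundamental solution associated to $\mathfrak x$, and then to show that, under the hypothesis on the Galois group, this a priori $\mathcal L$-point is in fact defined over $\mathcal K$. First I would recall, exactly as in Lemma \ref{lmLV2} but now over the splitting field $\mathcal L$, that the fundamental solution satisfies $l\partial(\sigma) = \vec A$, so that the gauge transformation $L_\sigma$ is a splitting isomorphism which carries the constant solution $x_0 \in M(\mathcal C)$ to a solution $x := \sigma\cdot x_0 \in M(\mathcal L)$ of $(M_{\mathcal L}, \partial_{\vec X})$. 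It then remains only to descend $x$ to $\mathcal K$.

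For the descent I would use the identification of $\Gal_{\mathfrak x}(G_{\mathcal K}, \partial_{\vec A})$ with $\Aut(\mathcal L/\mathcal K)$ established in Theorem \ref{C3THEauto}, under which each $\tau \in \Gal_{\mathfrak x}$ acts on $\mathcal L$ as the automorphism $s(\tau) = R_\tau^\sharp$. The computation in the proof of that theorem, namely $R_\tau(\sigma) = L_\sigma(\bar\tau)$, is precisely the statement that the fundamental solution transforms by right translation, $s(\tau)(\sigma) = \sigma\cdot\tau$. Since the action morphism $G\times_{\mathcal C} M \to M$ is defined over the constants and $x_0 \in M(\mathcal C)$ is fixed by $s(\tau)$, I would then compute
$$s(\tau)(x) = s(\tau)(\sigma)\cdot s(\tau)(x_0) = (\sigma\cdot\tau)\cdot x_0 = \sigma\cdot(\tau\cdot x_0).$$
The hypothesis $\Gal_{\mathfrak x}(G_{\mathcal K}, \partial_{\vec A}) \subset H_{x_0}$ gives $\tau\cdot x_0 = x_0$ for every such $\tau$, whence $s(\tau)(x) = x$; thus $x$ is fixed by the whole group $\Aut(\mathcal L/\mathcal K)$.

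Finally I would invoke strong normality of $\mathcal K\subset\mathcal L$ (Theorem \ref{C3THEsne}) together with the Galois correspondence (Theorem \ref{ThGaloisCorrespondence}), which yields $\mathcal L^{\Aut(\mathcal L/\mathcal K)} = \mathcal K$. Working in an affine neighbourhood of the image point of $x\colon \Spec(\mathcal L)\to M$ (such a neighbourhood exists because $M \simeq G/H_{x_0}$ is quasiprojective by Chevalley's theorem, Theorem \ref{ThChevalley}), the invariance of $x$ forces the corresponding coordinate homomorphism into $\mathcal L$ to land in the fixed field $\mathcal K$; hence $x$ factors through $\Spec(\mathcal K)$ and $x \in M(\mathcal K)$. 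Being a solution over $\mathcal L$ whose coordinates lie in $\mathcal K$, this $x$ is the desired rational solution of $\vec X$.

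The step I expect to be the main obstacle is pinning down the transformation rule $s(\tau)(\sigma) = \sigma\cdot\tau$ with the correct side, since the whole argument hinges on a \emph{right} translation appearing there, so that membership in the isotropy subgroup $H_{x_0}$ of the \emph{left} $G$-action on $M$ can be brought to bear. This must be read off carefully from the commutative diagrams in the proof of Theorem \ref{C3THEauto}, where $R_\tau$ is right translation and $\Gal_{\mathfrak x}$ is the isotropy group for the right action of $G$ on the differential points of $(G_{\mathcal K},\partial_{\vec A})$.
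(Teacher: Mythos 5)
Your proposal is correct and follows essentially the same route as the paper's own proof: both construct the candidate solution as $x=\sigma\cdot x_0$ from the fundamental solution, both show its coordinates are fixed by every Galois automorphism (your computation $s(\tau)(x)=(\sigma\cdot\tau)\cdot x_0=\sigma\cdot(\tau\cdot x_0)=x$ is exactly the point-level version of the paper's diagram chase giving $x^\sharp(f)=R_\tau^\sharp(x^\sharp(f))$, resting on the same identity $R_\tau(\sigma)=\sigma\cdot\tau$ from Theorem \ref{C3THEauto}), and both conclude via the Galois correspondence that the coordinates lie in $\mathcal K$. The only differences are cosmetic: you make explicit that $x$ is a solution (the paper leaves this implicit in the equivariance of the orbit map), and you invoke quasiprojectivity for an affine neighbourhood where the paper simply works with the local ring $\mathcal O_{M_{\mathcal K},\bar x}$.
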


\begin{proof}
  Let us consider the fundamental solution $\sigma$ associated to
$\mathfrak x$. We consider it as an $\mathcal L$-point of $G$,
$$\sigma\colon \Spec(\mathcal L)\to G_{\mathcal K}.$$
It is determined by the canonical morphism of \emph{taking values in
$\sigma$},
$$\sigma^\sharp \colon \mathcal O_{G_{\mathcal K},\mathfrak x} \to \mathcal L = \kappa(\mathfrak x).$$

  Now, let us consider the projection $\pi\colon G\to M$, $\tau\mapsto
\tau\cdot x_0$. It induces a morphism $\pi\colon G_\mathcal
K(\mathcal L) \to M_{\mathcal K}(\mathcal L)$. Let us consider $x =
\pi(\sigma)$.  This point $x$ is an $\mathcal L$ point of $M$ and
then it is a morphism
$$x\colon \Spec(\mathcal L) \to M_{\mathcal K}.$$
Let $\bar x\in M_{\mathcal K}$ be the image of $x$; then $x$ is
determined by the morphism $x^\sharp$ defined by the following
composition:
$$\xymatrix{\mathcal O_{M_{\mathcal K},\bar x} \ar[r]^-{\pi^\sharp}\ar[rrd]_-{x^\sharp} &
\mathcal O_{G_{\mathcal K},\mathfrak x}\ar[rd]^-{\sigma^\sharp} \\ &
& \mathcal L}$$

We are going to prove that $x$ is a rational point of $M_\mathcal
K$. Let us consider $\tau \in \Gal_{\mathfrak x}(G_{\mathcal
K},\partial_{\vec A})$. Therefore we have $R_{\tau}(x) = x$, and the
following diagram is commutative:
$$\xymatrix{\mathcal  O_{M_{\mathcal K},\bar x} \ar[rrrd]^-{x^\sharp}\ar[rd]\ar[rddd]_-{x^\sharp}\\
& \mathcal O_{G_{\mathcal K},\mathfrak x}\ar[rr]_-{\sigma^{\sharp}}\ar[dd]^-{(\sigma\tau)^\sharp} & & \mathcal L \ar[lldd]^-{R_\tau^\sharp} \\
& &\\
&  \mathcal L }$$ For each $f\in \mathcal O_{X_{\mathcal K},\bar
x}$, we have $x^\sharp(f) = R_\tau^\sharp(x^\sharp(f))$. This
equality holds for all $\tau\in H_{x_0}$. Hence,  $x^\sharp(f)$ an
element of $\mathcal L$ that is invariant for any differential
$\mathcal K$-algebra automorphism of $\mathcal L$. In virtue of the
Galois correspondence the fixed field of $\mathcal L$ by the action
of ${\Gal_{\mathfrak x}(G_{\mathcal K},\partial_{\vec A})}$ is
$\mathcal K$ 
. Thus,
$x^\sharp(f) \in \mathcal K$.
\end{proof}

\begin{theorem}\label{ThKolchinH}
Let us consider an algebraic subgroup $G'$ of $G$ verifying:
\begin{itemize}
\item[(1)] $\Gal_{\mathfrak x}(G_{\mathcal K}, \partial_{\vec A} ) \subset
G'$,
\item[(2)] $H^1(H,\mathcal K)$ is trivial.
\end{itemize}
Then there exist a gauge isomorphism $L_{\tau}$ of $G$ with
coefficients in $\mathcal K$ reducing the automorphic system $\vec
A$ to an automorphic system in $H$,
$$\vec B = \Adj_{\tau}(\vec A) + l\partial(\tau),$$
belongs to $\mathcal R(G')\otimes_{\mathcal C} \mathcal K$.
\end{theorem}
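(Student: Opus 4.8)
The plan is to realize the subgroup $G'$ as the isotropy group of a rational point in a suitable homogeneous space, and then to invoke the two preceding results --- Proposition \ref{PrpRationalX} and Theorem \ref{C4THE4.1.5} --- in sequence (here I read the subgroup $H$ of hypothesis (2) and the conclusion as the subgroup $G'$ of hypothesis (1); they denote the same object). Concretely, I would first form the quotient homogeneous space $M = G/G'$, which exists as a $\mathcal C$-algebraic variety because $\mathcal C$ is algebraically closed (Chevalley's Theorem \ref{ThChevalley}, realizing $G/G'$ as an orbit in a projectivized tensor space, or the Rosenlicht quotient). The class $x_0 = [e]$ of the identity is a $\mathcal C$-rational point of $M$, and by the defining property of the quotient its isotropy subgroup is exactly $H_{x_0} = G'$. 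The automorphic vector field $\vec A \in \mathcal R(G)\otimes_{\mathcal C}\mathcal K$ projects, through the canonical isomorphism $\mathcal R(G)\to\mathcal R(G,M)$, to a Lie-Vessiot vector field $\vec X$ on $M$, so that $(M_{\mathcal K},\partial_{\vec X})$ is precisely the Lie-Vessiot system whose associated automorphic system is $(G_{\mathcal K},\partial_{\vec A})$.

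Second, hypothesis (1) reads $\Gal_{\mathfrak x}(G_{\mathcal K},\partial_{\vec A})\subset G' = H_{x_0}$, which is exactly the hypothesis of Proposition \ref{PrpRationalX}. That proposition then yields a rational solution $x\in M(\mathcal K)$ of $\vec X$; the mechanism is that every value $x^\sharp(f)$ is fixed by the whole Galois group and hence lies in $\mathcal K$ by the Galois correspondence.

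Third, I would apply the Main Result, Theorem \ref{C4THE4.1.5}, to the system $(M_{\mathcal K},\partial_{\vec X})$. We now have a solution $x$ with coefficients in $\mathcal K$, and by hypothesis (2) the Galois cohomology set $H^1(H_{x_0},\mathcal K) = H^1(G',\mathcal K)$ is trivial. Theorem \ref{C4THE4.1.5} therefore produces a gauge transformation $L_\tau$ of $G_{\mathcal K}$ sending $\vec A$ to
$$\vec B = \Adj_\tau(\vec A) + l\partial(\tau),$$
with $\vec B\in\mathcal R(H_{x_0})\otimes_{\mathcal C}\mathcal K = \mathcal R(G')\otimes_{\mathcal C}\mathcal K$, which is exactly the asserted reduction.

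The argument is thus essentially a synthesis of the two previous theorems, and there is no genuinely new computation. The only point requiring care --- and the main (mild) obstacle --- is that the hypotheses transfer correctly under the passage to $M = G/G'$: one must check that $x_0 = [e]$ is $\mathcal C$-rational with isotropy exactly $G'$, and that the $\vec X$ obtained by projecting $\vec A$ is precisely the Lie-Vessiot field to which Proposition \ref{PrpRationalX} and Theorem \ref{C4THE4.1.5} apply. Once these identifications are in place, the cohomological hypothesis (2) is exactly what guarantees, via Lemma \ref{LmPrincipalStalk}, the existence of the rational point in the principal homogeneous stalk $\pi^{-1}(x)$ that underlies the reducing translation $\tau$, and the conclusion follows.
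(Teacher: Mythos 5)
Your proposal is correct and follows exactly the paper's own argument: the paper's proof likewise takes $M=G/G'$ with base point $x_0=[e]$, invokes Proposition \ref{PrpRationalX} (via hypothesis (1)) to produce a rational solution in $M(\mathcal K)$, and then concludes by Theorem \ref{C4THE4.1.5} (via hypothesis (2)). Your additional care about the quotient's existence, the rationality of $[e]$, and the identification of the $H$ of the statement with $G'$ only makes explicit what the paper leaves implicit.
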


\begin{proof}
  By Proposition \ref{PrpRationalX} there exists a rational
solution of the Lie-Vessiot system in $M$ associated to $\vec A$.
Theorem \ref{C4THE4.1.5} says that such a reduction exists.
\end{proof}

Denote by $\Gal_{\mathfrak x}^0(G_{\mathcal K},\partial_{\vec A})$
the connected component of the identity of the Galois group
$\Gal_{\mathfrak x}(G_{\mathcal K},\partial_{\vec A})$.

\begin{corollary}
  Let ${\mathcal K^\circ}$ be the
relatively algebraic closure of $\mathcal K$ in $\mathcal L$. Assume
that 
$H^1(\Gal^0_{\mathfrak x}(G_{\mathcal K},\partial_{\vec
A}),{\mathcal K^\circ})$ is trivial. Then there is a gauge
transformation $L_\tau$, $\tau$ with coefficients in ${\mathcal
K^\circ}$ such that
$$\vec B = \Adj_{\tau}(\vec A) + l\partial(\tau)$$
belongs to $\mathcal R(\Gal^0_{\mathfrak x}(G_{\mathcal
K},\partial_{\vec A}))\otimes_{\mathcal C}{\mathcal K^\circ}$.
\end{corollary}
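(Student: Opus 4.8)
The plan is to deduce this corollary directly from Theorem \ref{ThKolchinH}, applied not over $\mathcal K$ but over the intermediate field $\mathcal K^\circ$. The entire apparatus of the chapter---automorphic vector fields, gauge transformations, logarithmic derivatives and Galois groups---needs only a differential base field of characteristic zero whose constant field is algebraically closed, so I would first check that $\mathcal K^\circ$ is such a field. Since $\mathcal K^\circ\subset\mathcal L$ and $\mathcal L$ is an intermediate field of a strongly normal extension, we have $C_{\mathcal L}=\mathcal C$, whence $C_{\mathcal K^\circ}\subseteq C_{\mathcal L}=\mathcal C\subseteq\mathcal K^\circ$ and therefore $C_{\mathcal K^\circ}=\mathcal C$, still algebraically closed. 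Thus $\vec A$, viewed through the inclusion $\mathcal R(G)\otimes_{\mathcal C}\mathcal K\subset\mathcal R(G)\otimes_{\mathcal C}\mathcal K^\circ$, is an algebraic automorphic vector field over $\mathcal K^\circ$. By Lemma \ref{lmFSBaseChange} (with $\mathcal F=\mathcal K^\circ$) the base-changed Kolchin closed point $\mathfrak y\in\Diff(G_{\mathcal K^\circ},\partial_{\vec A})$ satisfies $\kappa(\mathfrak y)=\mathcal L$, so $\mathcal L$ remains the Galois extension of the system after passing to $\mathcal K^\circ$.

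Next I would identify the relevant subgroup. By the properties listed immediately after Proposition \ref{Prop318}---in particular that $\mathcal K^\circ=\mathcal L^{\Gal^0_{\mathfrak x}(G_{\mathcal K},\partial_{\vec A})}$ and that $\Gal_{\mathfrak y}(G_{\mathcal K^\circ},\partial_{\vec A})=\Gal^0_{\mathfrak x}(G_{\mathcal K},\partial_{\vec A})$---the Galois group of the system over $\mathcal K^\circ$ at the point $\mathfrak y$ is exactly $G':=\Gal^0_{\mathfrak x}(G_{\mathcal K},\partial_{\vec A})$, the connected component of the identity. I would then apply Theorem \ref{ThKolchinH} with base field $\mathcal K^\circ$, closed point $\mathfrak y$, and subgroup $G'$. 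Its two hypotheses are met: condition (1), $\Gal_{\mathfrak y}(G_{\mathcal K^\circ},\partial_{\vec A})\subset G'$, holds with equality by the identification just quoted; condition (2), the triviality of $H^1(G',\mathcal K^\circ)=H^1(\Gal^0_{\mathfrak x}(G_{\mathcal K},\partial_{\vec A}),\mathcal K^\circ)$, is precisely the standing hypothesis of the corollary. Theorem \ref{ThKolchinH} then furnishes a gauge transformation $L_\tau$ with $\tau$ having coefficients in $\mathcal K^\circ$ and with $\vec B=\Adj_\tau(\vec A)+l\partial(\tau)\in\mathcal R(G')\otimes_{\mathcal C}\mathcal K^\circ$, which is exactly the asserted reduction.

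There is little genuinely new to carry out: the corollary is a base-change instance of the main reduction theorem. The point that deserves care, and which I regard as the crux, is the legitimacy of the base change itself---that Theorem \ref{ThKolchinH} and all the notions feeding into it are faithfully inherited by the enlarged field $\mathcal K^\circ$, and above all that the Galois group at $\mathfrak y$ over $\mathcal K^\circ$ really shrinks to the connected component $G'$. This rests on the functoriality of the constructions in the base field together with the explicit computation of $\Gal_{\mathfrak y}(G_{\mathcal K^\circ},\partial_{\vec A})$ recorded after Proposition \ref{Prop318}; once those are invoked, the hypotheses of Theorem \ref{ThKolchinH} translate verbatim and the conclusion follows. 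The secondary bookkeeping---that $\mathcal K^\circ$ retains the constant field $\mathcal C$, so that $G'$ is still an algebraic group over $\mathcal C$ and the output lands in $\mathcal R(G')\otimes_{\mathcal C}\mathcal K^\circ$---was dispatched above.
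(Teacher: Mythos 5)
Your proof is correct and takes essentially the same route as the paper's: identify the Galois group of the automorphic system over $\mathcal K^\circ$ with the connected component $\Gal^0_{\mathfrak x}(G_{\mathcal K},\partial_{\vec A})$ (the paper cites an external remark for this, while you invoke property (c) listed after Proposition \ref{Prop318} and Lemma \ref{lmFSBaseChange}, which is the same fact), and then apply Theorem \ref{ThKolchinH} over the base field $\mathcal K^\circ$. Your extra bookkeeping---that $C_{\mathcal K^\circ}=\mathcal C$ and that $\mathcal L$ remains the Galois extension after the base change---is left implicit in the paper but is correctly dispatched.
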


\begin{proof}
  We know that the Galois group of the automorphic system with coefficients in
${\mathcal K^\circ}$ is precisely $\Gal^0_{\mathfrak x}(G_{\mathcal
K},\partial_{\vec A})$ (see, for instance, remark (c)  in \cite{BM}, below Proposition 18). We apply then Theorem \ref{ThKolchinH}.
\end{proof}

\begin{corollary}\label{LmConnectedH}
 If $H^1(\Gal_{\mathfrak
x}(G_{\mathcal K},\partial_{\vec A}),\mathcal K)$ is trivial then
$\Gal_{\mathfrak x}(G_{\mathcal K},\partial_{\vec A})$ is connected.
\end{corollary}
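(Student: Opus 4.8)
The plan is to reduce the automorphic system to its own Galois group by Lie--Kolchin reduction, observe that the reduced field automatically lands in the Lie algebra of the \emph{identity component}, and then identify Galois extensions to force connectivity. Write $H=\Gal_{\mathfrak x}(G_{\mathcal K},\partial_{\vec A})$ and let $H^0$ be its identity component. First I would apply Theorem \ref{ThKolchinH} with $G'=H$: condition (1), $H\subset H$, is trivial, and (2) is exactly the assumed triviality of $H^1(H,\mathcal K)$. This yields a $\tau\in G(\mathcal K)$ whose gauge transformation carries $\vec A$ to $\vec B=\Adj_\tau(\vec A)+l\partial(\tau)\in\mathcal R(H)\otimes_{\mathcal C}\mathcal K$. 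The crucial observation is that $\mathcal R(H)=\mathcal R(H^0)$: in characteristic zero the Lie algebra is the tangent space at the identity and sees only the identity component, so $\vec B$ is in fact an automorphic vector field of the \emph{connected} group $H^0$.

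Next I would transport the Galois data across the gauge transformation. Since $\tau\in G(\mathcal K)$, the left translation $L_\tau$ is a $\mathcal K$-isomorphism of schemes with derivation $(G_{\mathcal K},\partial_{\vec A})\xrightarrow{\sim}(G_{\mathcal K},\partial_{\vec B})$, and because left and right translations commute, $L_\tau$ is equivariant for the right $G(\mathcal C)$-action of Definition \ref{C3DEFGalois}. Hence it sends the Kolchin closed point $\mathfrak x$ to a Kolchin closed point $\mathfrak x'$ with the \emph{same} isotropy subgroup, so that $\Gal_{\mathfrak x'}(G_{\mathcal K},\partial_{\vec B})=H$ and $\kappa(\mathfrak x')\cong_{\mathcal K}\mathcal L$. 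Thus $(G_{\mathcal K},\partial_{\vec B})$ still has Galois group $H$ and Galois extension $\mathcal L$.

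Finally I would compare this with the automorphic system $(H^0_{\mathcal K},\partial_{\vec B})$ inside the connected group $H^0$, which is well defined because $\vec B$ is tangent to $H^0$. Choosing by Lemma \ref{C3LEMcloseddiffpoint} a Kolchin closed point $\mathfrak z$ of $\Diff(H^0_{\mathcal K},\partial_{\vec B})$, its residue field $\mathcal L_0=\kappa(\mathfrak z)$ has constant field $\mathcal C$ and is generated by a fundamental solution $\rho\in H^0(\mathcal L_0)\subset G(\mathcal L_0)$. As $\rho$ solves the same automorphic equation $l\partial(x)=\vec B$ viewed in $G$, it is also a fundamental solution of $(G_{\mathcal K},\partial_{\vec B})$; so $\mathcal L_0$ is a splitting extension and, by Proposition \ref{PrpSplitL} together with the uniqueness of Galois extensions (Corollary \ref{C3CORuniqueness}), $\mathcal L_0\cong_{\mathcal K}\mathcal L$. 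By Theorem \ref{C3THEauto} the Galois group of $(H^0_{\mathcal K},\partial_{\vec B})$ equals $\Aut(\mathcal L_0/\mathcal K)\cong\Aut(\mathcal L/\mathcal K)=H$, while by Definition \ref{C3DEFGalois} it is an algebraic subgroup of $H^0$. An algebraic subgroup of the connected group $H^0$ that is isomorphic to $H$ has dimension $\dim H=\dim H^0$, hence equals $H^0$; therefore $H\cong H^0$ is connected.

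The step I expect to be the main obstacle is the identification $\mathcal L_0\cong_{\mathcal K}\mathcal L$: one must argue that a fundamental solution of the smaller $H^0$-system, which a priori generates only that system's Galois extension, actually generates the full Galois extension of the $G$-system. This rests on the minimality of the Galois extension among splitting fields (Proposition \ref{PrpSplitL}) and on the characterization of Galois extensions as residue fields of Kolchin closed points whose field of constants is $\mathcal C$; once this identification is in hand, the concluding dimension count is routine.
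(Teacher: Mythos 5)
Your proposal is correct and follows essentially the paper's own route: apply Theorem \ref{ThKolchinH} with $G'=\Gal_{\mathfrak x}(G_{\mathcal K},\partial_{\vec A})$, observe that $\mathcal R(\Gal_{\mathfrak x}(G_{\mathcal K},\partial_{\vec A}))=\mathcal R(\Gal^0_{\mathfrak x}(G_{\mathcal K},\partial_{\vec A}))$, and conclude that the Galois group of the reduced system sits inside the identity component --- the paper states exactly this in three lines, leaving implicit the transport of the Galois data across the gauge transformation that you spell out. The one loose joint in your write-up, the identification $\mathcal L_0\cong_{\mathcal K}\mathcal L$ via Corollary \ref{C3CORuniqueness} (which as stated compares two closed points of the \emph{same} system, while Proposition \ref{PrpSplitL} alone yields only an embedding), is repaired by noting that $\mathfrak z$ remains Kolchin closed in $\Diff(G_{\mathcal K},\partial_{\vec B})$ --- since $H^0_{\mathcal K}$ is Zariski closed in $G_{\mathcal K}$ and $\vec B$ is tangent to it, a maximal differential ideal of $\mathcal O(H^0_{\mathcal K}\cap U)$ pulls back to a maximal differential ideal of $\mathcal O(G_{\mathcal K}\cap U)$ --- so $\kappa(\mathfrak z)$ is literally a Galois extension of $(G_{\mathcal K},\partial_{\vec B})$ and uniqueness then applies as you intended.
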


\begin{proof}
  If $H^1(\Gal_{\mathfrak
x}(G_{\mathcal K},\partial_{\vec A}),\mathcal K)$ is trivial, then
we can reduce the automorphic system to an automorphic system in
$\mathcal R(\Gal_{\mathfrak x}(G_{\mathcal K},\partial_{\vec A}))
\otimes_{\mathcal C}\mathcal K$. Note that $\Gal^0_{\mathfrak
x}(G_{\mathcal K},\partial_{\vec A})$ and $\Gal_{\mathfrak
x}(G_{\mathcal K},\partial_{\vec A})$ have the same Lie algebra.
Therefore the Galois group of the reduced equation is contained in
$\Gal^0_{\mathfrak x}(G_{\mathcal K},\partial_{\vec A})$.
\end{proof}

The following is an extension of the classical result of Kolchin on
the reduction a system of linear differential equations to the Lie
algebra of its Galois group \cite{Ko1}

\index{theorem!Kolchin of reduction}
\begin{theorem}[Kolchin]\label{C4THE4.1.10}
  Let us consider the relative
algebraic closure ${\mathcal K^\circ}$ of $\mathcal K$ in $\mathcal
L$. There is a gauge transformation $L_\tau$, $\tau$ with
coefficients in ${\mathcal K^\circ}$, such that,
$$\vec B = \Adj_{\tau}(\vec A) + l\partial(\tau)$$
belongs to $\mathcal R(\Gal_{\mathfrak x}(G_{\mathcal
K},\partial_{\vec A}))\otimes_{\mathcal C}{\mathcal K^\circ}$.
\end{theorem}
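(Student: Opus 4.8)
The plan is to deduce the statement from the preceding Corollary by passing to the relatively algebraically closed field $\mathcal{K}^\circ$, where the Galois group becomes connected. First I would record that $\Gal_{\mathfrak x}(G_{\mathcal K},\partial_{\vec A})$ and its identity component $\Gal^0_{\mathfrak x}(G_{\mathcal K},\partial_{\vec A})$ share the same Lie algebra, so that $\mathcal R(\Gal_{\mathfrak x}(G_{\mathcal K},\partial_{\vec A})) = \mathcal R(\Gal^0_{\mathfrak x}(G_{\mathcal K},\partial_{\vec A}))$. Hence the target subalgebra in the statement coincides with the one in the preceding Corollary, and it suffices to produce a gauge transformation with coefficients in $\mathcal{K}^\circ$ reducing $\vec A$ into $\mathcal R(\Gal^0_{\mathfrak x}(G_{\mathcal K},\partial_{\vec A}))\otimes_{\mathcal C}\mathcal{K}^\circ$.

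Next I would set up the reduction over $\mathcal{K}^\circ$ directly, following the proof of Theorem \ref{ThKolchinH}. Over $\mathcal{K}^\circ$ the Galois group of $(G_{\mathcal{K}^\circ},\partial_{\vec A})$ is exactly the connected group $\Gal^0_{\mathfrak x}(G_{\mathcal K},\partial_{\vec A})$; write $G'$ for it. Take $M = G/G'$, with distinguished rational point $x_0 = [e]$ whose isotropy group is $G'$. Since $\Gal_{\mathfrak x}(G_{\mathcal{K}^\circ},\partial_{\vec A}) = G' = H_{x_0}$, Proposition \ref{PrpRationalX} applied over $\mathcal{K}^\circ$ yields a rational solution $x \in M(\mathcal{K}^\circ)$ of the associated Lie-Vessiot system $(M_{\mathcal{K}^\circ},\partial_{\vec X})$. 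By Lemma \ref{LmPrincipalStalk} the fibre $\pi^{-1}(x)$ of $\pi\colon G_{\mathcal{K}^\circ}\to M_{\mathcal{K}^\circ}$, $\tau\mapsto \tau\cdot x_0$, is a principal homogeneous space under $G'_{\mathcal{K}^\circ}$, and it already carries the $\mathcal L$-point given by the fundamental solution $\sigma$. If this torsor has a $\mathcal{K}^\circ$-rational point, then Theorem \ref{C4THE4.1.5} produces a gauge transformation $L_\tau$ with $\vec B = \Adj_\tau(\vec A)+l\partial(\tau)$ an automorphic field in $G'$, which is the conclusion.

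The heart of the argument, and the step I expect to be the main obstacle, is therefore the triviality over $\mathcal{K}^\circ$ of the principal homogeneous space $\pi^{-1}(x)$, equivalently the vanishing of its class in $H^1(\Gal^0_{\mathfrak x}(G_{\mathcal K},\partial_{\vec A}),\mathcal{K}^\circ)$; this is precisely the hypothesis of the preceding Corollary, which I must now discharge unconditionally. Here the two new features of $\mathcal{K}^\circ$ are essential: the structure group $\Gal^0_{\mathfrak x}$ is connected, and $\mathcal{K}^\circ$ is relatively algebraically closed in $\mathcal L$, so that $\pi^{-1}(x)$ is a geometrically irreducible $\mathcal{K}^\circ$-variety with function field $\mathcal L$ which acquires a rational point after base change to $\mathcal L$. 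I would argue that the obstruction class lies in the kernel of $H^1(\Gal^0_{\mathfrak x},\mathcal{K}^\circ)\to H^1(G,\mathcal{K}^\circ)$ coming from the exact sequence of pointed sets attached to $G\to M$, and that it vanishes from the connectedness of $\Gal^0_{\mathfrak x}$ together with the cohomological smallness of the ground field: in the cases of principal interest (for instance $\mathcal K = \mathcal M(S)$ a field of meromorphic functions, or more generally a field of cohomological dimension at most one) this is exactly the vanishing recorded among the examples in the Galois cohomology subsection, since $\mathcal{K}^\circ$ inherits the same cohomological dimension as $\mathcal K$ and $\Gal^0_{\mathfrak x}$ is connected.

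Finally, once a rational point $\tau_1 \in \pi^{-1}(x)(\mathcal{K}^\circ)$ is in hand I would set $\tau = \tau_1^{-1}$, invoke Theorem \ref{C4THE4.1.5} with base point $x_0$ so that $\tau\cdot x = x_0$ is a constant solution, and conclude by Lemma \ref{lmSolutionx0} that $\vec B = \Adj_\tau(\vec A)+l\partial(\tau)$ lies in $\mathcal R(H_{x_0})\otimes_{\mathcal C}\mathcal{K}^\circ = \mathcal R(\Gal^0_{\mathfrak x}(G_{\mathcal K},\partial_{\vec A}))\otimes_{\mathcal C}\mathcal{K}^\circ$, which by the first paragraph equals $\mathcal R(\Gal_{\mathfrak x}(G_{\mathcal K},\partial_{\vec A}))\otimes_{\mathcal C}\mathcal{K}^\circ$. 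The only genuinely non-formal input is the cohomological vanishing isolated above; everything else is an application of the machinery already established.
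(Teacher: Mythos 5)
Your proposal has a genuine gap at exactly the step you yourself single out as its heart: the triviality over $\mathcal K^\circ$ of the torsor $\pi^{-1}(x)$, equivalently the vanishing of $H^1(\Gal^0_{\mathfrak x}(G_{\mathcal K},\partial_{\vec A}),\mathcal K^\circ)$. The theorem is unconditional: $\mathcal K$ is an arbitrary differential field of characteristic zero with algebraically closed constants, and nothing restricts its cohomological dimension. You only discharge the vanishing ``in the cases of principal interest,'' which proves (a special case of) the preceding unlabelled Corollary, not the theorem. Moreover, the vanishing criterion you invoke is false in the generality you would need even over small fields: the theorem of Steinberg--Serre (trivial $H^1$ for connected groups over perfect fields of cohomological dimension at most one) applies to \emph{linear} groups only, while $\Gal^0_{\mathfrak x}$ may perfectly well be an abelian variety here --- the paper treats automorphic systems on abelian varieties explicitly --- and torsors under an elliptic curve over, say, $\mathbb C(t)$ or $\mathbb C((t))$ can be nontrivial (nontrivial Weil--Ch\^atelet classes), so connectedness plus smallness of the field does not rescue the argument. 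Finally, your observation that the obstruction lies in the kernel of $H^1(\Gal^0_{\mathfrak x},\mathcal K^\circ)\to H^1(G,\mathcal K^\circ)$ is correct but circular: that kernel is precisely the image of the connecting map from $H^0(G/\Gal^0_{\mathfrak x},\mathcal K^\circ)$, and deciding whether a given class in it is trivial is the same problem as finding the rational point you are after.

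The paper's proof is designed to avoid any cohomological hypothesis, and that is the actual content of the theorem. It works with $H=\Gal_{\mathfrak x}(G_{\mathcal K},\partial_{\vec A})$ and $M=G/H$: by Proposition \ref{PrpRationalX} the fundamental solution projects to a rational solution $x\in M(\mathcal K)$, and the fiber $P=\pi^{-1}(x)$, which is the Zariski closure $\overline{\{\mathfrak x\}}$, is a principal homogeneous $H_{\mathcal K}$-space by Lemma \ref{LmPrincipalStalk}. Instead of demanding a $\mathcal K$- or $\mathcal K^\circ$-rational point of $P$, one takes any \emph{closed} point $\tau\in P$: its residue field $\kappa(\tau)$ is a finite algebraic extension of $\mathcal K$, over which $P$ acquires a rational point for free, and Theorem \ref{C4THE4.1.5} then gives the gauge transformation $L_{\tau^{-1}}$ reducing $\vec A$ into $\mathcal R(H)\otimes_{\mathcal C}\kappa(\tau)$. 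The remaining step --- for which your outline has no substitute --- is to show $\kappa(\tau)\subseteq\mathcal K^\circ$, i.e.\ that $\kappa(\tau)$ sits inside $\mathcal L$ as an intermediate differential extension: since $P_{\kappa(\tau)}$ has the rational point induced by $\tau$, it is a trivial torsor, hence a disjoint union of connected components each mapping isomorphically onto $P$ as $\mathcal K$-schemes; any one component transports its $\kappa(\tau)$-scheme structure to $P$, realizing $\mathcal K\subset\kappa(\tau)\subset\mathcal L$, whence $\kappa(\tau)=\mathcal K^\circ$ by the Galois correspondence. In short, the finite extension created by choosing a closed point is automatically absorbed into the relative algebraic closure; no vanishing theorem enters. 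If you want to repair your argument, replace the cohomological step by this ``closed point plus identification of its residue field'' mechanism.
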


\begin{proof}
Denote by $H$ the Galois group $\Gal_{\mathfrak x}(G_{\mathcal
K},\partial_{\vec A})$. Let us consider $M = G/H$, and let us denote
by $x_0\in M$ the origin which is the class of $H$ in $M$. Let $\vec
Y$ be the Lie-Vessiot vector field in $M$ associated to $\vec A$. In
virtue of Proposition \ref{PrpRationalX}, the canonical projection
$G(\mathcal L)\to M(\mathcal L)$ sends the fundamental solution
$\sigma$ to a solution $x$ of $(M,\partial_{\vec Y})$ with
coefficients in $\mathcal K$. Let us consider the projection:
$$\pi\colon G_{\mathcal K}\to M_{\mathcal K}.$$
Lemma \ref{LmPrincipalStalk} says that the stalk $\pi^{-1}(x)$ is a
principal homogeneous space modeled over the group $H_{\mathcal K}$.
Let us denote by $P\subset G_{\mathcal K}$ such homogeneous space.
Note that $P$ is $\overline{\{\mathfrak x\}}$, the closure of
$\mathfrak x$ in Zariski topology. We have the isomorphism,
$$\psi\colon P\times_{\mathcal K} H_{\mathcal K} \to P \times_{\mathcal K} P,
\quad (\tau, g) \to (\tau, \tau g),$$

Let $\tau$ be a closed point of $P$. Its rational field
$\kappa(\tau)$ is an algebraic extension of $\mathcal K$. We have
that $x = \tau\cdot x_0$. Thus, we can apply Lie-Kolchin reduction
method. $L_{\tau^{-1}}$ is a gauge transformation with coefficients
in $\kappa(\tau)$:
$$L_{\tau^-1}\colon G_{\kappa(\tau)}\to G_{\kappa(\tau)},$$
that sends the automorphic vector field $\vec A$ to an automorphic
vector field $\vec B$ in $H$ with coefficients in $\kappa(\tau)$.

In order to finish the proof we have to see that $\kappa(\tau)$ is a
subfield of the relative algebraic closure $\mathcal K^\circ$ of
$\mathcal K$ in $\mathcal L$. It is enough to see that $\mathcal K
\subset \kappa(\tau)$ is an intermediate differential extension of
$\mathcal K\subset\mathcal L$. Furthermore, if $\kappa(\tau)$ is an
intermediate differential extension then it coincides with $\mathcal
K^\circ$ because of the Galois correspondence.

Let us consider then the following base extension and natural
projection,
$$P_{\kappa(\tau)} = P \times_{\mathcal K} \Spec(\kappa(\tau)), \quad \quad
\pi_1\colon P_{\kappa(\tau)}\to P.$$ The product $P_{\kappa(\tau)}$
is a principal homogeneous space modeled over $H_{\kappa(\tau)}$.
Moreover, $\tau$ induces a rational point of $P_{\kappa(\tau)}$.
Hence, the Galois cohomology cohomology class of $P_{\kappa(\tau)}$
is trivial, so that it is isomorphic to $H_{\kappa(\tau)}$ as
homogeneous space. $P_{\kappa(\tau)}$ has as many connected
components as $H_{\kappa(\tau)}$. We write it as the disjoint union
of its connected components.
$$P_{\kappa(\tau)} = \bigsqcup_{i\in\Lambda} P_i.$$
For each $i\in \Lambda$, the restriction $P_i\to P$ is an
isomorphism of $\mathcal K$-schemes, and $\pi_1$ is a trivial
covering. But each $P_i$ is a $\kappa(\tau)$-scheme, and then each
component induces in $P$ an structure of $\kappa(\tau)$-scheme.
Hence we have a realization of $\kappa(\tau)$ as intermediate
extension
$$\mathcal K \subset \kappa(\tau) \subset  \mathcal L.$$
Thus, $\kappa(\tau) = {\mathcal K^\circ}$.
\end{proof}


\subsection{Integrability by Quadratures}

  To integrate an automorphic system by quadratures
means to write down a fundamental solution by terms of a formula.
This formula should involve the solutions of certain simpler
equations. We assume that we have a geometrical meccano to express
these solutions. We refer to elements of such a meccano as
\emph{quadratures}. Those simpler equations are like the building
blocks of our integrability theory. Depending of which simpler
equations we consider as \emph{integrable} we obtain different
theories integrability. In theory of Lie-Vessiot systems the
elements of our formulas are the \emph{exponential maps of Lie
groups} and \emph{indefinite integrals}.

 From a geometric point of view, it is reasonable to consider
automorphic systems in \emph{abelian groups} as \emph{integrable}.
Let us consider an abelian Lie group $G$. Then, the exponential map,
$$\exp\colon \mathcal R(G) \to G,$$
is a group morphism, and moreover, $\mathcal R(G)$ is the universal
covering of $G$. An automorphic equation,
$$\frac{d\log}{dt}(x) = \sum_{i=1}^n f_i(t)\vec A_i, \quad \vec A_i\in \mathcal R(G)$$
is integrated by the formula,
$$\sigma(t) = \exp\left(\sum_{i=1}^n \left(\int_{t_0}^t f_i(\xi) d\xi\right) \vec A_i\right).$$
This formula involves the integral of $t$ dependent functions, and
the exponential map of the Lie group. Assuming that we are able of
realize these operations \emph{a reasonable point of view is to
consider al automorphic equations in abelian groups integrable}.
This assumption is done in \cite{Ve2}, and followed in
\cite{Bryant}. On the other hand, the algebraic case has a new kind
of richness. An abelian Lie group splits in direct product of
circles an lines, but an abelian algebraic group can carry a higher
complexity, for example in the case of abelian varieties. In such
case the exponential map is the solution of the Abel-Jacobi
inversion problem. In \cite{Ko0} Kolchin develops a theory of
integrability generalizing Liouville integrability, in which just
quadratures in one dimensional abelian groups are allowed. It
reduces the case to quadratures in the additive group, the
multiplicative group and elliptic curves.

\subsection{Quadratures in the Additive Group}

Let us consider an automorphic equation in the additive group
$\mathcal C$. The additive group is its own Lie algebra, and the
logarithmic derivative is the usual derivative. Thus, the
automorphic equations are written in the following form:
\begin{equation}\label{EqAutomorphicAdditive}
\partial x = a, \quad a \in \mathcal K.
\end{equation}

\begin{definition}
  An extension of differential fields $\mathcal K \subset \mathcal
  L$ is an integral extension if $\mathcal L$ is $\mathcal K(b)$, with
  $\partial b \in \mathcal K$. We say that $b$ is an integral element over $\mathcal K$.
\end{definition}

  It is obvious that the Galois extension of equation
\eqref{EqAutomorphicAdditive} is an integral extension of $\mathcal
K$, with $b = \int a$. The additive group (of a field of
characteristic zero) has no algebraic subgroups. Therefore, if $a$
is algebraic over $\mathcal K$, then $a\in\mathcal K$. Hence we have
two different possibilities for integral extensions:
  \begin{itemize}
  \item $b\in \mathcal K$, $\Gal(\mathcal L/\mathcal K) = \{e\}$,
  \item $b\not\in\mathcal K$, $\Gal(\mathcal L/\mathcal K) =
  \mathcal C$.
  \end{itemize}

\subsection{Quadratures in the Multiplicative Group}

  Let us consider now an automorphic equation in the multiplicative
group. For the complex numbers $\mathbb C^*$ the exponential map is
the usual exponential. In the general case of an algebraically
closed field of characteristic zero, we can build the exponential
map for $\mathcal C^*$. However, it does not take values in
$\mathcal C^*$ but in a bigger group. We avoid such a construction,
and then we consider the exponential just as an algebraic symbol.
The logarithmic derivative in $\mathcal C^*$ coincides with the
classical notion of logarithmic derivative,
$$\mathcal K^* \to \mathcal K, \quad x \mapsto \frac{\partial
x}{x}.$$ The general automorphic equation in the multiplicative
group is written as follows:
\begin{equation}
\frac{\partial x}{x} = a,\quad a \in\mathcal K.
\end{equation}

\begin{definition}
  An extension of differential fields $\mathcal K \subset \mathcal
  L$ is an exponential extension if $\mathcal L = \mathcal K(b)$, with
  $\frac{\partial b}{b} \in \mathcal K$. We say that $b$ is an exponential element
  over $\mathcal K$.
\end{definition}

  $\mathcal C^*$ has cyclic finite subgroups. Then, we can obtain
exponential extensions that are algebraic. There appears the
following casuistic:

\begin{itemize}
\item $\Gal(\mathcal L/\mathcal K)$ is the multiplicative group $\mathcal C^*$
if $b$ is transcendent over $\mathcal K$.
\item $\Gal(\mathcal L/\mathcal K)$ is a cyclic group $(\mathbb Z_n)^*$ if $b^n\in\mathcal K$
for certain $n$. It means that there is $c\in\mathcal K$ that
$\frac{\partial c}{n c} = a$. In such case, $b^n = c$.
\end{itemize}

  Reciprocally, any algebraic Galois extension of $\mathcal K$ with a cyclic Galois
group is an exponential extension. Here, it is a an essential point
that $\mathcal C$ is algebraically closed.

\subsection{Quadratures in Abelian Varieties}

  Abelian varieties provide us examples of non linearizable
automorphic systems. For the following discussion, let us assume
that the constant field of $\mathcal K$ is the field of complex
numbers $\mathbb C$. Let $G$ be a complex abelian variety of complex
dimension $g$. Let us consider a basis of holomorphic differentials
$\omega_1,\ldots,\omega_g$, and $A_1,\ldots,A_g$,$B_1,\ldots,B_g$ a
basis of the homology of $G$, we can assume that $\int_{A_i}\omega_j
= \delta_{ij}$. Define the Jacobi-Abel map,
$$G \xrightarrow{\sim} \mathbb C^g/\Lambda, \quad  p \mapsto
\left(\int_e^p\omega_1,\ldots \int_e^p \omega_g\right).$$ The
exponential map is given by the exponential universal covering of
the torus and the inversion of the Jacobi-Abel map.
$$\xymatrix{\mathbb C^{g} \ar[d]_-{\exp} \ar[dr] \\
G \ar[r]^-{j} & \mathbb C^g/\Lambda }$$

A projective immersion of $G$ in $\mathbb P(\mathbb C,d)$, for $d$
big enough, is given by terms of \index{theta functions} theta
functions, $z \mapsto \left(\theta_0(z)\colon \ldots
\colon\theta_d(z)\right)$. Hence there are some homogeneous
polynomial constrains $\{P(\theta_0,\ldots,\theta_d)=0\}$. The
quotient $\frac{\theta_i}{\theta_j}$ defines a meromorphic abelian
function in $G$ (see \cite{Mum} Chapter 1, Section 3, p. 30). Let us
consider affine coordinates in $G$, $x_i =
\frac{\theta_i}{\theta_0}$. We can project the vector fields of
$\mathcal R(\mathbb C^g)$ to $G$,
$$\frac{\partial}{\partial z_i} \mapsto \sum_j F_{ij}(x_1,\ldots,x_d)\frac{\partial}{\partial
x_j}, \quad F_{ij}(x_1,\ldots,x_d) = \frac{\frac{\partial \theta
j}{\partial z_i}\theta_0 - \frac{\partial \theta_0}{\partial
z_i}\theta_j}{\theta_0^2}$$ being $F_{ij}$ abelian functions, and
then rational functions in the $x_j$. The automorphic system in
$\mathbb C^g$
$$\sum_i a_i \frac{\partial}{\partial z_i},\quad a_i\in \mathcal K$$
is seen in $A$ as a non linear system an $A$,
\begin{equation}\label{EqAA}
\dot x_j = \sum_i a_iF_{ij}(x_1,\ldots,x_d), \quad \{P(1,x_1,\ldots
x_d)=0\}.
\end{equation}
If $b_1,\ldots, b_d$ are integral elements over $\mathcal K$ such
that $\partial b_i = a_i$, then the solution of the automorphic
system \eqref{EqAA} is:
$$x_j = \frac{\theta_j(b)}{\theta_0(b)}, \quad\quad \left(\theta_0\left(b\right)
\colon\ldots \colon \theta_d\left(b\right) \right).$$

\begin{definition}
  A strongly normal extension $\mathcal K \subset \mathcal L$ whose
Galois group is an abelian variety is called an abelian extension.
\end{definition}

   For an automorphic system in an abelian variety $A$ we have that the
Galois group is an algebraic subgroup of $A$. Then its identity
component is an abelian variety. The Galois extension is then,
$$\mathcal K \subset {\mathcal K^\circ} \subset \mathcal L,$$
being $\mathcal K^\circ\subset \mathcal L$ an abelian extension.

\begin{example}
  Let us consider an algebraically completely integrable hamiltonian
system in the sense of Adler, Van Moerbecke and Vanhaecke (see
\cite{AMV}) $\{H,H_2,\ldots, H_n\}$ in $\mathbb C^{2n}$. Assume that
$\{H_i(x,y) = h_i\}$ are the equations of the affine part of an
abelian variety $G$. The Hamilton equations,
\begin{equation}\label{EqHamilton}
\dot x_i = \frac{\partial H}{\partial y_i}, \quad \dot y_i =
-\frac{\partial H}{\partial x_i}, \quad H_i(x,y) = h_i
\end{equation}
 are an automorphic system $\vec H$
in $G$ with constant coefficients $\mathcal K = \mathbb C$. In the
generic case, $G$ is a non-resonant torus, and then it is densely
filled by a solution curve of the equations \eqref{EqHamilton}. We
conclude that $(G,\partial_{\vec H})$ has not proper differential
points: its differential spectrum consist only of the generic point.
In such case, the Galois extension of the system is $\mathbb C
\subset \mathcal M(G)$, the field of meromorphic functions in $G$.
\end{example}

\begin{example} \emph{Automorphic systems in elliptic curves}:
  Let us examine the case of an elliptic curve $\mathcal E$ over
$\mathcal C$. Assume that $\mathcal E$ is given as a projective
subvariety of $\mathbb P(2, \mathcal C)$ in Weierstrass normal form.
$$t_0t_2^2 = 4t_1^3 - g_2t_0^2t_1 - g_3t_0^3$$
We take affine coordinates $x = \frac{t_1}{t_0}$ and $y =
\frac{t_2}{t_0}$. The Lie algebra $\mathcal R(\mathcal E)$ is then
generated by the vector field,
$$\vec v = y\frac{\partial}{\partial x} + (12x^2 - g_2)\frac{\partial}{\partial
y}$$

Every automorphic vector field in $\mathcal E$ with coefficients in
$\mathcal K$ is written in the form $a\vec v$ with $a\in\mathcal K$.
A solution of the automorphic equation is a point of $\mathcal E$
with values in the Galois extension $\mathcal L$. Such solution have
homogeneous coordinates $(1\colon \xi \colon \eta)$ such that $\eta
= a^{-1}\partial \xi$, and $\xi$ is a solution of the single
differential equation,
\begin{equation}\label{EqAE}
(\partial \xi)^2 = a^2(4\xi^2 - g_1\xi - g_2).
\end{equation}

If we know a particular solution $b$ of \eqref{EqAE} then we can
write down the general solution $(1\colon \xi \colon \eta)$ of the
automorphic equation by means of the addition law in $\mathcal E$
(see \cite{Ko0} p. 804 eq. 9), depending of an arbitrary point
$(1\colon x_0\colon y_0)\in \mathcal E(\mathcal C)$:
$$Sol\eqref{EqAE} \times \mathcal E(\mathcal C) \to \mathcal E(\mathcal L),\quad
(b, (1:x_0:y_0)) \mapsto (1:\xi:\eta)$$
\begin{equation}\label{Addition1}
\xi(x_0,y_0) =  -b - x_0 -\frac{1}{4}\left(\frac{\partial b -
ay_0}{a(b - x_0)}\right)^2
\end{equation}
\begin{equation}\label{Addition2}
\eta(x_0,y_0) =  -\frac{\partial b+ ay_0}{2a}+\frac{6}{2}(b +
x_0)\frac{\partial b - ay_0}{a(b -
x_0)}-\frac{1}{4}\left(\frac{\partial b - ay_0}{a(b -
x_0)}\right)^3.
\end{equation}
\end{example}

\begin{definition}
  Let $\mathcal K\subset \mathcal L$ a differential field extension. We say
that $b\in \mathcal L$ is a Weierstrassian element if there exist
$a\in \mathcal K$, and $g_1, g_2\in \mathcal C$, with the polynomial
$4x^3 - g_1 x - g_2$ having simple roots and such that, $(\partial
b)^2 = a^2(4b^2 - g_1 x - g_2)$. The differential extension
$\mathcal K \subset K(b,\partial b)$ is called an elliptic
extension.
\end{definition}

  The Galois extension of the automorphic equation \eqref{EqAE}
is an elliptic extension of $\mathcal K$. It can be transcendent or
algebraic. If it is transcendent then its Galois group is the
elliptic curve $\mathcal E$, if it is algebraic then its Galois
group is a finite subgroup of $\mathcal E$.

\index{equation!Weierstrass}
\begin{remark} Let us examine the case of complex numbers: assume that the field of
constants of $\mathcal K$ is $\mathbb C$. The solution of
Weierstrass equation is the elliptic function $\wp$, and it gives
rise to the universal covering of $\mathcal E$,
$$\pi\colon \mathbb C \to \mathcal E, \quad z\mapsto (1\colon\wp(z)\colon
\wp'(z)).$$ The automorphic vector field $a\vec v$ in $\mathcal E$
is the projection of the automorphic vector field
$a\frac{\partial}{\partial z}$ in $\mathbb C$. The solution of the
equation in the additive group is given by an integral element $\int
a$. Then the a solution of the projected system in $\mathcal E$ is
$(1\colon\wp(\int a)\colon\wp'(\int a))$. Then $b = \wp(\int a)$ is
the Weierstrass element of the Galois extension. Formulas
\eqref{Addition1} and \eqref{Addition2} are the addition formulas
for the Weierstrass $\wp$ and $\wp'$ functions.
\end{remark}

\begin{example} We obtain the previous situation in the case of one
degree of freedom, algebraic complete integrable hamiltonian
systems. Let us consider the pendulum equation:
\begin{equation}
\left.\begin{matrix} \dot x &=& y \\ \dot y &=&
\sin(x)\end{matrix}\quad \quad \right\} \quad \frac{y^2}{2} -
\cos(x) = h
\end{equation}
It is written as a simple ordinary differential equation depending
of the energy parameter $h$,
$$\left(\frac{dx}{dt}\right)^2 = 2h + 2\cos(x),$$
by setting $z = e^{ix}$, we obtain the algebraic form of such
equation, which is an automorphic equation in an elliptic curve for
all values of $h$ except for $h = \pm 1$;
$$\left(\frac{dz}{dt}\right)^2 = - z^3 -2hz^2 - 1.$$
The Weierstrass normal form is attained by setting $u = \frac{-z}{4}
- \frac{1}{6}h$;
$$\left(\frac{du}{dt}\right)^2 = 4u^3 - \frac{h^2}{3}u -
\left(\frac{h^3}{27}+\frac{1}{16}\right).$$ Hence, the general
solution is written in terms of the $\wp$ functions of invariants
$g_2 = \frac{h^2}{3}$ and $g_3 = \frac{h^3}{27}+\frac{1}{16}$, for
$h\neq \pm 1$:
$$z(t) = -4\wp(t + t_0) - \frac{2}{3}h\quad ;\quad  x(t) = \log\left( -4\wp(t-t_0) - \frac{4h+3\pi i}{6}
\right).$$
\end{example}

\subsection{Liouville and Kolchin Integrability}

\index{liouvillian extension} \index{liouvillian extension!strict}
\index{Kolchin extension}
\begin{definition}\label{C4DEF4.2.8}
  Let $\mathcal K\subset \mathcal F$ a differential field extension.
Let us break it up into a tower of differential fields:
$$\mathcal K = \mathcal F_0
\subset \mathcal F_1 \subset \ldots \subset \mathcal F_d = \mathcal
L.$$ We say that $\mathcal K \subset \mathcal F$ is $\ldots$
\begin{enumerate}
\item[(1)] $\ldots$ a Liouvillian extension if the differential fields $\mathcal F_i$ can
be chosen in such way that $\mathcal F_i\subset \mathcal F_{i+1}$ is
an algebraic, exponential or integral extension.
\item[(2)] $\ldots$ a strict-Liouvillian extension if the differential fields $\mathcal F_i$ can
be chosen in such way that $\mathcal F_i\subset \mathcal F_{i+1}$ is
an exponential or integral extension.
\item[(3)] $\ldots$ a Kolchin extension the differential fields $\mathcal F_i$ can
be chosen in such way that $\mathcal L_i\subset \mathcal F_{i+1}$ is
algebraic, elliptic, exponential or integral extension.
\end{enumerate}
\end{definition}

  Liouvillian and strict-Liouvillian extensions are Picard-Vessiot
extensions. An elliptic curve can not be a subquotient of an affine
group. Hence, if $\mathcal K\subset \mathcal F$ is a Kolchin
extension and $\Gal(\mathcal F/\mathcal K)$ is an affine group, then
it is a Liouville extension. From this perspective, the following
classical result is almost self evident:

\index{theorem!Drach-Kolchin}
\begin{theorem}[Drach-Kolchin]\label{C4THE4.2.9}
  Let $\mathcal K$ be a field of meromorphic functions of the
complex plane $\mathbb C$. Assume that the Weierstrass's $\wp$
function is not algebraic over $\mathcal K$. Then $\wp$ is not the
solution of any linear differential equation with coefficients in
$\mathcal K$.
\end{theorem}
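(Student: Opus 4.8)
The plan is to argue by contradiction, exploiting the dichotomy between linear (affine) algebraic groups and abelian varieties that underlies the whole integrability theory. Suppose $\wp$ satisfies a linear differential equation $L(y)=0$ of order $n$ with coefficients in $\mathcal{K}$. Since $\mathcal{K}$ is a field of meromorphic functions on $\mathbb{C}$, its field of constants is $\mathbb{C}$, which is algebraically closed; hence the Picard-Vessiot extension $\mathcal{K}\subset\mathcal{P}$ attached to $L$ exists without enlarging the constants, and its Galois group $\Gal(\mathcal{P}/\mathcal{K})$ is a \emph{linear} algebraic group over $\mathbb{C}$ (this is the Picard-Vessiot case of the theory, the general linear group example above). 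Being an actual solution of $L$, the function $\wp$ is a $\mathbb{C}$-linear combination of a fundamental system of solutions, so $\wp\in\mathcal{P}$; and since $\mathcal{P}$ is a differential field, $\wp'=\partial\wp\in\mathcal{P}$ as well. Thus the differential subfield $\mathcal{K}(\wp,\wp')$ sits as an intermediate differential extension $\mathcal{K}\subset\mathcal{K}(\wp,\wp')\subset\mathcal{P}$.

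Next I would identify this intermediate field geometrically. Because $\wp$ satisfies the Weierstrass relation $(\wp')^2=4\wp^3-g_2\wp-g_3$ and $\wp''=6\wp^2-\tfrac12 g_2\in\mathcal{K}(\wp)$, the field $\mathcal{K}(\wp,\wp')$ is exactly the differential field generated by a solution $(1:\wp:\wp')$ of the automorphic equation $(\partial\xi)^2=a^2(4\xi^3-g_2\xi-g_3)$ with $a=1$ in the elliptic curve $\mathcal{E}$; that is, $\mathcal{K}\subset\mathcal{K}(\wp,\wp')$ is an elliptic extension. Its constant field is again $\mathbb{C}=\mathcal{C}$ (a meromorphic function on $\mathbb{C}$ that is constant is a complex number), so it is the Galois extension of this automorphic system and, by Theorem \ref{C3THEsne}, it is \emph{strongly normal}. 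By the hypothesis that $\wp$ is transcendental over $\mathcal{K}$ we are in the transcendental case of the elliptic-extension discussion, whence $\Gal(\mathcal{K}(\wp,\wp')/\mathcal{K})\cong\mathcal{E}$, the full elliptic curve.

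Finally I would derive the contradiction from the Galois correspondence. Both $\mathcal{K}\subset\mathcal{P}$ and $\mathcal{K}\subset\mathcal{K}(\wp,\wp')$ are strongly normal, so Theorem \ref{ThGaloisCorrespondence} (equivalently Proposition \ref{Prop318}) realizes $\Gal(\mathcal{K}(\wp,\wp')/\mathcal{K})$ as the quotient $\Gal(\mathcal{P}/\mathcal{K})/\Gal(\mathcal{P}/\mathcal{K}(\wp,\wp'))$ of the Galois group of the Picard-Vessiot extension. A quotient of a linear algebraic group by a closed normal subgroup is again an affine algebraic group, so $\mathcal{E}$ would be affine. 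But $\mathcal{E}$ is a complete (projective) variety of dimension one, and a complete affine variety is finite; equivalently, as the paper records just above, an elliptic curve cannot be a subquotient of an affine group. This contradicts $\Gal(\mathcal{K}(\wp,\wp')/\mathcal{K})\cong\mathcal{E}$, so no such linear equation $L$ can exist. The step I expect to be most delicate is the second one: making precise that $\mathcal{K}(\wp,\wp')$ is genuinely the \emph{strongly normal} Galois extension with Galois group all of $\mathcal{E}$ (and not merely some intermediate elliptic extension whose group is a proper subgroup of $\mathcal{E}$), which is exactly where the transcendence hypothesis and the absence of new constants are essential.
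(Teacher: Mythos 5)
Your proof is correct and follows essentially the same route as the paper's: assume the linear equation exists, place $\mathcal K\subset\mathcal K(\wp,\wp')\subset\mathcal P$ inside the Picard-Vessiot extension, note that $\mathcal K\subset\mathcal K(\wp,\wp')$ is strongly normal with Galois group the elliptic curve $\mathcal E$, and conclude via the Galois correspondence that $\mathcal E$ would be a quotient of an affine group, hence affine --- a contradiction. The only difference is that you spell out the steps the paper leaves implicit (why $\wp,\wp'\in\mathcal P$, and why transcendence forces the group to be all of $\mathcal E$ rather than a finite subgroup), which is a faithful elaboration rather than a different argument.
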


\begin{proof}
  Let us assume that this equation exist, and let $\mathcal K
  \subset \mathcal F$ na associated its Galois extension. Its Galois group
  $\Gal(\mathcal F/\mathcal K)$ is an affine group. We have an
  intermediate extension:
  $$\mathcal K \subset \mathcal K(\wp, \wp') \subset \mathcal F,$$
  This intermediate extension $\mathcal K \subset \mathcal K(\wp, \wp')$ is strongly normal
  and its Galois group is an elliptic curve. Thus, there is a normal subgroup $H\lhd
  \Gal(\mathcal F/\mathcal K)$ and an exact sequence,
  $$ 0 \to H \to \Gal(\mathcal F/\mathcal K)  \to \mathcal E \to 0$$
  but the quotient group of an affine group is an affine group, and then
  $\mathcal E$ is affine.
\end{proof}

  From the Galois correspondence and some elemental properties of algebraic
groups we also have immediately the characterization of Liouvillian
and Kolchin extensions in terms of their Galois groups.

\begin{proposition}\label{C4PRO4.2.10}
Let $\mathcal K \subset \mathcal L$ be  a strongly normal extension.
\begin{enumerate}
\item[(1)]   $\mathcal K \subset \mathcal L$ is a Kolchin extension if and
only if there
  is a sequence of normal subgroups in $\Gal(\mathcal L/\mathcal K)$,
  $$H_0 \lhd H_1 \lhd \ldots \lhd H_n = \Gal(\mathcal L/\mathcal K),$$
  such that $\dim_{\mathcal C} H_i/H_{i+1} \leq 1$.
\item[(2)] $\mathcal K \subset \mathcal L$ is a strict-Liouville extension if
  and only if $\Gal(\mathcal L/\mathcal K)$ is an affine solvable
  group.
\item[(3)] $\mathcal K \subset \mathcal L$ is a Liouvillian extension
if and only if the identity component $\Gal^0(\mathcal L/\mathcal
K)$ is a linear solvable group.
\end{enumerate}
\end{proposition}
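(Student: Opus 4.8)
The plan is to derive all three equivalences from a single mechanism: the Galois correspondence for strongly normal extensions (Theorem \ref{ThGaloisCorrespondence}), combined with the dictionary between the elementary extensions of Definition \ref{C4DEF4.2.8} and their Galois groups that was set up in the preceding subsections, and with the structure theory of algebraic groups over the algebraically closed field $\mathcal C$. Concretely, a tower of intermediate differential fields $\mathcal K = \mathcal F_0 \subset \cdots \subset \mathcal F_n = \mathcal L$ in which every step $\mathcal F_i \subset \mathcal F_{i+1}$ is strongly normal corresponds, through $\Gal(\mathcal L/-)$, to a chain of closed subgroups of $\Gal(\mathcal L/\mathcal K)$ with each term normal in the next and successive subquotients isomorphic to the step groups $\Gal(\mathcal F_{i+1}/\mathcal F_i)$; conversely, taking fixed fields turns any such chain of subnormal subgroups into a tower of strongly normal steps, the Galois group of the $i$-th step being the corresponding subquotient. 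Thus the whole problem reduces to matching the subquotients appearing in a subnormal series against the list of admissible elementary Galois groups.

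For part (1) I would first record that each admissible Kolchin step has a Galois group of dimension at most one: an integral extension yields the additive group or $\{e\}$, an exponential extension the multiplicative group or a finite cyclic group, an elliptic extension an elliptic curve or a finite group, and an algebraic extension a finite group. This gives the forward implication immediately. For the converse, starting from a chain whose subquotients $\Gamma$ satisfy $\dim_{\mathcal C}\Gamma \le 1$, I would refine each step by inserting the preimage of the identity component, using the exact sequence $1 \to \Gamma^0 \to \Gamma \to \Gamma/\Gamma^0 \to 1$ with $\Gamma/\Gamma^0$ finite and $\Gamma^0$ connected of dimension $\le 1$. The classification of one-dimensional connected algebraic groups in characteristic zero (the additive group, the multiplicative group, or an elliptic curve) identifies the $\Gamma^0$-step with an integral, exponential, or elliptic extension, while the finite quotient contributes an algebraic step, so the refined tower is a Kolchin tower.

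For parts (2) and (3) I would invoke solvability. In the forward direction of (2), the step groups of exponential and integral extensions are affine and abelian, so $\Gal(\mathcal L/\mathcal K)$ is an iterated subnormal extension of affine abelian groups; iterating that an extension of solvable by solvable is solvable, and that an extension of affine by affine is affine (the latter via Chevalley's decomposition: the abelian-variety quotient receives only trivial homomorphisms from an affine normal subgroup, and is itself a quotient of the affine quotient group, hence is a point), shows $\Gal(\mathcal L/\mathcal K)$ is linear and solvable. For the converse I would take a composition series of the linear solvable group with abelian factors refined into copies of the additive group, the multiplicative group, and cyclic groups of prime order; the first two give integral and exponential steps by Lie--Kolchin triangularization of the connected solvable part, and each prime-cyclic factor of the finite component group is realized as an exponential (Kummer) extension because $\mathcal C$ is algebraically closed and therefore supplies the needed roots of unity. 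Part (3) then follows by splitting $\mathcal K \subset \mathcal L$ at $\mathcal K^\circ = \mathcal L^{\Gal^0(\mathcal L/\mathcal K)}$: the lower step $\mathcal K \subset \mathcal K^\circ$ is finite, hence algebraic, and the upper step $\mathcal K^\circ \subset \mathcal L$ has the connected linear solvable group $\Gal^0(\mathcal L/\mathcal K)$ as Galois group and so is strict-Liouvillian by (2); allowing algebraic steps makes the composite Liouvillian, while conversely a Liouvillian tower forbids elliptic factors and so forces the abelian-variety part of the identity component to vanish, leaving $\Gal^0$ linear and solvable.

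The routine portions are the bookkeeping of the Galois correspondence and the already-established dictionary of elementary extensions. The step I expect to demand the most care is the treatment of the finite component-group factors inside the strict-Liouvillian and Liouvillian frameworks: the assertion that every prime-cyclic extension is exponential rests squarely on $\mathcal C$ being algebraically closed, and one must check that inserting identity components to refine the chain preserves the step-wise normality that the Galois correspondence requires. The only genuinely non-formal input from the theory of algebraic groups is the affineness-of-an-extension argument, which is precisely where Chevalley's theorem on the abelian-variety quotient enters.
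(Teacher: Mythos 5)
Your plan is sound in outline and, notably, more complete than what the paper actually does: the paper disposes of parts (1) and (3) by citing Kolchin \cite{Ko0}, and for part (2) proves only the implication that an affine solvable Galois group yields a strict-Liouville extension, by the same mechanism you propose (a Lie--Kolchin resolution with additive, multiplicative and cyclic factors, plus the observation that a cyclic algebraic extension is radical, hence exponential, because $\mathcal C$ is algebraically closed). Your converse half of (2) --- solvability from a chain with affine abelian subquotients, affineness from the fact that an extension of an affine group by an affine group is affine, via Chevalley's decomposition --- is correct and fills a direction the paper leaves unproved.

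There are, however, two genuine gaps. First, the forward implications of (1) and (3) are not ``immediate'': an \emph{algebraic} step $\mathcal F_i \subset \mathcal F_{i+1}$ of a Kolchin or Liouvillian tower (Definition \ref{C4DEF4.2.8}) need not be a normal extension, hence need not be strongly normal over $\mathcal F_i$, so $\Gal(\mathcal L/\mathcal F_{i+1})$ need not be normal in $\Gal(\mathcal L/\mathcal F_i)$, and your dictionary between towers of strongly normal steps and subnormal chains simply does not apply to the given tower. The repair is to compose the entire tower with $\mathcal K^\circ$, the relative algebraic closure of $\mathcal K$ in $\mathcal L$ (the fixed field of $\Gal^0(\mathcal L/\mathcal K)$): since $\mathcal K^\circ$ is relatively algebraically closed in $\mathcal L$, every algebraic step of the composed tower becomes trivial, while integral, exponential and elliptic steps keep their type over the larger base (the defining element still satisfies its defining relation); the composed tower then has only strongly normal steps, giving a subnormal chain inside $\Gal^0$, and one prepends $\Gal^0 \lhd \Gal(\mathcal L/\mathcal K)$ with finite quotient. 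You invoke this splitting at $\mathcal K^\circ$ for (3), but it is needed already in (1), and your converse of (3) (``a Liouvillian tower forces the abelian-variety part to vanish'') needs the same device to become an argument rather than an assertion. Second, your converses rest on ``the dictionary set up in the preceding subsections,'' but those subsections establish only the easy direction: that the Galois extension of an automorphic equation \emph{on} the additive group, the multiplicative group, or an elliptic curve is integral, exponential, or elliptic. What you need is the opposite direction --- that an abstract strongly normal step whose Galois group is the additive group, the multiplicative group, a finite group, or an elliptic curve is respectively an integral, exponential, algebraic, or elliptic extension. That is Kolchin's theory of primitives, exponentials and Weierstrassian elements; it is precisely the content the paper outsources to \cite{Ko0}, and it is not routine bookkeeping. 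With the first gap repaired, and the second either proved or conceded as a citation (as the paper itself does), your argument goes through.
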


\begin{proof}
  For (1) and (3) see \cite{Ko0}. Let us proof that linear solvable
  Galois group implies strict Liouville.  Let us consider a
  resolution of the Galois group $H_0\lhd \ldots H_n$ such that each quotient
  $H_{i+1}/H_i$ is a cyclic group, a multiplicative group or an
  additive group. This resolution exist by means of Lie-Kolchin
  theorem. This resolution split the extension $\mathcal K\subset \mathcal L$ in a tower of
  differential fields
  $$\mathcal K_n \subset \mathcal K_{n-1} \subset \ldots \mathcal
  \subset \mathcal K_0.,$$
  Each differential extension of the tower is an exponential, integral or algebraic
  extension with cyclic Galois group. But an algebraic extension
  with cyclic group is a radical extension. The field $\mathcal C$ is
  algebraically closed, hence such radical extension is generated by the
  radical $\sqrt[n]{a}$ of a non-constant element of $a$, and then
  it is the Picard-Vessiot extension of the equation,
  $$\partial x = \frac{\partial a}{n a}x,$$
  which is an exponential extension.
\end{proof}

\subsection{Integration by Quadratures in Solvable Groups}

Let us remind that along this chapter we are considering an
automorphic vector field $\vec A$ with coefficients in $\mathcal K$
in an algebraic group $G$ defined over $\mathcal C$. We also
consider a Kolchin closed differential point $\mathfrak x\in
\Diff(G_{\mathcal K},\partial_{\vec A})$ and the associated Galois
extension $\mathcal K \subset \mathcal L$. We are going to explain
the classical integration by quadratures in terms of Lie-Kolchin
reduction method and Galois correspondence.

  Let us consider a normal subgroup $H\lhd G$, and the quotient
group $\bar G = G/H$. Let $\mathfrak y$ be the projection in $\bar
G_{\mathcal K}$ of $\mathfrak x$. In virtue of Theorem \ref{ThGM} we
know that,
  $$\mathcal K \subset \mathcal \kappa(\mathfrak y) \subset \mathcal
  L,$$
is an intermediate strongly normal extension. Furthermore, the
Galois group in $\mathfrak y$ of the automorphic system with
coefficients if $\kappa(\mathfrak y)$ is the intersection of the
Galois group $\Gal_{\mathfrak x}(G_{\mathcal K},\partial_{\vec A})$
with $H$.

\begin{theorem}\label{C4THE4.2.11}
  Assume that there is a resolution of $G$,
  $$H_0 \lhd H_1 \lhd \ldots \lhd H_n = G,$$
  such that $\dim_{\mathcal C} H_i/H_{i+1} = 1$,
  then $\mathcal K \subset \mathcal L$ is a Kolchin extension.
\end{theorem}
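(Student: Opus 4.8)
The plan is to reduce the statement to the purely group-theoretic criterion for Kolchin extensions already recorded in Proposition \ref{C4PRO4.2.10}(1), and to obtain the series it requires by intersecting the given resolution of $G$ with the Galois group. First I would recall that by Proposition \ref{C3PROalgebraicgroup} the group $\Gamma := \Gal_{\mathfrak x}(G_{\mathcal K},\partial_{\vec A})$ is an algebraic subgroup of $G$, and that by Theorem \ref{C3THEauto} it is canonically identified with the group $\Gal(\mathcal L/\mathcal K)$ of differential $\mathcal K$-automorphisms of the Galois extension $\mathcal K\subset\mathcal L$. Thus it suffices to exhibit a subnormal series of $\Gamma$ whose successive quotients have dimension at most one, for then Proposition \ref{C4PRO4.2.10}(1) applies directly.

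To build such a series I would set $\Gamma_i = \Gamma\cap H_i$ for each $i$, so that $\Gamma_0 = \{e\}$ and $\Gamma_n = \Gamma$. Since $H_i\lhd H_{i+1}$, conjugation inside $\Gamma_{i+1}$ preserves both $H_i$ and $\Gamma$, hence preserves $\Gamma_i=\Gamma\cap H_i$; therefore $\Gamma_i\lhd\Gamma_{i+1}$ and $\{e\}=\Gamma_0\lhd\Gamma_1\lhd\cdots\lhd\Gamma_n=\Gamma$ is a subnormal chain of algebraic subgroups. The one genuine computation is the control of the quotients: because $H_i\subseteq H_{i+1}$ one has $\Gamma_{i+1}\cap H_i=\Gamma\cap H_i=\Gamma_i$, and so the second isomorphism theorem for algebraic groups (legitimate here, since in characteristic zero every group in sight is smooth) yields an embedding
$$\Gamma_{i+1}/\Gamma_i \;\cong\; \Gamma_{i+1}H_i/H_i \;\hookrightarrow\; H_{i+1}/H_i.$$
As the successive quotients $H_{i+1}/H_i$ have dimension at most one by hypothesis, we obtain $\dim_{\mathcal C}\Gamma_{i+1}/\Gamma_i\leq 1$ for every $i$, which is exactly the form required by Proposition \ref{C4PRO4.2.10}(1). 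Feeding this chain into that proposition concludes that $\mathcal K\subset\mathcal L$ is a Kolchin extension.

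The point I expect to demand the most judgement is not any single estimate but the choice of strategy itself. The geometric alternative — peeling off the one-dimensional quotient $\bar G=G/H_{n-1}$ via Theorem \ref{ThGM}, writing the tower $\mathcal K\subset\kappa(\mathfrak y)\subset\mathcal L$, and inducting on $n$ — is tempting because it matches the reduction philosophy of this chapter. However, carrying out the inductive step \emph{constructively}, by actually gauging the automorphic field $\vec A$ into $H_{n-1}$, would require the vanishing of $H^1(H_{n-1},\,\cdot\,)$ through Theorem \ref{ThKolchinH}, a Galois-cohomological hypothesis that the present statement deliberately omits. The intersection argument above sidesteps this obstruction entirely, since it never reduces the system and only manipulates the abstract Galois group. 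Should one nevertheless prefer the inductive presentation, each step would have to be closed using the very same intersection-with-$\Gamma$ device together with Proposition \ref{C4PRO4.2.10}(1) — merely reproducing the direct argument level by level — which is why I would present the direct proof.
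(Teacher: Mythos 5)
Your proof is correct, but it follows a genuinely different route from the paper's. You intersect the given resolution with the Galois group $\Gamma = \Gal_{\mathfrak x}(G_{\mathcal K},\partial_{\vec A})$, identified with $\Gal(\mathcal L/\mathcal K)$ as an algebraic group via Theorem \ref{C3THEauto}, check via the second isomorphism theorem that $\Gamma_{i+1}/\Gamma_i \hookrightarrow H_{i+1}/H_i$, and then feed the resulting subnormal chain into Proposition \ref{C4PRO4.2.10}(1) as a black box (note your reading of that proposition as accepting chains that are only subnormal, not normal in the whole group, is the right one: it is what the chain notation there means, and it is what the theorem's own hypothesis on $G$ provides). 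The paper instead argues by induction on the length of the resolution, and its inductive step is exactly the ``geometric alternative'' you dismissed: project via Theorem \ref{ThGM} onto the one-dimensional quotient $G/H_{n-1}$, observe that $\mathcal K\subset\kappa(\mathfrak y)$ is an integral, exponential or elliptic extension, and then gauge the automorphic field into $H_{n-1}$. Your reason for rejecting that route, however, rests on a misreading: the paper performs the reduction not with Theorem \ref{ThKolchinH} (which would indeed demand vanishing of $H^1(H_{n-1},\cdot)$) but with Kolchin's reduction theorem, Theorem \ref{C4THE4.1.10}, which needs no cohomological hypothesis because it allows gauge coefficients in the relative algebraic closure $\mathcal K^\circ$; the price --- a finite algebraic extension --- is harmless, since algebraic extensions are permitted links in a Kolchin tower. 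As for what each approach buys: yours is shorter and isolates the purely group-theoretic content, at the cost of pushing all the differential algebra into Proposition \ref{C4PRO4.2.10}(1), whose proof the paper only cites from Kolchin; the paper's induction is constructive in spirit --- each step actually produces the quadrature (integral, exponential, Weierstrassian) together with the reduced automorphic system --- which is precisely the Lie-Kolchin reduction philosophy this section is meant to illustrate.
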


\begin{proof}
  Let us consider the quotients $\bar G_i = H_{n-i+1}/H_{n-i}$. They are algebraic groups
of dimension one. Each $G_i$ is isomorphic to one of the following:
the additive group, the multiplicative group, or an elliptic curve.
Each one corresponds to an integral, exponential, or Weierstrassian
quadrature. We prove the theorem by induction in the length of the
resolution. Let us consider the projection $\pi\colon G \to
G/H_{n-1}$. Define $\mathfrak y = \pi(\mathfrak x)$ and let
$\mathcal K_1$ be the relative algebraic closure of
$\kappa(\mathfrak x)$ in $\mathcal L$. Then $\mathcal K\subset
\kappa(\mathfrak y)$ is an integral, exponential or elliptic
extension and $\kappa(\mathfrak y)\subset \mathcal K_1$ is an
algebraic extension. Hence, $\mathcal K \subset \mathcal K_1$ is a
Kolchin extension.

Let $\mathfrak z$ be a closed differential point of $(G_{\mathcal
K_1},\partial_{\vec A})$ in the fiber of $\mathfrak x$. By Theorem
\ref{ThGM} $\Gal_{\mathfrak z}(G_{\mathcal K_1},\partial_{\vec A})
\subset H_{n-1}$, and then by Theorem \ref{C4THE4.1.10} there is a
gauge transformation $L_{\tau}$ with coefficients in $\mathcal K_1$
reducing the automorphic field to an automorphic field in $H_{n-1}$.
Any Galois extension associated to this last equation is $\mathcal
K_1$-isomorphic to $\mathcal L$. By the induction hypothesis the
extension $\mathcal K_1 \subset \mathcal L$ is a Kolchin extension,
hence $\mathcal K \subset \mathcal L$ is a Kolchin extension.
\end{proof}

\begin{theorem}\label{C4THE4.2.12}
  Assume that $G$ is affine and solvable. Then
  $\mathcal K \subset \mathcal L$ is a strict-Liouville
  extension.
\end{theorem}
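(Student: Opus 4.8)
The plan is to reduce the statement to the group-theoretic characterization of strict-Liouville extensions already established in Proposition \ref{C4PRO4.2.10}(2). The decisive observation is that the Galois group of $\mathcal K\subset\mathcal L$ is not an arbitrary algebraic group but a subgroup of $G$, so it inherits both affineness and solvability from $G$. First I would identify the abstract Galois group with a concrete algebraic group: by Theorem \ref{C3THEauto} the group of differential $\mathcal K$-algebra automorphisms of the Galois extension is exactly $\Gal_{\mathfrak x}(G_{\mathcal K},\partial_{\vec A})$, and by Proposition \ref{C3PROalgebraicgroup} this is an algebraic subgroup $H\subseteq G$.

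Next I would verify that $H$ is affine and solvable. A Zariski-closed subgroup of an affine algebraic group is again affine, and a subgroup of a solvable group is solvable; since $G$ is by hypothesis affine and solvable, so is $H$. At this point Proposition \ref{C4PRO4.2.10}(2) applies directly: its stated equivalence asserts that $\mathcal K\subset\mathcal L$ is a strict-Liouville extension precisely when $\Gal(\mathcal L/\mathcal K)$ is an affine solvable group, and we have just checked the right-hand side. In this route there is essentially no obstacle; the substance has been absorbed into Proposition \ref{C4PRO4.2.10}(2), whose relevant implication (affine solvable Galois group $\Rightarrow$ strict-Liouville) was proven via the Lie--Kolchin theorem and the reduction machinery.

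For completeness I would also sketch the constructive route, which exhibits the quadratures explicitly and is the real content behind Proposition \ref{C4PRO4.2.10}(2); here the hard part is the disconnectedness of $H$. On the identity component $H^0$ the Lie--Kolchin theorem furnishes a chain of normal subgroups with one-dimensional quotients, each isomorphic to the additive or the multiplicative group; feeding this resolution into the reduction Theorem \ref{C4THE4.1.10} (compare the inductive argument of Theorem \ref{C4THE4.2.11}) splits $\mathcal K\subset\mathcal L$ into a tower of integral and exponential extensions. The delicate point is that a strict-Liouville tower must avoid genuinely algebraic steps, yet the finite quotient $H/H^0$ a priori contributes an algebraic extension; I would refine $H/H^0$ to a composition series with cyclic quotients of prime order and observe, exactly as in the proof of Proposition \ref{C4PRO4.2.10}(2), that over the algebraically closed field $\mathcal C$ each such cyclic algebraic extension is radical, hence the exponential extension attached to $\partial x = \tfrac{\partial a}{n a}\,x$. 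Thus every step of the refined tower is integral or exponential and no algebraic step survives, which is precisely what strict-Liouvillianness demands.
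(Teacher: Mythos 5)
Your proposal is correct and follows essentially the same route as the paper: the paper's proof likewise observes that the Galois group, being an algebraic subgroup of the affine solvable group $G$, is itself affine and solvable, and then invokes Proposition \ref{C4PRO4.2.10}(2) (together with the reduction/tower argument of Theorem \ref{C4THE4.2.11}, which your constructive sketch reproduces). Your version merely makes explicit the citations to Theorem \ref{C3THEauto} and Proposition \ref{C3PROalgebraicgroup} and the handling of the finite quotient $H/H^0$, which the paper leaves implicit inside Proposition \ref{C4PRO4.2.10}(2).
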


\begin{proof}
The Galois group is a subgroup of $G$, and then it is a solvable
group. The result comes from Proposition \ref{C4PRO4.2.10} (2)
together with Theorem \ref{C4THE4.2.11}.
\end{proof}

\begin{proposition}\label{C4PRO4.2.13}
  If there is a connected affine solvable group
 $H\subset G$ such that $\Gal_{\mathfrak x}(G_{\mathcal K},\partial_{\vec A})\subset H$,
then $\mathcal K \subset \mathcal L$ is a strict-Liouville
extension.
\end{proposition}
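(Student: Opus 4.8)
The plan is to reduce the statement to the group-theoretic characterization of strict-Liouville extensions given in Proposition \ref{C4PRO4.2.10} (2), which asserts that a strongly normal extension is strict-Liouville if and only if its Galois group is an affine solvable group. By Theorem \ref{C3THEsne} the Galois extension $\mathcal K \subset \mathcal L$ attached to $\mathfrak x$ is strongly normal, and by Theorem \ref{C3THEauto} its group of differential $\mathcal K$-algebra automorphisms is precisely $\Gal_{\mathfrak x}(G_{\mathcal K},\partial_{\vec A})$. Hence the whole problem collapses to the structure of the abstract algebraic group $\Gal_{\mathfrak x}(G_{\mathcal K},\partial_{\vec A})$; once this identification is recorded, the ambient group $G$ plays no further role.

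First I would check that the Galois group is affine and solvable. By Proposition \ref{C3PROalgebraicgroup} it is an algebraic, hence Zariski closed, subgroup of $G$, and by hypothesis it is contained in $H$; therefore it is a closed subgroup of $H$. A closed subgroup of an affine algebraic group is again affine, and any subgroup of a solvable group is solvable, so $\Gal_{\mathfrak x}(G_{\mathcal K},\partial_{\vec A})$ is an affine solvable group. Feeding this into Proposition \ref{C4PRO4.2.10} (2) immediately gives that $\mathcal K \subset \mathcal L$ is a strict-Liouville extension. Notice that the connectedness of $H$ is not actually used in this route; it is a harmless extra hypothesis.

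A second, more constructive route stays closer to the reduction machinery of this section. Since $H$ is connected affine (equivalently linear) and solvable, its first Galois cohomology vanishes, so Theorem \ref{ThKolchinH} applied with $G' = H$ produces a gauge transformation $L_\tau$ with coefficients in $\mathcal K$ carrying $\vec A$ to an automorphic field $\vec B = \Adj_\tau(\vec A) + l\partial(\tau)$ lying in $\mathcal R(H)\otimes_{\mathcal C}\mathcal K$; because the gauge transformation is defined over $\mathcal K$ it preserves the Galois extension, and Theorem \ref{C4THE4.2.12} applied to the affine solvable group $H$ finishes the argument. The delicate point on this route is the cohomological input: one must know that the vanishing of $H^1$ for connected solvable linear groups persists over the non-algebraically-closed field $\mathcal K$, i.e. that $H^1(H,\mathcal K)$ and not merely $H^1(H,\mathcal C)$ is trivial. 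This subtlety is exactly what the first route sidesteps, which is why I would present the direct argument as the proof and regard the reduction as an illustrative alternative.
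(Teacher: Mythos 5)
Your proposal is correct, but your primary argument is genuinely different from the paper's: the paper proves this proposition exactly by your ``second route''. It observes that $H$, being connected affine solvable, has trivial Galois cohomology $H^1(H,\mathcal K)$, applies Theorem \ref{ThKolchinH} to obtain a gauge transformation with coefficients in $\mathcal K$ carrying $\vec A$ into $\mathcal R(H)\otimes_{\mathcal C}\mathcal K$, and then invokes Theorem \ref{C4THE4.2.12}. Your first route --- identify the Galois group of the strongly normal extension $\mathcal K\subset\mathcal L$ (Theorem \ref{C3THEsne}) with $\Gal_{\mathfrak x}(G_{\mathcal K},\partial_{\vec A})$ (Theorem \ref{C3THEauto}), note that this group is a Zariski closed subgroup of $H$ and hence affine and solvable, and conclude by Proposition \ref{C4PRO4.2.10} (2) --- is shorter, bypasses both Galois cohomology and the Lie--Kolchin reduction machinery, and, as you correctly observe, never uses the connectedness of $H$; the paper's route does need connectedness, precisely to guarantee the vanishing of $H^1$ over the non-algebraically-closed field $\mathcal K$. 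What the paper's route buys in exchange is constructive content: it does not merely certify that $\mathcal L$ is generated by exponentials and integrals, it exhibits a $\mathcal K$-rational gauge transformation reducing the automorphic system to a concrete equation in the smaller group $H$, to which the inductive quadrature procedure of Theorem \ref{C4THE4.2.11} can then be applied. Your route instead delegates that work to Proposition \ref{C4PRO4.2.10} (2), whose proof (via Lie--Kolchin resolutions and the Galois correspondence) is established earlier in the paper, so there is no circularity and your proof stands as written.
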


\begin{proof}
  $H$ is connected affine solvable an then it has trivial Galois
  cohomology. We can reduce to the group $H$ by means of theorem
  \ref{ThKolchinH}. Hence, we are in the hypothesis of theorem
  \ref{C4THE4.2.12}.
\end{proof}

\subsection{Linearization}\label{C4SS4.2.3}

  There exist non-linear non-linearizable algebraic groups. 
An algebraic group that does not admit any linear representation
is called quasi-abelian. In other words, a quasi-abelian variety is an algebraic group
$G$ such that $\mathcal O_G(G) = \mathcal C$. Algebraic groups over an algebraic closed base field
$C$, which are complete and connected, are called abelian varieties. Since they are
complete varieties, they do not admit non-constant global regular functions and then
they are quasi-abelian.

  The following results give us the structure of the algebraic groups
by terms of linear and quasi-abelian algebraic groups. See, for instance
\cite{Sa}.

\begin{theorem}[Rayleigh decomposition]
  Let $G$ be an algebraic group. There is a unique subgroup $X\in G$ such
  that, $X$ is quasi-abelian and $G/X$ is an affine group.
\end{theorem}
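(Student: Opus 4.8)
The plan is to realize $X$ as the smallest normal subgroup of $G$ whose quotient is affine, and then to verify that this distinguished subgroup is quasi-abelian and is the unique subgroup with the stated property. (Note that for $G/X$ to be \emph{an affine group} we must have $X \lhd G$, so throughout I read $X$ as a normal subgroup.) First I would consider the family $\mathcal N$ of normal algebraic subgroups $N \lhd G$ for which $G/N$ is affine. It is nonempty, since $G \in \mathcal N$, and it is stable under finite intersection: if $N_1, N_2 \in \mathcal N$, then the diagonal homomorphism exhibits $G/(N_1\cap N_2)$ as a closed subgroup of the affine group $(G/N_1)\times_{\mathcal C}(G/N_2)$, and a closed subgroup of an affine group is affine. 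Because $G$ is Noetherian, the family of finite intersections of members of $\mathcal N$ has a minimal element $X = N_1 \cap \cdots \cap N_k$; minimality forces $X \subseteq N$ for every $N \in \mathcal N$, so $X = \bigcap_{N\in\mathcal N} N$ is itself a finite intersection, hence $X \in \mathcal N$. Thus $X \lhd G$, $G/X$ is affine, and $X$ is the unique minimal element of $\mathcal N$.

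Next I would prove that $X$ is quasi-abelian, i.e.\ $\mathcal O_X(X) = \mathcal C$. Suppose not. Then the affinization $X^{\mathrm{aff}} = \Spec(\mathcal O_X(X))$ is a nontrivial affine group, and the canonical homomorphism $X \to X^{\mathrm{aff}}$ has kernel $Y \subsetneq X$. Since the formation of $\mathcal O_X(X)$ and of the affinization morphism is functorial in $X$, the subgroup $Y$ is preserved by every automorphism of $X$; as $X \lhd G$, conjugation by an arbitrary $g \in G$ restricts to an automorphism of $X$, whence $gYg^{-1} = Y$ and $Y \lhd G$. Now $X/Y \cong X^{\mathrm{aff}}$ is affine and $(G/Y)/(X/Y) \cong G/X$ is affine, so $G/Y$ is an extension of an affine group by an affine group; the quotient morphism $G/Y \to G/X$ is affine over the affine base $G/X$, hence $G/Y$ is affine. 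This places $Y$ in $\mathcal N$ with $Y \subsetneq X$, contradicting the minimality of $X$. Therefore $\mathcal O_X(X) = \mathcal C$ and $X$ is quasi-abelian.

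For uniqueness, let $X'$ be any quasi-abelian normal subgroup with $G/X'$ affine. Then $X' \in \mathcal N$, so $X \subseteq X'$ by minimality. Since both are normal, $X'/X$ is a normal subgroup of the affine group $G/X$, hence affine; but the induced surjection of $X'$ onto $X'/X$ is a homomorphism from an anti-affine group to an affine group, and such a homomorphism is necessarily trivial (a non-constant function on the image would pull back to a non-constant element of $\mathcal O_{X'}(X') = \mathcal C$). Hence $X'/X$ is trivial, $X' = X$, and the decomposition is unique.

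The main obstacle is the step identifying $X$ as quasi-abelian, and two ingredients there deserve care. The first is the functoriality of affinization, which is exactly what makes $Y = \ker(X \to X^{\mathrm{aff}})$ a characteristic subgroup of $X$ and therefore normal in $G$; the second is the fact that an extension of an affine group by an affine group is again affine. Both rest on Rosenlicht's theory of quotients, invoked earlier in the text, and on the finite generation of the algebra of global regular functions of an algebraic group, which is what legitimizes the affinization $X \to X^{\mathrm{aff}}$ as a morphism of algebraic groups. I would isolate these two facts as auxiliary lemmas rather than reprove them in line.
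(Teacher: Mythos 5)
Your argument is correct, but it is not the paper's argument --- in fact the paper gives no proof of this theorem at all: it is stated as a known structure result with a pointer to \cite{Sa}, and the only construction the paper itself gestures at (in the later subsection on linearization by means of global regular functions) is the direct one, $X=\ker\bigl(G\to \Spec\Gamma(\mathcal O_G,G)\bigr)$, with the quasi-abelianness of that kernel again delegated to \cite{Sa}. Your route is genuinely different: you characterize $X$ abstractly as the smallest member of the family $\mathcal N$ of normal subgroups with affine quotient (existence via stability of $\mathcal N$ under finite intersections and the Noetherian descending chain condition), you use the affinization of $X$ only negatively, to contradict minimality when $\mathcal O_X(X)\neq\mathcal C$, and you get uniqueness essentially for free from minimality together with the observation that a quasi-abelian group admits no nontrivial homomorphism into an affine group. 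Compared with the direct construction this buys something real: there, the hard step is precisely to prove that the kernel of the affinization of $G$ is anti-affine, whereas your minimality argument needs only the weaker statement that a non-quasi-abelian group has a proper characteristic subgroup with affine quotient, and uniqueness is built in rather than being a separate verification. The cost is the two inputs you correctly isolate: finite generation of $\Gamma(\mathcal O_X,X)$ (which is what makes $X\to X^{\mathrm{aff}}$ a homomorphism of algebraic groups onto an affine algebraic group) and the fact that an extension of an affine group by an affine group is affine. Neither is elementary --- the first is of essentially the same depth as the theorem itself --- but both are standard and are proved in the paper's own reference \cite{Sa}, so quoting them as lemmas is consistent with the level at which the paper treats this statement; just be aware that virtually all the substance of the proof is concentrated in those two citations, so they should be made precise.
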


\begin{theorem}[Chevalley-Barsotti-Sancho]\label{ThChevalleyBS}
  Let $G$ be a connected algebraic group over $\mathcal C$, with $\mathcal C$ an algebraically
  closed field of characteristic zero. Then there is a unique normal affine
  subgroup $N\subset G$ such that the quotient $G/N$ is an abelian
  variety.
\end{theorem}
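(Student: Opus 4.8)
The plan is to split the statement into two parts, uniqueness, which is essentially formal, and existence, which is the genuine content of Chevalley's theorem. For uniqueness, suppose $N_1$ and $N_2$ are normal affine subgroups with $G/N_1$ and $G/N_2$ abelian varieties. I would study the composite $N_1\hookrightarrow G\to G/N_2$. Its image is a homomorphic image of an affine algebraic group, hence affine, and simultaneously a closed subgroup of the complete variety $G/N_2$, hence complete. A connected complete affine group is a single point, so the image of the identity component $N_1^0$ is trivial, giving $N_1^0\subset N_2^0$; by symmetry $N_1^0=N_2^0$. Since the subgroup produced in the existence step below is connected, this yields uniqueness of the connected affine normal subgroup with abelian quotient (a finite normal subgroup may be absorbed without changing the identity component, so the canonical object is $N=N^0$).

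For existence I would follow the compactification-and-linearization method of Rosenlicht and Barsotti, in the form reproduced in \cite{Sa}. Since $\mathcal C$ has characteristic zero, resolution of singularities is available, so I may realize $G$ as a dense open subvariety of a \emph{smooth} projective variety $\bar G$. The first key step is to make the left-translation action of $G$ on itself extend to a regular action $G\times_{\mathcal C}\bar G\to \bar G$: translations furnish a rational action on $\bar G$, and after an equivariant modification (again using resolution in characteristic zero) this becomes an everywhere-regular action for which the boundary $Y=\bar G\setminus G$ is a $G$-stable divisor.

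The second key step is linearization. I would consider the invertible sheaf $\mathcal L=\mathcal O_{\bar G}(Y)$ attached to the boundary. Because $Y$ is disjoint from $G$, the restriction $\mathcal L|_{G}$ is trivial, with the canonical section $s_Y$ cutting out $Y$ being nowhere zero on $G$; this is precisely why the associated $G$-equivariant morphism $\varphi\colon \bar G\to \mathbb P(V)$, with $V=H^0(\bar G,\mathcal L)$ and $G$ acting linearly after passing to a suitable power of $\mathcal L$, will \emph{contract} $G$ onto its abelian part rather than embed it. I set $N$ to be the (identity component of the) stabilizer of the line $[s_Y]$, equivalently the kernel of the induced projective representation $G\to PGL(V)$. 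The equivariance of $\varphi$ identifies $G/N$ with the orbit $\varphi(G)$.

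The final step is to identify the pieces and see they are of the asserted type. One shows $N$ is affine, acting faithfully and linearly on the finite-dimensional data attached to the conormal sheaf of $Y$, while $G/N\cong\varphi(G)$ sits inside the complete variety $\varphi(\bar G)=\varphi(G)$; connected and complete, it is therefore an abelian variety in the sense fixed earlier in the text. The main obstacle, where all the work concentrates, is exactly this existence argument: arranging a regular translation action on a smooth projective compactification and producing the $G$-linearized line bundle $\mathcal O_{\bar G}(Y)$ whose associated map has affine contracted fibres and \emph{complete} image, so that $\varphi(G)$ is already closed. Given the quotient theory and the characterization of abelian varieties recorded above, uniqueness and this final identification are then routine.
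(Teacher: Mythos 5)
The paper offers no proof to compare against: Theorem \ref{ThChevalleyBS} is quoted there as a known structure theorem with a pointer to \cite{Sa}, so your proposal has to stand on its own. Your uniqueness argument is correct (the image of an affine normal subgroup in the complete quotient $G/N_2$ is a closed, affine and complete subgroup, hence trivial on identity components), and the caveat you add is genuinely needed: as literally stated, with $N$ not required to be connected, uniqueness fails --- for an elliptic curve $E$ both $\{e\}$ and the finite (hence affine) normal subgroup $E[n]$ have abelian-variety quotients --- so the canonical object is indeed the connected $N$.

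The existence part, however, contains a genuine error, not merely a gap: the boundary-divisor linearization produces the wrong decomposition, and you conflate three different subgroups. First, the stabilizer of the line $[s_Y]$ is all of $G$: a connected $G$ fixes each component of the divisor $Y$, and on a complete integral variety two sections of $\mathcal O_{\bar G}(Y)$ with the same divisor are proportional, so $[s_Y]$ is a $G$-fixed point and this definition returns $N=G$, which is wrong whenever $G$ is not affine. Second, the kernel $N$ of $G\to PGL(V)$ is a different group, and for it $G/N$ is (isomorphic to) a closed subgroup of the affine group $PGL(V)$, hence \emph{affine}; a connected group that is simultaneously affine and an abelian variety is a point, so this $N$ can satisfy the theorem only when $G$ itself is affine. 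What this kernel construction actually proves is the dual statement --- the ``Rayleigh''/Rosenlicht decomposition stated just before Theorem \ref{ThChevalleyBS}: representations of $G$ on spaces of rational functions with poles along $Y$ detect exactly the maximal affine \emph{quotient} of $G$, never an abelian-variety quotient. Third, the identification $G/N\cong\varphi(G)$ would require $N$ to be the stabilizer of the point $\varphi(e)$, yet another group, and the claimed completeness $\varphi(G)=\varphi(\bar G)$ is already false for $G=\mathbb{G}_a$, $\bar G=\mathbb{P}^1$, $Y=\{\infty\}$: there $\mathcal L=\mathcal O(1)$, $\varphi$ is the identity, and $\varphi(G)=\mathbb{A}^1$ is not complete. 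To reach an abelian-variety quotient one needs homomorphisms \emph{into} an abelian variety (the Albanese or $\mathrm{Pic}^0$ of $\bar G$), or a reduction to the anti-affine case followed by a completeness argument via the theorem of the square; that is where the real content of Chevalley--Barsotti lies. A lesser but real issue: your first step, a smooth projective compactification on which all translations act regularly, is not a routine consequence of resolution of singularities; equivariant completion (Sumihiro) is a theorem about \emph{linear} groups, and for general $G$ the standard proof of such a completion already uses the Chevalley decomposition, so as stated that step risks circularity.
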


\subsection{Reduction by means
Chevalley-Barsotti-Sancho Theorem}

  In virtue of Chevalley-Barsotti-Sancho theorem
(\ref{ThChevalleyBS} in appendix B), there is a unique linear normal
connected algebraic group $N \lhd G$ such that the quotient $G/N$
and is an abelian variety $V$. Let us consider the projection
$\pi\colon G \to V$. Let $\vec B$ be the projected automorphic
system  $\pi(\vec A)$ in $V$, and denote by $\mathfrak y$ the image
of $\mathfrak x$ by $\pi$. We state the following:

\begin{theorem}\label{C4THE4.2.14}
  Let $\mathcal M$ be the field of meromorphic functions in
$V_{\mathcal K}$. Assume that $\Gal_{\mathfrak y}(V_{\mathcal K},
\partial_{\vec B}) = V$, and one of the following hypothesis:
\begin{enumerate}
\item[(1)] $H^1(N,\mathcal M)$ is trivial.
\item[(2)] $\mathcal K$ is relatively algebraically closed in $\mathcal
L$.
\end{enumerate}
Then, there is a gauge transformation of $G$ with coefficients in
$\mathcal M$ reducing the automorphic system $\vec A$ to $N$.
\end{theorem}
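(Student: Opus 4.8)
The plan is to run a Lie--Kolchin reduction over the larger base field $\mathcal M$, taking as homogeneous space the quotient $M = G/N = V$ itself, whose isotropy subgroup at the base point $x_0 = [e]$ is exactly $N$. First I would identify $\mathcal M$ as a splitting field of the projected system. Since by hypothesis $\Gal_{\mathfrak y}(V_{\mathcal K},\partial_{\vec B}) = V$, the only differential point of $(V_{\mathcal K},\partial_{\vec B})$ is its generic point $\mathfrak y$ (as in the proof of Theorem~\ref{Nihioka}), so $\kappa(\mathfrak y)$ is precisely the function field $\mathcal M$ of $V_{\mathcal K}$; by Proposition~\ref{C3PRO3.2.5} the extension $\mathcal K \subset \mathcal M$ splits $(V_{\mathcal K},\partial_{\vec B})$. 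By Lemma~\ref{lmLV1} the abelian automorphic equation $l\partial(x) = \vec B$ then has a solution $\bar\sigma \in V(\mathcal M)$, namely the tautological $\mathcal M$-point attached to $\mathfrak y$.

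Next I would recast the desired reduction as a lifting problem. Because $V$ is abelian and $N = \ker(\pi)$, a gauge transformation $L_\tau$ with $\tau \in G(\mathcal M)$ carries $\vec A$ into $\mathcal R(N)\otimes_{\mathcal C}\mathcal M$ if and only if the projected field vanishes, i.e.\ $l\partial(\pi(\tau)) = -\vec B$, which (using abelianness of $V$) means $\pi(\tau)^{-1} = \bar\sigma$. Thus the reduced field $\vec C = \Adj_{\tau}(\vec A) + l\partial(\tau)$ lies in $\mathcal R(N)\otimes_{\mathcal C}\mathcal M$ exactly when $\tau$ is an $\mathcal M$-rational lift of $\bar\sigma^{-1}$ along $\pi\colon G \to V$. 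The fibre $\pi^{-1}(\bar\sigma)$ is a principal homogeneous space over $N_{\mathcal M}$ (Lemma~\ref{LmPrincipalStalk}), and the lift exists if and only if its class $[\pi^{-1}(\bar\sigma)] \in H^1(N,\mathcal M)$ is the distinguished point. This class is automatically trivialised over $\mathcal L$, since the fundamental solution $\sigma$ associated to $\mathfrak x$ is an $\mathcal L$-point of $\pi^{-1}(\bar\sigma)$. The reformulation is precisely the input to Theorem~\ref{C4THE4.1.5} with $M = G/N$, base point $x_0$, isotropy $N$, and the $\mathcal M$-rational solution $\bar\sigma$.

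Under hypothesis~(1) the argument closes at once: $H^1(N,\mathcal M)$ being trivial, the torsor $\pi^{-1}(\bar\sigma)$ has an $\mathcal M$-rational point, so Theorem~\ref{C4THE4.1.5} (applied with base field $\mathcal M$) produces the gauge transformation $L_\tau$, $\tau \in G(\mathcal M)$, reducing $\vec A$ to an automorphic field in $N$.

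Under hypothesis~(2) the obstruction must be killed by connectedness rather than by the vanishing of the whole cohomology set. Here $\mathcal K$ relatively algebraically closed in $\mathcal L$ means $\Gal_{\mathfrak x}(G_{\mathcal K},\partial_{\vec A})$ is connected. I would first apply Kolchin's reduction (Theorem~\ref{C4THE4.1.10}) with $\mathcal K^\circ = \mathcal K$, so that one may assume $\vec A \in \mathcal R(H)\otimes_{\mathcal C}\mathcal K$ for the connected Galois group $H$, which still satisfies $H\cdot N = G$ because $\Gal_{\mathfrak y} = V$. The residual torsor to be trivialised is then $\overline{\{\mathfrak z\}}$, the Zariski closure of a closed differential point $\mathfrak z$ of $(G_{\mathcal M},\partial_{\vec A})$ lying over $\mathfrak x$, a principal homogeneous space over $(H\cap N)_{\mathcal M}$ with $\Gal(\mathcal L/\mathcal M) = H\cap N$. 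Running the connected-components argument of Theorem~\ref{C4THE4.1.10} on this torsor --- decomposing its base change to the residue field of a closed point into components, each mapping isomorphically onto the base --- exhibits a closed point whose residue field is an \emph{intermediate differential} extension of $\mathcal M \subset \mathcal L$, which forces it down to $\mathcal M$ and yields the sought $\mathcal M$-rational lift. The heart of the proof, and the main obstacle, is exactly this trivialisation of the fibre torsor over $\mathcal M$: everything reduces to the single Galois-cohomology class $[\pi^{-1}(\bar\sigma)]$, and the content of the theorem is that each hypothesis suffices to annihilate it --- (1) directly, (2) through connectedness of the Galois group together with relative algebraic closedness.
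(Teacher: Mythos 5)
Your treatment of hypothesis (1) is correct and is, in substance, the paper's own argument: identifying $\kappa(\mathfrak y)=\mathcal M$, producing the tautological solution $\bar\sigma\in V(\mathcal M)$, and feeding it into Theorem \ref{C4THE4.1.5} with $M=G/N$ is exactly what the paper's one-line appeal to Theorem \ref{ThKolchinH} unwinds to, applied over the base field $\mathcal M$, where by the Galois correspondence $\Gal(\mathcal L/\mathcal M)=N\cap\Gal_{\mathfrak x}(G_{\mathcal K},\partial_{\vec A})\subset N$. Your preliminary normalization in case (2) (first reducing $\vec A$ into $\mathcal R(H)\otimes_{\mathcal C}\mathcal K$ for the connected Galois group $H$, using $\mathcal K^\circ=\mathcal K$) is also a legitimate variant of the paper's direct application of Theorem \ref{C4THE4.1.10} over $\mathcal M$.

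Case (2), however, has a genuine gap, located at the words ``which forces it down to $\mathcal M$''. Re-running the connected-components argument of Theorem \ref{C4THE4.1.10} over the base $\mathcal M$ produces a closed point $\tau$ of the torsor whose residue field $\kappa(\tau)$ is an intermediate differential extension of $\mathcal M\subset\mathcal L$, algebraic over $\mathcal M$; by the Galois correspondence this identifies $\kappa(\tau)$ with $\mathcal M^\circ$, the relative algebraic closure of $\mathcal M$ in $\mathcal L$, that is, the fixed field of the identity component of $\Gal(\mathcal L/\mathcal M)=N\cap\Gal_{\mathfrak x}$ --- and nothing more. That much is already the conclusion of Theorem \ref{C4THE4.1.10} with no hypothesis at all. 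To descend from $\mathcal M^\circ$ to $\mathcal M$ you must prove that $N\cap\Gal_{\mathfrak x}$ is connected, and this does not follow formally from hypothesis (2): connectedness of $\Gal_{\mathfrak x}$ is not inherited by the kernel of its surjection onto $V$, because a connected algebraic group can map onto an abelian variety with finite nontrivial kernel (an isogeny); your intermediate conclusions ($H$ connected, $\pi(H)=V$) are satisfied in such isogeny-type configurations, where $H\cap N$ is finite and nontrivial, so no argument using only those facts can close the case. This connectedness transfer is precisely the content of the paper's proof of case (2): there $\Gal_{\mathfrak x}$ is realized as a principal bundle over $V$ with structure group $\Gal(\mathcal L/\mathcal M)$, connectedness of that structure group is deduced, hence $\mathcal M$ is relatively algebraically closed in $\mathcal L$, and only then do the coefficients of the Kolchin gauge transformation land in $\mathcal M$ itself. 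Your proof supplies nothing in place of this step.
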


\begin{proof}
  Let us consider $\vec A$ as an automorphic vector field in $G$
with coefficients in $\mathcal M$. By Galois correspondence we have:
  $$\Gal(\mathcal L/\mathcal M) \simeq \Gal_{\mathfrak x}(G_{\mathcal K},\partial_{\vec A})\cap N.$$
If hypothesis (1) holds, then the statement is a particular case of
Theorem \ref{ThKolchinH}. Let us prove the result in the case of
hypothesis (2). By Theorem \ref{C4THE4.1.10} there exists a gauge
transformations whose coefficients are algebraic over $\mathcal M$.
By hypothesis $\Gal_{\mathfrak x}(G_{\mathcal K},\partial_{\vec A})$
is connected. This group $\Gal_{\mathfrak x}(G_{\mathcal
K},\partial_{\vec A})$ realizes itself as a principal bundle over
$V$ whose structural group os $\Gal(\mathcal L/\mathcal M)$. It
implies that $\Gal(\mathcal L/\mathcal M)$ is also connected. So
that $\mathcal M$ is relatively algebraically closed in $\mathcal
L$. The coefficients of the considered gauge transformation are in
$\mathcal M$, as we wanted to prove.
\end{proof}

\subsection{Linearization by means of Adjoint
Representation}

We consider $GL(\mathcal R(G))$ the group of $\mathcal C$-linear
automorphisms of the Lie algebra $\mathcal R$. It is an algebraic
group over $\mathcal C$. The adjoint representation
$$\Adj\colon G\to GL(\mathcal R(G))$$
is a morphism of algebraic groups. It gives us a linearization of
the equations. Let us consider the center $\mathfrak Z(G)$ and the
exact sequence:
$$0 \to \mathfrak Z(G) \to G \to GL(\mathcal R(G)) \to 0$$
Denote by $\vec B$ the projection of the automorphic vector field
$\vec A$ by the morphism $\Adj$. It is a linear system and then its
Galois extension $\mathcal K \subset \mathcal P$ is a Picard-Vessiot
intermediate extension of $\mathcal K\subset \mathcal L$.

\begin{proposition}\label{C4PRO4.2.15}
 $\mathcal P \subset \mathcal L$ is a strongly normal extension and $\Gal(\mathcal L/\mathcal P)$ is
an abelian group.
\end{proposition}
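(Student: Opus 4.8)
The plan is to read off both assertions from the structure theorem for group morphisms, Theorem \ref{ThGM}, applied to the homomorphism $\Adj\colon G \to \Adj(G)$, where I replace the target $GL(\mathcal R(G))$ by the image $\Adj(G)$ so that the morphism becomes surjective (as the reduction at the start of the subsection on Galois correspondence and group morphisms permits). The kernel of this morphism is the center $\mathfrak Z(G)$, recorded in the displayed exact sequence, and the projected system is exactly $\vec B = \Adj(\vec A)$, whose Galois extension is $\mathcal P$.

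First I would identify $\mathcal P$ with $\kappa(\mathfrak y)$, where $\mathfrak y = \Adj(\mathfrak x)$. By Theorem \ref{ThGM}(1), $\mathfrak y$ is a Kolchin closed differential point of $\Diff(\Adj(G)_{\mathcal K}, \partial_{\vec B})$, so its rational field is a Galois extension of $(\Adj(G)_{\mathcal K}, \partial_{\vec B})$; by the uniqueness of Galois extensions (Corollary \ref{C3CORuniqueness}) this is $\mathcal P$. Next, choosing a Kolchin closed point $\mathfrak z$ of $(G_{\mathcal P}, \partial_{\vec A})$ in the fiber of $\mathfrak x$, Lemma \ref{lmFSBaseChange} shows that the fundamental solution $\sigma$ is unchanged and that $\kappa(\mathfrak z) = \mathcal L$. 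Hence $\mathcal L$ is the Galois extension of the automorphic system $(G_{\mathcal P}, \partial_{\vec A})$ attached to $\mathfrak z$.

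Strong normality of $\mathcal P \subset \mathcal L$ is then immediate from Theorem \ref{C3THEsne}, which asserts that every such Galois extension is strongly normal. For the Galois group I would invoke Theorem \ref{ThGM}(4): the group $\Gal_{\mathfrak z}(G_{\mathcal P}, \partial_{\vec A})$, which by Theorem \ref{C3THEauto} is precisely $\Gal(\mathcal L/\mathcal P)$, equals $\ker(\Adj) \cap \Gal_{\mathfrak x}(G_{\mathcal K}, \partial_{\vec A}) = \mathfrak Z(G) \cap \Gal_{\mathfrak x}(G_{\mathcal K}, \partial_{\vec A})$. Since the center $\mathfrak Z(G)$ is abelian, every subgroup of it is abelian, so $\Gal(\mathcal L/\mathcal P)$ is abelian, which completes the argument.

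I do not expect a serious obstacle here; the content is essentially bookkeeping within the already-established dictionary between the geometrically defined Galois group and $\Gal(\mathcal L/\mathcal P)$. The one point demanding care is the identification $\Gal(\mathcal L/\mathcal P) = \mathfrak Z(G) \cap \Gal_{\mathfrak x}(G_{\mathcal K}, \partial_{\vec A})$: I must apply Theorem \ref{ThGM} with the correct base field $\mathcal P = \kappa(\mathfrak y)$ and the correct fiber point $\mathfrak z$, keeping the downstairs point $\mathfrak y \in \Diff(\Adj(G)_{\mathcal K}, \partial_{\vec B})$ distinct from the upstairs point $\mathfrak z \in \Diff(G_{\mathcal P}, \partial_{\vec A})$, and use that $\ker(\Adj) = \mathfrak Z(G)$ exactly as given by the exact sequence.
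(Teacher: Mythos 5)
Your proposal is correct and follows essentially the same route as the paper: the paper's (much terser) proof likewise observes that $\mathcal L$ is a Galois extension of $\vec A$ regarded with coefficients in $\mathcal P$ (hence strongly normal by Theorem \ref{C3THEsne}), and obtains $\Gal(\mathcal L/\mathcal P) = \mathfrak Z(G)\cap \Gal_{\mathfrak x}(G_{\mathcal K},\partial_{\vec A})$ by the Galois correspondence through the group morphism $\Adj$, which is exactly your appeal to Theorem \ref{ThGM}(4) together with Theorem \ref{C3THEauto}. Your write-up merely makes explicit the bookkeeping (restriction to the image $\Adj(G)$, the points $\mathfrak y$, $\mathfrak z$, and Lemma \ref{lmFSBaseChange}) that the paper leaves implicit.
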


\begin{proof}
  The extension $\mathcal P\subset \mathcal L$ is a Galois extension of $\vec A$ with coefficients in
$\mathcal P$, so that it is strongly normal. Its Galois group is, by
the Galois correspondence, the intersection of the Galois group of
$\Gal_{\mathfrak x}(G_{\mathcal K},\partial_{\vec A})$ with the
center $\mathfrak Z(G)$; it is an abelian group.
\end{proof}

\subsection{Linearization by means of Global Regular Functions}

 The ring of global regular functions $\Gamma(\mathcal O_{G},G)$ is a Hopf
 algebra, and then it spectrum is a linear algebraic group $L =
 \Spec(\Gamma(\mathcal O_G,G))$. The kernel $C$ of the canonical
 morphism $\pi\colon G\to L$ is, by definition a quasi-abelian variety (see \cite{Sa}). Let
 us consider the exact sequence:
$$ 0 \to C \to G \to L \to 0.$$
  We proceed as we did in Proposition \ref{C4PRO4.2.15}, and then we obtain
the following result.

\begin{proposition}\label{C4PRO4.2.16}
  Let $\mathcal K\subset \mathcal P$ be the Picard-Vessiot extension of
the automorphic system $\pi(\vec A)$ in $L$. Then $\mathcal P
\subset \mathcal L$ is a strongly normal extension, and the
connected component of the identity of its Galois group is a
quasi-abelian variety.
\end{proposition}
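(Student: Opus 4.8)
The plan is to imitate the proof of Proposition \ref{C4PRO4.2.15} almost verbatim, replacing the adjoint morphism there by the affinization morphism $\pi\colon G\to L$ and the center $\mathfrak Z(G)$ by the quasi-abelian kernel $C$. First I would apply Theorem \ref{ThGM} to the surjective morphism $\pi\colon G\to L$ and the projected automorphic vector field $\vec B=\pi(\vec A)$. Part (1) gives that $\mathfrak y=\pi(\mathfrak x)$ is a Kolchin closed differential point of $\Diff(L_{\mathcal K},\partial_{\vec B})$, and part (2) gives that $\kappa(\mathfrak y)$ is a strongly normal intermediate extension $\mathcal K\subset\kappa(\mathfrak y)\subset\mathcal L$. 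Since $L$ is a \emph{linear} algebraic group, this Galois extension is precisely the Picard-Vessiot extension $\mathcal P$ of the linear system $\vec B$, which realizes $\mathcal P$ as an intermediate field $\mathcal K\subset\mathcal P\subset\mathcal L$.

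Next I would show that $\mathcal P\subset\mathcal L$ is strongly normal, exactly as in Proposition \ref{C4PRO4.2.15}. Viewing $\vec A$ as an automorphic vector field with coefficients in $\mathcal P$, I take a Kolchin closed differential point $\mathfrak z$ of $\Diff(G_{\mathcal P},\partial_{\vec A})$ lying in the fibre of $\mathfrak x$. By Lemma \ref{lmFSBaseChange} the fundamental solution $\sigma$ associated with $\mathfrak x$ is also the fundamental solution associated with $\mathfrak z$, and $\kappa(\mathfrak z)=\mathcal L$. Hence $\mathcal P\subset\mathcal L$ is a Galois extension of the automorphic system $(G_{\mathcal P},\partial_{\vec A})$, so Theorem \ref{C3THEsne} yields that it is strongly normal. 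Combining Theorem \ref{ThGM}(4) with the identification of the geometric Galois group with the automorphism group (Theorem \ref{C3THEauto}) gives
$$\Gal(\mathcal L/\mathcal P)=\Gal_{\mathfrak z}(G_{\mathcal P},\partial_{\vec A})=\ker(\pi)\cap\Gal_{\mathfrak x}(G_{\mathcal K},\partial_{\vec A})=C\cap\Gal_{\mathfrak x}(G_{\mathcal K},\partial_{\vec A}),$$
so $\Gal(\mathcal L/\mathcal P)$ is an algebraic subgroup of the quasi-abelian variety $C$.

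In Proposition \ref{C4PRO4.2.15} the final step was immediate, since any subgroup of the abelian group $\mathfrak Z(G)$ is abelian; here the corresponding assertion is that the connected component of the identity of a subgroup of a quasi-abelian variety is again quasi-abelian, and this is where I expect the real difficulty to lie. The naive group-theoretic inheritance fails: a connected subgroup of a quasi-abelian (anti-affine) variety need not be quasi-abelian, as $\mathbb G_m$ embeds as a closed subgroup of a non-split extension of an elliptic curve by $\mathbb G_m$. So the conclusion must use more than the ambient group $C$. My approach would be to exploit that $\mathcal P$ is the Picard-Vessiot extension attached to the affinization $L=\Spec(\Gamma(\mathcal O_G,G))$, which is the \emph{universal} linear quotient of $G$: every nonconstant global regular function on $G$, hence every linear quotient of the system, has already been absorbed into $\mathcal P$. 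Concretely, I would argue that the affinization of $\Gal(\mathcal L/\mathcal P)^{0}$ must be trivial, since a nontrivial linear quotient of this residual group would, through the structure of the quasi-abelian group $C$ (Theorem \ref{ThChevalleyBS} and the Rayleigh decomposition), produce a nonconstant regular datum not visible in $L$, contradicting the maximality of $L$ as the affinization of $G$. Once the affinization of the identity component is seen to be trivial, $\Gal(\mathcal L/\mathcal P)^{0}$ carries only constant global regular functions, i.e. it is a quasi-abelian variety, which completes the argument.
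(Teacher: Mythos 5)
The parts of your proposal that establish strong normality and identify the Galois group are correct, and they coincide with the paper's own (very terse) proof: the paper literally says ``we proceed as in Proposition \ref{C4PRO4.2.15}'', i.e.\ one views $\vec A$ as an automorphic field with coefficients in $\mathcal P$, notes that $\mathcal L$ is a Galois extension of $(G_{\mathcal P},\partial_{\vec A})$ (hence strongly normal, Theorem \ref{C3THEsne}), and obtains $\Gal(\mathcal L/\mathcal P)=C\cap\Gal_{\mathfrak x}(G_{\mathcal K},\partial_{\vec A})$ from the Galois correspondence (Theorem \ref{ThGM}). Your further observation is also correct and is the crucial point: ``a subgroup of an abelian group is abelian'' has no analogue for quasi-abelian groups, since a connected closed subgroup of a quasi-abelian group can be affine.

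However, your proposed repair cannot work. The universality of $L=\Spec(\Gamma(\mathcal O_G,G))$ constrains affine \emph{quotients} of $G$, not affine quotients of \emph{subgroups} of $C$: if $N=\Gal(\mathcal L/\mathcal P)\subset C$ and $N^0\to Q$ is a nontrivial affine quotient, this induces no nonconstant regular function on $G$ (nor on $C$), precisely because characters of a subgroup of a quasi-abelian group need not extend to the ambient group --- the very mechanism behind your $\mathbb G_m$ example. In fact that example can be promoted to a counterexample of the proposition as stated, so no argument can close the gap. Take $G$ a quasi-abelian extension $0\to\mathbb G_m\to G\to E\to 0$ of an elliptic curve $E$ (extension class a non-torsion point of $\mathrm{Pic}^0(E)$, which forces $\mathcal O_G(G)=\mathcal C$), and $\vec A=a\,\vec v$, where $\vec v$ spans $\mathcal R(\mathbb G_m)\subset\mathcal R(G)$ and $a\in\mathcal K$ is chosen so that $\partial x=ax$ has Picard-Vessiot group $\mathbb G_m$. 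Then $\vec A$ is tangent to $\mathbb G_{m,\mathcal K}$, whose generic point is a Kolchin closed point $\mathfrak x$ of $\Diff(G_{\mathcal K},\partial_{\vec A})$ with isotropy group $\Gal_{\mathfrak x}(G_{\mathcal K},\partial_{\vec A})=\mathbb G_m$; meanwhile $L$ is the trivial group, so $\mathcal P=\mathcal K$ and $\Gal(\mathcal L/\mathcal P)^0=\mathbb G_m$, which is affine, not quasi-abelian. What your analysis has really uncovered is a gap in the paper itself: the proposition needs an additional hypothesis, e.g.\ that $\Gal_{\mathfrak x}(G_{\mathcal K},\partial_{\vec A})$ contains $C$ (in particular, that it equals $G$, compare the hypothesis of Theorem \ref{C4THE4.2.14}), in which case $\Gal(\mathcal L/\mathcal P)=C$ and the conclusion is immediate because $C$ is connected and quasi-abelian. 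Without such a hypothesis only the exact analogue of Proposition \ref{C4PRO4.2.15} survives: $\Gal(\mathcal L/\mathcal P)$ is abelian, since quasi-abelian groups are commutative.
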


\section{Integrability of Linear Equations}

This section is devoted to the Liouville integrability of linear
differential equations. Since the development of Picard-Vessiot
system it is a rich field of research, let us cite some important
specialized literature \cite{Kov0}, \cite{SingerUlmer1},
\cite{SingerUlmer2}, \cite{UlmerWeil}, \cite{HoeijWeil},
\cite{HRUW}. Here, we adopt a slightly different point of view on
linear differential equations. We see them as automorphic systems.
It gives us some insight into the geometric mechanisms that allows
quadratures. In this way we are able to measure the solvability of
the Galois groups, in terms of equations in flag varieties and
grassmanians (Theorem \ref{C4THE4.3.2}). They are the natural
geometrical generalization of Riccati equations.

From now on let $G$ be a \emph{linear} connected algebraic group
over $\mathcal C$. We consider $\vec A$ an automorphic vector field
in $G$ with coefficients in $\mathcal K$.

\index{flag variety}
\subsection{Flag Variety}

 We call \emph{Borel subgroup} \index{Borel subgroup} of $G$ to any maximal connected
solvable group of $G$. Borel subgroups are all conjugated and
isomorphic subgroups. The quotient space $G/B$ is a complete variety
(see \cite{Sa} p. 163, th. 10.2).

\begin{definition}
  We call flag variety of $G$ to the homogeneous space quotient $G/B$, being
  $B$ a Borel subgroup of $G$.
\end{definition}

  The flag variety of $G$ is defined up to isomorphism of $G$-homogeneous spaces.
Let us consider $Flag(G)$ a flag variety of $G$, and let
$(Flag(G),\partial_{\vec F})$ be the induced Lie-Vessiot system.

  Let us see a natural generalization of the well-known
theorem of J. Liouville that relates the integrability by
Liouvillian functions of the second order linear homogeneous
differential equation with the existence of an algebraic solution of
an associated Riccati equation. This classical result is the
particular case of $GL(2,\mathbb C)$ in the following \emph{general
Liouville's theorem}.

\begin{theorem}\label{C4THE4.3.2}
  The Galois extension $\mathcal K \subset \mathcal L$ is
Liouvillian if and only if the flag Lie-Vessiot system
$(Flag(G),\partial_{\vec F})$ has an algebraic solution with
coefficients in ${\mathcal K^\circ}$, the algebraic relative closure
of $\mathcal K$ in $\mathcal L$.
\end{theorem}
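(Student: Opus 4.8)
The plan is to prove both implications by relating the existence of an algebraic (that is, $\mathcal K^\circ$-rational) solution of the flag Lie-Vessiot system to the solvability of the identity component of the Galois group, and then to invoke the characterization of Liouvillian extensions in Proposition \ref{C4PRO4.2.10}(3). Throughout I would work over the intermediate base field $\mathcal K^\circ$, using that the Galois group of $\vec A$ with coefficients in $\mathcal K^\circ$ is exactly $\Gal^0_{\mathfrak x}(G_{\mathcal K},\partial_{\vec A})$, the connected component of the identity, as recorded in the discussion following Theorem \ref{ThKolchinH}.

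For the forward implication, suppose $\mathcal K\subset\mathcal L$ is Liouvillian. By Proposition \ref{C4PRO4.2.10}(3) the group $\Gal^0_{\mathfrak x}(G_{\mathcal K},\partial_{\vec A})$ is linear and solvable, hence connected solvable. Since $\mathcal C$ is algebraically closed, every connected solvable subgroup of $G$ is contained in a Borel subgroup; let $B'$ be one containing it. All stabilizers of points of $Flag(G)=G/B$ are conjugates of $B$, so the class $x_0=[B']$ is a $\mathcal C$-rational, hence $\mathcal K^\circ$-rational, point of $Flag(G)$ with isotropy $H_{x_0}=B'\supseteq\Gal^0_{\mathfrak x}(G_{\mathcal K},\partial_{\vec A})$. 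Applying Proposition \ref{PrpRationalX} over the base field $\mathcal K^\circ$ then yields a rational solution $y\in Flag(G)(\mathcal K^\circ)$ of $\vec F$; as $\mathcal K^\circ$ is algebraic over $\mathcal K$, this is precisely an algebraic solution with coefficients in $\mathcal K^\circ$.

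For the converse, suppose $\vec F$ admits a solution $y\in Flag(G)(\mathcal K^\circ)$, and let $\sigma\in G(\mathcal L)$ be the fundamental solution associated with $\mathfrak x$. Since $\mathcal L$ is a splitting field, the gauge $L_{\sigma^{-1}}$ trivializes the system, so the solutions of $(Flag(G)_{\mathcal L},\partial_{\vec F})$ are exactly the points $\sigma\cdot x_0$ with $x_0\in Flag(G)(\mathcal C)$ (Lemma \ref{lmLV2}); after base change to $\mathcal L$ I may thus write $y=\sigma\cdot x_0$ for a single constant point $x_0$. Because the coordinates of $y$ lie in $\mathcal K^\circ$, $y$ is fixed by every $\tau\in\Gal^0_{\mathfrak x}(G_{\mathcal K},\partial_{\vec A})=\Gal(\mathcal L/\mathcal K^\circ)$. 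Translating the Galois action on the fundamental solution into right translation (from the proof of Theorem \ref{C3THEauto}, $s(\tau)(\sigma)=\sigma\cdot\tau$) and using that the action is defined over $\mathcal C$ and $x_0$ is constant, I obtain $\sigma\cdot(\tau\cdot x_0)=\sigma\cdot x_0$, whence $\tau\cdot x_0=x_0$. Thus $\Gal^0_{\mathfrak x}(G_{\mathcal K},\partial_{\vec A})\subset H_{x_0}$, a conjugate of $B$, so it is connected solvable; Proposition \ref{C4PRO4.2.10}(3) then shows that $\mathcal K\subset\mathcal L$ is Liouvillian.

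The facts I would lean on — Borel subgroups are maximal connected solvable, all point stabilizers in $G/B$ are conjugate Borels, and connected solvable subgroups embed in Borels — are standard for linear algebraic groups, and over the algebraically closed field $\mathcal C$ they hold with no rationality complications. The step demanding the most care is the converse, namely pinning down how the Galois group acts on a solution of the flag system: one must turn the abstract automorphism $s(\tau)$ into the concrete right translation $\sigma\mapsto\sigma\cdot\tau$ and verify that a solution rational over $\mathcal K^\circ$ is genuinely $\Gal^0_{\mathfrak x}(G_{\mathcal K},\partial_{\vec A})$-invariant. I expect the principal subtlety to be bookkeeping the passage to $\mathcal K^\circ$ consistently, ensuring that the relevant Galois group there is the connected component and that an ``algebraic solution with coefficients in $\mathcal K^\circ$'' is the same datum as a $\mathcal K^\circ$-rational solution of $\vec F$.
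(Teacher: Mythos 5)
Your proof is correct. Half of it coincides with the paper's own argument: your ``Liouvillian $\Rightarrow$ algebraic flag solution'' direction is exactly what the paper does, namely use Proposition \ref{C4PRO4.2.10}(3) to place $\Gal^0_{\mathfrak x}(G_{\mathcal K},\partial_{\vec A})$ inside a Borel subgroup and then apply Proposition \ref{PrpRationalX} over the base $\mathcal K^\circ$ to produce a rational solution of $\vec F$. Where you genuinely diverge is the other implication. The paper argues via Lie--Kolchin reduction: the isotropy group of the flag solution is a Borel subgroup $B$, which is connected solvable, so its first Galois cohomology over $\mathcal K^\circ$ vanishes, and the reduction theorems (Theorem \ref{C4THE4.1.5}, invoked in the text through Theorem \ref{C4THE4.1.10}) give a gauge transformation over $\mathcal K^\circ$ carrying $\vec A$ into $\mathcal R(B)\otimes_{\mathcal C}\mathcal K^\circ$; hence the Galois group over $\mathcal K^\circ$, which is $\Gal^0_{\mathfrak x}(G_{\mathcal K},\partial_{\vec A})$, lies in a Borel and is solvable. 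You bypass the reduction theorem and the cohomological vanishing entirely: writing the $\mathcal K^\circ$-rational solution as $\sigma\cdot x_0$ with $x_0$ a constant point (using the splitting over $\mathcal L$ from Lemma \ref{lmLV2}) and letting $\Gal(\mathcal L/\mathcal K^\circ)$ act through right translation of the fundamental solution (Theorem \ref{C3THEauto}), you deduce $\Gal^0_{\mathfrak x}(G_{\mathcal K},\partial_{\vec A})\subset H_{x_0}$ directly --- in effect proving the converse of Proposition \ref{PrpRationalX}. Both routes are sound. The paper's buys an explicit by-product: the gauge reduction of the automorphic system to the Borel subgroup, which is what makes the subsequent integration by quadratures effective. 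Yours is more economical and robust: it needs no $H^1$ statement, it is insensitive to the convention in the Galois action (whether $s(\tau)(\sigma)=\sigma\tau$ or $\sigma\tau^{-1}$, one still gets $\tau\cdot x_0=x_0$, since isotropy groups are closed under inversion), and it isolates the clean orbit--stabilizer fact underlying the theorem; the only small points to make explicit are that all $\mathcal L$-solutions of the split system are constant points (which is why ``exactly'' holds in your appeal to Lemma \ref{lmLV2}) and that $\mathcal C$-points are Zariski dense in $\Gal^0$, so fixing $x_0$ pointwise yields the inclusion of algebraic groups.
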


\begin{proof}
  By the Galois correspondence we have that the Galois group of $(G_{{\mathcal K^\circ}},
  \partial_{\vec A})$ is the connected identity component of the
  Galois group of $(G_{\mathcal K},\partial_{\vec A})$. Assume that
  $(Flag(G),\partial_{\vec F})$ has an algebraic
  solution $x \in Flag(G)({\mathcal K^\circ})$. We are under the hypothesis of
  Theorem \ref{C4THE4.1.10}. There is a gauge transformation of $G_{\mathcal K^0}$ that
  send $\vec A$ to an automorphic vector field $\vec B$ in the Borel subgroup $B$. Then
  the Galois group of $\vec B$ with coefficients in $\mathcal K^0$ is contained in a Borel subgroup.
  Then the connected component of $\Gal_{\mathfrak x}(G_{\mathcal K},\partial_{\vec A})$ is solvable.

  Reciprocally, let us assume that $\mathcal K \subset \mathcal L$ is a Liouvillian extension.
  In such case the identity connected component of the Galois group is contained in a Borel subgroup $B$.
  By Proposition \ref{PrpRationalX} there is a solution with coefficients in $\mathcal K^\circ$ of
  $\vec F$.
\end{proof}

\subsection{Automorphic Equations in the General Linear Group}

\subsection{Grassmanians}

  Let us consider $E$ as $n$-dimensional vector space. Along this text
\emph{$m$-plane} will mean \emph{$m$-dimensional linear subspace}.
For all $m\leq n$ the linear group $GL(E)$ acts transitively in the
set of $m$-planes. For an $m$-plane $E_m$, the stabilizer subgroup
is an algebraic group, and then the set of $m$-planes define an
algebraic homogeneous space.

\index{grassmanian}
\begin{definition}
  We call grassmanian of $m$-planes of $E$, $\Gr(E,m)$,
  to the homogeneous space whose closed points are the $m$-planes of
  $E$. Denote $\Gr(\mathcal C,n,m)$ the grassmanian of
  $m$-planes of $\mathcal C^n$.
\end{definition}

\begin{example} $\Gr(\mathcal C,n,1)$ is the space of lines in
$\mathcal C^n$, and then if its the projective space of dimension
$n-1$, $\mathbb P(n-1,\mathcal C)$. The $\Gr(\mathcal C,n,n-1)$ is
the space os hyperplanes and then it is the dual projective space
$\mathbb P(n-1,\mathcal C)^*$.
\end{example}

In general, $m$-planes of $E$ are in one-to-one correspondence with
$(n-m)$-planes of the dual space $E^*$, and then we have the
projective duality
$$\Gr(E,m) \simeq \Gr(E^*, n-m).$$
The action of $GL(E)$ on $\Gr(E,m)$ is not faithful. Each scalar
matrix of the center of $GL(\mathcal C, n)$ fix all $m$-planes.
Thus, the non faithful action of $GL(E)$ is reduced to a faithful
action of the projective group $PGL(E)$.

  All grassmanian are projective varieties. There is a canonical
embedding of $\Gr(E,m)$ into the projective space of dimension
$\left(\substack{n\\ m}\right)-1$, called \emph{the pl\"ucker
embedding}:
$$\Gr(E,m) \to \mathbb P(E^{\wedge n}), \quad\langle
e_1,\ldots,e_m\rangle \mapsto \langle e_1\wedge
e_1\wedge\ldots\wedge e_m\rangle.$$
\index{Pl\"ucker!embedding}\index{Pl\"ucker!coordinates}

  For computation in the grassmanian spaces we will use
\emph{pl\"uckerian coordinates}. This system of coordinates is
subordinated to a basis in $E$. Thus, let us consider a basis
$\{e_1,\ldots,e_n\}$. Let $E_1 = \langle e_1,\ldots, e_m\rangle$ be
the $m$ plane spanned by the first $m$ elements of the basis, and
define $E_2 = \langle  e_{m+1}, \ldots e_{n}\rangle$ its
complementary. Let us consider the projection $\pi \colon E \to E_2$
of kernel $E_1$. We define the open subset $U \subset \Gr(E,m)$,
$$U = \{F\colon F\oplus E_2 = E\}.$$
For $F\in U$ the splitting of the space induces an isomorphism $i_F
\colon E_1 \to F$. We have an isomorphism
$$U \xrightarrow{\sim} \Hom_{\mathcal C}(E_1,E_2),\quad F \mapsto \pi\circ i_F.$$
We define the pl\"ukerian coordinates of $F$ as the matrix elements
of $\pi\circ i_F$ in the above mentioned basis. By permuting the
elements of the basis we construct a covering of $\Gr(E,m)$ by
$\left(\substack{n \\ m}\right)$ affine open subsets isomorphic to
$\mathcal C^{n(n-m)}$.

  Let us compute pl\"uckerian coordinates in $\Gr(\mathcal C, m, n)$
related to the canonical basis. Let us consider $F\in \Gr(\mathcal
C, m, n)$, and a basis of $F$, $\{\vec x_1,\ldots, \vec x_m\}$,
$\vec x_i = (x_{1i},\ldots,x_{ni})$. The matrix,
$$\left(\begin{matrix}x_{11}  & \ldots & x_{1m} \\
x_{21} & \ldots & x_{2m} \\ \vdots & \ddots & \vdots \\ x_{n1} &
\ldots & x_{nm}
\end{matrix}\right)$$
is of maximal rank. Thus, there is a non vanishing minor of rank
$m$. In particular, $F$ is in the open subset $U$ if and olny if the
minor corresponding to the first $m$ rows does not vanish. In such
case we define the numbers $\lambda_{ij}^{(m)}$
$$\left(\begin{matrix}x_{11}  & \ldots & x_{1m} \\
x_{21} & \ldots & x_{2m} \\ \vdots & \ddots & \vdots \\ x_{n1} &
\ldots & x_{nm}
\end{matrix}\right)\left(\begin{matrix}x_{11}  & \ldots & x_{1m} \\
\vdots & \ddots & \vdots \\ x_{m1} & \ldots & x_{mm}
\end{matrix} \right)^{-1} =
\left(\begin{matrix} 1  & \ldots & 0 \\ \vdots & \ddots & \vdots
\\ 0 & \ldots & 1 \\ \lambda^{(m)}_{11} & \ldots & \lambda^{(m)}_{1m} \\ \vdots
&  \ddots & \vdots \\ \lambda^{(m)}_{n-m,1} & \ldots &
\lambda^{(m)}_{n-m,m}
\end{matrix}\right)$$
that are the pl\"uckerian coordinates of $E_m\in \Gr(\mathcal
C,m,n)$ in the open affine subset $U$ related to the split of
$\mathcal C^n$ as $E_1\otimes E_2$.

\subsection{Flag Variety of the General Linear Group}

  A  flag of subspaces of $\mathcal C^n$, is a sequence,
  $$E_1 \subset E_2 \subset \ldots  \subset E_{n-1}, \quad \dim_{\mathcal C}E_i = i$$
of linear subspaces of $\mathcal C^n$. The space $Flag(\mathcal
C,n)$ of flags of $\mathcal C^n$ is an homogeneous space of
$GL(\mathcal C,n)$, and it is faithful for the action of
$PGL(\mathcal C, n)$. There is a canonical morphism,
$$Flag(\mathcal C, n) \to \prod_{m=1}^{n-1} \Gr(\mathcal C,n,m),\quad E_1 \subset E_2 \subset E_{n-1} \mapsto (E_1,\ldots,
E_{n-1}).$$

  By Lie-Kolchin theorem the isotropy subgroup of a flag is also a
Borel subgroup. Then, we can state $Flag(\mathcal C,n)$ is the flag
variety of the general linear group. Let us introduce a system of
coordinates in $Flag(\mathcal C,n)$. Let us consider $\{e_1,\ldots,
e_n\}$ the canonical basis of $\mathcal C^n$. Each $\sigma\in
GL(\mathcal C,n)$ defines a flag $F(\sigma)$ as follows:
$$\langle\sigma(e_1)\rangle \subset
\langle\sigma(e_1),\sigma(e_2)\rangle \subset \ldots \subset
\langle\sigma(e_1),\ldots,\sigma(e_{n-1})\rangle.$$

  There is a canonical flag corresponding to the identity element.
Its isotropy group is precisely $T(\mathcal C,n)$ the group of upper
triangular matrices. Then two matrices $A, B \in GL(\mathcal C,n)$
define the same flag if and only if $A = BU$ for certain $U\in
T(\mathcal C,n)$. Then let us consider the affine subset of
$GL(\mathcal C,n)$ of matrices with non vanishing principal minors.
For such a matrix there exist a unique $LU$ decomposition such that
$U\in T(\mathcal C,n)$ and is a lower triangular matrix as follows,
$$A = \left(\begin{matrix} 1 & 0 & \ldots & 0 \\ \lambda_{21} & 1 & \ldots & 0 \\
\vdots & \vdots & \ddots & \vdots \\ \lambda_{n1} & \lambda_{n2} &
\ldots & 1 \end{matrix}\right)U$$

Hence the matrix elements $\lambda_i$ define a system of affine
coordinates in $Flag(\mathcal C,n)$, in certain affine open subset.
We construct an open covering of the flag space by permutating the
vectors of the canonical base. The canonical morphism
$$Flag(\mathcal C, n) \to \prod_m \Gr(\mathcal C,m,n)$$
is easily written in pl\"uckerian coordinates:
$$\lambda_{ij}^{(m)} = \lambda_{i+m,j} -
\sum_{k=1}^{m}\lambda_{i+m,k}\lambda_{kj}.$$

\subsection{Matrix Riccati Equations}

Let us consider an homogeneous linear differential equation
$$\dot x = Ax, \quad A\in gl(\mathcal K, n).$$
It is seen as an automorphic system that induces Lie-Vessiot systems
in each homogeneous space. Let us compute the induced Lie-Vessiot
systems in the grassmanian spaces. First, the linear system induces
a linear system in $(\mathcal C^n)^m$.
\begin{equation}\label{EqMatL}
\dot X = AX,
\end{equation}
where $X$ is a $n\times m$ matrix. We write $X = \left(\substack{ U
\\ Y }\right)$, being $U$ a $m\times m$ matrix and $Y$ a $(n-m)\times m$
matrix. $\Lambda_m = YU^{-1}$ is the matrix of pl\"uckerian
coordinates of the space generated by the $m$ column vectors of the
matrix $X$. Then, $\dot \Lambda_m = \dot Y U^{-1} - \Lambda_m \dot U
U^{-1}$. If we decompose the matrix $A$ in four submatrices
$$A = \left(\begin{matrix} A_{11} & A_{12} \\ A_{21} & A_{22}
\end{matrix}\right)$$
being $A_{11}$ of type $m\times m$, $A_{12}$ of type $m\times
(n-m)$, $A_{21}$ of type $(n-m)\times m$, and $A_{22}$ if type
$m\times m$. Them the matrix linear equation \eqref{EqMatL} splits
as a system of matrix linear differential equations,
$$\dot U = A_{11} + A_{12}Y, \quad \dot Y = A_{21}U + A_{22}Y,$$
from which we obtain the differential equation for affine
coordinates in the grassmanian,
\begin{equation}\label{EqRiccMat}
\dot\Lambda_m = A_{21} + A_{22}\Lambda_m - \Lambda_m A_{11} -
\Lambda_m A_{12} \Lambda_m
\end{equation}
which is a quadratic system. We call such a system a \emph{matrix
Riccati equation} \index{equation!matrix Riccati} associated to the
linear system.
$$\Lambda_m = \left(\begin{matrix}\lambda^{(m)}_{11} &\ldots & \lambda^{(m)}_{1,m} \\ \vdots & \ddots & \vdots \\
\lambda^{(m)}_{n-m,1} & \ldots & \lambda^{(m)}_{n-m,m}
\end{matrix}\right)$$
$$\dot \lambda^{(m)}_{ij} = a_{m+i,j} +
\sum_{k=1}^{n-m}a_{m+i,m+k}\lambda^{(m)}_{kj} - \sum_{k=1}^m
\lambda^{(m)}_{ik}a_{kj} - \sum_{\substack{k = 1 \ldots m \\
r = 1 \ldots n-m}}\lambda^{(m)}_{ik}a_{k,r+m}\lambda^{(m)}_{rj}$$

\begin{example} Let us compute the matrix Riccati
equations associated to the general linear system of rank $2$ and
$3$. First, let us consider a general linear system of rank $2$,
  $$\dot x_1 = a_{11} x_1 + a_{12} x_2, \quad \dot x_2 = a_{21} x_1
  + a_{22} x_2.$$
There is one only grassmanian $\Gr(\mathcal C,1,2)$, which is
precisely the projective line. The associated matrix Riccati
equation is an ordinary Riccati equation
  $$\dot x = a_{21} + (a_{22} - a_{11})x - a_{12}x^2.$$

In the case of a general system of rank $3$,
  $$\left(\begin{matrix} \dot x_1 \\ \dot x_2 \\ \dot x_3\end{matrix} \right) =
  \left(\begin{matrix} a_{11} & a_{12} & a_{13} \\ a_{21} & a_{22} & a_{23} \\ a_{31} & a_{32} & a_{33}
  \end{matrix} \right) \left(\begin{matrix} x_1 \\ x_2 \\ x_3 \end{matrix} \right)$$
\end{example}
  there are two grassmanian spaces, $\Gr(\mathcal C, 1, 3)$ and
  $\Gr(\mathcal C, 2, 3)$, being the projective plane $\mathbb
  P^2(\mathcal C)$ and the projective dual plane $\mathbb
  P^2(\mathcal C)^*$ respectively. Then we obtain two quadratic
  systems,
  $$\mathbb P(2,\mathcal C) \quad \left\{\begin{matrix}
  \dot x &=& a_{21} + (a_{22}-a_{11})x + a_{23}y - a_{12}x^2 -
  a_{13}xy
  \\
  \dot y &=& a_{31} + (a_{33}-a_{11}) + a_{32}x - a_{13}y^2 -
  a_{12}xy
  \end{matrix}\right.$$

  $$\mathbb P(2,\mathcal C)^*\left\{\begin{matrix}
  \dot \xi &=& a_{31} + (a_{33}-a_{11})\xi + a_{21}\eta - a_{23}\xi\eta -
  a_{13}\xi^2
  \\
  \dot \eta &=& a_{32} + (a_{33}-a_{22})\eta + a_{12}\xi - a_{13}\xi\eta -
  a_{23}\eta^2
  \end{matrix}\right.$$
called the associated \emph{projective Riccati equations}.
\index{equation!projective Riccati}

\subsection{Flag Equation}

\index{equation!flag} From the relation between pl\"uckerian
coordinates and affine coordinates in the flag variety we can deduce
the equations of the induced Lie-Vessiot system in $Flag(\mathcal
C,n)$, from the matrix Riccati equations. We will obtain a Riccati
quadratic equation for $n = 2$, and a cubic system for $n\geq 3$.
$$\dot \lambda_{ij} = a_{ij} + \sum_{k=j+1}^{n}a_{ik}\lambda_{kj} - \sum_{k=1}^{j}\lambda_{ik}a_{kj} +
\sum_{k=1}^{j}\sum_{r=k+1}^{j}\lambda_{ir}\lambda_{rk}a_{kj}
$$ $$- \sum_{k=1}^j\sum_{r=j+1}^{n}\lambda_{ik}a_{kr}\lambda_{rj}+
\sum_{k=1}^{j}\sum_{r=j+1}^{n}\sum_{s=k+1}^{j}\lambda_{is}\lambda_{sk}a_{kr}\lambda_{rj},$$
Setting $\lambda_{ii} = 1$ for all $i$, we can simplify these
equations.
\begin{equation}\label{EqFlagEq}
\dot{\lambda_{ij}} = \sum_{k=j}^{n}a_{ik}\lambda_{kj} -
\sum_{k=1}^j\sum_{r=j}^n \lambda_{ik}a_{kr}\lambda_{rj} +
\sum_{k=1}^j\sum_{r=k+1}^j\sum_{s=j}^n
\lambda_{ir}\lambda_{rk}a_{ks}\lambda_{sj}
\end{equation}
Such as cubic system can be seen as a hierarchy of projective
Riccati equations. The equation corresponding to the first column
$\lambda_{i1}$, $i=2\ldots,n$ is a projective Riccati equation in
$\mathbb P(n-1,\mathcal C)$. The equation corresponding to the
second column is a projective Riccati equation in $\mathbb
P(n-2,\mathcal C(\lambda_{i1}))$, and so on.

\begin{example}
  Let us compute the flag equation for the general differential linear
system of rank $3$. Denote $x = \lambda_{21}$, $y=\lambda_{31}$, $z
=\lambda_{32}$.
\begin{equation}\label{C4EQ4.7}
\left\{\begin{array}{ccl} \dot x &=& a_{21} + (a_{22}- a_{11})x +
a_{23}y  - a_{12}x^2 - a_{13}xy \\
\dot y &=& a_{31} + a_{32}x + (a_{33} - a_{11})y - a_{12}xy -
a_{13}y^2\end{array}\right.
\end{equation}
$$\dot z = a_{32} - a_{12}y + (a_{33} -a_{22} +a_{12}y - a_{13}y )z + (a_{13}y - a_{23})z^2.$$
\end{example}

\subsection{Equations in the Special Orthogonal Group}

Automorphic equations in special orthogonal group have been deeply
studied since 19th century \cite{Ve3}, \cite{Darboux}. In particular
Darboux related these equation with Riccati equation. He stated that
the integration of \eqref{EqSO3} is reduced to the integration of
\eqref{RicSO3}. Here we show that the Flag equation of an
automorphic equation in $SO(\mathcal C, 3)$ is precisely the Riccati
equation, and then the solutions of \eqref{EqSO3} are Liouvillian if
and only if there are algebraic solutions for \eqref{RicSO3}

  The Lie algebra $so(3,\mathcal C)$ is the algebra of skew-symmetric
matrices of $gl(\mathcal C, 3)$. Then an automorphic system in
$SO(3, \mathcal C)$ is written in the following form.
\begin{equation}\label{EqSO3}
   \left(\begin{matrix} \dot x_0\\ \dot x_1 \\ \dot
   x_2\end{matrix}\right) = \left(\begin{matrix} & a & b \\ -a & & c \\ -b & -c \end{matrix} \right)
   \left(\begin{matrix} x_0 \\ x_1 \\ x_2 \end{matrix}\right)\quad
   \quad a,b,c\in\mathcal K,
\end{equation}
\emph{where the void spaces represent the vanishing elements in the
matrix}.

\subsection{On the Structure of the Special Orthogonal Group}

 The special orthogonal group is the group of linear transformations
preserving the quadratic form $x_0^2 + x_1^2 + x_2^2$. Let us
consider the non degenerated quadric in the projective space
$S_2\subset\mathbb P(3,\mathcal C)$, defined by homogeneous equation
$\{t_0^2+t_1^2+t_2^2-t_3^2=0\}$. In affine coordinates $x_i =
\frac{t_i}{t_3}$, its affine part is a sphere of radius $1$. Thus
$SO(3, \mathcal C)$ is a subgroup of algebraic automorphisms of the
quadric; $SO(3)\subset Aut(S_2)$.

Each non degenerate quadric in the projective space over an
algebraically closed field is a hyperbolic ruled surface. It has two
systems of generatrices, being each system parameterized by a
projective line. Denote $P_1$, $P_2$ these projective lines. $p\in
P_1$, and $q\in P_2$ are lines $S_2$, and they intersect in a unique
point $s(p,q)\in p\cap q$. We have a decomposition of $S_2$ which is
a particular case of \emph{Segre isomorphism},
$$P_1 \times_{\mathcal C} P_2
\xrightarrow{\sim} S_2 \subset \mathbb P(3,\mathcal C)$$
$$((u_0:u_1),(v_0:v_1))\mapsto (t_0:t_1:t_2:t_3) \left\{\begin{matrix} t_0 &=& u_0v_1+u_1v_0 \\
t_1 &=& u_1v_1-u_0v_0 \\ t_2 &=& i(u_1v_1 + u_0v_0) \\ t_3&=& u_0v_1
- u_1v_0\end{matrix}\right.$$

  Let us consider any algebraic automorphism of $S_2$. $\tau\colon
S_2\to S_2$. In particular, it must carry a system of generatrices
to a system of generatrices. Let us denote $P_1$, $P_2$ to the two
system of generatrices of $S_2$. Hence, $\tau$ is induces by a pair
of projective transformations $(\tau_1,\tau_2)$, where
$$\tau_1\colon P_1 \to P_1, \quad \tau_2\colon P_2 \to P_2$$
or
$$\tau_1\colon P_1\to P_2, \quad \tau_2\colon P_2\to P_1.$$
We conclude that the group of automorphism of $S_2$ is isomorphic to
the following algebraic group,
$$\Aut(S_2) = PGL(1,\mathcal C) \times_{\mathcal C} PGL(1,\mathcal C)
\times_{\mathcal C} \mathbb Z/2\mathbb Z.$$

  Let us compute the image of the canonical monomorphism \linebreak
$SO(3,\mathcal C)\subset \Aut(S_2)$. We take affine coordinates in
the pair of projective lines, $x = \frac{u_0}{u_1}$, $y =
\frac{v_0}{v_1}$. This is the system of \emph{symmetric coordinates}
of the sphere introduced by Darboux \cite{Darboux}.
\begin{equation}\label{SymCoor1}
x_0 = \frac{1-xy}{x-y} \quad x_1 = i\frac{1+xy}{x-y} \quad x_2 =
\frac{x+y}{x-y}
\end{equation}
\begin{equation}\label{SymCoor2}
 x = \frac{x_0+ix_1}{1-x_2} \quad y =
\frac{x_2-1}{x_1 - ix_2}.
\end{equation}
Let us write a general element of $SO(3,\mathcal C)$ in affine
coordinates,
$$R_{\lambda,\mu,\nu} =
\left(\begin{matrix} 1 & & \\ & \frac{\lambda+\lambda^{-1}}{2} & \frac{\lambda^{-1}-\lambda}{2i}\\
& \frac{\lambda-\lambda^{-1}}{2i} & \frac{\lambda+\lambda^{-1}}{2}
\end{matrix}\right) \left(\begin{matrix} \frac{\mu+\mu^{-1}}{2} &  \frac{\mu^{-1}-\mu}{2i} &
\\ \frac{\mu-\mu^{-1}}{2i} & \frac{\mu+\mu^{-1}}{2} & \\ & & 1
\end{matrix}\right) \left(\begin{matrix} 1 & & \\ & \frac{\nu+\nu^{-1}}{2} & \frac{\nu^{-1}-\nu}{2i} \\
 & \frac{\nu-\nu^{-1}}{2i} & \frac{\nu+\nu^{-1}}{2} \end{matrix}\right)$$
where, in the complex case $\lambda = e^{i\alpha}$, $\mu =
e^{i\beta}$, $\nu = e^{i\gamma}$ are the exponentials of the Euler
angles. Direct computation gives us,
$$R_{\lambda, \mu, \nu}\left\{\begin{matrix}
x&\mapsto & \frac{(\lambda\mu\nu + \lambda\nu + \mu\nu - \nu +
\lambda\mu - \lambda + \mu +1 )x +
 \lambda\mu\nu + \lambda\nu + \mu\nu - \nu - \lambda\mu + \lambda -
 \mu -1}{(\lambda\mu\nu + \lambda\nu - \mu\nu + \nu + \lambda\mu - \lambda - \mu -1)x +
 \lambda\mu\nu + \lambda\nu - \mu\nu + \nu - \lambda\mu + \lambda + \mu +1} = r_{\lambda,\mu,\nu}(x)\\
y&\mapsto & \frac{(\lambda\mu\nu + \lambda\nu + \mu\nu - \nu +
\lambda\mu - \lambda + \mu +1 )y +
 \lambda\mu\nu + \lambda\nu + \mu\nu - \nu - \lambda\mu + \lambda -
 \mu -1}{(\lambda\mu\nu + \lambda\nu - \mu\nu + \nu + \lambda\mu - \lambda - \mu -1)y +
 \lambda\mu\nu + \lambda\nu - \mu\nu + \nu - \lambda\mu + \lambda + \mu +1} = r_{\lambda,\mu,\nu}(y)\end{matrix}\right.
$$
and then $R_{\lambda,\mu,\nu}$ induces the same projective
transformation $r_{\lambda,\mu,\nu}$ for $x$ and $y$. Hence,
$$SO(3) \subseteq PGL(1,\mathcal C) \subset Aut(S_2).$$
In particular, we have the following formulae for rotations around
euclidean axis:
\begin{align}\label{:EqRot1}
\left(\begin{matrix} 1 & & \\ & \frac{\lambda+\lambda^{-1}}{2} & \frac{\lambda^{-1}-\lambda}{2i}\\
& \frac{\lambda-\lambda^{-1}}{2i} & \frac{\lambda+\lambda^{-1}}{2}
\end{matrix}\right) &\colon x \mapsto \frac{(\lambda+1)x + (\lambda -1)}{(\lambda-1) x + (\lambda +
1)}
\\\label{:EqRot2}
\left(\begin{matrix} \frac{\lambda+\lambda^{-1}}{2} &
\frac{\lambda^{-1}-\lambda}{2i} &
\\ \frac{\lambda-\lambda^{-1}}{2i} & \frac{\lambda+\lambda^{-1}}{2} & \\ & & 1
\end{matrix}\right) &\colon x \mapsto \lambda x
\\\label{:EqRot3}
\left(\begin{matrix} \frac{\lambda+\lambda^{-1}}{2} & &
\frac{\lambda^{-1}-\lambda}{2i} \\ & 1 & \\
\frac{\lambda-\lambda^{-1}}{2i} & & \frac{\lambda+\lambda^{-1}}{2}
\end{matrix}\right) &\colon x \mapsto
\frac{(\lambda+\lambda^{-1}+1/2)x-i(\lambda-\lambda^{-1})}{i(\lambda^{-1}-\lambda)x
- (\lambda+\lambda^{-1}+1/2)}
\end{align}
An the following formulae for the induced Lie algebra morphism --
the are computed by derivation of previous formulae with $\lambda =
1 + i\varepsilon$ --. Here the Lie algebra $pgl(1,\mathcal C)$ is
identified with $sl(2,\mathcal C)$:
\begin{align*}
\left(\begin{matrix} & 1 &
\\ -1 & & \\ & & 0 \end{matrix}\right)&\mapsto\left(\begin{matrix} \frac{i}{2} & \\ & \frac{-i}{2}\end{matrix}\right)
\\
\left(\begin{matrix} & & 1
\\ & 0 & \\ -1 & & \end{matrix}\right)&\mapsto\left(\begin{matrix} & \frac{1}{2} \\ -\frac{1}{2} & \end{matrix}\right)
\\
\left(\begin{matrix} 0 & &
\\ & & 1 \\ & -1 & \end{matrix}\right)&\mapsto\left(\begin{matrix} & -\frac{i}{2} \\ - \frac{i}{2} \end{matrix}\right)
\end{align*}

Reciprocally, a projective transformation
$$x \mapsto \frac{u_{11} x + u_{12}}{u_{21}x+u_{22}}; \quad y
\mapsto \frac{u_{11}y + u_{12}}{u_{21}y+u_{22}},$$ induces a linear
transformation in the affine coordinates $x_0,x_1,x_2$ (see
\cite{Darboux} p. 34). $SO(\mathcal C,3)$ is precisely the group of
automorphisms of $S_2$ that are linear in those coordinates. We have
proven the following proposition which is due to Darboux.

\begin{proposition}
  The special orthogonal group $SO(3,\mathcal C)$ over an
  algebraically closed field is isomorphic to the projective general
  group $PGL(1,\mathcal C)$. The isomorphism is given by formulae
  \eqref{:EqRot1},  \eqref{:EqRot2},  \eqref{:EqRot3}.
\end{proposition}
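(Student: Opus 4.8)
The statement collects the computations carried out above, so the plan is to assemble them into the two inclusions that together yield the asserted isomorphism. First I would record that $SO(3,\mathcal C)$ acts faithfully on the quadric $S_2$ as a subgroup of $\Aut(S_2)$: a rotation inducing the identity on $S_2$ fixes the affine sphere $\{x_0^2+x_1^2+x_2^2=1\}$ pointwise, and a linear map fixing a spanning set of vectors (for instance $e_0,e_1,e_2$, which lie on the sphere) is the identity, so the action is faithful. Next, since $SO(3,\mathcal C)$ is connected, the image of the induced homomorphism lands in the identity component of $\Aut(S_2)=PGL(1,\mathcal C)\times_{\mathcal C}PGL(1,\mathcal C)\times_{\mathcal C}\mathbb Z/2\mathbb Z$; in particular a rotation cannot interchange the two rulings $P_1,P_2$, so the image lies in $PGL(1,\mathcal C)\times_{\mathcal C}PGL(1,\mathcal C)$.

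The second step is the explicit computation already displayed: writing a general $R_{\lambda,\mu,\nu}\in SO(3,\mathcal C)$ in the symmetric coordinates $x=u_0/u_1$, $y=v_0/v_1$ of the two rulings, one finds that it acts on $x$ and on $y$ by the \emph{same} M\"obius transformation $r_{\lambda,\mu,\nu}$. Consequently the image lands in the diagonal subgroup $\Delta\cong PGL(1,\mathcal C)$ of $PGL(1,\mathcal C)\times_{\mathcal C}PGL(1,\mathcal C)$, producing an injective morphism $SO(3,\mathcal C)\hookrightarrow PGL(1,\mathcal C)$. Faithfulness from the first step gives injectivity; since the map is given by the explicit rational expressions above it is a morphism of algebraic groups, and the cases \eqref{:EqRot1}, \eqref{:EqRot2}, \eqref{:EqRot3} for rotations about the coordinate axes exhibit it on generators.

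For surjectivity I would run the reciprocal construction. Starting from an arbitrary diagonal element, that is a single M\"obius transformation $x\mapsto\frac{u_{11}x+u_{12}}{u_{21}x+u_{22}}$ acting identically on $x$ and $y$, the Darboux substitution formulae \eqref{SymCoor1}--\eqref{SymCoor2} express the induced automorphism of $S_2$ back in the ambient coordinates $x_0,x_1,x_2$, and the crucial point is that this automorphism turns out to be \emph{linear} in $x_0,x_1,x_2$. A linear automorphism of $\mathcal C^3$ preserving the affine sphere must preserve the form $x_0^2+x_1^2+x_2^2$ and so is orthogonal; arising from the connected group $PGL(1,\mathcal C)$ it lies in the identity component, hence has determinant $1$ and belongs to $SO(3,\mathcal C)$. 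Therefore $\Delta\subseteq SO(3,\mathcal C)$, and together with the inclusion of the previous paragraph this gives the equality $SO(3,\mathcal C)=\Delta\cong PGL(1,\mathcal C)$ inside $\Aut(S_2)$.

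The main obstacle is precisely this reciprocal step: confirming that a diagonal M\"obius transformation induces a self-map of $S_2$ that is genuinely \emph{linear} in $x_0,x_1,x_2$, rather than merely a Cremona transformation, since linearity is exactly what forces the map into $SO(3,\mathcal C)$ and delivers surjectivity. This is the content of the symmetric-coordinate identities \eqref{SymCoor1}--\eqref{SymCoor2}; once linearity is granted, invariance of the form and the determinant condition follow at once. As an alternative one could finish by a dimension count, noting $\dim SO(3,\mathcal C)=3=\dim PGL(1,\mathcal C)$ and invoking that an injective morphism of connected algebraic groups of equal dimension over a field of characteristic zero is an isomorphism; I would nonetheless keep the constructive argument, as it also justifies the explicit formulae claimed in the statement.
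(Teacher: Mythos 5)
Your proof is correct and follows essentially the same route as the paper: embed $SO(3,\mathcal C)$ in $\Aut(S_2)$, use the two rulings and the Segre identification to see the image lies in the diagonal copy of $PGL(1,\mathcal C)$ via the displayed computation for $R_{\lambda,\mu,\nu}$, and obtain surjectivity from Darboux's reciprocal observation that a diagonal M\"obius transformation acts linearly in the coordinates $x_0,x_1,x_2$. The details you add beyond the paper's text --- faithfulness of the action, connectedness to rule out swapping the rulings and to force determinant $1$ (so the linear automorphism lands in $SO(3,\mathcal C)$ rather than $O(3,\mathcal C)$), and the dimension-count alternative --- are sound refinements of the same argument rather than a different approach.
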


\subsection{Flag Equation}

  The flag variety of $SO(3,\mathcal C)$ is a projective line.
Any of the Darboux symmetric coordinates,
$$x \colon  S_2 \to P_1$$
gives us a realization of the action of $SO(3)$ on $P_1$. By
substituting the equation \eqref{EqSO3} in the identities
\eqref{SymCoor1}, \eqref{SymCoor2} we deduce the Riccati
differential equation satisfied by this symmetric coordinate, which
is the flag equation of equation \eqref{EqSO3}:
\begin{equation}\label{RicSO3}
\dot x = \frac{-b-ic}{2}-iax+\frac{-b +ic}{2}x^2.
\end{equation}

In \cite{Darboux}, Darboux reduces the integration of the equation
\eqref{EqSO3} to finding two different particular solutions of the
Riccati equation \eqref{RicSO3}. By application of our
generalization of Liouville's theorem we obtain an stronger result.

\index{theorem!Darboux on rigid movements}
\begin{theorem}[Darboux]\label{C4THE4.3.8}
  The Galois extension of the equation \eqref{EqSO3} is a
Liouvillian extension of $\mathcal K$ if and only if the Riccati
equation \eqref{RicSO3} has an algebraic solution.
\end{theorem}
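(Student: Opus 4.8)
The plan is to obtain this statement as the specialization to $G=SO(3,\mathcal C)$ of the general Liouville theorem \ref{C4THE4.3.2}. First I would note that $SO(3,\mathcal C)$ is a linear connected algebraic group, so the hypotheses of that theorem are met. The two computations carried out just before the statement supply the required dictionary: the flag variety of $SO(3,\mathcal C)$ is the projective line $P_1$ (via the Darboux isomorphism $SO(3,\mathcal C)\simeq PGL(1,\mathcal C)$), and the flag Lie-Vessiot equation induced on $P_1$ by the automorphic equation \eqref{EqSO3} is precisely the Riccati equation \eqref{RicSO3}, obtained by substituting \eqref{EqSO3} into the symmetric-coordinate identities \eqref{SymCoor1} and \eqref{SymCoor2}. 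Under this identification, Theorem \ref{C4THE4.3.2} reads: $\mathcal K\subset\mathcal L$ is Liouvillian if and only if \eqref{RicSO3} has a solution with coefficients in the relative algebraic closure ${\mathcal K^\circ}$ of $\mathcal K$ in $\mathcal L$.

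What then remains is to match the phrase \emph{solution with coefficients in ${\mathcal K^\circ}$} of Theorem \ref{C4THE4.3.2} with the phrase \emph{algebraic solution} of the present statement; this reconciliation is the only genuine point of the argument. One direction is immediate: since ${\mathcal K^\circ}$ is algebraic over $\mathcal K$, any solution with coefficients in ${\mathcal K^\circ}$ is in particular an algebraic solution, and the Liouvillian case of the theorem then follows directly. The delicate direction is the converse, where from an arbitrary algebraic solution $\alpha$ of \eqref{RicSO3} I must produce one defined over ${\mathcal K^\circ}$; equivalently, I must show that $\alpha\in\mathcal L$.

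To do this I would use the fundamental solution $\sigma\in G(\mathcal L)$ attached to $\mathfrak x$. Since $\mathcal L$ is a splitting field, $(G_{\mathcal L},\partial_{\vec A})$ splits, and by Lemma \ref{lmLV2} the gauge transformation $L_{\sigma^{-1}}$ splits the associated flag Lie-Vessiot system $(P_1)_{\mathcal L}$, carrying $\partial_{\vec F}$ to the canonical derivation $\partial$. Hence $\beta=\sigma^{-1}\cdot\alpha$ is a solution of the split system, that is, a constant point of $P_1$ over the field $\mathcal L(\alpha)=\mathcal L\cdot\mathcal K(\alpha)$. As $\alpha$ is algebraic over $\mathcal K\subseteq\mathcal L$, the extension $\mathcal L\subseteq\mathcal L(\alpha)$ is algebraic, and a standard fact of differential algebra (cf. \cite{Ko1}) shows that a constant algebraic over $\mathcal L$ is algebraic over $C_{\mathcal L}=\mathcal C$, hence lies in $\mathcal C$ because $\mathcal C$ is algebraically closed. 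Therefore $\beta\in P_1(\mathcal C)$ and $\alpha=\sigma\cdot\beta\in P_1(\mathcal L)$, so $\alpha\in\mathcal L$; being also algebraic over $\mathcal K$, it lies in ${\mathcal K^\circ}$. This closes the gap, and the equivalence follows from Theorem \ref{C4THE4.3.2}. The main obstacle is exactly this last step: ensuring that algebraicity of a Riccati solution over $\mathcal K$ forces it into the Galois extension $\mathcal L$, which rests on the splitting of the flag system over $\mathcal L$ and on the algebraic closedness of the constant field $\mathcal C$.
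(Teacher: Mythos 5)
Your proposal is correct and takes the same route as the paper: the paper's entire proof of this theorem is the single sentence that it is a particular case of Theorem \ref{C4THE4.3.2}, resting on the preceding identifications of $SO(3,\mathcal C)$ with $PGL(1,\mathcal C)$, of its flag variety with the projective line, and of the flag equation of \eqref{EqSO3} with the Riccati equation \eqref{RicSO3}. Where you go beyond the paper is in the reconciliation of ``algebraic solution'' (meaning algebraic over $\mathcal K$) with ``solution with coefficients in ${\mathcal K^\circ}$'' as required by Theorem \ref{C4THE4.3.2}: the paper silently treats these as interchangeable, whereas you actually prove the nontrivial implication, gauging the flag system to the split one by the fundamental solution $\sigma$ and using that the constants of the algebraic extension $\mathcal L(\alpha)\supseteq\mathcal L$ remain in $\mathcal C$, so that $\alpha\in P_1(\mathcal L)$ and hence $\alpha\in{\mathcal K^\circ}$. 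That supplementary argument is sound, with one small point you leave implicit: to form the differential compositum $\mathcal L(\alpha)$ you should first place $\alpha$ inside $\bar{\mathcal L}$, which you can do by replacing $\alpha$ with a conjugate root of its minimal polynomial $P$ over $\mathcal K$, noting that every such conjugate is again a solution of \eqref{RicSO3} because $P$ divides the polynomial $P^\partial + P'\cdot f$ expressing the Riccati relation (here $f$ is the quadratic right-hand side and $P^\partial$ is obtained by differentiating coefficients). With that remark added, your proof is, if anything, more complete than the one in the paper.
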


\begin{proof}
 It is a particular case of Theorem \ref{C4THE4.3.2}.
\end{proof}

\appendix

\section{Stalk formula for affine morphisms}\label{ApA}

\subsection{Stalk Formula for Ring Morphisms}

  Let us consider a ring morphism $\varphi\colon\mathcal R\to\mathcal
  R'$, and $\mathfrak a\subset \mathcal R$ an ideal. We write
  $\varphi(\mathfrak a)\cdot \mathcal R'$ for the ideal of $\mathcal
  R'$ spanned by the image of $\mathfrak a$ by $\varphi$.

\index{stalk formula}
\begin{theorem}[Stalk formula]\label{StalkFormula}
  Let us consider $x\in\Spec(\mathcal R)$. The stalk $(\varphi^*)^{-1}(x)\subset \Spec(\mathcal R')$ is
  homeomorphic to the spectrum of $$\mathcal R'_{\varphi(x)\cdot\mathcal
  R'}/\varphi(x)\cdot\mathcal R_{\varphi(x)\cdot\mathcal R'} =
  \left(\mathcal R'/\varphi(x)\cdot \mathcal R'\right)_{\varphi(x)\cdot \mathcal R'} =
  \mathcal R' \otimes_{\mathcal R} \kappa(x). $$
\end{theorem}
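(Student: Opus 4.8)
The plan is to reduce the statement to two standard facts about the behaviour of the prime spectrum under localization and under passage to a quotient, and then to observe that the fiber $(\varphi^*)^{-1}(x)$ is exactly the intersection of the two loci cut out by these operations. Throughout I identify the point $x$ with the prime ideal $\mathfrak p\subset\mathcal R$ it represents, I write $\varphi^*$ for the induced map $\Spec(\mathcal R')\to\Spec(\mathcal R)$, $\mathfrak q\mapsto\varphi^{-1}(\mathfrak q)$, and I set $S=\varphi(\mathcal R\setminus\mathfrak p)$, a multiplicative system in $\mathcal R'$.

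First I would establish the chain of ring isomorphisms displayed in the statement. Recall that $\kappa(x)=\mathcal R_{\mathfrak p}/\mathfrak p\mathcal R_{\mathfrak p}$. Since tensor product commutes with localization and with quotients, base change along $\varphi\colon\mathcal R\to\mathcal R'$ gives $\mathcal R'\otimes_{\mathcal R}\mathcal R_{\mathfrak p}=S^{-1}\mathcal R'$ and $\mathcal R'\otimes_{\mathcal R}(\mathcal R/\mathfrak p)=\mathcal R'/\varphi(\mathfrak p)\cdot\mathcal R'$. Composing these two base changes, $\mathcal R'\otimes_{\mathcal R}\kappa(x)=S^{-1}\bigl(\mathcal R'/\varphi(\mathfrak p)\cdot\mathcal R'\bigr)=(S^{-1}\mathcal R')/\varphi(\mathfrak p)\cdot(S^{-1}\mathcal R')$, the last equality because localization is exact and so commutes with forming the quotient by $\varphi(\mathfrak p)\cdot\mathcal R'$. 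This is precisely the asserted identification of the three presentations of the ring.

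Next I would treat the topological claim. The canonical morphism $\mathcal R'\to\mathcal R'\otimes_{\mathcal R}\kappa(x)$ factors as the localization $\mathcal R'\to S^{-1}\mathcal R'$ followed by the quotient $S^{-1}\mathcal R'\to(S^{-1}\mathcal R')/\varphi(\mathfrak p)\cdot(S^{-1}\mathcal R')$. Each of these two maps induces a homeomorphism of the spectrum of its target onto a subspace of $\Spec(\mathcal R')$: the localization identifies $\Spec(S^{-1}\mathcal R')$ with the primes $\mathfrak q$ disjoint from $S$, that is, those $\mathfrak q$ with $\varphi^{-1}(\mathfrak q)\cap(\mathcal R\setminus\mathfrak p)=\emptyset$, equivalently $\varphi^{-1}(\mathfrak q)\subseteq\mathfrak p$; the quotient identifies the spectrum of its target with the closed set of primes $\mathfrak q\supseteq\varphi(\mathfrak p)\cdot\mathcal R'$, equivalently $\varphi^{-1}(\mathfrak q)\supseteq\mathfrak p$. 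Composing, $\Spec(\mathcal R'\otimes_{\mathcal R}\kappa(x))$ is carried homeomorphically onto the set of $\mathfrak q$ satisfying both conditions, i.e. $\varphi^{-1}(\mathfrak q)=\mathfrak p$, which is exactly $(\varphi^*)^{-1}(x)$ endowed with the subspace topology.

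The argument is essentially bookkeeping; the point that must be handled with care — and the main potential obstacle — is verifying that the composite of the two subspace embeddings really carves out the intersection of the two loci (so that its image is the full fiber and nothing more), and that the topology transported from $\Spec(\mathcal R'\otimes_{\mathcal R}\kappa(x))$ agrees with the induced subspace topology. This works because a localization map and a quotient map are each homeomorphisms onto their images, the first image being closed under generization and the second under specialization, so that their intersection is precisely the fiber. I would make this explicit by translating the prime-ideal conditions as above, rather than by any delicate topological manipulation.
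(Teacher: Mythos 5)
Your proof is correct and takes essentially the same route as the paper: the paper's discussion following the theorem performs exactly your two-step decomposition, identifying $\Spec(\mathcal R'\otimes_{\mathcal R}\mathcal R_x)$ (localization) with the primes of $\mathcal R'$ whose preimage is contained in $x$, identifying $\Spec(\mathcal R'\otimes_{\mathcal R}\mathcal R/x)$ (quotient) with those whose preimage contains $x$, and noting that the two processes commute and cut out precisely the fiber via the canonical map $\mathcal R'\to\mathcal R'\otimes_{\mathcal R}\kappa(x)$. Your version simply makes explicit the bookkeeping the paper leaves implicit (exactness of localization, and that each map is a homeomorphism onto its image in the subspace topology).
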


  Let us note that we do two different processes in the computation
of the stalk. First there is a process of localization: the spectrum
of $R'_{\varphi(x)\cdot \mathcal R'} = \mathcal R' \otimes_{\mathcal
R} \mathcal R_x$ is identified with the set of prime ideals
$y\subset \mathcal R'$ verifying $\varphi(y)\subseteq x$. Second
there is a process of restriction, the spectrum of $\mathcal
R'/\varphi(x)\cdot\mathcal R' = \mathcal R' \otimes_{\mathcal R}
\mathcal R/x$ is identified with the set of prime ideals
$y\subset\mathcal R'$ verifying $\varphi(y)\supseteq x$. These
processes commute. When we take both together we obtain $\mathcal
R'\otimes_{\mathcal R}\kappa(x)$. As expected, the canonical
morphism $\mathcal R'\to\mathcal R'\otimes_{\mathcal R}\kappa(x)$,
$a\mapsto a\otimes 1$ induces de immersion of the stalk into
$\Spec(\mathcal R')$.

\subsection{Stalk Formula for Change of Base Field}

\begin{definition}
  Let $X$ be an $k$-scheme, and $k\hookrightarrow \mathcal A$ a
$k$-algebra. We write $X(\mathcal A)$ for the set of $k$-scheme
homomorphisms $\Spec(\mathcal A)\to X$. The functor
$$X\colon \mathcal A \leadsto X(\mathcal A) = \Hom_k(\Spec(\mathcal A),
X)$$ of the category of $k$-algebras in the category of sets, is
called the functor of points of $X$. An element $x\in X(\mathcal A)$
is called an $\mathcal A$-point of $X$.
\end{definition}

  First, note that for each field extension $k\hookrightarrow K$
there is a map,
  $$X(K) \to X,\quad x \mapsto x((0)), \quad (0)\subset K$$
following this map, $X(k)$ is identified with the set of points of
$X$ whose rational field $\kappa(x)$ is $k$. We call these points
\emph{rational points of $X$}.

  For any field extension $k\subset K$, the map $X(K)\to X$ is
surjective onto the subset of points $x\in X$ for whom that there
exist a commutative diagram,
$$\xymatrix{k \ar[dr] \ar[rr] & & K \\ & \kappa(x) \ar[ru]}$$
and moreover, $X(K)$ is identified with the set of $K$-rational
points of the $K$-scheme $X_K$:
$$\xymatrix{ & X\times\Spec(K) \ar[d] \\ \Spec(K) \ar[ur]\ar[r] &
X}$$

If $X$ is of finite type, then $X(\bar{k})\mapsto |X|_{cl} \subset
|X|$ is surjective onto the subset of closed points of $X$.

\begin{theorem}
There is a canonical one-to-one correspondence between the set
$X(K)$ of $K$-points of $X$ and the set of rational points of the
extended scheme $X_K$.
\end{theorem}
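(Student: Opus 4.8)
The plan is to assemble the correspondence from three ingredients, two of which are already recorded above. \emph{First} I would reduce the assertion about topological rational points to one about morphisms. Applying the definition of the functor of points to the $K$-scheme $X_K$, the set $X_K(K)=\Hom_K(\Spec(K),X_K)$ maps to $|X_K|$ by $y\mapsto y((0))$, and, exactly as observed above for $X(k)$ sitting inside $X$, this map identifies $X_K(K)$ with the set of points of $X_K$ whose residue field is $K$, that is, with the rational points of the $K$-scheme $X_K$. It therefore suffices to produce a canonical bijection $X(K)\xrightarrow{\sim}X_K(K)$.

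\emph{Second} I would establish that bijection from the universal property of the fibre product. A $K$-morphism $\Spec(K)\to X_K=X\times_k\Spec(K)$ is, by the defining property of the product, the same datum as a pair consisting of a $k$-morphism $\Spec(K)\to X$ and a $K$-morphism $\Spec(K)\to\Spec(K)$, the latter being forced to equal the identity since everything is taken over $\Spec(K)$. Hence
$$X_K(K)=\Hom_K(\Spec(K),X\times_k\Spec(K))=\Hom_k(\Spec(K),X)=X(K),$$
and composing with the identification of the previous paragraph yields the desired one-to-one correspondence, canonical because each step is.

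\emph{Third}, to make the passage between \emph{points} and \emph{morphisms} fully explicit and to pin down which points of $X_K$ arise, I would invoke the stalk formula (Theorem \ref{StalkFormula}). Choosing an affine neighbourhood $\Spec(\mathcal R)\subset X$ of a point $x$, the base-change morphism is dual to $\mathcal R\to\mathcal R\otimes_k K$, and the formula computes its fibre over $x$ as $\Spec((\mathcal R\otimes_k K)\otimes_{\mathcal R}\kappa(x))=\Spec(\kappa(x)\otimes_k K)$. A rational point of $X_K$ lying over $x$ is then a $K$-point of this fibre, i.e.\ a $K$-algebra homomorphism $\kappa(x)\otimes_k K\to K$, which by adjunction is the same as a $k$-embedding $\kappa(x)\hookrightarrow K$; the pair $(x,\kappa(x)\hookrightarrow K)$ is precisely the datum of an element of $X(K)$. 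This simultaneously describes the inverse map and shows that its image is exactly the set of rational points of $X_K$.

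\emph{The main obstacle} I anticipate is bookkeeping rather than conceptual: one must carry along the residue-field embedding $\kappa(x)\hookrightarrow K$ attached to a $K$-point, in order to check that reconstructing a morphism from a bare topological point is well defined and genuinely inverse to $y\mapsto y((0))$. The stalk formula is what controls this, since it guarantees that over each $x\in X$ the rational points of the fibre are governed by $\kappa(x)\otimes_k K$ and hence correspond bijectively to the $k$-embeddings of $\kappa(x)$ into $K$.
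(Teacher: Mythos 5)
Your proof is correct and follows essentially the same route as the paper, which states this theorem as a summary of the immediately preceding discussion identifying $X(K)$ with the $K$-rational points of the $K$-scheme $X_K$ via the universal property of the fibre product — exactly your first two steps. Your third step, which makes the correspondence explicit fibre-by-fibre by computing the fibre over $x$ as $\Spec(\kappa(x)\otimes_k K)$ and unwinding $K$-algebra maps $\kappa(x)\otimes_k K\to K$ into $k$-embeddings $\kappa(x)\hookrightarrow K$, is sound and simply anticipates the base change formula (Proposition \ref{CBformula}) that the paper proves right afterwards by the same computation.
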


\index{base change formula}
\begin{proposition}[Base change formula]\label{CBformula}
  Let $X$ be a $k$-scheme, $x\in X$, and $k\subset \mathcal A$ a
$k$-algebra. The stalk $\pi^{-1}(x)$ of $x$ by $\pi\colon
X_{\mathcal A}\to X$, is isomorphic to $\Spec(\kappa(x)\otimes_k
\mathcal A)$.
\end{proposition}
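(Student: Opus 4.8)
The plan is to reduce the statement to the affine situation and then invoke the Stalk Formula (Theorem \ref{StalkFormula}); the rest is a formal manipulation of tensor products.

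First I would observe that the assertion is local on the base $X$. Since $\pi\colon X_{\mathcal A}\to X$ is continuous, for any open $U\subset X$ containing $x$ the preimage $\pi^{-1}(U)=U_{\mathcal A}$ is an open subscheme of $X_{\mathcal A}$ that contains the whole fiber $\pi^{-1}(x)$. Hence the computation of $\pi^{-1}(x)$ may be carried out inside $\pi^{-1}(U)$, and we are free to replace $X$ by an affine neighborhood $U=\Spec(\mathcal R)$ of $x$. In this affine case we have $X_{\mathcal A}=\Spec(\mathcal R\otimes_k\mathcal A)$, and $\pi$ is the morphism of spectra dual to the structural ring morphism
$$\varphi\colon \mathcal R\to \mathcal R\otimes_k\mathcal A,\quad r\mapsto r\otimes 1.$$

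Next I would apply the Stalk Formula to $\varphi$ and to the point $x\in\Spec(\mathcal R)$, which identifies the fiber $\pi^{-1}(x)=(\varphi^{*})^{-1}(x)$ with the spectrum of $(\mathcal R\otimes_k\mathcal A)\otimes_{\mathcal R}\kappa(x)$. Then, regarding $\mathcal R\otimes_k\mathcal A$ as an $\mathcal R$-algebra through its first factor and using the commutativity and associativity of the tensor product, there are canonical ring isomorphisms
$$(\mathcal R\otimes_k\mathcal A)\otimes_{\mathcal R}\kappa(x)\cong \mathcal A\otimes_k(\mathcal R\otimes_{\mathcal R}\kappa(x))\cong \mathcal A\otimes_k\kappa(x)\cong \kappa(x)\otimes_k\mathcal A.$$
This yields $\pi^{-1}(x)\simeq\Spec(\kappa(x)\otimes_k\mathcal A)$, as claimed.

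The only point requiring attention is to make sure the identification supplied by the Stalk Formula respects the scheme structure rather than being a mere homeomorphism of underlying spaces. This causes no real difficulty: the Stalk Formula already realizes the fiber as the spectrum of the explicit ring $\mathcal R'\otimes_{\mathcal R}\kappa(x)$, so the displayed chain of ring isomorphisms transports this scheme structure directly to $\Spec(\kappa(x)\otimes_k\mathcal A)$. The genuine content of the proof is therefore concentrated in the Stalk Formula together with the elementary base-change identity for tensor products, and no further computation is needed.
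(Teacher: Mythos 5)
Your proof is correct and follows essentially the same route as the paper's: both reduce to an affine neighborhood of $x$ (using that the whole fiber $\pi^{-1}(x)$ lies in the preimage of such a neighborhood), apply the Stalk Formula (Theorem \ref{StalkFormula}) to the structural morphism $\mathcal R\to\mathcal R\otimes_k\mathcal A$, and conclude via the standard base-change identity $(\mathcal R\otimes_k\mathcal A)\otimes_{\mathcal R}\kappa(x)\cong\kappa(x)\otimes_k\mathcal A$. Your write-up is in fact somewhat more careful than the paper's, which states the same steps more tersely (and with a couple of typographical slips in the displayed tensor products).
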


\begin{proof}
First, assume that $X=Spec(\mathcal B)$ is affine. Then, by stalk
formula, we have
$$\pi^{-1}(x) = \Spec(\mathcal A\otimes_k B \otimes_{\mathcal B}
\otimes \kappa(x)) = \Spec(\mathcal A \otimes_k \kappa(x)),$$ the
homeomorphism is induced by the ring morphism
$$A\otimes_k B \to A \otimes \kappa(x), \quad a\otimes f \mapsto a \otimes f(x).$$

If $X$ is not affine, then we cover it with affine subsets $U_i$. If
$\pi(y) = x$, and $x\in U_i$, then $y\in U_i\times_k\Spec(\mathcal
A)$ and the previous argument is sufficient.
\end{proof}

\subsection*{Acknowledgements}

  This research of both authors has been partially financed
by MCyT-FEDER Grant MTM2006-00478 of spanish goverment. The
first author is also supported by {\sc Civilizar}, the research 
agency of Universidad Sergio Arboleda. We also acknowledge 
prof. J.-P. Ramis and prof. E. Paul for their support 
during the visit of the first author to Laboratoire Emile Picard. 
We are also in debt with J. Mu\~noz of Universidad de Salamanca for 
his continuous help and support. We thank also P. Acosta,
T. Lazaro and C. Pantazi who shared with us
the seminar of algebraic methods in differential equations in Barcelona.

\bigskip

{\sc\noindent David Bl\'azquez-Sanz \\
Escuela de Matem\'aticas\\
Universidad Sergio Arboleda\\
Calle 74, no. 14-14 \\
Bogot\'a, Colombia\\
}
E-mail: {\tt david.blazquez-sanz@usa.edu.co}

\bigskip

{\sc\noindent Juan Jos\'e Morales-Ruiz \\
Departamento de Inform\'atica y Matem\'aticas\\
Escuela de Caminos Canales y Puertos\\
Universidad Polit\'ecnica de Madrid
Madrid, Espa\~na\\
}
E-mail: {\tt juan.morales-ruiz@upm.es}


\end{document}